\numberwithin{equation}{section}
\numberwithin{figure}{section}
\renewcommand{\subsection}[1]{\hspace{-\parindent}\refstepcounter{subsection}{\bf (\arabic{section}\alph{subsection}) #1.}\addcontentsline{toc}{subsection}{\bf #1.}}
\newenvironment{nouppercase}{%
  \renewcommand{\uppercasenonmath}[1]{}}{}
\newcommand{\mybox}[1]{\parbox[t]{37.5em}{\parindent0em\parskip.5em #1}}
\newcommand{\smallprint}[1]{{\scriptstyle #1}}
\theoremstyle{plain}
\newtheorem{thm}{Theorem}[section]
\newtheorem{prop}[thm]{Proposition}
\newtheorem{remark}[thm]{Remark}
\newtheorem{proposition}[thm]{Proposition}
\newtheorem{example}[thm]{Example}
\newtheorem{lemma}[thm]{Lemma}
\newtheorem{construction}[thm]{Construction}
\newtheorem*{claim*}{Claim} 
\newtheorem*{lemma*}{Lemma}
\newtheorem*{theorem*}{Theorem}
\newtheorem*{conjecture*}{Conjecture}
\newcommand{\bC}{{\mathbb C}}
\newcommand{\bK}{{\mathbb K}}
\newcommand{\bQ}{{\mathbb Q}}
\newcommand{\bR}{{\mathbb R}}
\newcommand{\bZ}{{\mathbb Z}}
\newcommand{\scrA}{\EuScript A}
\newcommand{\scrB}{\EuScript B}
\newcommand{\scrC}{\EuScript C}
\newcommand{\scrD}{\EuScript D}
\newcommand{\scrF}{\EuScript F}
\newcommand{\scrH}{\EuScript H}
\newcommand{\scrI}{\EuScript I}
\newcommand{\scrL}{\EuScript L}
\newcommand{\scrM}{\EuScript M}
\newcommand{\scrN}{\EuScript N}
\newcommand{\scrO}{\EuScript O}
\newcommand{\scrP}{\EuScript P}
\newcommand{\scrQ}{\EuScript Q}
\newcommand{\scrR}{\EuScript R}
\newcommand{\scrS}{\EuScript S}
\newcommand{\scrW}{\EuScript W}
\newcommand{\scrZ}{\EuScript Z}
\newcommand{\frakg}{\mathfrak{g}}
\newcommand{\bfC}{\boldsymbol{C}}
\newcommand{\bfL}{\boldsymbol{L}}
\newcommand{\bfO}{\boldsymbol{O}}
\newcommand{\bfS}{\boldsymbol{S}}
\newcommand{\bfd}{\boldsymbol{d}}
\newcommand{\bfg}{\boldsymbol{g}}
\newcommand{\bfm}{\boldsymbol{m}}
\newcommand{\bfp}{\boldsymbol{p}}
\newcommand{\bfr}{\boldsymbol{r}}
\newcommand{\bfu}{\boldsymbol{u}}
\newcommand{\bfw}{\boldsymbol{w}}
\newcommand{\bfx}{\boldsymbol{x}}
\newcommand{\bfz}{\boldsymbol{z}}
\newcommand{\bfphi}{\boldsymbol{\phi}}
\newcommand{\bfrho}{\boldsymbol{\rho}}
\newcommand{\bfgamma}{\boldsymbol{\gamma}}
\newcommand{\bfzeta}{\boldsymbol{\zeta}}
\newcommand{\bfsigma}{\boldsymbol{\sigma}}
\newcommand{\half}{{\textstyle\frac{1}{2}}}
\newcommand{\quarter}{\textstyle\frac{1}{4}}
\newcommand{\iso}{\cong}
\newcommand{\htp}{\simeq}
\newcommand{\smooth}{C^\infty}
\newcommand{\tw}{\mathit{tw}}
\newcommand{\Aut}{\mathit{Aut}}
\newcommand{\cornerbar}[1]{
  \tikz[baseline=(n.base)]{\node(n)[inner sep=1pt]{$#1$};
    \draw[line cap=round](n.south west)--(n.north west)--(n.north east);
  }
}
\newcommand{\ch}{\mathit{ch}}
\newcommand{\va}{\mathit{va}}
\title[LEFSCHETZ FIBRATIONS]{\Large\larger\rm Fukaya $A_\infty$-structures associated to\\ Lefschetz fibrations. VI}
\author{Paul Seidel}
\subjclass[2020]{Primary 53D37; Secondary 18G70, 14D05.}
\begin{document}
\begin{nouppercase}
\maketitle
\end{nouppercase}

\begin{abstract}
To a symplectic Lefschetz pencil on a monotone symplectic manifold, we associate an algebraic structure, which is a pencil of categories in the sense of noncommutative geometry. 
\end{abstract}
%

\section{Introduction}
This paper constructs a Fukaya-categorical structure for a certain class of symplectic Lefschetz pencils. It is intended as a unifying concept, encapsulating the information which Lagrangian Floer theory can extract from those Lefschetz pencils. Pieces of this structure have been used before in both theory and applications, but the construction in its entirety is new.

\subsection{Floer theory relative to an anticanonical divisor}
We start the story at the point which is technically easiest to access (but not the most fundamental one, from our perspective). Let $M$ be a closed monotone symplectic manifold, meaning that
\begin{equation} \label{eq:monotone}
[\omega_M] = c_1(M).
\end{equation}
Suppose we have a smooth symplectic hypersurface $D \subset M$ which is an anticanonical divisor, meaning Poincar{\'e} dual to $c_1(M)$. The complement $M \setminus D$ is an exact symplectic manifold. Let $L \subset M \setminus D$ be a Lagrangian submanifold, assumed to have $H^1(L) = 0$ for simplicity, and to be {\em Spin}. Its Floer cohomology inside $M \setminus D$ is a graded abelian group, and comes with an isomorphism
\begin{equation} \label{eq:hf}
\mathit{HF}^*(L,L) \iso H^*(L).
\end{equation}
It carries additional information in the form of an invariant counting holomorphic discs that intersect $D$ exactly once (this goes back to the Addendum of \cite{oh93}, and was fully developed in \cite{cho-oh02, auroux07}), which in our context is just a number
\begin{equation} \label{eq:wl}
W(L) \in \mathit{HF}^0(L,L) = \bZ.
\end{equation}
On a more abstract level, this reflects the fact that the Fukaya category $\scrF(M \setminus D)$ has a natural formal deformation with deformation parameter of degree $2$; this deformation is the relative Fukaya category of the pair $(M,D)$, see \cite{sheridan-fano}. Inverting the deformation parameter yields a full subcategory of the Fukaya category $\scrF(M)$ of the closed symplectic manifold, and this has been traditionally a means of understanding $\scrF(M)$.

With more effort, one can extend the picture to noncompact Lagrangian submanifolds, with the following outcome. The wrapped Fukaya category $\scrW(M \setminus D)$ is a graded $A_\infty$-category over $\bZ$, and one can equip it with a deformation involving $D$, which extends that of $\scrF(M \setminus D)$.


\subsection{Floer theory for Lefschetz fibrations\label{subsec:old}}
Let's start in a different geometric context, that of Lefschetz fibrations $\Pi: E \rightarrow \bC$ over the complex plane, such that $[\omega_E]$ and $c_1(E)$ are both zero. Here, the natural class of Lagrangian submanifolds are those $L \subset E$ whose image in $\bC$ is allowed to go to infinity in positive real direction (that includes Lefschetz thimbles, as well as closed submanifolds; we continue to assume $H^1(L) = 0$ and $L$ is {\em Spin}). One can define Floer cohomology for such submanifolds so that \eqref{eq:hf} continues to hold, by using a suitable Hamiltonian term. There is also a Fukaya category $\scrF(\Pi)$, of which these are the cohomology level morphisms.

\begin{remark}
The Fukaya category of a Lefschetz fibration has had a troubled infancy (to which the author has contributed through the never-finished \cite{abouzaid-seidel11b}). The author first learned about it from Kontsevich. One can simplify the technicalities by allowing only a basis of Lefschetz thimbles as objects, and this was done in \cite{seidel02} as well as some earlier papers in this series \cite{seidel12b,seidel14b}. The book \cite{seidel04} gave a complete definition of $\scrF(\Pi)$, but it used a complicated workaround involving a double branched cover. Instead, the most frequently used approach is a categorical quotient construction \cite{abouzaid-seidel11b, ganatra-pardon-shende17}, and we will adopt that here. There is also an alternative \cite{lefschetz4andahalf}, which is algebraically much more straightforward but uses a larger automorphism group of the base than just rotations.
\end{remark}

Any Lefschetz fibration comes with a canonical Hamiltonian automorphism $\phi$, which is obtained by applying a full rotation to the part at $\infty$ of the base, and then lifting that. By using continuation maps, one can define distinguished elements
\begin{equation} \label{eq:c-elements}
C(L) \in \mathit{HF}^0(\phi(L),L),
\end{equation}
of a somewhat different nature than \eqref{eq:wl}: if $L$ is closed, $\mathit{HF}^*(\phi(L),L) \iso \mathit{HF}^*(L,L)$, and \eqref{eq:c-elements} is just the identity endomorphism. On the $A_\infty$-level, $\phi$ induces an autoequivalence of $\scrF(\Pi)$, which can be intrinsically described as its Serre functor; and the elements \eqref{eq:c-elements} lift to a natural transformation from $\phi$ (the Serre functor) to the identity functor. The last-mentioned observation goes back to \cite{seidel06}; it is worked out in \cite{seidel12b} in the context of a basis of vanishing cycles, or more elegantly in \cite{lefschetz5} using the approach from \cite{lefschetz4andahalf}; and we will give yet another definition here. This canonical natural transformation plays an important role in understanding the geometry of the Lefschetz fibration: one can easily construct examples with isomorphic Fukaya categories but different natural transformations.

\begin{example} \label{th:simple-example}
Take an exact Lefschetz fibration which has only two critical points. For simplicity, let's work with Floer cohomology with coefficients in a field $\bK$. In this case, the Fukaya category of the Lefschetz fibration is determined by a single graded $\bK$-vector space $H = \mathit{HF}(L_0,L_1)$, which is the Floer cohomology between the two Lefschetz thimbles in the total space $M^{2n}$. However, one can also consider $H$ as Floer cohomology between the vanishing cycles in a fibre $F$ of the Lefschetz fibration, $H = \mathit{HF}(L_{F,0},L_{F,1})$. In that context, one has additional multiplicative structure, namely
\begin{equation} \label{eq:simple-multiplication}
H^i \otimes (H^{n-1+i})^\vee = \mathit{HF}^i(L_{F,0},L_{F,1}) \otimes \mathit{HF}^{-i}(L_{F,1},L_{F,0}) \longrightarrow \mathit{HF}^0(L_{F,1},L_{F,1}) = \bK,
\end{equation}
which is nonzero iff the vanishing cycles are mutually isomorphic in the Fukaya category of the fibre. In terms of the Fukaya category of the Lefschetz fibration, that multiplicative structure is part of what's encoded in the natural transformation. 
For instance, consider two Lefschetz fibrations whose fibres are genus zero surfaces, and with the vanishing cycles as in Figure \ref{fig:simple-example}. The Fukaya categories of the Lefschetz fibrations are the same ($H$ is of total dimension two, with one generator each in two adjacent degrees), but they are distinguished by \eqref{eq:simple-multiplication}.
\end{example}
\begin{figure}
\begin{centering}
\includegraphics[scale=0.75]{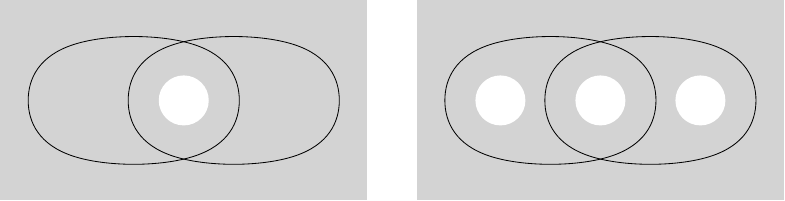}
\caption{\label{fig:simple-example}The vanishing cycles from Example \ref{th:simple-example}.}
\end{centering}
\end{figure}%

In \cite{seidel12b}, this natural transformation was exhibited as a piece of a richer algebraic relationship between the Fukaya category of the Lefschetz fibration and that of the fibre. Various names have been proposed for this kind of relationship \cite{seidel12b, kontsevich-vlassopoulos13, seidel14b}; here, we follow the last-mentioned reference in calling it a ``noncommutative divisor'' (more specifically, because the Serre functor is involved, one might want to call it a noncommutative anticanonical divisor, but that will play no role in the current paper). The terminology is inspired by mirror symmetry, where this structure corresponds to singling out an anticanonical divisor on the mirror to the Lefschetz fibration.

\subsection{Floer theory for Lefschetz pencils}
Now, let's consider the situation of an anticanonical Lefschetz pencil on a monotone symplectic manifold $M$. This fits into both our previous contexts: any smooth member $D$ of the pencil is an anticanonical divisor; and the complement carries an exact Lefschetz fibration $\Pi: M \setminus D \rightarrow \bC$. The basic insight in \cite{seidel14b} was that in this situation, there is a second natural transformation from the Serre functor of $\scrF(\Pi)$ to the identity, which encodes the geometry of holomorphic curves passing through $D$, and in particular (when combined with the first natural transformation) contains the numbers \eqref{eq:wl}; see Section \ref{subsec:formal-c}. Just like the first transformation was part of a structure of noncommutative divisor, this pair of transformations was conjectured to be part of a richer structure of noncommutative pencil: that algebraic notion was defined in \cite[Section 1b]{seidel14b}, and its existence in this geometric context postulated in \cite[Conjecture 1.6]{seidel14b}. Moreover, \cite[Section 1c]{seidel14b} wove it into a broader conjectural picture which includes not just the Fukaya category of the fibre, but also the wrapped category $\scrW(M \setminus D)$ and its deformation as described before. 

Among subsequent developments, the relation with the wrapped category now falls under the much more general ``stop-removal'' formalism from \cite{ganatra-pardon-shende18}. In a slightly different situation (anticanonical Lefschetz fibrations rather than pencils), the dependence of the natural transformations on the K\"ahler parameters is of interest, because it leads to a computation of mirror maps \cite{lefschetz2andahalf, lefschetz8}. The contribution of this paper is different, namely to give a direct definition of the structure we've been talking about all this time:

\begin{construction} \label{th:1}
Take a monotone symplectic manifold with an anticanonical symplectic Lefschetz pencil, and its associated Lefschetz fibration. Then, the Fukaya category of that fibration carries the structure of a noncommutative pencil.
\end{construction}

One can question whether such a geometric construction is really necessary; one could bypass it by only constructing the natural transformations geometrically, and then using obstruction theory \cite{seidel14b} to recover the rest. This obstruction-theoretic approach is used in \cite{lefschetz8}; but it handicaps possible future developments, where one wants to connect properties of  the noncommutative pencil to symplectic geometry. It also does not seem to work any longer for generalizations, such as the one we are about to state (following an indication in \cite[Remark 1.7]{seidel14b}):

\begin{construction} \label{th:2}
On a monotone symplectic manifold, consider a symplectic Lefschetz pencil, such that the first Chern class of the manifold is $N$ times the class of the hypersurfaces in the pencil, for some integer $N \geq 1$. The Fukaya category of the associated Lefschetz fibration carries a graded noncommutative pencil, with degrees $(2N-2,0)$.
\end{construction}

Just like their geometric counterparts, noncommutative pencils have ``fibres'' parametrized by the projective line, which are $A_\infty$-categories. In our applications the ``fibre at $\infty$'' of the noncommutative pencil is $\scrF(D \setminus B)$, the Fukaya category of the symplectic divisor with the base locus of the pencil removed. This reflects a relation that applies to more general Lefschetz fibrations and was already discussed in Section \ref{subsec:old} (our version of it is proved in Proposition \ref{th:restriction-functor}). On the other hand, the fibre at $0$  seems to hold particular interest in Construction \ref{th:2}: it is $\bZ$-graded, and from a first look, could plausibly be a Calabi-Yau category of dimension $n + 1-2N$ (here, the hypersurfaces of the original symplectic pencil had real dimension $2n-2$). In principle, there should be another approach to defining the ``fibres'' at points other than $\infty$, in terms of $\scrW(M \setminus D)$, but that does not necessarily shed more light on their nature (and the agreement between the two approaches may no longer hold in generalizations where $D$ is allowed to be singular).

\begin{remark}
In spite of the way in which the results above were formulated, we do not actually need a whole geometric Lefschetz pencil for the definitions: they only depend on what one can see in a neighbourhood of a single divisor (basically, the divisor itself and its perturbed versions, all intersecting in the base locus). However, if one wanted to study the resulting categories further, having the entire Lefschetz pencil at one's disposal would certainly be useful (see Section \ref{subsec:global-geometry}, and in particular Remark \ref{th:lefschetz-pencil}).
\end{remark}

\begin{remark}
We use monotonicity as a technical tool, which (unsuprisingly) allows us to deal with bubbling off of holomorphic spheres in a fairly simplistic way. It is not inconceivable that this condition could be dropped, and replaced with the assumption that the hypersurfaces should be Poincar\'e dual to the symplectic class (which is the most commonly used notion of Lefschetz pencil on a general closed symplectic manifold). On the technical side, one would have to find more sophisticated arguments, possibly borrowed from relative Gromov-Witten theory. For the resulting algebraic structures, one can only expect to have a $\bZ/2$-grading in general.
\end{remark}


Concerning the actual techniques, all our Floer-theoretic constructions are based on certain moduli spaces of decorated Riemann surfaces, namely the popsicle spaces from \cite{abouzaid-seidel07} (even though the class of surfaces, and the Cauchy-Riemann equations they carry, is broadly the same, our use of it is in a sense complementary to that in \cite{abouzaid-seidel07}; see Remark \ref{th:abouzaid} for a comparison). This additional structure on the surfaces will be used in two different ways: one is to equip them with one-forms that enter into inhomogeneous versions of the pseudo-holomorphic map equation; the other is to impose intersection conditions with a symplectic divisor.

\subsection{Content of the paper}
The first three parts (Sections \ref{sec:algebra}--\ref{sec:popsicle}) are review and background material concerning, respectively, homological algebra, pseudo-holomorphic curves, and popsicle spaces. After that, we define the ``fibre at $\infty$'' of the noncommutative pencil (Section \ref{sec:first-construction}), and relate it to the Fukaya category of the fibre (Section \ref{sec:restriction}). Then we give the construction of the ``fibre at $0$'', which is similar on a formal-algebraic level but encodes different geometric information (Section \ref{sec:anticanonical}). After that, we unify the two ideas to build the entire noncommutative pencil (Section \ref{sec:flavour}). One consequence of this gradual buildup is that techniques are explained in most detail when they first occur, and then adapted to more complicated but analogous situations in a more summary fashion. We conclude with an example (Section \ref{sec:example}). 

{\em Acknowledgments.} Two aspects of this paper, namely, the definition of the Fukaya category of a Lefschetz fibration using localisation, and the construction of the functor that maps it to the Fukaya category of the fibre, are scavenged from \cite{abouzaid-seidel11b}. This work was partially supported by the Simons Foundation, through a Simons Investigator award and the Simons Collaboration for Homological Mirror Symmetry; by NSF grant DMS-1500954; and by Princeton University and the Institute for Advanced Study, through visiting appointments. 

\section{Basic notions: algebra\label{sec:algebra}}
This section recalls the algebraic language in which the results of the paper will be phrased. Among other things, we fix the basic conventions concerning signs and units for $A_\infty$-structures. We also discuss categorical localisation, which will be used as a technical workaround in our constructions.

\subsection{$A_\infty$-categories\label{subsec:categories}}
Fix a (commutative) coefficient ring $R$, assumed to be of finite global dimension. A (strictly unital $\bZ$-graded) $A_\infty$-category $\scrA$ over $R$ consists of the following data. First, a set of objects; secondly, for any pair of objects $(X_0,X_1)$, a free graded $R$-module $\scrA(X_0,X_1)$ (we denote the degree of a homogeneous morphism $x \in \scrA(X_0,X_1)$ by $|x|$, and the reduced degree by $\|x\| = |x| - 1$); and finally, composition maps 
\begin{equation}
\mu_{\scrA}^d: \scrA(X_{d-1},X_1) \otimes \cdots \otimes \scrA(X_0,X_1) \longrightarrow
\scrA(X_0,X_d)[2-d]
\end{equation}
for $d \geq 1$, satisfying the $A_\infty$-associativity relations
\begin{equation} \label{eq:associativity}
\begin{aligned}
& \sum_{i,j} (-1)^\ast \mu_{\scrA}^{d-j+1}(x_d,\dots,x_{i+j},\mu_{\scrA}^j(x_{i+j-1},\dots,x_i), x_{i-1},\dots,x_1) = 0; 
\\[-.5em]
& \smallprint{\qquad \qquad \ast = \|x_1\| + \cdots + \|x_{i-1}\|.}
\end{aligned}
\end{equation}
The strict unitality property means that we have nonzero elements $e = e_X \in \scrA^0(X,X)$ such that
\begin{equation} \label{eq:e}
\left\{
\begin{aligned}
& \scrA^0(X,X)/R e \text{ is a free $R$-module,} \\
& \mu^1_{\scrA}(e) = 0, \\
& \mu^2_{\scrA}(x,e) = (-1)^{|x|} \mu^2_{\scrA}(e,x) = x, \\
& \mu^d_{\scrA}(\dots, e, \dots) = 0 \;\; \text{ for $d>2$.}
\end{aligned}
\right.
\end{equation}
The associated cohomology level category $A = H^*(\scrA)$ has the graded spaces $H^*(\scrA(X_0,X_1))$ as morphisms, with composition induced by $(x_2,x_1) \mapsto (-1)^{|x_1|} \mu^2_{\scrA}(x_2,x_1)$. When considering $A_\infty$-functors, we also impose a strictly unitality condition. 

\begin{remark}
The assumptions on $R$ and $\scrA(X_0,X_1)$ ensure that certain standard arguments based on quasi-isomorphism work. Similarly, the freeness assumption in \eqref{eq:e} ensures that the reduced Hochschild complex is well-behaved. It is possible to work with $A_\infty$-categories without imposing those assumptions  (for instance, \cite{bespalov-lyubashenko-manzyuk} uses  $A_\infty$-categories whose morphism spaces are arbitrary $R$-modules, for any commutative ring $R$), but then, quasi-isomorphisms have to be replaced with more precise chain homotopy arguments. 
\end{remark}

$A_\infty$-categories are ``homotopy invariant algebraic structures''. One concrete aspect of that are transfer results \cite{markl04}, of which the following is a sample:
\begin{equation} \label{eq:transfer}
\mybox{
Let $\scrB$ be an $A_\infty$-category. Take graded subspaces $\scrA(X_0,X_1) \subset \scrB(X_0,X_1)$ for all pairs of objects, such that both $\scrA(X_0,X_1)$ and $\scrB(X_0,X_1)/\scrA(X_0,X_1)$ are free graded $R$-modules. For $X_0 = X_1 = X$, we also require that $\scrA^0(X,X)$ should contain the identity, and that $\scrA^0(X,X)/R e$ is free. Suppose that $\scrA(X_0,X_1)$ is a subcomplex, and the inclusion into $\scrB(X_0,X_1)$ is a quasi-isomorphism. Then, there is an $A_\infty$-category $\scrA$ with the same objects as $\scrB$, and with morphism spaces $\scrA(X_0,X_1)$, together with an $A_\infty$-quasi-equivalence $\scrA \rightarrow \scrB$ whose linear term is the inclusion.
}
\end{equation}
A sketch of the proof, restricted to $A_\infty$-algebras for notational simplicity, goes as follows. We know that $\scrB/\scrA$ is acyclic, and therefore contractible. One takes a contracting homotopy for $\scrB/\scrA$, and extends that to a self-map $h$ of $\scrB$ which vanishes on $\scrA$. Then, the chain map $\pi = \mathit{id} - \mu^1_{\scrB} h - h \mu^1_{\scrB}$ is a projection from $\scrB$ to $\scrA$.
%
The $A_\infty$-operations on $\scrA$ can be constructed through explicit formulae: of course $\mu^1_{\scrA}(x) = \mu^1_{\scrB}(x)$, and then further
\begin{equation} \label{eq:explicit-transfer}
\left\{
\begin{aligned}
& \mu^2_{\scrA}(x_2,x_1) = \pi \mu^2_{\scrB}(x_2,x_1), \\
& \mu^3_{\scrA}(x_3,x_2,x_1) = \pi \big( \mu^3_{\scrB}(x_3,x_2,x_1) 
+ \mu^2_{\scrB}(x_3,h \mu^2_{\scrB}(x_2,x_1)) + \mu^2_{\scrB}(h \mu^2_{\scrB}(x_3,x_2),x_1) \big),
\\
& \dots
\end{aligned}
\right.
\end{equation}
Inspection of those formulae shows that $\scrA$ is again strictly unital.

\subsection{$A_\infty$-bimodules\label{subsec:bimodules}}
An $A_\infty$-bimodule $\scrQ$ over $\scrA$ consists of free graded $R$-modules $\scrQ(X_0,X_1)$, and structure maps
\begin{multline}
\qquad \mu_{\scrQ}^{s;1;r}: \scrA(X_{r+s}, X_{r+s+1}) \otimes \cdots \otimes \scrA(X_{r+1},X_{r+2}) \otimes \scrQ(X_r,X_{r+1}) \\  
\otimes \scrA(X_{r-1},X_r) \otimes \cdots \otimes \scrA(X_0,X_1)
\longrightarrow \scrQ(X_0,X_{r+s+1})[1-r-s] \qquad
\end{multline}
for $r,s \geq 0$, satisfying the bimodule equations
\begin{equation}
\begin{aligned}
& \sum_{i,j} (-1)^\ast \mu_{\scrQ}^{s;1;r-j+1}(x_{r+s},\dots,x_{r+1}; y; x_r,\dots,x_{i+j},\mu_{\scrA}^j(x_{i+j}-1,\dots,x_{i}),x_{i-1},\dots,x_1) \\ 
+ & \sum_{i,j} (-1)^\ast \mu_{\scrQ}^{s-j;1;i}(x_{r+s},\dots,x_{r+j}; \mu_{\scrQ}^{j-1;1;r-i+1}(x_{r+j-1},\dots,x_{r+1};y;x_r,\dots,x_{i}); x_{i-1},\dots,x_1) \\
+ & \sum_{i,j} (-1)^\dag \mu_{\scrQ}^{s-j+1;1;r}(x_{r+s},\dots,\mu_{\scrA}^j(x_{r+i+j-1},\dots,x_{r+i}),\dots,x_{r+1};y;x_r,\dots,x_1)
= 0; \\[-.5em]
& \smallprint{\qquad \qquad \ast = \|x_{1}\|+\cdots+\|x_{i-1}\|, \;\; \dag = \|x_1\| + \cdots + \|x_{r+i-1}\| + |y|;}
\end{aligned}
\end{equation}
and the unitality conditions
\begin{equation}
\left\{
\begin{aligned}
& \mu_{\scrQ}^{0;1;1}(y;e) = (-1)^{\|y\|} \mu_{\scrQ}^{1;1;0}(e;y) = y, \\
& \mu_{\scrQ}^{s;1;r}(\dots,e,\dots;y;\dots) = 0, \; \mu_{\scrQ}^{s;1;r}(\dots;y;\dots,e,\dots) = 0 \;\; \text{ for $r+s>1$.}
\end{aligned}
\right.
\end{equation}
There is a shift operation on bimodules: $\scrQ[1]$ has the same $R$-modules as $\scrQ$ with gradings shifted down, and the structure maps 
\begin{equation}
\begin{aligned}
& \mu^{s;1;r}_{\scrQ[1]}(x_{r+s},\dots,x_{r+1};y;x_r,\dots,x_1) = (-1)^\ast \mu^{s;1;r}_{\scrQ}(x_{r+s},\dots,x_{r+1};y;x_r,\dots,x_1); \\
& \smallprint{\qquad \qquad \ast = \|x_1\| + \cdots + \|x_r\| + 1.}
\end{aligned}
\end{equation}
For instance, the diagonal bimodule $\Delta_{\scrA}$ consists of the same $R$-modules as $\scrA$ itself. Its bimodule operations can be described by saying that the bimodule operations on the shifted version $\Delta_{\scrA}[1]$ are the same as the $A_\infty$-operations of the category: $\mu^{s;1;r}_{\Delta_\scrA[1]} = \mu^{s+1+r}_{\scrA}$. The linear dual of a bimodule $\scrQ$ consists of the graded $R$-modules $\scrQ^\vee(X_0,X_1) = \scrQ(X_1,X_0)^\vee = \mathit{Hom}_R(\scrQ(X_1,X_0),R)$, with $A_\infty$-operations that satisfy
\begin{equation} \label{eq:dual-bimodule}
\langle \mu_{\scrQ^\vee}^{s;1;r}(x_{r+s},\dots,x_{r+1};\xi;x_r,\dots,x_1), y \rangle = (-1)^{\|y\|} \langle \xi, \mu_{\scrQ}^{r;1;s}(x_r,\dots,x_1;y; x_{r+s},\dots,x_{r+1})\rangle,
\end{equation}
with respect to the canonical pairing $\langle \cdot, \cdot \rangle$. This construction should be used with caution, as the $\scrQ^\vee(X_0,X_1)$ may no longer be free $R$-modules in general; however, that will not be an issue in our only application (Section \ref{sec:example}).

A morphism $\phi: \scrQ \rightarrow \scrR$ of bimodules (by which we mean a degree $0$ cocycle in the dg category of $A_\infty$-bimodules) consists of maps
\begin{multline}
\qquad \phi^{s;1;r}: \scrA(X_{r+s}, X_{r+s+1}) \otimes \cdots \otimes \scrA(X_{r+1},X_{r+2}) \otimes \scrQ(X_r,X_{r+1}) \\
\otimes \scrA(X_{r-1},X_r) \otimes \cdots \otimes \scrA(X_0,X_1)
\longrightarrow \scrR(X_0,X_{r+s+1})[-r-s], \qquad
\end{multline}
such that
\begin{equation}
\phi^{s;1;r}(\dots,e,\dots;y;\dots) = 0, \;\; \phi^{s;1;r}(\dots;y;\dots,e,\dots) = 0,
\end{equation}
and
\begin{equation} \label{eq:bimodule-map-equation}
\begin{aligned}
& \sum_{i,j} (-1)^\ast \phi^{s;1;r-j+1}(x_{r+s},\dots,x_{r+1}; y; x_r,\dots,x_{i+j},\mu_{\scrA}^j(x_{i+j-1},\dots,x_{i}),\dots,x_1) \\ 
+ & \sum_{i,j} (-1)^\ast \phi^{s-j;1;i-1}(x_{r+s},\dots,x_{r+j+1};\mu_{\scrQ}^{j-1;1;r-i+1}(x_{r+j-1},\dots,x_{r+1};y;x_r,\dots,x_{i});
\dots,x_1) \\
+ & \sum_{i,j} (-1)^\dag \phi^{s-j+1;1;r}(x_{r+s},\dots,\mu_{\scrA}^j(x_{r+i+j-1},\dots,x_{r+i}),\dots,x_{r+1};y;x_r,\dots,x_1)
\\  = &
\sum_{i,j} \mu_{\scrR}^{s-j;1;i-1}(x_{r+s},\dots,x_{r+j};\phi^{j-1;1;r-i+1}(x_{r+j-1},\dots,x_{r+1};y;x_r,\dots,x_{i});
x_{i-1},\dots,x_1); \\[-.5em]
& \smallprint{\qquad \qquad \ast = \|x_1\|+\cdots+\|x_{i-1}\|, \;\; \dag = \|x_1\| + \cdots + \|x_{r+i-1}\| + |y|;}
\end{aligned}
\end{equation}

One can associate to any $\scrA$-bimodule $\scrQ$ an $A_\infty$-category, as follows:
\begin{equation}
\mybox{
The trivial extension category $\scrA \oplus \scrQ[1]$ has the same objects as $\scrA$, and morphism spaces 
\[
(\scrA \oplus \scrQ)(X_0,X_1) = \scrA(X_0,X_1) \oplus \scrQ(X_0,X_1)[1].
\]
The composition maps consist of the $A_\infty$-structure of $\scrA$ together with the bimodule structure of $\scrQ[1]$. 
}
\end{equation}
In particular, we have $\mu_{\scrA \oplus \scrQ[1]}^d(x_d,\dots,x_1) = 0$ whenever at least two of the $x_k$ belong to $\scrQ$ summands.

\subsection{Noncommutative linear systems\label{subsec:nc-linear-system}}
We will use an additional grading, called the weight grading. Given a weight-graded $R$-module $M$, write $M^{(p)}$ for the weight $p$ part. Fix a nonzero finite-dimensional free $R$-module $V$, given weight $-1$. Consider the weight-graded symmetric algebra $R[V]$. We will work with weight-graded modules over $R[V]$. To say that a weight-graded module is free means that it is of the form $R[V] \otimes_R M$, where $M$ is a weight-graded free $R$-module. We have assumed that $R$ has finite global dimension; hence so does $R[V]$ (as an ungraded ring). It follows that the category of weight-graded $R[V]$-modules also has finite global dimension. In particular, (unbounded) complexes of free weight-graded modules are homotopically projective.

We will consider $A_\infty$-categories over $R[V]$ which are weight-graded (in addition to the usual grading). This means that the morphism spaces in each homological degree are free weight-graded $R[V]$-modules, and that the composition maps are $R[V]$-multilinear and homogeneous with respect to weights. Following \cite[Section 2g]{seidel14b}, we define:
\begin{equation} \label{eq:nc-linear-system}
\mybox{
A noncommutative linear system, of dimension $\mathrm{rank}(V)\!-\!1$, is a weight-graded $A_\infty$-category $\scrL$ over $R[V]$ with the following property: each space $\scrL(X_0,X_1)$ is generated over $R[V]$ by elements of weights $0$ and $-1$.
}
\end{equation}
Note that this implies that the positive weight parts $\scrL(X_0,X_1)^{(p)}$, $p>0$, are zero. Because we have imposed restrictions on $\scrL(X_0,X_1)$ as an $R[V]$-module, the notion of noncommutative linear system is not ``homotopy invariant''. These restrictions would have to be relaxed if, for instance, one wanted to work with dg rather than $A_\infty$-structures. For us, this issue becomes relevant only at one point, where we will need to start with an object with weaker properties, and obtain a noncommutative linear system by transfer:
\begin{equation} \label{eq:weight01}
\mybox{
Suppose that $\scrM$ is a weight-graded $A_\infty$-category over $R[V]$, such that all morphism spaces satisfy $\scrM(X_0,X_1)^{(p)} = 0$ for $p>0$. Let $\scrL(X_0,X_1) \subset \scrM(X_0,X_1)$ be the $R[V]$-submodule generated by elements of weight $-1,0$. By assumption, this is a subcomplex. Suppose that the quotient $\scrM(X_0,X_1)/\scrL(X_0,X_1)$ is contractible, for each $(X_0,X_1)$.
Then, there is a noncommutative linear system $\scrL$ with the same objects, with morphism spaces $\scrL(X_0,X_1)$, and a quasi-equivalence of weight-graded $A_\infty$-categories $\scrL \rightarrow \scrM$, whose linear part is the inclusion. Moreover, the $A_\infty$-compositions $\mu_\scrL^d(x_d,\dots,x_1)$ agree with those on $\scrM$ as long as the sum of weights of $x_1,\dots,x_d$ is $\geq -1$ (which means that at most one has weight $-1$, and all others weight $0$).
}
\end{equation}
This is a direct application of the version of \eqref{eq:transfer} with added weight-gradings: the quotient $\scrM(X_0,X_1)/\scrL(X_0,X_1)$ is a complex of free weight-graded modules, and is acyclic, hence contractible. The homotopy $h$ that enters into the construction vanishes on the $\scrL$ subspaces, and hence in particular on parts of weight $\geq -1$; hence, all expressions $h \mu_{\scrM}^j$ in \eqref{eq:explicit-transfer} vanish if the sum of weights of the inputs is $\geq -1$. This implies the property stated in the last sentence of \eqref{eq:weight01}.

\begin{remark}
Since the quotient $\scrM(X_0,X_1)/\scrL(X_0,X_1)$ is a complex of free weight-graded $R[V]$-modules, it is contractible iff it is acyclic. Hence, in \eqref{eq:weight01} it is sufficient to know that $\scrL(X_0,X_1) \subset \scrM(X_0,X_1)$ is a quasi-isomorphism.
\end{remark}

Let's look at noncommutative linear systems in more concrete terms. 
\begin{equation} \label{eq:splitting}
\mybox{
One can write $\scrL(X_0,X_1) \iso (\scrA(X_0,X_1) \oplus \scrQ(X_0,X_1)[1]) \otimes R[V]$, where
\[
\begin{aligned} 
& \scrA(X_0,X_1) = \scrL(X_0,X_1)^{(0)}, \\
& \scrQ(X_0,X_1)[1] = \scrL(X_0,X_1)^{(-1)}/V \scrL(X_0,X_1)^{(0)}.
\end{aligned}
\]
The summands $\scrA(X_0,X_1)$ and $\scrQ(X_0,X_1)$ are free graded $R$-modules, which carry weight $0$ and $-1$, respectively. This splitting is not canonical: one can change it by any automorphism of the right hand side which is the identity plus an arbitrary $R$-module map $\alpha: \scrQ(X_0,X_1) \rightarrow \scrA(X_0,X_1) \otimes V$ of degree $-1$.
}
\end{equation}
In terms of \eqref{eq:splitting}, the compositions become maps
\begin{equation} \label{eq:nc-pencil-maps}
\bigotimes_{i=1}^d \big(\scrA(X_{i-1},X_i) \oplus \scrQ(X_{i-1},X_i)[1]\big) \stackrel{\mu_{\scrL}^d}{\longrightarrow}
\big( \scrA(X_0,X_d)[2-d] \oplus \scrQ(X_0,X_d)[3-d] \big) \otimes R[V].
\end{equation}
Homogeneity with respect to weights means that the number of $\scrQ$ factors in the source of \eqref{eq:nc-pencil-maps} matches that on the output plus the degree of polynomials in $R[V]$. The simplest piece of \eqref{eq:nc-pencil-maps} that can be treated in isolation is the weight $0$ part:
\begin{equation} \label{eq:ambient-space}
\mybox{
Let $\scrL$ be a noncommutative linear system. The subspaces $\scrA(X_0,X_1)$ from \eqref{eq:splitting} form an $A_\infty$-category, called the ``ambient space'' of the linear system. We also say that $\scrL$ is a noncommutative linear system ``on $\scrA$'', or that $\scrA$ ``carries'' the system.
}
\end{equation}
Next:
\begin{equation} \label{eq:dual-bundle}
\mybox{
The composition maps on the weight $(-1)$ part canonically induce maps
\[
\begin{aligned}
& \scrA(X_{r+s},X_{r+s+1}) \otimes \cdots \otimes \scrA(X_{r+1},X_{r+2}) \otimes \scrQ(X_r,X_{r+1}) \otimes \scrA(X_{r-1},X_r)
\\ & \quad \otimes \cdots \otimes \scrA(X_0,X_1) \longrightarrow \scrQ(X_0,X_d)[1-r-s],
\end{aligned}
\]
which give $\scrQ$ the structure of an $\scrA$-bimodule, called the ``dual bundle'' of the linear system.
}
\end{equation}
In \cite{seidel14b}, we asked for $\scrQ$ to be an invertible bimodule (with respect to tensor product). That condition has been omitted here, since the basic definitions do not depend on it. However, if one wanted to keep the meaning of the theory close to that of classical linear systems, the condition definitely needs to be re-imposed; compare Remark \ref{th:classical} below.
\begin{equation} \label{eq:dual-section}
\mybox{
Consider again the weight $(-1)$ part, but now extract the other component
\[
\begin{aligned}
& \sigma^{s;1;r}: \scrA(X_{r+s},X_{r+s+1}) \otimes \cdots \otimes \scrA(X_{r+1},X_{r+2}) \otimes \scrQ(X_r,X_{r+1}) \otimes \scrA(X_{r-1},X_r)
\\ & \quad \otimes \cdots \otimes \scrA(X_0,X_1) \longrightarrow \scrA(X_0,X_d) \otimes V[-r-s].
\end{aligned}
\]
This can be viewed as a linear family of $A_\infty$-bimodule maps $\sigma_w: \scrQ \rightarrow \Delta_{\scrA}$, parametrized by the dual space $W = \mathit{Hom}(V,R)$. These maps depend on the choice of splitting, but their cohomology classes in the dg category of bimodule maps are canonical. Indeed, changing the splittings by $\alpha$, as in \eqref{eq:splitting}, yields bimodule maps which differ from the original one by the boundary of $\alpha$. We call these bimodule maps the ``sections'' of the noncommutative linear system.
}
\end{equation}
\begin{equation} \label{eq:fibres}
\mybox{
Specializing a noncommutative linear system to a nonzero $w \in W$ (by tensoring with the corresponding simple $R[V]$-module, which destroys the weight grading) yields an $A_\infty$-category $\scrF_w$ with
\[
\scrF_w(X_0,X_1) = \scrA(X_0,X_1) \oplus \scrQ(X_0,X_1)[1],
\]
which contains $\scrA$ as an $A_\infty$-subcategory. We call these categories the ``fibres'' of the linear system. If we use the inclusion $\scrA \subset \scrF_w$ to consider the diagonal bimodule $\Delta_{\scrF_w}$ as an $\scrA$-bimodule, then as such, it sits in a short exact sequence
\[
0 \rightarrow \Delta_{\scrA} \longrightarrow \Delta_{\scrF_w} \longrightarrow \scrQ[1] \rightarrow 0,
\]
whose connecting homomorphism is the section associated to $w$.
}
\end{equation}
Of course, one can also specialize to $w = 0$, but that is not interesting, since one always gets the trivial extension $\scrA \oplus \scrQ[1]$. Multiplying $w$ by an element of $R^\times$ does not change the isomorphism type of $\scrF_w$; the two structures are related by rescaling the $\scrQ$ part of the morphism space. With that in mind, we will sometimes consider the $\scrF_w$ to be parametrized by the projective space $\mathbb{P}(W)$. 

We use the terminology ``noncommutative divisor'' and ``noncommutative pencil'' for the cases $V = R$ and $V = R^2$, with notation $\scrD$ and $\scrP$. Let's look briefly at the case of a noncommutative divisor. There, the only nontrivial fibre
\begin{equation}
\scrF = \scrF_1, 
\end{equation}
thought of as an $A_\infty$-category containing $\scrA$, has all the information on the noncommutative divisor. Indeed, this was the definition of noncommutative divisor used in \cite[Section 2f]{seidel14b} (and which, under a different name, already appears in \cite{seidel09}; for related notions, see \cite{kontsevich-vlassopoulos13, brav-dyckerhoff16}).

\begin{remark} \label{th:switch-q}
Suppose temporarily that $R$ is a field of characteristic zero. One can then view the classification theory of noncommutative linear systems, with given $\scrA$ and $\scrQ$, as a special instance of the general Maurer-Cartan deformation formalism \cite[Section 2f-2g]{seidel14b}. Using a general property of that formalism (invariance under $L_\infty$-quasi-isomorphisms) one sees that, given a quasi-isomorphism of bimodules $\scrQ \htp \tilde\scrQ$, there is an induced bijection between (equivalence classes, in a suitable sense, of) noncommutative linear systems with dual bundle $\scrQ$ or $\tilde{\scrQ}$. In fact, the same holds for general $R$, but requires one to inspect the argument more carefully, ensuring that one can get away with using only obstruction theory instead of the full Maurer-Cartan formalism.
\end{remark}

\begin{remark} \label{th:classical}
As the terminology indicates, there is a parallel with algebraic geometry, which we will make explicit now. Let $A$ be a scheme over $R$, and $Q$ a line bundle (a rank one locally free $\scrO_A$-module sheaf) on $A$. Taking $V$ and $W$ as before, suppose that we have a linear family of sections of the inverse $Q^{-1}$, parametrized by $W$, by which we mean an injective map
\begin{equation} \label{eq:section-1}
\sigma: W \longrightarrow \Gamma(A,Q^{-1}) = \mathit{Hom}_{\scrO_A}(Q,\scrO_A).
\end{equation}
In classical algebro-geometric language, this yields a linear system of hypersurfaces $F_w = \sigma_w^{-1}(0)$, parametrized by $w \in \mathbb{P}(W)$. The sheaf $\scrO_{F_w}$, pushed forward to $A$, admits the Koszul resolution
\begin{equation} 
\big\{ Q \stackrel{\sigma_w}{\longrightarrow} \scrO_A \big\} \iso \scrO_{F_w},
\end{equation}
whose left hand side is a sheaf of differential graded algebras (the exterior algebra over $Q$, negatively graded, and with differential given by $\sigma_w$). Let's look at the family of such resolutions,
\begin{equation} \label{eq:commutative-case}
L \stackrel{\mathrm{def}}{=} \big\{ Q \otimes R[V] \stackrel{\sigma}{\longrightarrow} \scrO_A \otimes R[V] \big\},
\end{equation}
where we think of $\sigma$ as an element of $\mathit{Hom}_{\scrO_A}(Q,\scrO_A) \otimes V$. The dga structure of \eqref{eq:commutative-case} is compatible with the weight grading, defined by giving $Q$ and $V$ weight $-1$, and $\scrO_A$ weight $0$ (this homogeneity property precisely expresses the linear dependence of $\sigma_w$ on $w$). In those terms, one recovers the original data as in \eqref{eq:splitting}:
\begin{equation}
\begin{aligned}
& \scrO_A = L^{(0)}, \\
& Q = L^{(-1)}/ V L^{(0)}.
\end{aligned}
\end{equation}
The analogy is incomplete in several respects; for one thing, in our definition of noncommutative linear system, there is no counterpart of the injectivity condition for \eqref{eq:section-1} (and it would not seem appropriate to impose one).
\end{remark}

The following slight generalization (which has no classical parallel in the vein of Remark \ref{th:classical}) occurs naturally in the symplectic context. Namely, we consider a finite-dimensional free $R$-module $V$ as before, but now assume that it comes with a (homological) grading, which is supposed to be even. Given the weight grading as before, we therefore have a bigraded symmetric algebra $R[V]$. A free bigraded $R[V]$-module is a bigraded module of the form $R[V] \otimes M$, where $M$ is a free bigraded $R$-module. Consider $A_\infty$-categories over $R[V]$ which are bigraded (by homological degree and by weight), and whose morphism spaces are free bigraded $R[V]$-modules. The corresponding version of \eqref{eq:nc-linear-system} is:
\begin{equation} \label{eq:graded-linear-system}
\mybox{
A graded noncommutative linear system is a bigraded $A_\infty$-category $\scrL$ over $R[V]$ such that each space $\scrL(X_0,X_1)$ is generated over $R[V]$ by elements of weights $0$ and $-1$.
}
\end{equation}
There is also an analogue of \eqref{eq:weight01}, replacing complexes of modules by bigraded $R[V]$-modules together with a differential (which preserves weight and increases the homological degree by $1$).

\begin{remark}
Morphism spaces in the $A_\infty$-categories under discussion are (bigraded) dg modules over $R[V]$, such that the underlying module is (bigraded) free. For a general such dg module $M$, the freeness condition may not guarantee that $M$ is homotopically projective in the dg category of such modules. However, in all our constructions, we are dealing only with dg modules whose positive weight parts $M^{(p)}$, $p>0$, are trivial. In that case, there is a natural bounded below increasing filtration $F_k M$, consisting of the submodules generated by elements of weight $\geq -k$; and each quotient is of the form
\begin{equation}
F_k M/F_{k-1}M \iso R[V] \otimes (\text{\it bigraded chain complex of free $R$-modules});
\end{equation}
this implies homotopical projectivity, by \cite[Section 3.1]{keller}. In particular, in the appropriate version of \eqref{eq:weight01}, it is again enough to assume that $\scrL(X_0,X_1) \subset \scrM(X_0,X_1)$ is a quasi-isomorphism.
\end{remark}

Much of our analysis from \eqref{eq:splitting}--\eqref{eq:dual-section} carries over: the subspaces $\scrA = \scrL^{(0)}$ (omitting the objects for brevity) form an $A_\infty$-category $\scrA$, with its homological grading; the quotients $\scrQ = \scrL^{(-1)}/\scrL^{(0)}$ form an $\scrA$-bimodule; and we get a bimodule map $\sigma: \scrQ \rightarrow \scrA \otimes V$. That bimodule map has homological degree zero, but since $V$ itself is graded, it should be viewed as a collection of bimodule maps $\scrQ \rightarrow \scrA$ of different (even) degrees. Suppose that $w \in W$ is a homogeneous element. Its evaluation $R[V] \rightarrow R$ becomes a graded homomorphism if one considers the homological grading minus $|w|$ times the weight grading. Correspondingly:
\begin{equation} \label{eq:graded-fibres}
\mybox{
Given a graded noncommutative linear system, the fibre at a homogeneous $w \in W$ is a $\bZ$-graded $A_\infty$-category, with
\[
\scrF_w(X_0,X_1) = \scrA(X_0,X_1) \oplus \scrQ(X_0,X_1)[1-|w|].
\]
The general fibres (at non-homogeneous points $w$) are $\bZ/2$-graded $A_\infty$-categories. As before, there is a straightforward isomorphism $\scrF_w \iso \scrF_{tw}$ for $t \in R^\times$. Moreover, let $\rho$ be the action of $R^\times$ on $W$ which has weight $-k$ in degree $k$. Then, if we take the fibre at $\scrF_w$ and rescale all its $A_\infty$-operations by $t$, the outcome is isomorphic to $\scrF_{\rho_t(w)}$ (to see that, one applies the automorphism of $\scrF_w$ which acts by $t^{k-1}$ in homological degree $k$). 
}
\end{equation}
Suppose for instance that $R$ is an algebraically closed field, and $V = R \oplus R$ where the two summands have different degrees. Then, each nontrivial fibre of the pencil has one of the following properties: it's isomorphic to the fibre at $(1,0)$; or to that at $(0,1)$; or to a rescaled version of that at $(1,1)$.

\subsection{Localisation\label{subsec:quotient}}
Take a (small) triangulated category $A$, whose morphism spaces we write as $A(X_0,X_1)$. Suppose that we are given a collection of morphisms $S$, meaning subsets $S(X_0,X_1) \subset A(X_0,X_1)$ for all $(X_0,X_1)$. (With our geometric applications in mind, we often use superscripts $+$ to label the objects involved as sources in $S$, and $-$ for the target objects; even though that notation has no particular meaning at this point.) Let $C_S$ be the full triangulated subcategory of $A$ generated by the cones of morphisms in $S$. We write the quotient by that subcategory as 
\begin{equation}  \label{eq:localisation}
S^{-1}A \stackrel{\text{def}}{=} A/C_S.
\end{equation}
It comes with an exact functor $A \rightarrow S^{-1}A$, the localisation functor, which turns elements of $S$ into isomorphisms. The universal property is:
\begin{equation} \label{eq:universal-property}
\mybox{
Suppose that $A \rightarrow B$ is an exact functor between triangulated categories, which turns elements of $S$ into isomorphisms. Then that functor factors through the localisation functor to $S^{-1}A$, in a way which is unique up to isomorphism of functors.
}
\end{equation}
(This is localisation in the world of triangulated categories, which does not necessarily agree with the notion of the same name from general category theory \cite{gabriel-zisman67}.) We recall a basic fact \cite[Lemma 4.8.1]{krause10}:
\begin{equation} \label{eq:projective-object}
\mybox{
Suppose that an object $X_0^+$ has the following property: composition with any element of $S(X_1,X_1^-)$, for arbitrary objects $(X_1,X_1^-)$, yields an isomorphism $A(X_0^+,X_1) \rightarrow A(X_0^+,X_1^-)$. Then, the localisation functor gives an isomorphism $A(X_0^+,X_1) \rightarrow S^{-1}A(X_0^+,X_1)$ for any $X_1$.
}
\end{equation}
One can call objects $X_0^+$ from \eqref{eq:projective-object} $S$-projective. An $S$-projective resolution of $X_0$ would be an $S$-projective object $X_0^+$ together with a morphism in $S(X_0^+,X_0)$. Such resolutions can be used to compute morphisms in the localized category. Here is one straightforward application:
\begin{equation} \label{eq:quotient-hom}
\mybox{
Let $(X_0,X_1)$ be objects, such that for any $X_0^+$ and any element of $S(X_0^+,X_0)$, composition with that element is an isomorphism $A(X_0,X_1) \rightarrow A(X_0^+,X_1)$. Assume moreover that $X_0$ admits an $S$-projective resolution. Then, the localisation functor gives an isomorphism $A(X_0,X_1) \iso S^{-1}A(X_0,X_1)$.
}
\end{equation}
In our applications, there are no $S$-projective resolutions, but we will be able to satisfy a weaker condition:
\begin{equation} \label{eq:more-projective}
\mybox{
We say that an object $X_0$ of $A$ admits approximately $S$-projective resolutions if the following is satisfied. Given finitely many $s_i \in S(X_i,X_i^-)$ ($i = 1,\dots,m$), there exists an object $X_0^+$ and a morphism in $S(X_0^+,X_0)$, such that composition with each $s_i$ is an isomorphism $A(X_0^+,X_i) \rightarrow A(X_0^+,X_i^-)$.
}
\end{equation}
This comes with a generalization of \eqref{eq:quotient-hom}:
\begin{equation} \label{eq:quotient-hom-1b}
\mybox{
Let $(X_0,X_1)$ be objects of $A$, such that for any $X_0^+$ and any element of $S(X_0^+,X_0)$, composition with that element is an isomorphism $A(X_0,X_1) \rightarrow A(X_0^+,X_1)$. Assume moreover that $X_0$ admits approximately $S$-projective resolutions. Then, the localisation functor gives an isomorphism $A(X_0,X_1) \rightarrow S^{-1}A(X_0,X_1)$.
}
\end{equation}
The reason for this is fairly straightforward. By definition, any morphism in $S^{-1}A$ comes from a finite quotient, by which we mean the localisation with respect to a finite subset $S_{\mathit{fin}} \subset S$. Similarly, equality of two such morphisms holds iff it holds in some finite quotient. More formally, we have an isomorphism
\begin{equation}
\underrightarrow{\lim}\, S_{\mathit{fin}}^{-1}A \iso S^{-1}A.
\end{equation}
Take a morphism $q \in S^{-1}A(X_0,X_1)$; find a preimage $q_{\mathit{fin}} \in S_{\mathit{fin}}^{-1}A(X_0,X_1)$ for some $S_{\mathit{fin}} = \{s_1,\dots,s_m\}$, and choose $X_0^+$ accordingly as in \eqref{eq:more-projective}. Then, \eqref{eq:projective-object} says that $A(X_0^+,X_1) \rightarrow S_{\mathit{fin}}^{-1}A(X_0^+,X_1)$ is an isomorphism. Hence, if we compose $q_{\mathit{fin}}$ with the image of $s \in S(X_0^+,X_0)$ under the localisation functor to $S_{\mathit{fin}}^{-1}A$, the outcome is the image of an element of $A(X_0^+,X_1)$; by assumption on $X_1$, that element can itself be written as $as$, for some $a \in A(X_0,X_1)$. After passing to $S^{-1}A$ and inverting $s$, it follows that $a$ itself maps to $q$ under the localisation functor, proving surjectivity of the map $A(X_0,X_1) \rightarrow S^{-1}A(X_0,X_1)$.
The argument for injectivity is similar.

The localisation construction has a chain level version, as follows. For an $A_\infty$-category $\scrA$, let $\scrA^{\tw}$ be its formal enlargement by twisted complexes. Given a set of closed morphisms $S$ in $\scrA$ (meaning morphisms satisfying $\mu^1_\scrA(s) = 0$), we consider the full $A_\infty$-subcategory $\scrC_S \subset \scrA^{\tw}$ formed by their mapping cones. Take the $A_\infty$-quotient $\scrA^{\tw}/\scrC_S$, following \cite{lyubashenko-ovsienko06} (which generalized the dg case from \cite{drinfeld02}). We define
\begin{equation} \label{eq:localisation-2}
S^{-1}\scrA \subset \scrA^{\tw}/\scrC_S
\end{equation}
to be the full subcategory whose objects lie in the image of the functor $\scrA \hookrightarrow \scrA^{\tw} \rightarrow \scrA^{\tw}/\scrC_S$. Concretely, morphism spaces in $S^{-1}\scrA$ are of the form
\begin{equation} \label{eq:localised-morphisms}
\begin{aligned}
S^{-1}\scrA(X_0,X_1)  = \scrA(X_0,X_1) \, & \oplus \bigoplus_C \scrA^{\tw}(C,X_1)[1] \otimes \scrA^{\tw}(X_0,C) \\
& \oplus \bigoplus_{C_0,C_1} \scrA^{\tw}(C_1,X_1)[1] \otimes \scrA^{\tw}(C_0,C_1)[1] \otimes \scrA^{\tw}(X_0,C_0) \\
& \oplus \cdots
\end{aligned}
\end{equation}
where the sums are over cones $C,C_0,C_1,\dots$ in $\scrC_S$. The composition maps are inherited from those in $\scrA^{\tw}$ (applied to parts of the expressions in the tensor products), and the $A_\infty$-localisation functor is just the inclusion of the first summand in \eqref{eq:localised-morphisms}. The cohomology level category $H^0(\scrA^{\tw}/\scrC_S)$ is the localisation of the triangulated category $H^0(\scrA^{\tw})$ by $C_S$, in the previously considered sense. Hence, if $\scrA$ was already triangulated (closed under forming cones), $H^0(S^{-1}\scrA)$ is the localisation $S^{-1} H^0(\scrA)$. Indeed, in that case, we could have used mapping cones in $\scrA$ itself rather than in $\scrA^{\tw}$, making the definition \eqref{eq:localisation-2} formally analogous to \eqref{eq:localisation}. The universal property of localisation in the $A_\infty$-context is:
\begin{equation} \label{eq:dg-quotient-property}
\mybox{
Let $\scrB$ be an $A_\infty$-category, and $\mathit{fun}(\scrA,\scrB)$ the $A_\infty$-category of $A_\infty$-functors. Composition with the localisation functor $\scrA \rightarrow S^{-1}\scrA$ induces a cohomologically full and faithful $A_\infty$-functor $\mathit{fun}(S^{-1}\scrA,\scrB) \rightarrow \mathit{fun}(\scrA,\scrB)$, whose essential image are the functors that map elements of $S$ to isomorphisms on the cohomology level. 
}
\end{equation}

\begin{remark}
To spell out the effect of this construction, consider a single morphism $s \in \scrA(X_1,X_0)$ belonging to $S$, with cone $C$. This comes with canonical morphisms
\begin{equation}
i \in \scrA^{\tw}(X_0,C)^0, \quad p \in \scrA^{\tw}(C,X_1)^1, \quad
\mu^1_{\scrA^{\tw}}(i) = 0, \; \mu^1_{\scrA^{\tw}}(p) = 0, \; \mu^2_{\scrA^{\tw}}(p,i) = 0.
\end{equation}
As a consequence,
\begin{equation}
p \otimes i \in \scrA^{\tw}(C,X_1)[1] \otimes \scrA^{\tw}(X_0,C) \subset
S^{-1}\scrA(X_0,X_1)
\end{equation}
is a degree $0$ cocycle in $S^{-1}\scrA$. On cohomology, this is the inverse of the morphism $s$ we started with (by a computation which we will not reproduce here).
\end{remark}

Our discussion of approximately $S$-projective resolutions carries over to the $A_\infty$-context: by first looking at $S^{-1}\scrA^{\mathit{tw}}$, one reduces it to the previously considered triangulated category version.


\section{Basic notions: geometry\label{sec:geometry}}
This section is a brief review of the geometric objects that appear in Lagrangian Floer theory. It also introduces notation used in the rest of the paper. We exclude more advanced topics (such as transversality or Gromov compactness), which will be discussed when the need arises.

\subsection{Riemann surfaces}
Throughout, all Riemann surfaces will be assumed to be connected. The class of surfaces relevant for our (open string) theory is this:
\begin{equation} \label{eq:riemann}
\mybox{
Take a compact Riemann surface with boundary $S^\bullet$, and remove a finite set $\Sigma_S$ of boundary points, arbitrarily divided into two parts $\Sigma_S^{\pm}$. The resulting $S = S^\bullet \setminus \Sigma_S$ will be called a punctured-boundary Riemann surface.
}
\end{equation}
A simple example is the infinite strip $Z = \bR \times [0,1]$, with $\Sigma_Z^{\pm} =\{\pm\infty\}$, whose compactification $Z^\bullet$ is isomorphic to a closed disc. We write $(s,t)$ for the standard coordinates on $Z$.
\begin{equation}
\mybox{
A set of ends for a boundary-punctured Riemann surface $S$ consists of proper holomorphic embeddings of the half-infinite strips $Z^{\pm} = \bR^{\pm} \times [0,1] = \{\pm s \geq 0\} \times [0,1] \subset Z$:
\[
\epsilon_\zeta: 
Z^\pm \longrightarrow S \;\; \text{ for $\zeta \in \Sigma_S^\pm$, with }
\epsilon_{\zeta}^{-1}(\partial S) = \{t  = 0,1\}.
\]
Each embedding is asymptotic to the respective point of $\Sigma_S$, and their images must be pairwise disjoint. 
}
\end{equation}
From the viewpoint of $S^\bullet$, the ends are obtained by taking a local complex coordinate $z$ ($|z| \leq 1$, $\mp \mathrm{im}(z) \geq 0$) near a point of $\Sigma_S^{\pm}$, and then setting $s+it = \mp \log(z)/\pi$.
\begin{equation} \label{eq:bar-compactification}
\mybox{
Take a punctured-boundary Riemann surfaces $S$, and add intervals $\{\zeta\} \times [0,1]$, $\zeta \in \Sigma_S$, so as to obtain a smooth (actually real analytic) surface with corners $|S$, which one can think of as the real oriented blowup of $S^\bullet$ at the set of boundary points $\Sigma_S$. More concretely, a choice of ends yields coordinates on $|S$ near the intervals at infinity, 
\[
\left\{
\begin{aligned}
& \sigma = e^{\mp \pi s} = |z| \geq 0,\\ &  t = \mp \mathrm{arg}(z)/\pi \in [0,1]. 
\end{aligned}
\right.
\]
}
\end{equation}
The smooth structure of $|S$, as well as the parametrization of the intervals at infinity, are canonical. To check that, note that a holomorphic coordinate change $\tilde{z} = z e^{\phi(z)}$ (near the origin in the closed half-plane, with $\phi$ a real analytic function) induces a smooth (and real analytic) coordinate change 
\begin{equation} \label{eq:coordinate-change}
\left\{\begin{aligned}
& \tilde{\sigma} = \sigma \exp(\mathrm{re}(\phi(\sigma e^{\pi it}))), \\ 
& \tilde{t} = t + \mathrm{im}(\phi(\sigma e^{\pi it}))/\pi,
\end{aligned}
\right.
\end{equation}
which reduces to the identity for $\sigma = \tilde{\sigma} = 0$. The signs in \eqref{eq:coordinate-change} have been chosen for a point of $\Sigma_S^-$, but the other case is of course parallel.

Next, let's look at the standard process of gluing together surfaces.
\begin{equation}
\label{eq:gluing}
\mybox{
Suppose that we have surfaces $\bfS_v$ ($v = 1,2$) with ends. Choose points at infinity $\bfzeta_1 \in \Sigma^+_{\bfS_1}$, $\bfzeta_2 \in \Sigma^-_{\bfS_2}$, and denote the coordinates on those ends by $(s_v,t)$. Given a gluing parameter $\gamma \in (0,1)$, or equivalently a gluing length $l = -\log(\gamma)/\pi \in (0,\infty)$, one constructs $S_{\gamma}$ by removing part of the ends, and identifying the rest:
\[
\begin{aligned}
& S_{\gamma} = \big(\bfS_1 \setminus \epsilon_{\bfzeta_1}(\{s_1 > l\})\big) \cup_{\sim} \big(\bfS_2 \setminus \epsilon_{\bfzeta_2}(\{s_2 < -l\})\big), \\
& \qquad \qquad \qquad \text{where  } \epsilon_{\bfzeta_1}(s_1,t) \sim \epsilon_{\bfzeta_2}(s_2,t), \;\; s_2 = s_1-l.
\end{aligned}
\]
}
\end{equation}
The glued surfaces fit into a family $\scrS \rightarrow (0,1)$ parametrized by $\gamma$, and one can partly extend that to $\bar\scrS \rightarrow [0,1)$ by adding the disjoint union of $\bfS_1$ and $\bfS_2$ as a fibre over $\gamma = 0$. Furthermore:
\begin{equation} \label{eq:gluing-2}
\mybox{
There is a fibrewise compactification 
\[
\cornerbar{\scrS} \rightarrow [0,1)
\]
of $\bar{\scrS}$, where the fibre over $\gamma \neq 0$ is the compactification $|S_{\gamma}$, and the fibre over $\gamma = 0$ is the union of $|\bfS_1$ and $|\bfS_2$, with the intervals at infinity $\{\bfzeta_k\} \times [0,1]$ identified. The total space $\cornerbar{\scrS}$ is a smooth three-manifold with corners, and as such, it is independent of the choice of ends (but the fibrewise complex structure depends on that choice). In the same notation as in \eqref{eq:gluing}, local coordinates near the corner locus where the gluing takes place are
\[
\left\{
\begin{aligned}
& \sigma_1 = e^{-\pi s_1} \geq 0, \\ 
& \sigma_2 = e^{\pi s_2} \geq 0, \\
& t \in [0,1],
\end{aligned}
\right.
\]
and in those coordinates, $\gamma = \sigma_1\sigma_2$. 
}
\end{equation}
Let $f$ be a smooth function on $\cornerbar{\scrS}$ which vanishes on the fibre at $0$. This can be written as $f(\sigma_1,\sigma_2,t) = \sigma_1\sigma_2 g(\sigma_1,\sigma_2,t)$ for some smooth function $g$. If we restrict to a nonzero fibre, and use the more symmetric variable $s = s_1-l/2 = s_2+l/2$, we get
\begin{equation} \label{eq:length-decay}
f(s,t) = e^{-\pi l} g(e^{-\pi s- \pi l/2}, e^{\pi s- \pi l/2}, t).
\end{equation}
In particular, on any fixed bounded subset of the ``necks'' $(s,t) \in [-l/2, l/2] \times [0,1]$, we have $f \rightarrow 0$ exponentially as $l \rightarrow \infty$.

\subsection{One-forms}
Let $S$ be any Riemann surface with boundary. 
\begin{equation} 
\label{eq:beta0}
\mybox{
We consider real one-forms $\beta_S \in \Omega^1(S)$ such that $\beta_S|\partial S = 0 \in \Omega^1(\partial S)$, and $d\beta_S$ is compactly supported.
%
}
\end{equation}
In our applications, the following more specific class of one-forms appears.
\begin{equation}
\label{eq:beta1}
\mybox{
Let $S$ be a punctured-boundary Riemann surface. A one-form $\beta_S$ as in \eqref{eq:beta0} is called asymptotically translation-invariant if it extends smoothly to $|S$, and its restriction to the intervals at infinity $\{\zeta\} \times [0,1]$ is $w_\zeta \mathit{dt}$, for some constants $w_\zeta$. In the coordinates given by a choice of ends, this means that for $\pm s \gg 0$,
\[
\epsilon_\zeta^*\beta_S = w_{\zeta} \mathit{dt} + d (\sigma g_\zeta(\sigma,t)) 
= w_\zeta \mathit{dt} + d(e^{\mp \pi s} g_\zeta(e^{\mp \pi s}, t)),
\]
for $\zeta \in \Sigma_S^{\pm}$, where $g_\zeta(\sigma,t)$ is a smooth function that vanishes for $t = 0,1$. Hence, in the coordinates $(s,t)$ on the end, we have exponential decay towards the limit $w_\zeta \mathit{dt}$, which explains the terminology.
}
\end{equation}
There is also a more restrictive version of that property, which depends on the ends:
\begin{equation} \label{eq:restricted-beta}
\mybox{
A one-form \eqref{eq:beta0} is called compatible with a choice of ends if its restrictions to those ends equals $w_\zeta \mathit{dt}$.
}
\end{equation}

\subsection{Discs}
The Riemann surfaces which will be most relevant for us are these:
\begin{equation} \label{eq:punctured-disc}
\mybox{
A boundary-punctured disc is a Riemann surface as in \eqref{eq:riemann}, where $S^\bullet$ is isomorphic to a closed disc, and $\Sigma^-_S = \{\zeta_0\}$ consists of a single point. We number the remaining points $\Sigma^+_S = \{\zeta_1,\dots,\zeta_d\}$ in accordance with the boundary orientation, and use the same numbering for the ends. The components of $\partial S$ are numbered as $I_0,\dots,I_d$, starting with the component between $\zeta_0$ and $\zeta_1$, and continuing in the direction of the boundary orientation.
}
\end{equation}
In this context, we find it convenient to restrict the choices of ends somewhat:
\begin{equation} \label{eq:rational-ends}
\mybox{
Let $S$ be a boundary-punctured disc. A set of ends $(\epsilon_0,\dots,\epsilon_d)$ is called rational if the following holds: $\epsilon_0$ extends to an isomorphism $Z^\bullet \rightarrow S^\bullet$, necessarily taking $-\infty$ to $\zeta_0$; and for $k>0$, $\epsilon_k$ again extends to an isomorphism $Z^\bullet \rightarrow S^\bullet$, which besides taking $+\infty$ to $\zeta_k$, also takes $-\infty$ to $\zeta_0$.
}
\end{equation}
In tune with the classical ``little intervals'' operad, one can think of the situation as follows: 
\begin{equation} \label{eq:upper-half-plane}
\mybox{
Let $H = \{\mathrm{im}(z) \geq 0\}$ be the closed upper half-plane, and $H^\bullet = H \cup \{i\infty\}$ its compactification. Given a boundary-punctured disc with rational ends, one can identify $S^\bullet \iso H^\bullet$ in a unique way, so that $\zeta_0$ corresponds to $i\infty$, and the end $\epsilon_0$ covers the region $\{|z| \geq 1\}$. Then, the other $\zeta_k$ ($k>0$) become ordered points on the real line; and the images of the $\epsilon_k$ are semidiscs centered at the $\zeta_k$, with some radii $\rho_k$.
}
\end{equation}
The gluing process \eqref{eq:gluing} for discs with rational ends can be reformulated accordingly:
\begin{equation} \label{eq:gluing-again}
\mybox{
Let $\bfS_v$ ($v = 1,2$) be discs with $(\bfd_v+1)$ boundary punctures, and rational ends. Identify $\bfS_v^\bullet \iso H^\bullet$ as in \eqref{eq:upper-half-plane}, so that the points of $\Sigma^+_{\bfS_v}$ are $\bfzeta_{v,k} \in \partial H = \bR$. Choose some $1 \leq i \leq \bfd_1$, and a gluing parameter $\gamma$. Let $\bfrho_{1,i}$ be the radius of the semicircle around $\bfzeta_{1,i}$ determined by its strip-like end. Then, the glued surface corresponds to the following collection of $d = (\bfd_1 + \bfd_2- 1)$ points on the real line:
\[
\zeta_k = \begin{cases} 
\bfzeta_{1,k} 
& k<i, \\ 
\bfzeta_{1,i} + \bfrho_{1,i}\, \gamma\, \bfzeta_{2,k-i+1} 
& i \leq k \leq i +\bfd_2 - 1, \\
\bfzeta_{1,k-\bfd_2 +1} 
& k \geq i+\bfd_2.
\end{cases}
\]
}
\end{equation}

\subsection{Cauchy-Riemann equations}
The basic form is this:
\begin{equation} \label{eq:cr}
\mybox{
Let $S$ be a Riemann surface with boundary (whose complex structure we denote by $j$), equipped with a one-form \eqref{eq:beta0}. Take a symplectic manifold $M$ with a Hamiltonian vector field $X$. For each connected component $I \subset \partial S$, let a Lagrangian submanifold $L_I \subset M$ be given. Moreover, we want to have a family $J_S = (J_{S,z})_{z \in S}$ of compatible almost complex structures on $M$. Then, we consider maps
\[
\left\{
\begin{aligned}
& u: S \longrightarrow M, \\
& (Du - X \otimes \beta_S)^{0,1} = \half (Du + J_{S,z} \circ Du \circ j - X \otimes \beta_S - J_{S,z} X \otimes \beta_S \circ j) = 0,  
\\
& u(z) \in L_I \;\; \text{ for $z \in I \subset \partial S$.}
\end{aligned}
\right.
\]
}
\end{equation}
The classical Gromov trick allows one to get rid of the inhomogeneous term:
\begin{equation} \label{eq:graph}
\mybox{
Set $u^*(z) = (z,u(z))$. Then \eqref{eq:cr} is equivalent to
\[
\left\{
\begin{aligned}
& u^*: S \longrightarrow S \times M \text{ a section}, \\
& (Du^*)^{0,1} = \half (Du^* + J_S^* \circ Du^* \circ j) = 0, \\ 
& u^*(z) \in L_I^* \;\; \text{ for $z \in I \subset \partial S$.}
\end{aligned}
\right.
\]
Here, $J_S^*$ is the unique almost complex structure on $S \times M$ which makes projection to $S$ holomorphic, restricts to $J_z$ on the fibres $\{z\} \times M$, and satisfies 
\[
J_S^*(\xi) = j\xi + X\, \beta_S(j\xi) - J_{S,z} X\, \beta_S(\xi) \;\; \text{ for } \xi \in TS;
\]
and $L_I^* = I \times L_I$.
}
\end{equation}
In the special case where $\beta_S$ is exact, one can achieve a similar effect in a different way, without raising the dimension. Namely, let $(\phi^t)$ be the flow of $X$. 
\begin{equation} \label{eq:gauge}
\mybox{Suppose that $\beta_S = db_S$, and write $u(z) = \phi^{b_S(z)}(u^\dag(z))$. Then \eqref{eq:cr} is equivalent to
\[
\left\{
\begin{aligned}
& u^\dag: S \longrightarrow M, \\
& (Du^\dag)^{0,1} = \half(Du^\dag + J_{S,z}^\dag \circ Du^\dag \circ j) = 0, \\
& u^\dag(z) \in L_I^\dag \;\; \text{ for } z \in I \subset \partial S,
\end{aligned}
\right.
\]
where $J^\dag_{S,z} = (\phi^{b_S(z)})^*J_{S,z}$ and $L_I^\dag = (\phi^{b_S|I})^{-1}(L_I)$ (by assumption, $b_S$ is constant on each boundary component).
}
\end{equation}

\subsection{Limits of solutions}
The translation-invariant special case of \eqref{eq:cr}, with $S = Z$ and $\beta_S = w \mathit{dt}$, is Floer's equation: 
\begin{equation} \label{eq:floer}
\left\{
\begin{aligned}
& u: Z \longrightarrow M, \\
& u(s,0) \in L_0, \;\; u(s,1) \in L_1, \\
& \partial_s u + J_t (\partial_t u - wX) = 0. \\
\end{aligned}
\right.
\end{equation}
Translation-invariant solutions are chords of ``length'' $w$:
\begin{equation} \label{eq:chord}
\left\{
\begin{aligned}
& x: [0,1] \longrightarrow M, \\
& x(0) \in L_0, \, x(1) \in L_1, \\
& x'(t) = wX,
\end{aligned}
\right.
\end{equation}
which correspond to points $x(1) \in \phi^w(L_0) \cap L_1$. It therefore makes sense to impose convergence conditions on general solutions of \eqref{eq:floer}, of the form
\begin{equation} \label{eq:floer-limits}
\textstyle \lim_{s \rightarrow -\infty} u(s,\cdot) = x_0, \;\; \lim_{s \rightarrow +\infty} u(s,\cdot) = x_1,
\end{equation}
for $x_0$, $x_1$ as in \eqref{eq:chord}. If the chords are nondegenerate, which means that the intersection $\phi^w(L_0) \cap L_1$ is transverse, convergence in \eqref{eq:floer-limits} is exponentially fast (with rate $e^{\pm \lambda s}$, where $\lambda>0$ depends on the specific chord).

We now generalize from the strip to other geometric situations:
\begin{equation} \label{eq:asymptotically}
\mybox{
Let $S$ be a boundary-punctured Riemann suface. A family of almost complex structures $(J_{S,z})_{z \in S}$ is called asymptotically translation-invariant if it extends smoothly to $|S$. We write $(J_{\zeta,t})_{t \in [0,1]}$ for its values on the intervals at infinity $\{\zeta\} \times [0,1]$.
}
\end{equation}
If we choose ends, this means that there is exponential convergence
\begin{equation} \label{eq:j-decays}
J_{S,\epsilon_\zeta(s,t)} = J_{\zeta,t} + e^{\mp \pi s} O_{\zeta,e^{\mp s}, t},
\end{equation}
where the error term $O_{\zeta,\sigma,t}$ is smooth at all $(\sigma,t) \in [0,1]^2$. This is more flexible than the classical requirement, which would ask for strict compatibility with the ends,
\begin{equation} \label{eq:j-strict}
J_{S,\epsilon_\zeta(s,t)} = J_{\zeta,t}.
\end{equation}
Take a family of almost complex structures as in \eqref{eq:j-decays}, and a one-form with the corresponding property \eqref{eq:beta1}. Then, the resulting equation \eqref{eq:cr} is asymptotic to an equation \eqref{eq:floer} on each end. Hence, it makes sense to consider solutions whose limits 
\begin{equation} \label{eq:zeta-limits}
\textstyle
\lim_{s \rightarrow \pm\infty} u(\epsilon_\zeta(s,\cdot)) = x_\zeta \;\; \text{ for } \zeta \in \Sigma_S^{\pm}
\end{equation}
are chords $x_\zeta$ of length $w_\zeta$, between the Lagrangian submanifolds $(L_{\zeta,0}, L_{\zeta,1})$ associated to points $\epsilon_\zeta(s,0), \epsilon_\zeta(s,1) \in \partial S$. Even though we have used ends in \eqref{eq:zeta-limits}, one could avoid that by reformulating the condition as follows: $u$ extends continuously to $|S$, with values $x_\zeta$ on the intervals at infinity (note that even if the chords are nondegenerate, this extension is not usually differentiable with respect to the manifold structure of $|S$).

Let's consider in particular boundary-punctured discs, and introduce some convenient notation for that case. Take such a disc $S$, equipped with Lagrangian submanifolds $L_0,\dots,L_d$, corresponding to the components of $\partial S$ in the numbering from \eqref{eq:punctured-disc}. Then, the boundary conditions in \eqref{eq:cr} can be written as
$u(I_k) \subset L_k$;
and if $w_0,\dots,w_d$ are the constants governing the behaviour  of $\beta_S$ at infinity, \eqref{eq:zeta-limits} becomes
\begin{equation} \label{eq:disc-limits}
\textstyle
\left\{\begin{aligned}
& \textstyle \lim_{s \rightarrow -\infty} u(\epsilon_0(s,\cdot)) = x_0 \;\; \text{ a chord of length $w_0$ between $(L_0,L_d)$,} \\
& \textstyle \lim_{s \rightarrow +\infty} u(\epsilon_k(s,\cdot)) = x_k \;\; \text{a chord of length $w_k$ between $(L_{k-1},L_k)$, for $k>0$.}
\end{aligned}
\right.
\end{equation}

\subsection{Energy\label{subsec:energy}}
For a solution of \eqref{eq:cr}, one defines the geometric energy as
\begin{equation} \label{eq:e-geom}
E^{\mathit{geom}}(u) = \int_S \half \| Du - X \otimes \beta_S \|^2 = \int_S u^*\omega_M - d(u^*H) \wedge \beta_S,
\end{equation}
where the norm is with respect to the conformal structure on $S$ and the metrics $\omega_M(\cdot,J_{S,z}\cdot)$ on $M$; and $H$ is the Hamiltonian giving rise to $X$. The topological energy is
\begin{equation} \label{eq:e-e}
E^{\mathit{top}}(u) =  \int_S u^*\omega_M - d(u^*H \, \beta_S) = E^{\mathit{geom}}(u) - \int_S u^*H\, d\beta_S.
\end{equation}
Strictly speaking, in this level of generality, the integrals may not converge. Hence, let's restrict to the case of a punctured-boundary Riemann surface, where the one-form and almost complex structures are asymptotically translation-invariant, and consider solutions with nondegenerate limits \eqref{eq:zeta-limits}. Then, the energies are well-defined; and the topological energy remains constant under deformations of $u$ (indeed, it is a topological invariant of a suitable class of maps, without imposing the Cauchy-Riemann equation).
\begin{equation} \label{eq:sub-closed0}
\mybox{
A one-form \eqref{eq:beta0} is called sub-closed if $d\beta_S \leq 0$, with respect to the orientation of $S$.
}
\end{equation}
This condition allows one to estimate the last term in \eqref{eq:e-e}, as follows:
\begin{equation} \label{eq:energy-control}
\mybox{
Consider a solution of \eqref{eq:cr}, \eqref{eq:zeta-limits}. Suppose that $\beta_S$ is sub-closed, and $H(x) \geq C$ everywhere. Then
\[
E^{\mathit{geom}}(u) \leq E^{\mathit{top}}(u) + C \int_S d\beta_S.
\]
}
\end{equation}

Finally, suppose that we are in the following more special geometric situation:
\begin{equation} \label{eq:exact-symplectic}
\mybox{
The symplectic form is exact, $\omega_M = d\theta_M$; and all Lagrangian submanifolds under consideration are exact with respect to that primitive, $\theta_M|L = dK_L$. 
}
\end{equation}
Then, the action of a chord \eqref{eq:chord} is defined as
\begin{equation} \label{eq:action}
A(x) = \int_{[0,1]} x^*(-\theta_M + w H\mathit{dt}) + K_{L_1}(x(1)) - K_{L_0}(x(0)).
\end{equation}
By applying Stokes, one gets
\begin{equation} \label{eq:energy-and-action}
E^{\mathit{top}}(u) = \sum_{\zeta \in \Sigma_S^{\pm}} \mp A(x_\zeta) .
\end{equation}

\subsection{Moving boundary conditions}
This is a mild generalization of our previous setup:
\begin{equation} \label{eq:follows}
\mybox{
We have a family $(L_z)_{z \in \partial S}$ of Lagrangian submanifolds, which is a Hamiltonian isotopy and constant outside a compact subset of $\partial S$. To express this more formally, write $G = \bigcup_z \{z\} \times L_z \subset \partial S \times M$. Then there is a $\chi_G \in \Omega^1(G)$ vanishing on each fibre $L_z$, and which is also zero outside the preimage of a compact subset of $\partial S$, such that
\[
d\chi_G = \omega_M|G.
\]
Concretely, given $\xi \in T(\partial S)_z$, we can evaluate $\chi_G$ at any lift of $\xi$; the outcome, denoted simply by $\chi_G(\xi)$, is a function on $L_z$ describing the infinitesimal Lagrangian deformation $\partial_\xi L_z$.
}
\end{equation}
In \eqref{eq:cr}, we then change the boundary condition to
\begin{equation}
u(z) \in L_z \;\; \text{ for $z \in \partial S$.}
\end{equation}
This affects the equation only on a compact subset of $S$. Hence, it makes sense to impose the same limiting behaviour \eqref{eq:zeta-limits} as before. To our previous definition of the topological energy of solutions, we add an extra term
\begin{equation} \label{eq:corrected-topological-energy}
E^{\mathit{top}}(u) = \int_S u^*\omega_M - d(u^*H \beta_S) \;- \int_{\partial S} u^*\chi_G 
= E^{\mathit{geom}}(u) - \int_S u^*H \, d\beta_S \; - \int_{\partial S} u^*\chi_G.
\end{equation}
Suppose that we are in the exact symplectic context \eqref{eq:exact-symplectic}. Then $\theta_M|G - \chi_G \in \Omega^1(G)$ is a one-form which is globally closed, and exact on each fibre. Hence, possibly after adjusting our choice of $\chi_G$, we can write
\begin{equation} \label{eq:k-gap}
\theta_M|G - \chi_G = dK_G,
\end{equation}
where $K_G \in \smooth(G, \bR)$ is constant in $z \in \partial S$ outside a compact subset. Bearing in mind that $\beta_S|\partial S = 0$, this means that
\begin{equation} \label{eq:modified-energy-2}
E^{\mathit{top}}(u) = \int_S d(u^*\theta_M - u^*H \beta_S) - \int_{\partial S} (u^*\theta_M - u^*H \beta_S) + \int_{\partial S} d(u^*K_G),
\end{equation}
which one can integrate out, leading to the same formula \eqref{eq:energy-and-action} as before, provided that the values of $K_G$ close to each $\zeta$ are used to define the action $A(x_\zeta)$.

\subsection{The complex plane as target space\label{subsec:maps-to-c}}
Let's look at a simple special case of \eqref{eq:cr}, where: the symplectic manifold is the complex plane; the Hamiltonian vector field is infinitesimal rotation; and the Lagrangians are radial lines (in fact half-lines):
\begin{equation} \label{eq:cr-c}
\left\{
\begin{aligned}
& v: S \longrightarrow \bC, \\
& \bar\partial v - 2\pi i v \beta_S^{0,1} = 0, \\
& v(I) \subset e^{2\pi i \alpha_I} \bR^+. \\
\end{aligned}
\right.
\end{equation}
This is a linear $\bar\partial$-equation with totally real boundary conditions, and the standard elliptic theory applies to it. In particular, there are local nonvanishing solutions around any point of $S$; and if $v$ is such a solution, any other local solution can be written as $fv$, where $f$ is a holomorphic function with real nonnegative boundary values. In particular:
\begin{equation} \label{eq:taylor}
\mybox{
Let $v$ be a solution of \eqref{eq:cr-c} which is not identically zero. Then $v^{-1}(0)$ is discrete. Take an interior point of $v^{-1}(0)$, and choose a local complex coordinate $\xi$ centered at that point. Then, the vanishing multiplicity of $v$ is finite, meaning that there is a positive integer $\mu$ such that
\[
v(\xi) = \xi^\mu (c + O(|\xi|)), \;\; \text{ for } c \in \bC^*.
\]
For a boundary point, we similarly have, in a local coordinate with $\mathrm{im}(\xi) \geq 0$,
\[
v(\xi) = \xi^{2\mu} (c + O(|\xi|)), \;\; \text{ for } c \in e^{2\pi i \alpha_I} \bR^{>0}.
\]
We write $\mu = \mu_z(v)$, and extend that to all points of $S$ by setting $\mu_z(v) = 0$ if $v(z) \neq 0$.
}
\end{equation}
The fact that the boundary conditions are half-lines rather than whole lines is relevant here, since it causes the Taylor  exponent in the boundary case to be even (otherwise, boundary multiplicities would be half-integers). More geometrically, the boundary multiplicities can be thought of as follows. Choose a path $\gamma: [0,1] \rightarrow S \setminus \{z\}$ which: remains in a small punctured neighbourhood of $z$; starts at a boundary point to the right of $z$; and ends at a boundary point to the left of $z$. Then $v(\gamma)/|v(\gamma)|$ is a loop in $S^1$, whose winding number is $\mu_z(v)$.

For punctured-boundary Riemann surfaces with one-forms \eqref{eq:beta1}, one can also look at the behaviour near the points at infinity, assuming nondegeneracy of the limits. The model is
\begin{equation} \label{eq:cr-c-strip}
\left\{
\begin{aligned}
& v: Z^{\pm} \longrightarrow \bC, \\
& \partial_s v + i \partial_t v - 2\pi i v (\beta_{Z^\pm}(\partial_s) + i \beta_{Z^\pm}(\partial_t)) = 0, \\
& v(s,0) \in e^{2\pi i \alpha_0}\bR^+, \; v(s,1) \in e^{2\pi i \alpha_1} \bR^+, \\
& \textstyle \lim_{s \rightarrow \pm \infty} v(s,\cdot) = 0.
\end{aligned}
\right.
\end{equation}
Here, the one-form $\beta_{Z^\pm}$ is closed, vanishes when restricted to $\{t = 0,1\}$, and is asymptotic to $w \mathit{dt}$, in the sense of \eqref{eq:beta1}. Moreover, $\alpha_0, \alpha_1 \in \bR$ satisfy $\alpha_1 - \alpha_0 - w \notin \bZ$. Suppose for concreteness that we are on $Z^-$. Then, as a toy model application of \eqref{eq:gauge}, one can write 
\begin{equation} \label{eq:v-dag}
v(s,t)  = \exp(2\pi i (w t + e^{\pi s} g(e^{\pi s},t))) v^\dag(s,t),
\end{equation}
where $g(\sigma,t) \in \bR$ is as in \eqref{eq:beta1}; and the transformed map $v^\dag$ is holomorphic, with the same boundary and convergence conditions as \eqref{eq:cr-c-strip}. The Fourier expansion of $v^\dag$ yields the following asymptotic behaviour as $s \rightarrow -\infty$:
\begin{equation} \label{eq:v-dag-2}
v(s,t) \sim c e^{2\pi (\alpha_1 - \alpha_0 + \nu) s} e^{2\pi i (\alpha_1-\alpha_0+\nu+w) t}, 
\end{equation}
for some integer $\nu > \alpha_0 - \alpha_1$, and $c \in e^{2\pi i \alpha_0} \bR^{>0}$. In the counterpart for $Z^+$, we have $\nu < \alpha_0 - \alpha_1$ instead.

Besides those elementary tools from complex analysis, one can also use more symplectic arguments. For those, we adopt the standard symplectic form $\omega_{\bC}$, so that the Hamiltonian is $H(w) = \pi |w|^2$; and its primitive $\theta_{\bC} = \frac{i}{4} (w\,d\bar{w} - \bar{w} dw)$. Since our Hamiltonian is everywhere nonnegative, we have a particularly simple version of \eqref{eq:energy-control}, namely
\begin{equation} \label{eq:toy-energy-bound} 
E^{\mathit{geom}}(v) \leq E^{\mathit{top}}(v) \;\; \text{ when $\beta_S$ is sub-closed.} 
\end{equation}
We also want to mention a related result, which is a version of the integrated maximum principle \cite[Lemma 7.2]{abouzaid-seidel07}:
\begin{equation}  \label{eq:integrated}
\mybox{
Suppose that $\beta_S$ is sub-closed. Let $v$ be a solution of \eqref{eq:cr-c}. Assume that, for some $r>0$, $v$ intersects the circle $\{|w| = r\}$ transversally, and that the intersection is nonempty. Then, $T = v^{-1}(\{|w| \geq r\}) \subset S$ cannot be compact.
}
\end{equation}
To prove it, suppose on the contrary that $T$ is compact. We consider modified notions of energy, where the integration is over $T \subset S$. The same argument as in \eqref{eq:energy-control} shows that
\begin{equation} \label{eq:model-e-e}
0 \leq E^{\mathit{geom}}(v|T) \leq E^{\mathit{top}}(v|T).
\end{equation}
Write $\partial_{\mathit{in}}T = v^{-1}(\{|w| = r\}) \subset \partial T$, orienting it with the boundary orientation. Then
\begin{equation} \label{eq:t-energy-1}
E^{\mathit{top}}(v|T) = \int_{\partial_{\mathit{in}}T} v^*\theta_{\bC} - \pi r^2 \beta_S.
\end{equation}
At any positively oriented tangent vector $\xi$ to $\partial_{\mathit{in}}T$, we know that $j\xi$ points towards the interior of $T$, hence $\mathit{dv}(j\xi)$ has positive radial component, or equivalently $\theta_{\bC}(i\, \mathit{dv}(j\xi)) > 0$. Hence
\begin{equation} \label{eq:t-energy-2}
\theta_{\bC}(\mathit{dv}(\xi)) = \theta_{\bC}(-i \,\mathit{dv}(j\xi) + 2\pi v \beta_S(j\xi) - 2\pi i v \beta_S(\xi)) <
\pi r^2 \beta_S(\xi).
\end{equation}
It follows from \eqref{eq:t-energy-1} and \eqref{eq:t-energy-2} that $E^{\mathit{top}}(v|T) < 0$, which contradicts \eqref{eq:model-e-e}. 

To conclude this discussion, let's look briefly at the moving boundary condition version of the same situation, which means that we change the relevant part of \eqref{eq:cr-c} to
\begin{equation} \label{eq:moving-radial-lines}
v(z) \in e^{2\pi i \alpha_{\partial S}} \bR^+, \; \text{ for some } \alpha_{\partial S} \in \smooth(\partial S,\bR/\bZ).
\end{equation}
The observations leading to \eqref{eq:taylor} carry over immediately to this context. In the terminology of \eqref{eq:follows}, one can take 
\begin{equation} \label{eq:toy-chi}
\chi_G = H\, d\alpha_{\partial S}. 
\end{equation}
In particular, if one adds the condition that 
\begin{equation} \label{eq:toy-isotopy}
d\alpha_{\partial S} \leq 0,
\end{equation}
then \eqref{eq:toy-energy-bound} still holds. The same additional assumption also makes \eqref{eq:integrated} go through.

\section{Popsicle structures\label{sec:popsicle}}

This section recalls the construction of popsicle moduli spaces, largely following \cite{abouzaid-seidel07}. In brief, popsicles are discs with both boundary and interior marked points, where the interior marked points are constrained to lie on certain curves connecting the boundary marked points. We depart from our source in two respects. In \cite[Section 2d]{abouzaid-seidel07}, the naturally occurring curves (hyperbolic geodesics on the disc) were deformed in a non-canonical way, so as to make the gluing process for popsicles easier to describe. The modified curves partially overlap, which would be unfortunate for our purpose. Therefore, we prefer to stick with the unmodified version, while still using the same terminology. The other difference is that, when it comes to popsicles equipped with integer weights associated to the boundary marked points, what's relevant here are nonpositive weights, and most importantly weights $\{-1,0\}$, as opposed to the nonnegative weights in \cite{abouzaid-seidel07}.

\subsection{Popsicles}
To begin with, we introduce the curves mentioned above.
\begin{equation} \label{eq:popsicle-sticks}
\mybox{
Let $S = S^\bullet \setminus \{\zeta_0,\dots,\zeta_d\}$ be a boundary-punctured disc. For $1 \leq k \leq d$, let $Q_k^\bullet \subset S^\bullet$ be the fixed point set of the unique antiholomorphic involution which fixes $\zeta_0$ and $\zeta_k$, and $Q_k = Q_k^\bullet \cap S$. In the picture from \eqref{eq:upper-half-plane}, this is the vertical line $Q_k = \{\mathrm{re}(z) = \zeta_k\}$. Equivalently, if we consider the interior of $S$ as the hyperbolic disc, $Q_k$ is the geodesic connecting the points at infinity $\zeta_0$ and $\zeta_k$. We call the $Q_k$ popsicle sticks.
}
\end{equation}
The fundamental definition is as follows \cite[Section 2c]{abouzaid-seidel07}:
\begin{equation} \label{eq:popsicle}
\mybox{
Take integers $d \geq 1$ and $m \geq 0$, together with a nondecreasing map $p: \{1,\dots,m\} \rightarrow \{1,\dots,d\}$ (we will often write $|p| = m$ for the size of its domain). A popsicle of type $(d+1,p)$ is a pair $(S,\sigma)$ consisting of a disc $S$ with $(d+1)$ boundary punctures, together with, for each $1 \leq f \leq |p|$, a choice of point $\sigma_f \in Q_{p(f)}$. We call those points (some of which may coincide, if they lie on the same popsicle stick) sprinkles.
}
\end{equation}
Suppose that in addition, $S$ carries rational ends, hence comes with a preferred identification $S^\bullet \iso H^\bullet$ as in \eqref{eq:upper-half-plane}. In that case, the sprinkles can be described by numbers $s_f>0$, namely
\begin{equation} \label{eq:s-values}
\sigma_f = \zeta_{p(f)} + i \,s_f \in H.
\end{equation}

We will also need a degenerate version, called broken popsicles. Take $(d+1,p)$ as in \eqref{eq:popsicle}. Consider a ribbon tree $T$ with $(d+1)$ semi-infinite edges, and with vertices of valency $|v| \geq 2$. One of the semi-infinite edges is singled out as the root,  and the others are called the leaves. Starting with the root and proceeding anticlockwise, we number all semi-infinite edges by $\{0,\dots,d\}$. Similarly, for each vertex $v$, there is a single adjacent edge which either is the root or connects the vertex to the root; starting there, we number all adjacent edges by $\{0,\dots,|v|-1\}$. The additional combinatorial data that enter into a broken popsicle are:
\begin{equation} \label{eq:broken-popsicle-data}
\mybox{ \parskip.5em
Equip each vertex of the ribbon tree $T$ with a nonnegative integer $\bfm_v$, such that $\sum_v \bfm_v = m$, as well as a nondecreasing map $\bfp_v: \{1,\dots,\bfm_v\} \rightarrow \{1,\dots,|v|-1\}$. Additionally, there is an increasing map $\bfr_v: \{1,\dots,\bfm_v\} \rightarrow \{1,\dots,m\}$ for each $v$, and those maps combine to form a bijection $\bfr: \bigsqcup_v \{1,\dots,\bfm_v\} \rightarrow \{1,\dots,m\}$ (equivalently, one could say that $\{1,\dots,m\}$ is divided into subsets of order $\bfm_v$, each of which we then identify with $\{1,\dots,\bfm_v\}$ in the unique order-preserving way). These maps should be such that the $\bfp_v(f)$-th edge adjacent to the vertex $v$ either is the $p(\bfr_v(f))$-th leaf of the tree, or lies on the path connecting $v$ to that leaf.
}
\end{equation}
Then,
\begin{equation} \label{eq:broken-popsicle}
\mybox{
A broken popsicle of type $(T,\bfp,\bfr)$, consists of: for each vertex $v$, a popsicle $(\bfS_v,\bfsigma_v)$ of type $(|v|, \bfp_v)$. 
} 
\end{equation}
For the unique single-vertex tree, this reduces to an ordinary popsicle of type $(d+1,p)$. We spell out the next simplest case:
\begin{equation} \label{eq:two-vertex-tree}
\mybox{
Let $T$ be the tree with two vertices labeled by $v \in \{1,2\}$, where the convention is that the first vertex is closest to the root. Let's denote their valencies by $\bfd_v+1$, and say that the unique finite edge is the $i$-th edge adjacent to the first vertex, for $1 \leq i \leq \bfd_1$. The maps $(\bfp_1,\bfr_1)$ and $(\bfp_2,\bfr_2)$ associated to the vertices must satisfy
\[
p(f) 
\begin{cases}
= \bfp_1(f_1) & \text{if } f = \bfr_1(f_1) \text{ and } \bfp_1(f_1) < i, \\
\in \{i,\dots,i+\bfd_2-1\} & \text{if } f = \bfr_1(f_1) \text{ and } \bfp_1(f_1) = i, \\
= \bfp_1(f_1) + \bfd_2 - 1 & \text{if } f = \bfr_1(f_1) \text{ and }\bfp_1(f_1) > i, \\
= \bfp_2(f_2) + i - 1 & \text{if } f = \bfr_2(f_2).
\end{cases}
\]
}
\end{equation}

Suppose that we have a broken popsicle, each component of which comes equipped with rational ends \eqref{eq:rational-ends}. Suppose additionally that we are given a subset $E$ of the finite edges of $T$, and for each $e \in E$, a gluing parameter $\bfgamma_e \in (0,1)$. The gluing process yields a broken popsicle associated to the tree where all edges of $E$ have been contracted. Restricting to the simplest nontrivial case for simplicity, this is done as follows:
\begin{equation} \label{eq:glue-popsicles}
\mybox{
Take the situation from \eqref{eq:two-vertex-tree}. Given a broken popsicle and parameter $\gamma$, one then glues together its two components $\bfS_1$ and $\bfS_2$ as in \eqref{eq:gluing-again}. The resulting glued surface, identified as before with part of the upper half plane, inherits this popsicle structure:
\[
\sigma_f = 
\begin{cases}
\bfsigma_{1,f_1} & \text{if $f = \bfr_1(f_1)$ and $\bfp_1(f_1) \neq i$}, \\ 
\bfsigma_{1,f_1} + \bfrho_{1,i}\, \gamma\, \bfzeta_{2,p(\bfr_1(f_1)) - i +1}  & \text{if $f = \bfr_{1}(f_1)$ and $\bfp_1(f_1) = i$}, \\
\bfzeta_{1,i} + \bfrho_{1,i} \gamma \bfsigma_{2,f_2} & \text{ if $f = \bfr_2(f_2)$.}
\end{cases}
\]
The first and last case are straightforward: those sprinkles on $\bfS_1$ or $\bfS_2$ carry over to $S$ through the gluing process for surfaces. In the middle case, we have moved the sprinkle $\bfsigma_{1,f_1}$ horizontally so that it will lie on the appropriate popsicle stick for $S$, determined by $\bfr_1(f_1)$: indeed, by \eqref{eq:two-vertex-tree} and \eqref{eq:gluing-again}, 
\[
\mathrm{re}(\bfsigma_{1,f_1} + \bfrho_{1,i}\, \gamma \, \bfzeta_{2,p(\bfr_1(f_1)) - i + 1}) = 
\bfzeta_{1,i} + \bfrho_{1,i\,} \gamma\, \bfzeta_{2,p(\bfr_1(f_1)) - i + 1} = 
\zeta_{p(\bfr_1(f_1))}.
\]
}
\end{equation}

\subsection{Moduli spaces}
For fixed $(d+1,p)$ satisfying the stability condition $d+|p| \geq 2$, there is a moduli space of popsicles, which is a smooth manifold $\scrR^{d+1,p}$ of dimension $d+|p|-2$. It carries an action of the group $\Aut(p)$ of those permutations of $\{1,\dots,m\}$ which preserve $p$. Over the moduli space, there is a universal family of popsicles,
\begin{equation} \label{eq:universal-family}
\scrS^{d+1,p} \longrightarrow \scrR^{d+1,p},
\end{equation}
and the sprinkles give $|p|$ smooth sections of \eqref{eq:universal-family}. The topology of these moduli spaces is easy to understand. For $d \geq 2$, we have a forgetful map to the moduli space of boundary-punctured discs,
\begin{equation}
\scrR^{d+1,p} \longrightarrow \scrR^{d+1},
\end{equation}
which is a fibre bundle with fibre $\bR^{|p|}$. In the remaining case, $\scrR^{2,p}$ can be directly identified with $\bR^{|p|}/\bR = \bR^{|p|-1}$. The outcome is that $\scrR^{d+1,p}$ is always diffeomorphic to $\bR^{d-2+|p|}$. 
\begin{figure}
\begin{picture}(0,0)%
\includegraphics{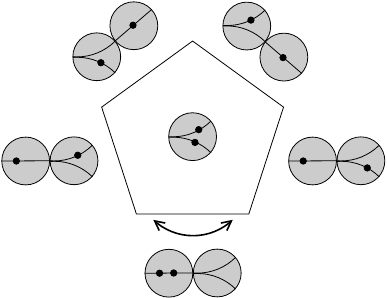}%
\end{picture}%
\setlength{\unitlength}{3355sp}%
\begingroup\makeatletter\ifx\SetFigFont\undefined%
\gdef\SetFigFont#1#2#3#4#5{%
  \reset@font\fontsize{#1}{#2pt}%
  \fontfamily{#3}\fontseries{#4}\fontshape{#5}%
  \selectfont}%
\fi\endgroup%
\begin{picture}(3629,2798)(739,-1879)
\put(3437,441){\makebox(0,0)[lb]{\smash{{\SetFigFont{7}{8.4}{\rmdefault}{\mddefault}{\updefault}{\color[rgb]{0,0,0}1}%
}}}}
\put(4070,-750){\makebox(0,0)[lb]{\smash{{\SetFigFont{7}{8.4}{\rmdefault}{\mddefault}{\updefault}{\color[rgb]{0,0,0}1}%
}}}}
\put(2324,-1567){\makebox(0,0)[lb]{\smash{{\SetFigFont{7}{8.4}{\rmdefault}{\mddefault}{\updefault}{\color[rgb]{0,0,0}2}%
}}}}
\put(2174,-1792){\makebox(0,0)[lb]{\smash{{\SetFigFont{7}{8.4}{\rmdefault}{\mddefault}{\updefault}{\color[rgb]{0,0,0}1}%
}}}}
\put(1381,-488){\makebox(0,0)[lb]{\smash{{\SetFigFont{7}{8.4}{\rmdefault}{\mddefault}{\updefault}{\color[rgb]{0,0,0}2}%
}}}}
\put(2476,-286){\makebox(0,0)[lb]{\smash{{\SetFigFont{7}{8.4}{\rmdefault}{\mddefault}{\updefault}{\color[rgb]{0,0,0}2}%
}}}}
\put(1909,757){\makebox(0,0)[lb]{\smash{{\SetFigFont{7}{8.4}{\rmdefault}{\mddefault}{\updefault}{\color[rgb]{0,0,0}2}%
}}}}
\put(2459,-511){\makebox(0,0)[lb]{\smash{{\SetFigFont{7}{8.4}{\rmdefault}{\mddefault}{\updefault}{\color[rgb]{0,0,0}1}%
}}}}
\put(851,-740){\makebox(0,0)[lb]{\smash{{\SetFigFont{7}{8.4}{\rmdefault}{\mddefault}{\updefault}{\color[rgb]{0,0,0}1}%
}}}}
\put(1551,231){\makebox(0,0)[lb]{\smash{{\SetFigFont{7}{8.4}{\rmdefault}{\mddefault}{\updefault}{\color[rgb]{0,0,0}1}%
}}}}
\put(2980,769){\makebox(0,0)[lb]{\smash{{\SetFigFont{7}{8.4}{\rmdefault}{\mddefault}{\updefault}{\color[rgb]{0,0,0}2}%
}}}}
\put(3555,-753){\makebox(0,0)[lb]{\smash{{\SetFigFont{7}{8.4}{\rmdefault}{\mddefault}{\updefault}{\color[rgb]{0,0,0}2}%
}}}}
\end{picture}%
\caption{\label{fig:pentagon}The moduli space $\bar\scrR^{d,p}$ for $d = 2$ and $p: \{1,2\} \rightarrow \{1,2\}$ the identity map. Note the $\bZ/2$-symmetry on the bottom boundary face (exchanging the two points), which is an instance of the phenomenon described in \eqref{eq:extra-symmetry}.}
\end{figure}

To compactify our moduli space, we allow broken popsicles, subject to the stability condition $|v| + |\bfp_v| \geq 3$ at each vertex. The resulting space is a compact manifold with corners $\bar\scrR^{d+1,p}$. The strata of $\bar\scrR^{d+1,p}$ are indexed by combinatorial data \eqref{eq:broken-popsicle-data}, and each such stratum is a product
\begin{equation} \label{eq:boundary-stratum}
\scrR^{T,\bfp,\bfr} \iso \prod_v \scrR^{|v|,\bfp_v},
\end{equation}
with the single-vertex tree $T$ yielding the interior $\scrR^{d+1,p}$. The gluing process \eqref{eq:glue-popsicles} provides $C^\infty$ charts: the coordinates transverse to each stratum are the gluing parameters $\bfgamma_e \geq 0$, where setting $\bfgamma_e = 0$ means not gluing along that edge. As before, there is a universal family $\bar\scrS^{d+1,p} \rightarrow \bar\scrR^{d+1,p}$, whose fibres are $\bigsqcup_v \bfS_v$, and which comes with $|p|$ canonical sections. There is also a well-behaved fibrewise compactification
\begin{equation} \label{eq:cornerbar-s}
\cornerbar{\scrS}^{d+1,p} \longrightarrow \bar\scrR^{d+1,p}.
\end{equation}
Here, $\cornerbar{\scrS}^{d+1,p}$ is a smooth manifold with corners; the fibres over points of $\scrR^{d+1,p}$ are compactifications $|S$ of popsicles, as in \eqref{eq:bar-compactification}; and the fibres over a boundary point of type $(T,\bfp,\bfr)$ consist of the union of $|\bfS_v$, with the intervals at infinity identified pairwise if they correspond to the same edge. The compactified gluing process \eqref{eq:gluing-2} provides a model for what \eqref{eq:cornerbar-s} looks like in transverse direction to a boundary face. We point out two features of $\bar\scrR^{d+1,p}$:
\begin{equation} \label{eq:extra-symmetry}
\mybox{
Each stratum \eqref{eq:boundary-stratum} carries an action of $\Aut(\bfp) = \prod_v \Aut(\bfp_v)$, and this group can be larger for more degenerate strata. Figure \ref{fig:pentagon} shows an example where the interior has trivial automorphism group, but a boundary stratum has a $\bZ/2$-symmetry.
}
\end{equation}
\begin{equation} \label{eq:shared}
\mybox{
The same product \eqref{eq:boundary-stratum} can appear as boundary stratum for different spaces.
Figure \ref{fig:different-strata} shows one such example.
}
\end{equation}
\begin{figure}
\begin{picture}(0,0)%
\includegraphics{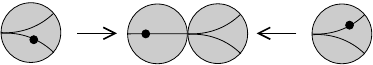}%
\end{picture}%
\setlength{\unitlength}{3355sp}%
\begingroup\makeatletter\ifx\SetFigFont\undefined%
\gdef\SetFigFont#1#2#3#4#5{%
  \reset@font\fontsize{#1}{#2pt}%
  \fontfamily{#3}\fontseries{#4}\fontshape{#5}%
  \selectfont}%
\fi\endgroup%
\begin{picture}(3506,590)(-200,-955)
\end{picture}%
\caption{\label{fig:different-strata}A ``shared boundary point'' of two moduli spaces $\bar\scrR^{d+1,p}$, see \eqref{eq:shared}. On the left, $p: \{1\} \rightarrow \{1,2\}$ has image $1$, and on the right it has image $2$. The broken popsicle appearing as the limit has the same structure in both cases.
}
\end{figure}%

\subsection{Complexification}
Following \cite[Section 6]{abouzaid-seidel07}, we sketch how basic properties of compactified popsicle moduli spaces follow from those of their complex counterparts. Write $\bC^\bullet = \bC \cup \{\infty\} \iso \bC P^1$. We consider genus zero curves $C \iso \bC^\bullet$ with $(d+1)$ distinct marked points $(\zeta_0,\dots,\zeta_d)$. Generally speaking, a set of ends for $C$ would be a collection of holomorphic embeddings of the closed disc into $C$, centered on the marked points.
\begin{equation} \label{eq:rational-ends-2}
\mybox{
A set of ends $(\epsilon_0,\dots,\epsilon_d)$ is called rational if they extend to isomorphisms $\bC^\bullet \rightarrow C$, with the following additional property: if we identify $C = \bC^\bullet$ so that $\zeta_0 = \infty$ and $\epsilon_0(z) = z^{-1}$, then $\epsilon_k(z) = \zeta_k + \rho_k z$ for all $k>0$, with $\zeta_k \in \bC$, $\rho_k \in \bC^*$.
}
\end{equation}
This is analogous to \eqref{eq:rational-ends}. The gluing process for curves with rational ends then has the same form as in \eqref{eq:gluing-again}, with gluing parameter $\gamma \in \bC^*$.

For $(d+1,p)$ as before, consider $X = (\bC^\bullet)^{|p|}$, and the submanifolds $F_0,\dots,F_d \subset X$ given by 
\begin{equation}
\left\{
\begin{aligned}
& F_0 = \{(\infty,\dots,\infty)\}, \\
& F_k = \{ x_f = 0 \; \text{for all $f$ such that $p(f) = k\}$,} \;\; k>0. 
\end{aligned}
\right.
\end{equation}
We then define:
\begin{equation} \label{eq:complex-popsicle}
\mybox{
A complex popsicle is a $C$ as before, together with a map $\phi: C \rightarrow X$ of degree $(1,\dots,1)$, satisfying the incidence condition $\phi(\zeta_k) \in F_k$. If we identify $C \iso \bC^\bullet$ so that $\zeta_0 = \infty$, the components of the map have the form
\[
\phi_f(z) = c_f (z-\zeta_{p(f)}) \;\; \text{ for } c_f \in \bC^*.
\]
The parameter $c_f$, and hence the map $\phi_f$, is uniquely determined by the point $\sigma_f = \phi_f^{-1}(i) \in \bC \setminus \{\zeta_{p(f)}\}$ (here, $i = \sqrt{-1}$ is the complex number, not an integer indexing something).
}
\end{equation}

Assuming $d+|p| \geq 2$, we have a smooth algebro-geometric moduli space $\scrM^{d+1,p}$ of complex popsicles, and its stable map compactification $\bar\scrM^{d+1,p}$, which is again smooth \cite[Lemma 6.1]{abouzaid-seidel07}. Points of the compactified space are represented by stable broken complex popsicles. Their combinatorial structure is indexed by data as in \eqref{eq:broken-popsicle-data}, except for the modifications required by the fact that the trees $T$ no longer have a ribbon structure. To construct charts near a point of the compactification, one equips the components with rational ends \eqref{eq:rational-ends-2}, and then glues them together. Let's consider the simplest case:
\begin{equation} \label{eq:glue-popsicles-2}
\mybox{
Take a tree as in \eqref{eq:two-vertex-tree}, and a corresponding complex popsicle, with rational ends on each component. Spelled out in term of an identification of the components with $\bC^\bullet$ as in \eqref{eq:rational-ends-2}, this data consists of, for $v = 1,2$: points $\bfzeta_{v,1},\dots,\bfzeta_{v,\bfd_v} \in \bC$, with ends parametrized by $\bfrho_{v,1},\dots,\bfrho_{v,\bfd_v} \in \bC^*$; and the maps $\bfphi_{v,1},\dots,\bfphi_{v,\bfm_v}$. After gluing the components for some small value of the parameter $\gamma$, we get maps
\[
\phi_f(z) = 
\begin{cases}
\bfphi_{1,f_1}(z) & \text{if $f = \bfr_1(f_1)$ and $\bfp_1(f_1) \neq i$}, \\ 
\bfphi_{1,f_1}(z - \bfrho_{1,i}\, \gamma\, \bfzeta_{2,p(\bfr_1(f_1))-i+1}) 
& \text{if $f = \bfr_1(f_1)$ and $\bfp_1(f_1) = i$}, \\
\bfphi_{2,f_2}(\bfrho_{1,i}^{-1} \gamma^{-1} (z-\bfzeta_{1,i})) & \text{if $f = \bfr_2(f_2)$.}
\end{cases}
\]
If one thinks in terms of points $\sigma_f$ as in \eqref{eq:complex-popsicle}, this formula turns into that from \eqref{eq:glue-popsicles}.
}
\end{equation}
The important thing for us is that \eqref{eq:glue-popsicles-2} defines a family of complex popsicles with rational dependence on $\gamma$, hence a holomorphic map from a punctured disc $D^*$ to $\scrM^{d+1,p}$. It is easy to see that this extends continuously, and hence holomorphically, to a map $D \rightarrow \bar\scrM^{d+1,p}$, taking $0$ to our original broken popsicle. An analysis of the tangent space to $\bar\scrM^{d+1,p}$ shows that this map is a local transverse slice to the boundary divisor. The same applies to more complicated gluing processes, which yield local holomorphic coordinate charts on $\bar\scrM^{d+1,p}$. To be precise, when gluing is applied to a family of broken popsicles, holomorphicity of the resulting coordinate chart assumes that the ends vary holomorphically in our family. If one allows ends that vary $\smooth$ in the original family, one still gets $\smooth$ charts, for the reason mentioned in \cite[Addendum 6.3]{abouzaid-seidel07}.

The space $\scrM^{d+1,p}$ admits an antiholomorphic involution, where one reverses the complex structure on $C$, and takes the complex conjugate of the maps $\phi_f$. If one thinks of $C = \bC^\bullet$ as in \eqref{eq:complex-popsicle}, this is equivalent to 
\begin{equation}
\left\{ 
\begin{aligned}
& (\zeta_1,\dots,\zeta_d) \longmapsto (\bar\zeta_1,\dots,\bar\zeta_d), \\
& \phi_f(z) \longmapsto \overline{\phi_f(\bar{z})} = \bar{c}_f (z - \bar{\zeta}_{p(f)}).
\end{aligned}
\right.
\end{equation}
The fixed locus has the $\zeta_k$ on the real line, and the $c_f$ nonzero real numbers. Assume that $\zeta_1 < \cdots < \zeta_d$ and $c_f > 0$. Then $\sigma_f = \phi_f^{-1}(i) = \zeta_{p(f)} + i/c_f$ lies on the popsicle stick $Q_{p(f)}$. In that way, $\scrR^{d+1,p}$ is embedded into $(\scrM^{d+1,p})^{\bR}$ as an open subset. The same idea identifies $\bar\scrR^{d+1,p}$ with a subset of $(\bar\scrM^{d+1,p})^{\bR}$, and an analysis of the gluing parameters shows this to be a (codimension zero) submanifold with corners, cut out locally by real analytic inequalities. From that picture, we derive the topology and differentiable structure on $\bar\scrR^{d+1,p}$ used throughout the paper.
\begin{figure}
\begin{centering}
\begin{picture}(0,0)%
\includegraphics{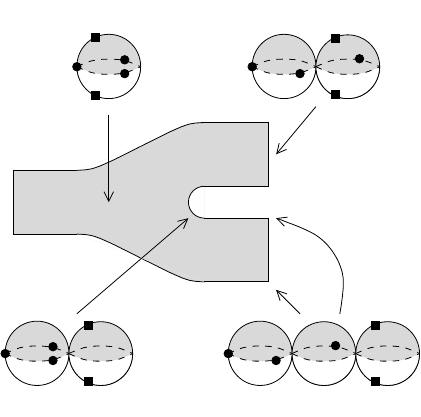}%
\end{picture}%
\setlength{\unitlength}{3355sp}%
\begingroup\makeatletter\ifx\SetFigFont\undefined%
\gdef\SetFigFont#1#2#3#4#5{%
  \reset@font\fontsize{#1}{#2pt}%
  \fontfamily{#3}\fontseries{#4}\fontshape{#5}%
  \selectfont}%
\fi\endgroup%
\begin{picture}(3960,3903)(178,-3730)
\put(2851,-3661){\makebox(0,0)[lb]{\smash{{\SetFigFont{10}{12.0}{\rmdefault}{\mddefault}{\updefault}{\color[rgb]{0,0,0}corner}%
}}}}
\put(451,-3661){\makebox(0,0)[lb]{\smash{{\SetFigFont{10}{12.0}{\rmdefault}{\mddefault}{\updefault}{\color[rgb]{0,0,0}boundary}%
}}}}
\put(901, 14){\makebox(0,0)[lb]{\smash{{\SetFigFont{10}{12.0}{\rmdefault}{\mddefault}{\updefault}{\color[rgb]{0,0,0}interior}%
}}}}
\put(2401, 14){\makebox(0,0)[lb]{\smash{{\SetFigFont{10}{12.0}{\rmdefault}{\mddefault}{\updefault}{\color[rgb]{0,0,0}interval at infinity}%
}}}}
\end{picture}%
\caption{\label{fig:complexify}A compactified surface $|S$, with examples of the stable curves associated to points of $|S$. Here, the round dots are the $\zeta_k$, and the square dots the extra marked points $z_{\pm}$. On the bottom right, note how the endpoints of an interval in $(|S) \setminus S$ have isomorphic associated stable curves.}
\end{centering}
\end{figure}%

Now consider a space $\bar\scrN^{d+1,p}$ which consists of stable maps with the same conditions as before, but where the domain carries two additional marked points $z_+, z_-$. This is a subvariety of the space of stable maps of genus zero with $(d+3)$ marked points, and is again smooth. Take the real locus $(\bar\scrN^{d+1,p})^{\bR}$, where the involution reverses the complex structure as before, and simultaneously exchanges the extra points $z_{\pm}$. There is a continuous map 
\begin{equation} \label{eq:add-plusminus}
\cornerbar{\scrS}^{d+1,p} \longrightarrow (\bar{\scrN}^{d+1,p})^{\bR}.
\end{equation}
On the interior of $\scrS^{d+1,p}$, where we are considering a popsicle $S$ and a point $z$ in its interior, this consists of complexifying the popsicle and then adding $z_+ = z$ and well as its complex conjugate $z_-$ as extra marked points. If $z$ is a boundary point of the popsicle, its image under \eqref{eq:add-plusminus} adds an extra component which will contain $z_{\pm}$; that component is a sphere with three distinct marked points, hence unique up to isomorphism. We can extend that to the intervals at infinity in $|S$, again by adding extra components to the complexified curve. This time, the position of a point $t \in (0,1) \subset (|S) \setminus S$ will determine the position of $z_{\pm}$ on the extra component, which is a sphere with four marked points; one can think of that sphere as the union of $Z^\bullet = \{\pm \infty\} \times (\bR \times [0,1])$ and its complex conjugate, and then $z_+ = (0,t)$. There is a minor hitch: the endpoints of each interval $[0,1] \subset (|S) \setminus S$ give rise to the same complexified curve, so \eqref{eq:add-plusminus} is not injective (see Figure \ref{fig:complexify}). Nevertheless, one can use that map to obtain smooth (and in fact real-analytic) coordinates on $\cornerbar{\scrS}^{d+1,p}$, which make it into a manifold with corners. To explain that,  look at a popsicle $|S$ (not broken). Its image under \eqref{eq:add-plusminus} lies inside a real surface, which is a fibre of $(\bar{\scrN}^{d+1,p})^{\bR}$ under the map forgetting $z_{\pm}$. The image of \eqref{eq:add-plusminus} inside that surface has points where it looks like $\{s_1s_2 \geq 0\}$ in local coordinates $(s_1,s_2) \in \bR^2$. What happens is that there are two corners of $|S$ mapped to the point $\{s_1 = s_2 = 0\}$, and the parts of $|S$ near those corners correspond to the two quadrants $\{s_1,s_2 \geq 0\}$, $\{s_1,s_2 \leq 0\}$. Hence, $(s_1,s_2)$ still give smooth coordinates on $|S$ near the corners (Figure \ref{fig:complexify-2}). A similar, but more involved, observation applies to broken popsicles.
\begin{figure}
\begin{centering}
\includegraphics{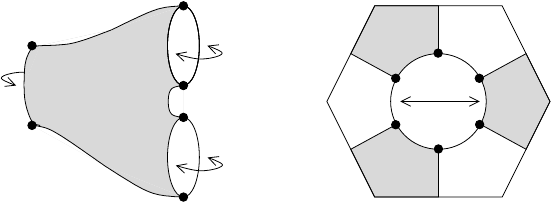}
\caption{\label{fig:complexify-2}The image of the surface $|S$ from Figure \ref{fig:complexify} under \eqref{eq:add-plusminus}. In this simple case, the target space of \eqref{eq:add-plusminus} is a fixed locus of a suitable real involution of Deligne-Mumford space $\bar{\scrM}_{0,5}$, and concretely is a non-orientable surface with Euler characteristic $-1$. We give two representations: as a pair-of-pants with each boundary component closed up to a Moebius band; and as a torus (represented as a hexagon with opposite sides identified by translation), again with a hole closed up to a Moebius band.
The image of $|S$ is the shaded region (on the left, the front half of the pair-of-pants); the dots are the images of the corners of $|S$; and the arrows indicate identifying opposite points on each of the boundary circles.}
\end{centering}
\end{figure}%

\subsection{Orientations\label{subsec:orientations}}
Consider the moduli space $\scrH^{d+1,p}$ of popsicles with a fixed identification $S^\bullet = H^\bullet$, which takes $\zeta_0$ to $i\infty$. By construction,
\begin{equation} \label{eq:g-fibration}
\scrR^{d+1,p} = \scrH^{d+1,p}/G,
\end{equation}
where $G$ is the group of automorphisms of $H^\bullet$ preserving $i\infty$. Identify $\scrH^{d+1,p}$ with an open subset of $\bR^{d+|p|}$ by using the coordinates $\zeta_1,\dots,\zeta_d,-s_1,\dots,-s_{|p|}$, in the notation from \eqref{eq:s-values}. This determines an orientation of that space. We orient the Lie algebra $\frakg$ by taking a basis consisting first of the infinitesimal translation to the right, and then of infinitesimal radial expansion. There is a unique orientation of $\scrR^{d+1,p}$ which is compatible with the choices of orientations and \eqref{eq:g-fibration} (here, the ordering convention is that we choose a splitting $T\scrH^{d+1,p} \iso \frakg \oplus T\scrR^{d+1,p}$ of tangent spaces).

Take the special case of boundary-punctured discs without sprinkles. One can identify $\scrR^{d+1}$ with the slice of the $G$-action on $\scrH^{d+1} = \{\zeta_1 < \cdots < \zeta_d\} \subset \bR^d$ obtained by keeping $\zeta_1$ and $\zeta_2$ fixed. On the infinitesimal level, for a point on that slice, we have a splitting of the short exact sequence
\begin{equation}
\xymatrix{ 
0 \ar[r] & \frakg \ar[rr] && (T\scrH^{d+1})_S \ar[rr] && (T\scrR^{d+1})_S
\ar@/_1.5pc/@{-->}[ll]_{ } \ar[r] & 0.
}
\end{equation}
If we take the previous basis of $\frakg$, followed by the basis of the tangent space to the slice obtained by moving $\zeta_3,\dots,\zeta_d$ to the right, then the outcome yields the same orientation of $(T\scrH^{d+1})_S$ as our previous choice. This shows that the orientation of $\scrR^{d+1}$ defined here is the same as that in \cite[Section 12g]{seidel04}; similarly, the orientation of general spaces $\scrR^{d+1,p}$ agrees with that in \cite[Section 9d]{abouzaid-seidel07}. The effect of these orientation conventions on boundary faces in $\bar\scrR^{d+1,p}$ is expressed by \cite[Equation (9.15)]{abouzaid-seidel07}, which we reproduce here in view of its importance later on:

\begin{lemma} \label{th:boundary-orientations}
Take a codimension one boundary face of the compactified moduli space of popsicles, determined by data as in \eqref{eq:two-vertex-tree}, and which is therefore of the form
\begin{equation} \label{eq:product-face}
\scrR^{\bfd_1+1,\bfp_1} \times \scrR^{\bfd_2+1,\bfp_2} \subset \partial \bar{\scrR}^{d+1,p}.
\end{equation}
With our conventions, the product orientation of that space differs from the induced boundary orientation by a sign $(-1)^\ddag$, where
\begin{equation} \label{eq:boundary-signs}
\begin{aligned}
\ddag = & \bfd_1\bfd_2 + i\bfd_2 + i - 1 \\
& + 
|\bfp_1|\, \bfd_2 \\
& + \# \{ 1 \leq f_1 \leq |\bfp_1|, \, 1 \leq f_2 \leq |\bfp_2| \;:\; \bfr_1(f_1) > \bfr_2(f_2) \}.
\end{aligned}
\end{equation}
\end{lemma}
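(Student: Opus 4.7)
The plan is to compute both orientations in explicit local coordinates near the boundary face, and compare. By the fibration \eqref{eq:g-fibration}, it suffices to work on $\scrH^{d+1,p}$ with the coordinates $(\zeta_1,\ldots,\zeta_d,-s_1,\ldots,-s_{|p|})$, provided I use a consistent $G$-slice on both sides. Near a two-vertex boundary face, the gluing parameter $\gamma \in [0,\infty)$ will serve as transverse coordinate, with $\gamma = 0$ the boundary and $\gamma > 0$ the interior; the induced boundary orientation is then obtained by contracting the interior orientation with the outward-pointing normal $-\partial/\partial\gamma$.

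I would express the interior coordinates $(\zeta_1,\ldots,\zeta_d,-s_1,\ldots,-s_{|p|})$ in terms of the product coordinates on $\scrH^{\bfd_1+1,\bfp_1} \times \scrH^{\bfd_2+1,\bfp_2}$ (ordered analogously: first the $\bfd_v$ boundary coordinates, then the $|\bfp_v|$ sprinkle coordinates, for each $v = 1, 2$) together with $\gamma$, using the gluing formula \eqref{eq:glue-popsicles}, and read off the sign of the Jacobian determinant at $\gamma = 0$. This sign decomposes into three independent contributions, one per line of \eqref{eq:boundary-signs}.

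First, in the special case $|p| = 0$, the statement reduces to the sign formula for the Stasheff associahedron established in \cite[Section 12g]{seidel04}, which yields the first term $\bfd_1\bfd_2 + i\bfd_2 + i - 1$. Second, the product-ordering convention lists all $\bfS_1$-coordinates before all $\bfS_2$-coordinates, while the global convention lists all $\zeta$'s before all $-s$'s; reconciling these requires transposing the block of $|\bfp_1|$ first-component sprinkle coordinates past the block of $\bfd_2$ second-component boundary coordinates, which contributes $(-1)^{|\bfp_1|\bfd_2}$. Third, the global sprinkle index $f \in \{1,\ldots,m\}$ is recovered from the vertex-wise indices via the bijection $\bfr$; since $\bfr$ restricts to an order-preserving map on each $\{1,\ldots,\bfm_v\}$, its sign is precisely $(-1)^{\#\{(f_1,f_2)\,:\,\bfr_1(f_1) > \bfr_2(f_2)\}}$.

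The main obstacle will be the first contribution: verifying that in the purely disc case the gluing \eqref{eq:gluing-again} produces exactly the associahedral sign. A convenient approach is to fix the $G$-action on the interior moduli space using a slice adapted to gluing (for instance, holding $\bfzeta_{1,1}$ and $\bfzeta_{1,2}$ constant on $\bfS_1$, with the $\frakg$-action on $\bfS_2$ absorbed into $\gamma$ together with one $\bfzeta_{2,\ast}$ coordinate), and then tracing through the resulting linear change of variables. This bookkeeping is essentially identical to that in \cite[Section 9d]{abouzaid-seidel07}, where \cite[Equation (9.15)]{abouzaid-seidel07} is proved; the sprinkle contributions enter additively via the second and third lines, and are independent of the disc-level calculation.
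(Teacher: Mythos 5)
Your proposal is correct and takes essentially the same route as the paper: the sign is decomposed into the same three contributions (the associahedron sign from \cite[Section 12g]{seidel04}, the Koszul sign from transposing the first-component sprinkle directions past the second-component disc directions, and the permutation sign of the order-preserving merge $\bfr$), and the paper's own proof is precisely this three-term bookkeeping, pointing to the same references. The only thing the paper adds that you did not flag is the caveat that the cited associahedral calculation literally covers only $\bfd_1,\bfd_2 \geq 2$, with the remaining low-valence cases handled by a minor extension — your explicit Jacobian plan would absorb those cases automatically, so this is a presentational difference, not a gap.
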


The first line in \eqref{eq:boundary-signs} comes from the moduli spaces of boundary-punctured discs \cite[Equation (12.22)]{seidel04}. The second line is a Koszul sign: to compare the product orientation with the boundary orientation, the tangent directions which vary the sprinkles belonging to the first vertex have to be moved past those that vary the $(\bfd_2+1)$-punctured disc. The last line is obtained from reordering the sprinkles. Strictly speaking, this recourse to \cite{seidel04} only applies to the situation where $\bfd_1, \bfd_2 \geq 2$, but it is not hard to extend the argument to cover the remaining cases as well.
%

\subsection{Weighted popsicles\label{subsec:weights}}
This is a variant of the corresponding notion in \cite[Section 2g]{abouzaid-seidel07}. Take $(d+1,p)$ as before, and in addition, nonpositive integers $w = (w_0,\dots,w_d)$, satisfying the following conditions:
\begin{align}
& \label{eq:weights-and-sprinkles}
w_0 - w_1 - \cdots - w_d = |p|, \\
& \label{eq:upper-bound}
\# p^{-1}( i) \leq -w_i \;\; \text{ for all $i = 1,\dots,d$.}
\end{align}
We think of the $w_j$ as weights associated to the points $\zeta_j$. We can add weights to the previous notation, without changing the moduli spaces (so we have $\scrS^{d+1,p,w} \rightarrow \scrR^{d+1,p,w}$ and its extension $\bar\scrS^{d+1,p,w} \rightarrow \bar\scrR^{d+1,p,w}$, as well as fibrewise compactification $\cornerbar{\scrS}^{d+1,p,w}$). What's worthwhile mentioning is how one should think of a broken popsicle carrying weights. Namely, take $(d+1,p,w)$ and a broken popsicle of type $(T,\bfp,\bfr)$. Its components inherit weights $\bfw_v = (\bfw_{v,0},\dots,\bfw_{v,|v|-1})$ in the unique way which satisfies the following conditions: the weights at the semi-infinite edges of the tree are the original $(w_0,\dots,w_d)$; the weights on the two ends of each finite edge agree; and the analogue of \eqref{eq:weights-and-sprinkles} at each vertex holds. Slightly less obviously, the analogue of \eqref{eq:upper-bound} will be satisfied at each vertex. To check that, it suffices to consider broken popsicles with two components, since the general case follows by induction. In the notation from \eqref{eq:two-vertex-tree}, where the vertices are $v \in \{1,2\}$, the only potentially nontrivial issue is the size of $\bfp_1^{-1}(i)$. But clearly,
\begin{equation} \label{eq:check-inequality}
\begin{aligned}
\# \bfp_1^{-1}(i) & =  \# p^{-1}(\{i,\dots,i+\bfd_2-1\}) - |\bfp_{2}| 
\\ & 
\leq - w_i - w_{i+1} - \cdots - w_{i+\bfd_2-1} - \bfw_{2,0} + \bfw_{2,1} + \cdots + \bfw_{2,\bfd_2} 
\\ & 
= -\bfw_{2,0} = -\bfw_{1,i}.
\end{aligned}
\end{equation}
Our main concern will be with the case where 
\begin{equation} \label{eq:0-1}
w_0,\dots,w_d \in \{-1,0\}.
\end{equation}
Because of \eqref{eq:weights-and-sprinkles} and \eqref{eq:upper-bound}, this constraint works out as follows. If $w_j = 0$ for some $j >0$, then $p^{-1}(j) = \emptyset$. If $w_j = -1$ for some $j>0$, then $\# p^{-1}(j) \leq 1$; moreover, equality holds either all the time (in which case $w_0 = 0$) or with one exception (in which case $w_0 = -1$). Finally, because $p$ is injective, it is automatically increasing. The condition \eqref{eq:0-1} is not always inherited by the components of a broken popsicle. In codimension one, the cases where this fails reflect the phenomena encountered in Figures \ref{fig:pentagon} and \ref{fig:different-strata}. Namely, one has the following straightforward result:

\begin{lemma} \label{th:wrong-weights}
Take a space $\bar\scrR^{d+1,p,w}$ where the weights lie in $\{-1,0\}$. Consider a boundary stratum of codimension $1$, such that the induced weights for the components of broken popsicles in that stratum do not lie in $\{-1,0\}$. In the notation from \eqref{eq:two-vertex-tree} and \eqref{eq:check-inequality}, this happens when
\begin{equation}
\bfw_{1,i} = \bfw_{2,0} < -1.
\end{equation}
Then, one of the two following applies:
\begin{equation} \label{eq:more-symmetry}
\mybox{$\# \bfp_1^{-1}(i) \geq 2$, or equivalently, $\Aut(\bfp_1)$ is nontrivial.}
\end{equation}
\begin{equation} \label{eq:switch-sprinkle}
\mybox{$\bfw_{1,0} = w_0 =  -1$, $\bfw_{1,i} = \bfw_{2,0} = -2$, and $\bfp_{1}^{-1}(i) = \{f\}$ consists of one element. In that case, $p(\bfr_1(f))$ is one of exactly two values $k_1 < k_2$ in $\{i,\dots,i+\bfd_2-1\}$ for which $w_{k_j} = \bfw_{2,k_j-i+1} = -1$ and $\bfp_{2}^{-1}(k_j-i+1) = \emptyset$.
}
\end{equation}
\end{lemma}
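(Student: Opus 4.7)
The proof is essentially a bookkeeping exercise with the defining relations \eqref{eq:weights-and-sprinkles} and \eqref{eq:upper-bound} applied first at vertex $1$ and then at vertex $2$. The plan is to combine those two relations at vertex $1$ into a single equation, exploit the $\{-1,0\}$ constraint on the original weights to limit the possibilities, and then do the same at vertex $2$ to locate the pair $k_1 < k_2$.

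First I would rewrite \eqref{eq:weights-and-sprinkles} at vertex $1$ as
\begin{equation*}
\bfw_{1,0} \;=\; \sum_{k=1}^{\bfd_1} \bigl(\#\bfp_1^{-1}(k) + \bfw_{1,k}\bigr),
\end{equation*}
where each summand is $\leq 0$ by the vertex-$1$ instance of \eqref{eq:upper-bound} established in \eqref{eq:check-inequality}. Since vertex $1$ is the root vertex, $\bfw_{1,0} = w_0 \in \{-1,0\}$. If $w_0 = 0$, every summand vanishes; in particular $\#\bfp_1^{-1}(i) = -\bfw_{1,i} \geq 2$, which is \eqref{eq:more-symmetry}. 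If $w_0 = -1$, exactly one summand equals $-1$ and the rest vanish; if the defective index is $k\neq i$, then again $\#\bfp_1^{-1}(i) = -\bfw_{1,i} \geq 2$ and we are done. Thus, assuming \eqref{eq:more-symmetry} fails, the defect must sit at $k=i$, which forces $\bfw_{1,i} = -2$ and $\#\bfp_1^{-1}(i) = 1$. This already pins down the first half of \eqref{eq:switch-sprinkle}: $w_0 = -1$, $\bfw_{1,i} = \bfw_{2,0} = -2$, and $\bfp_1^{-1}(i)$ is a singleton $\{f\}$.

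Next I would repeat the same argument at vertex $2$. Here \eqref{eq:weights-and-sprinkles} reads $\sum_{k=1}^{\bfd_2}(\#\bfp_2^{-1}(k) + \bfw_{2,k}) = \bfw_{2,0} = -2$, and each summand is in $\{-1,0\}$ because $\bfw_{2,k} = w_{k+i-1} \in \{-1,0\}$ and $\#\bfp_2^{-1}(k) \leq \#p^{-1}(k+i-1) \leq -\bfw_{2,k}$. Hence exactly two local indices $k_1 < k_2$ produce the defect, with $\bfw_{2,k_j} = -1$ and $\bfp_2^{-1}(k_j) = \emptyset$. Re-indexing by $K_j = k_j + i - 1 \in \{i,\dots,i+\bfd_2-1\}$ yields the two values named in \eqref{eq:switch-sprinkle}.

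It remains to show that $p(\bfr_1(f))$ is one of $K_1, K_2$. I would count sprinkles in the range $\{i,\dots,i+\bfd_2-1\}$. Let $A$ denote the number of positions $j$ in that range with $w_j = -1$. Applying the same dichotomy at the root-level equation $\sum_{j=1}^d(\#p^{-1}(j) + w_j) = -w_0 = -1$ shows that $p$ misses exactly one position $j^*$, necessarily with $w_{j^*}=-1$, and hits all others with equality. A separate count using \eqref{eq:check-inequality} shows $\#p^{-1}(\{i,\dots,i+\bfd_2-1\}) = |\bfp_2| + 1 = A - 1$, so $j^* \in \{i,\dots,i+\bfd_2-1\}$. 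The $A-1$ sprinkles that land in this range split as one contributing $\bfr_1(f)$ to vertex $1$ (at position $p(\bfr_1(f))$) and $|\bfp_2| = A-2$ contributing to vertex $2$. The positions in $\{i,\dots,i+\bfd_2-1\}$ with $w=-1$ but no $\bfp_2$-sprinkle are therefore precisely $j^*$ and $p(\bfr_1(f))$, which must coincide with $\{K_1,K_2\}$. This finishes \eqref{eq:switch-sprinkle}. The only mild obstacle I anticipate is keeping the three levels of indexing straight — global sprinkles in $\{1,\dots,|p|\}$, per-vertex sprinkles via $\bfr_v$, and the shift $k \leftrightarrow k+i-1$ between vertex-$2$'s local popsicle-stick labels and their global counterparts — but no new ideas beyond those already present in \eqref{eq:check-inequality} are required.
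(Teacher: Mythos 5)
Your proof is correct and is exactly the "straightforward" bookkeeping the paper has in mind: apply \eqref{eq:weights-and-sprinkles} and \eqref{eq:upper-bound} at each vertex to express $\bfw_{v,0}$ as a sum of nonpositive terms, force each term into $\{-1,0\}$ using the $\{-1,0\}$ hypothesis on the original weights, and count. One small typo: the root-level sum equals $w_0 = -1$, not $-w_0$ (you already apply the formula correctly at vertices $1$ and $2$, where you write $\bfw_{v,0}$ on the right); since $w_0 = -1$, the numerical value you use is right and the rest of the argument is unaffected.
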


Our next task is to consider weighted popsicles equipped with a suitable class of one-forms.
\begin{equation} \label{eq:restricted-sub-closed}
\mybox{
Let $S$ be a weighted popsicle, with rational ends $(\epsilon_0,\dots,\epsilon_d)$. A sub-closed one-form compatible with the ends is a $\beta_S \in \Omega^1(S)$ as in \eqref{eq:beta0}, \eqref{eq:sub-closed0}, whose behaviour over the ends is dictated by the weights as in \eqref{eq:restricted-beta}.
}
\end{equation}
Suppose that, for each stable weighted popsicle, we choose rational ends $(\epsilon_0,\dots,\epsilon_d)$, as well as a one-form \eqref{eq:restricted-sub-closed}. These choices should be invariant under $\Aut(p)$, and vary smoothly in the moduli space of popsicles. Let's call this a universal choice of ends and compatible one-forms. One can always find such choices, since for both ends and one-forms, the space of possibilities is contractible.
\begin{equation} \label{eq:consistent-ends-and-one-forms}
\mybox{
A universal choice of ends and compatible sub-closed one-forms is called consistent if the following holds. Consider a stable broken popsicle with weights. Each component $\bfS_v$ is itself a stable weighted popsicle, and we equip it with the ends and one-forms which are part of our universal choice. If we glue along a subset of the finite edges, as in \eqref{eq:glue-popsicles} or its generalizations, the outcome inherits ends and a one-form, and for sufficiently small values of the gluing parameters, these should agree with those given by the universal choice.
}
\end{equation}
This condition can be achieved by an inductive construction. Besides the previously mentioned contractibility property, the main ingredient is  the ``associativity'' of the gluing process \eqref{eq:glue-popsicles} (see e.g.\ \cite{qin11} for the terminology), which ensures that the constraints coming from different boundary strata of $\bar\scrR^{d+1,p,w}$ do not contradict each other. 

There is an alternative and less restrictive approach, where one works with the following class:
\begin{equation} \label{eq:sub-closed}
\mybox{
Let $S$ be a weighted popsicle. An asymptotically translation-invariant sub-closed one-form is a $\beta_S \in \Omega^1(S)$ as in \eqref{eq:beta0}, \eqref{eq:sub-closed0}, whose asymptotic behaviour is dictated by the weights as in \eqref{eq:restricted-beta}.
}
\end{equation}
Note that, unlike the previously considered setup \eqref{eq:restricted-sub-closed}, no ends need to be specified. One can choose such a one-form for each stable weighted popsicle, as before satisfying $\Aut(p)$-symmetry and smoothness with respect to moduli. The last-mentioned condition can be encoded by asking for a smooth fibrewise one-form on the space $|\scrS^{d+1,p,w}$, which is the fibrewise compactification of $\scrS^{d+1,p,w}$ in the sense of \eqref{eq:bar-compactification}. Let's call this a universal choice of asymptotically translation-invariant sub-closed one-forms. The same is true for the analogue of \eqref{eq:consistent-ends-and-one-forms}, which is:
\begin{equation} \label{eq:asymptotically-consistent-one-forms}
\mybox{
A universal choice of aymptotically translation-invariant sub-closed one-forms is called asymptotically consistent if the following holds. For every $(d+1,p,w)$, our choice can be extended to a smooth one-form on $\cornerbar{\scrS}^{d+1,p,w}$, which is closed in a neighbourhood of each interval at infinity. Moreover, if we take that one-form and restrict it to any component $\bfS_v$ of a broken popsicle, the outcome agrees with that prescribed by the universal choice for weighted popsicles of type $(|v|,\bfp_v,\bfw_v)$.
}
\end{equation}
To understand the meaning of this, it is helpful to think of the model gluing setup from \eqref{eq:gluing-again}. Let the surfaces $\bfS_v$ carry one-forms $\beta_{\bfS_v}$, with the same $w$ at the points at infinity being glued together. Explicitly, in coordinates on the ends, this means that for $|s| \gg 0$,
\begin{equation} \label{eq:beta-gluing-0}
\left\{
\begin{aligned}
& \beta_{\bfS_1} = w\, \mathit{dt} + d(e^{-\pi s_1} \bfg_1(e^{-\pi s_1},t)), \\
& \beta_{\bfS_2} = w\, \mathit{dt}  + d(e^{\pi s_2} \bfg_2(e^{\pi s_2},t)).
\end{aligned}
\right.
\end{equation}
Asymptotic consistency means that the one-form $\beta_{S_l}$ on the glued surfaces have the following shape on the neck region $[-l/2,l/2] \times [0,1]$, with first coordinate $s = s_1-l/2 = s_2+l/2$ as in \eqref{eq:length-decay}:
\begin{equation} \label{eq:beta-gluing}
\beta_{S_l} = \beta_{\bfS_1} + \beta_{\bfS_2} - w \,\mathit{dt} 
+ e^{-\pi l} d (g(e^{-\pi s- \pi l/2}, e^{\pi s- \pi l/2}, t)),
\end{equation}
where $g(\sigma_1,\sigma_2,t)$ vanishes for $t = 0,1$. As before, one can satisfy \eqref{eq:asymptotically-consistent-one-forms} by an inductive construction; this is a little more complicated because, on the gluing regions, partitions-of-unity arguments have to be applied to primitives of the one-forms involved, which means the functions $\bfg_v$, $g$ in \eqref{eq:beta-gluing-0}, \eqref{eq:beta-gluing}.

The weaker condition \eqref{eq:asymptotically-consistent-one-forms} is sufficient for our applications (and it also serves as a model for how we will address other consistency issues later on); however, readers who prefer a simple explicit description for how sub-closed one-forms behave under degenerations of popsicles may want to stick with the more classical approach \eqref{eq:consistent-ends-and-one-forms}.

\section{Fibrations with singularities\label{sec:first-construction}}
This section defines a noncommutative divisor associated to any exact symplectic fibration with singularities (we will assume that the first Chern class of the total space vanishes, but except for the obvious grading issue, that is unnecessary). This serves as a preparation for developments later in the paper, which will be formally analogous but geometrically a bit trickier. The pseudo-holomorphic map equations involved are of the same kind as in \cite{abouzaid-seidel07}, even though the algebraic structure we build on those foundations is different. 
%
%
%

\subsection{Executive summary\label{subsec:fantasy}}
Let's start by explaining what the definition would look like if one were allowed to wish away important technical points. The issues that are being swept under the carpet include: transversality, down to the basic fact that only having morphisms between transverse Lagrangian submanifolds does not constitute an $A_\infty$-category structure; energy and $C^0$-bounds for pseudoholomorphic maps; bubbling off of holomorphic discs or spheres; and issues of signs and grading in Lagrangian Floer cohomology. On the resulting formal level, the specific symplectic geometry situation becomes irrelevant; and the statements, if read literally, range from questionable to hair-raisingly wrong. Nevertheless, such an outline is hopefully useful in conveying the overall shape of the construction (besides, some notation introduced here will carry over to the later parts).

Take a symplectic manifold $M$, together with a Hamiltonian vector field $X$ and its flow $(\phi^t)$. Moreover, fix a compatible almost complex structure (which will be used in all Cauchy-Riemann equations that appear). Take $(L_0,L_1)$ and a nonpositive integer $w$, such that all chords \eqref{eq:chord} are nondegenerate. Then, the Floer cochain space is the free abelian group generated by them:
\begin{equation} \label{eq:floer-cochain}
\mathit{CF}^*(L_0,L_1;w) = \bigoplus_x \bZ x.
\end{equation}
The differential $d$ counts solutions of Floer's equation \eqref{eq:floer}, \eqref{eq:floer-limits}, which are isolated up to translation, with suitable signs:
\begin{equation} \label{eq:count-differential}
 dx_1 = \textstyle \sum_{x_0} \sum_u \pm x_0.
 \end{equation}
Let $\mathit{HF}^*(L_0,L_1;w)$ be its cohomology. In the special case $w = 0$, we usually omit $w$ from the notation. The main role in our story will be played by the Floer complexes for $w \in \{-1,0\}$ together with the continuation map between them, which is a chain map
\begin{equation} \label{eq:continuation-map}
\gamma: \mathit{CF}^*(L_0,L_1;-1) \longrightarrow \mathit{CF}^*(L_0,L_1).
\end{equation}
To define that, take the surface $S = Z$, equipped with a one-form \eqref{eq:sub-closed} which corresponds to setting $w_0 = 0$ at the end $s \rightarrow -\infty$, respectively $w_1 = -1$ as $s \rightarrow +\infty$. This gives rise to a special case of \eqref{eq:cr} (the continuation map equation).  One obtains $\gamma$ by counting solutions of that equation, in the same way as in \eqref{eq:count-differential}. Denote by $\mathit{HF}^*(L_0,L_1;\mathit{cont})$ the cohomology of the mapping cone of \eqref{eq:continuation-map}. By construction, this fits into a long exact sequence
\begin{equation} \label{eq:hf-cont}
\cdots \rightarrow \mathit{HF}^*(L_0,L_1;-1) \xrightarrow{H(\gamma)} \mathit{HF}^*(L_0,L_1) \longrightarrow \mathit{HF}^*(L_0,L_1;\mathit{cont}) \rightarrow \cdots 
\end{equation}
This is of interest only for non-closed symplectic manifolds and Lagrangian submanifolds: if $L_0$ or $L_1$ is closed, the continuation map is a quasi-isomorphism, hence $\mathit{HF}^*(L_0,L_1;\mathit{cont}) = 0$.

Our aim is to define an $A_\infty$-category $\scrF$ whose objects are Lagrangian submanifolds, and whose cohomology level morphisms are $\mathit{HF}^*(L_0,L_1;\mathit{cont})$. Assume that, for stable weighted popsicles, an asymptotically consistent choice of one-forms \eqref{eq:asymptotically-consistent-one-forms} has been made. Suppose that we have such a popsicle $(S,\sigma)$, of type $(d+1,p,w)$, and Lagrangian submanifolds $(L_0,\dots,L_d)$, such that the intersections $\phi^{w_0}(L_0) \cap L_d$, $\phi^{w_k}(L_{k-1}) \cap L_k$ are transverse. We then consider solutions $u$ of the associated equation \eqref{eq:cr}, with limits \eqref{eq:disc-limits}. Denote the space of triples $(S,\sigma,u)$ by
\begin{equation} \label{eq:r-maps}
\scrR^{d+1,p,w}(x_0,\dots,x_d) \longrightarrow \scrR^{d+1,p,w}.
\end{equation}
To make the notation more homogeneous, we include spaces of non-stationary solutions of Floer's equation mod translation as  $\scrR^{2,p,w}(x_0,x_1)$, with $|p| = 0$. Putting Floer trajectories on an entirely equal footing with \eqref{eq:r-maps} would require a proper notion of ``the $(-1)$-dimensional space $\scrR^2 = \mathit{point}/\bR$''. We skip that and, given that Floer trajectories are the most familiar part of our entire story, will mostly leave their treatment to the reader (more details are given in \cite{abouzaid-seidel07}). The spaces \eqref{eq:r-maps} have Gromov compactifications 
\begin{equation} \label{eq:r-maps-2}
\bar\scrR^{d+1,p,w}(x_0,\dots,x_d) \longrightarrow \bar{\scrR}^{d+1,p,w}.
\end{equation}
In the limit, weighted popsicles break into pieces. Those pieces are either themselves stable, or else are strips which carry Floer trajectories; the last-mentioned components are forgotten under the map in \eqref{eq:r-maps-2}. As usual, the codimension of each stratum in the compactification is the number of components, minus one.

Counting points in zero-dimensional spaces \eqref{eq:r-maps} yields maps
\begin{equation} \label{eq:mu-p}
\begin{aligned}
& \mu^{d,p,w}: \mathit{CF}^*(L_{d-1},L_d;w_d) \otimes \cdots \otimes \mathit{CF}^*(L_0,L_1;w_1)
\longrightarrow \mathit{CF}^{*+2-d-|p|}(L_0,L_d;w_0), \\
& \mu^{d,p,w}(x_d,\dots,x_1) = \textstyle \sum_{x_0} \sum_{(S,\sigma,u)} \pm x_0.
\end{aligned}
\end{equation}
Not all maps \eqref{eq:mu-p} are nontrivial:
\begin{equation} \label{eq:involution}
\mybox{
Suppose that $p$ is not injective. Then $\mu^{d,p,w} = 0$. Indeed, pick an element of $\Aut(p)$ which switches exactly two sprinkles. This acts on $\scrR^{d+1,p,w}$ as an orientation-reversing involution, with a codimension $1$ fixed point set. The action lifts to $\scrR^{d+1,p,w}(x_0,\dots,x_d)$, with the same properties. Generically, no isolated points in our moduli space lie over the fixed point set, and all the other isolated points appear in pairs exchanged by the involution, whose contributions cancel. 
}
\end{equation}
We set 
\begin{equation} \label{eq:cone-floer-cochains}
\scrF(L_0,L_1) = \mathit{CF}^*(L_0,L_1) \oplus \mathit{CF}^{*+1}(L_0,L_1;-1),
\end{equation}
and define the $A_\infty$-operation $\mu^d_{\scrF}$ to be the sum over all $\mu^{p,d,w}$ such that the weights lie in $\{-1.0\}$. The $A_\infty$-associativity equations \eqref{eq:associativity} for $\scrF$ are, as always, derived from a study of boundary points of one-dimensional spaces $\bar\scrR^{d+1,p,w}(x_0,\dots,x_d)$. Besides splitting off of Floer trajectories on the ends, this means looking at the codimension one strata of $\bar\scrR^{d+1,p,w}$, which correspond to broken stable popsicles with two components. In principle, these components can carry weights $<-1$, which puts us potentially in danger of seeing boundary contributions which are not part of the $A_\infty$-structure. However, based on Lemma \ref{th:wrong-weights}, one can see that those contributions turn out to be zero:
\begin{equation} \label{eq:cancel-1}
\mybox{
In the case of \eqref{eq:more-symmetry}, the analysis from \eqref{eq:involution} applies to the first component. The cancellation observed there carries over to the pair consisting of both components, and from there to the counting of such boundary points in $\bar\scrR^{d+1,p,w}(x_0,\dots,x_d)$.
}
\end{equation}
\begin{equation} \label{eq:cancel-2}
\mybox{
In the situation of \eqref{eq:switch-sprinkle}, we get one boundary point of each of two moduli spaces $\bar\scrR^{d+1,p,w}(x_0,\dots,x_d)$, which differ only in the map $p$. More precisely, what switches is the unique $k \in \{i,\dots,i+\bfd_2-1\}$ such that $k-i+1$ does not lie in the image of $\bfp_2$, but $k$ lies in the image of $p$; in the notation from \eqref{eq:switch-sprinkle}, $k = p(\bfr_1(f))$, and the two possibilities are $k = k_1$ or $k_2$. The definition of $\mu^d_{\scrF}$ adds up the two resulting $\mu^{d,p,w}$, and the boundary contributions again cancel.
}
\end{equation}
To round off the discussion, let's point out that $\mu^1_{\scrF}$ consists of the Floer differentials on each summand, plus an off-diagonal entry, which comes from ($d = 1$, $|p| = 1$, $w_0 = 0$, $w_1 = -1$). Inspection of the definition shows that this entry is the continuation map \eqref{eq:continuation-map}.

\subsection{Target space geometry\label{subsec:target}}
We now embark on the task of making the construction rigorous, in a specific geometric context. Let's recall a version of the well-known definition of an exact symplectic fibration with singularities. Take an exact symplectic manifold with boundary $(M^{2n}, \omega_M = d\theta_M)$, together with a proper map 
\begin{equation} \label{eq:pi}
\Pi: M \longrightarrow \bC.
\end{equation}
Equip the base with its standard symplectic form $\omega_{\bC}$.
\begin{equation}
\mybox{
We say that $\Pi$ is symplectically trivial at $x \in M$ if the following holds. $x$ is a regular point; $\mathit{TM}_x^v = \mathit{ker}(D\Pi_x) \subset \mathit{TM}_x$ is a symplectic subspace; and the restriction of $\omega_M$ to its symplectic orthogonal complement $\mathit{TM}_x^h = (\mathit{TM}_x^v)^\perp$ is equal to the pullback of $\omega_{\bC}$. If $x \in \partial M$, we additionally require that $\mathit{TM}_x^h \subset T(\partial M)_x$.
}
\end{equation}
With that at hand, we can formulate the following:
\begin{equation} \label{eq:symplectic-fibration}
\mybox{
An exact symplectic fibration with singularities is a map \eqref{eq:pi}, such that: $\Pi$ is symplectically trivial at every point which satisfies $|\Pi(x)| \geq 1$, and also in a neighbourhood of $\partial M$.
}
\end{equation}
This implies that the restriction of $\Pi$ to $\{|w| \geq 1\} \subset \bC$ is a locally trivial exact symplectic fibration: in particular, all those fibres $M_w$ are isomorphic exact symplectic manifolds. A similar remark applies to the geometry near $\partial M$: if we take $F = M_1$ as a reference fibre, parallel transport yields a symplectic isomorphism
\begin{equation} \label{eq:partial-trivialization}
\xymatrix{
(\text{neighbourhood of } \partial M) \ar[rr]^-{\iso} \ar@{^{(}->}[d] 
&& 
\bC \times (\text{neighbourhood of } \partial F) \ar@{^{(}->}[d]
\\
M \ar[r]_-{\Pi}
&\bC & 
\bC \times F. \ar[l]^-{\;\text{projection}} 
}
\end{equation}
We also impose the following convexity condition:
\begin{equation} \label{eq:convexity}
\mybox{
A neighbourhood of the boundary in $F = M_1$ should carry a preferred compatible almost complex structure $I_F$, such that $\partial F$ is weakly $I_F$-convex (has nonnegative Levi form).}
\end{equation}
On a neighbourhood of the boundary in $M$, this determines a unique almost complex structure $I_M$ which, under \eqref{eq:partial-trivialization}, corresponds to the product of the complex structure on $\bC$ and $I_F$. Finally, we require:
\begin{equation} \label{eq:complex-volume-form}
\mybox{$c_1(M) = 0$, and we choose a trivialization of its canonical bundle.}
\end{equation}

All the other geometric objects are chosen to be compatible with the specific nature of our symplectic manifolds:
\begin{equation}
\label{eq:hamiltonian}
\mybox{
We fix some $H \in \smooth(M,\bR)$ with the following properties. Outside the preimage of the open unit disc, it equals $\pi  |\Pi|^2$. Moreover, in a neighbourhood of $\partial M$, it is the pullback of a function on $\bC$. The associated vector field $X$ then satisfies $D\Pi(X_x) = 2\pi i w\partial_w$ at all points with $|\Pi(x)| \geq 1$. Similarly, near $\partial M$, $X$ is a vector field on the $\bC$ factor, extended trivially in $F$-direction, with respect to \eqref{eq:partial-trivialization}. Finally, we make a technical assumption, namely, that the zero-set $X^{-1}(0)$ has no interior points (its complement is open and dense).
}
\end{equation}
Clearly, $X$ has a well-defined flow $(\phi^t)$.
\begin{equation}
\label{eq:almost-complex}
\mybox{
We use compatible almost complex structures $J$ on $M$ with the following properties. First, $D\pi_x$ is $J$-holomorphic all points with $|\Pi(x)| \geq 1$. Moreover, there is a compatible complex structure $I_{\bC}$ on the complex plane such that, with respect to \eqref{eq:partial-trivialization}, $J_M$ agrees with $I_{\bC} \times I_F$ to first order; this means that $J_M - (I_{\bC} \times F)$ is zero on $\partial M$, and the derivative in normal direction to $\partial M$ is also zero.
}
\end{equation}
This class of almost complex structures is preserved by $(\phi^t)$.
\begin{equation}
\label{eq:lagrangian}
\mybox{
We consider connected exact Lagrangian submanifolds $L \subset M$, disjoint from $\partial M$. Each such submanifold must come with an ``angle'' $\alpha_L \in (-\half,\half)$, such that $\Pi(L) \subset \bC$ is contained in the union of the open unit disc and the ray $e^{2\pi i \alpha_L} \bR^+$. It should also come with a grading with respect to \eqref{eq:complex-volume-form}, and hence an orientation, as well as a {\em Spin} structure.
}
\end{equation}
Less formally, Lagrangian submanifolds are allowed to go infinity (with ends modelled on a radial half-ray times some closed Lagrangian submanifold in a fibre) in any direction except the forbidden one $\bR^- \subset \bC$. Hence, this class of submanifolds is preserved by $(\phi^t)$ only as long as one doesn't hit the forbidden direction. 

This completes the definition of our geometric setup. For the rest of the section, we  work with a fixed $M$ satisfying \eqref{eq:symplectic-fibration}, \eqref{eq:convexity}, \eqref{eq:complex-volume-form}; a fixed Hamiltonian \eqref{eq:hamiltonian} on it; and any almost complex structures or Lagrangian submanifolds will belong to the classes \eqref{eq:almost-complex}, \eqref{eq:lagrangian}. 

\subsection{Controlling solutions\label{subsec:control}}
The simplest use of the various geometric assumptions is to prevent solutions of Cauchy-Riemann equations from escaping to infinity, or from reaching the boundary.

\begin{lemma} \label{th:boundary}
If a solution of an equation \eqref{eq:cr} touches $\partial M$, it must be entirely contained in it. 
\end{lemma}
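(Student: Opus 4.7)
The plan is a standard application of the maximum principle for pseudo-holomorphic maps, exploiting the product structure \eqref{eq:partial-trivialization} near $\partial M$ together with weak $I_F$-convexity from \eqref{eq:convexity}. By \eqref{eq:lagrangian}, all Lagrangian boundary conditions lie in $M \setminus \partial M$, so any touching point $z_0 \in S$ with $u(z_0) \in \partial M$ is automatically an \emph{interior} point of $S$; this is what makes a strong maximum principle argument available.

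First I would localise around $u(z_0)$ using \eqref{eq:partial-trivialization}, identifying the neighbourhood of $\partial M$ with $\bC \times U$, where $U \subset F$ is a neighbourhood of $\partial F$, and decomposing $u = (u_1,u_2)$ accordingly. By \eqref{eq:hamiltonian}, in this neighbourhood $H$ is pulled back from $\bC$, so $X$ has no $F$-component, and the inhomogeneous term $X \otimes \beta_S$ drops out of the $F$-part of the Cauchy--Riemann equation \eqref{eq:cr}. Thus $u_2$ satisfies a \emph{homogeneous} pseudo-holomorphic equation $(Du_2)^{0,1}_{J'} = 0$, where $J'$ is the $F$-component of the ambient almost complex structure along the image of $u_2$. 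By \eqref{eq:almost-complex} combined with \eqref{eq:convexity}, $J'$ agrees with $I_F$ along $\partial F$ to first order.

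The key step is then to pick a smooth defining function $\rho: U \to \bR$ for $\partial F$, with $\rho \leq 0$ on $U$ and $\rho^{-1}(0) = \partial F$, and to set $f = \rho \circ u_2$. This is a nonpositive function on a neighbourhood of $z_0$ in $S$ which attains its maximum $0$ at $z_0$. A standard computation (analogous to the boundary-convexity arguments in Seidel's book on Picard--Lefschetz theory) expresses $\Delta f$ in terms of the Levi form of $\rho$ applied to $Du_2$, plus error terms coming from the non-integrability of $J'$ and from its deviation from $I_F$. The first-order matching of $J'$ and $I_F$ at $\partial F$ ensures that these errors are of lower order at points where $u_2$ meets $\partial F$, while weak $I_F$-convexity makes the Levi form nonnegative. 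Together this yields a linear subelliptic inequality $\Delta f + A \cdot df \geq 0$ on a neighbourhood of $z_0$, and Hopf's strong maximum principle forces $f \equiv 0$ on some open neighbourhood of $z_0$.

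To conclude, the subset $\{z \in S : u(z) \in \partial M\}$ is nonempty, closed by continuity, and open by the local argument just given, so connectedness of $S$ gives $u(S) \subset \partial M$. The main technical point is the verification of the subelliptic inequality: this is exactly the place where the first-order agreement of $J$ with $I_M$ along $\partial M$ imposed in \eqref{eq:almost-complex} is used, and without it the error terms from the non-product nature of $J$ near $\partial M$ could swamp the positivity coming from the Levi form. Once one has that first-order matching built in, the remainder is routine.
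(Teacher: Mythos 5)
Your proposal is correct in its geometric content and shares all the key inputs with the paper's proof --- that touching points are necessarily interior in $S$ (so a strong interior maximum principle is available), that $J$ agrees with $I_M$ to first order along $\partial M$, and that $\partial F$ is weakly $I_F$-convex --- but the technical packaging is different. The paper applies the Gromov trick \eqref{eq:graph} to rewrite the inhomogeneous equation as a genuine pseudo-holomorphic section $u^*: S \to S\times M$ with respect to an almost complex structure $J^*$; it then observes that $S \times \partial M$ is weakly $J^*$-pseudoconvex (because pseudoconvexity is a first-order property, and $J^*$ agrees with $I_M^*$ to first order there) and cites \cite[Corollary 4.7]{diederich-sukhov08} for the conclusion that $(u^*)^{-1}(S\times\partial M)$ is open and closed. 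You instead try to isolate the fibre component $u_2$ and carry out the Hopf maximum principle computation by hand. Your route does essentially the work that Diederich--Sukhov's result packages, and is closer to the boundary-convexity arguments in \cite{seidel04}; the paper's route is shorter and, via the Gromov trick, makes the absorption of the inhomogeneous term automatic.

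One imprecision worth flagging: the claim that the inhomogeneous term ``drops out of the $F$-part'' and that $u_2$ satisfies a \emph{homogeneous} pseudo-holomorphic equation $(Du_2)^{0,1}_{J'}=0$ is only literally true when $J_{S,z} = I_M$. For a general $J$ from the class \eqref{eq:almost-complex}, $J$ is not a product near $\partial M$ (only along it, to first order), so $J_{S,z}X$ has a nonzero $F$-component away from $\partial M$ and the $F$-projection of the equation for $u_2$ is coupled to $Du_1$. These contributions, together with the deviation of $J$ from $I_M$ acting on $Du$, vanish to first order at $\partial M$; they are exactly the error terms that the first-order matching renders harmless. You do acknowledge error terms, but attribute them only to non-integrability and to ``deviation from $I_F$'', and it would be cleaner to state at the outset that the fibre equation is homogeneous only modulo terms vanishing to first order at $\partial M$ (which include a residual inhomogeneous contribution from $J X$), and that the subelliptic inequality has the form $\Delta f + a\cdot df + cf \geq 0$ with $c \leq 0$, as needed for Hopf. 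With that adjustment, the argument is sound.
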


\begin{proof}
Note that $\partial S$ is irrelevant for this argument: because of the boundary conditions, only interior points of $S$ can map to $\partial M$. Suppose, for the moment, that the almost complex structures $J_{S,z}$ used in \eqref{eq:cr} are products $I_{\bC,z} \times I_F$ not just to first order, but on some neighbourhood of $\partial M$. Then, near $\partial M$, the equation splits into base and fibre components, the latter of which is just the equation for an $I_F$-holomorphic map $S \rightarrow F$, with no inhomogeneous term; and the desired result follows from the weak convexity property of $I_F$. Let's reformulate this observation slightly differently. Suppose that, following \eqref{eq:graph}, we rewrite our equation as one for pseudo-holomorphic sections $u^*: S \rightarrow S \times M$, for an almost complex structure $I_{S \times M}^*$ on $S \times M$. Then, near $S \times \partial M$, we can split $I_{S \times M}^*$ as the product of an almost complex structure on $S \times \bC$ (determined by $I_{\bC,z}$, $\beta_S$, and $H$) and the almost complex structure $I_F$ on $F$. It then follows from \eqref{eq:convexity} that $S \times \partial M$ is weakly $I_{S \times M}^*$-convex.

If we use a general family of almost complex structures and carry out the same construction, we end up with an almost complex structure $J^*_{S \times M}$ which, along $S \times \partial M$, agrees with $I_{S \times M}^*$, and the first derivatives are also the same. In particular, the previous weak convexity property still holds (because the Levi forms are the same). By \cite[Corollary 4.7]{diederich-sukhov08}, it follows that $(u^*)^{-1}(S \times \partial M) = u^{-1}(\partial M)$ is open and closed.
\end{proof}

\begin{lemma} \label{th:compactness-2}
Let $u$ be a solution of an equation \eqref{eq:cr} on a noncompact surface $S$, such that $|\Pi(u)| \leq 1$ outside a compact subset. Then $|\Pi(u)| \leq 1$ everywhere.
\end{lemma}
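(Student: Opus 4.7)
My plan is to argue by contradiction, reducing the problem to the model situation of Section~\ref{subsec:maps-to-c} and then invoking the integrated maximum principle \eqref{eq:integrated}. Suppose there exists $z_0 \in S$ with $|\Pi(u(z_0))| > 1$, and consider the open subset $V = u^{-1}(\{|\Pi| > 1\}) \subset S$ containing $z_0$. On the region $\{|\Pi(x)| > 1\} \subset M$, the hypotheses on the geometric data are very rigid: by \eqref{eq:almost-complex}, $D\Pi_x$ is $J_x$-holomorphic, while \eqref{eq:hamiltonian} gives $D\Pi(X_x) = 2\pi i w \partial_w$ and $H = \pi|\Pi|^2$. Composing these with the equation for $u$ shows that $v = \Pi \circ u: V \longrightarrow \bC$ satisfies the toy equation \eqref{eq:cr-c}: holomorphicity of $\Pi$ converts the Cauchy--Riemann operator, and the identity $D\Pi(X) = 2\pi i w\partial_w$ turns the inhomogeneous term $X \otimes \beta_S$ into $2\pi i v \beta_S$.

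Next I would check the boundary conditions. For any component $I \subset \partial S$ with $I \cap V \neq \emptyset$, the Lagrangian $L_I$ satisfies $\Pi(L_I) \subset (\text{open unit disc}) \cup e^{2\pi i \alpha_{L_I}} \bR^+$ by \eqref{eq:lagrangian}; since $|v| > 1$ on $V$, the values on $I \cap V$ are forced into the radial half-line $e^{2\pi i \alpha_{L_I}} \bR^+$, exactly matching \eqref{eq:cr-c}. Applying Sard's theorem to the continuous function $|v|^2$ (smooth on $V$), I would select a regular value $r^2 \in (1, |\Pi(u(z_0))|^2)$, so that $v$ meets the circle $\{|w| = r\}$ transversally. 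The set $T = v^{-1}(\{|w| \geq r\})$ is then a codimension-zero submanifold of $V$; it is nonempty (it contains $z_0$), and it is compact inside $S$ because the hypothesis $|\Pi(u)| \leq 1$ outside a compact subset forces $T$ to lie in that compact subset.

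With this setup in place, \eqref{eq:integrated} yields the desired contradiction, provided $\beta_S$ is sub-closed (a property built into every $\beta_S$ that will actually occur in our constructions, cf.\ \eqref{eq:sub-closed} and \eqref{eq:restricted-sub-closed}). The main obstacle is really bookkeeping rather than substance: one needs to confirm that the restriction of $v$ to $V$ is a bona fide solution of \eqref{eq:cr-c} on the Riemann surface $V$ (with Lagrangian boundary components given by radial half-lines) and that the argument in \eqref{eq:integrated}, which is stated globally for $S$, applies verbatim to the compact subset $T \subset V$ — this works because the integration there only takes place over $T$, with $\partial_{\mathit{in}} T = v^{-1}(\{|w|=r\})$ entirely contained in $V$, while along $\partial T \cap \partial S$ both $v^*\theta_\bC$ (vanishing on radial lines) and $\beta_S$ (vanishing on $\partial S$) contribute zero.
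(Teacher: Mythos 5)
Your proof is correct and takes essentially the same route as the paper's: both reduce to the projected map $v = \Pi \circ u$ on the region $\{|v| > 1\}$, observe that it solves the toy equation \eqref{eq:cr-c} there with radial-half-line boundary conditions, and then invoke the integrated maximum principle \eqref{eq:integrated} to contradict compactness of $T = v^{-1}(\{|w|\geq r\})$. The only difference is in presentation: the paper merely remarks that $\{|\Pi(u)| = r\}$ is nonempty for $r$ near $1$ and leaves transversality implicit, whereas you make the Sard argument and the boundary-term bookkeeping explicit — a faithful elaboration rather than a new approach.
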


\begin{proof}
Suppose that this is false. By assumption, $T = \{|\Pi(u(z))| \geq r\}$ is compact for all $r>1$, and $\{|\Pi(u(z))| = r\}$ is nonempty if $r$ is sufficiently close to $1$. Since the map $v = \Pi(u)$ satisfies \eqref{eq:cr-c} on the region $\{|v(z)| > 1\}$, we get a contradiction to \eqref{eq:integrated}.
\end{proof}

\begin{lemma} \label{th:injective}
Let $u$ be a solution of an equation \eqref{eq:cr}, such that $U = \{|\Pi(u(z))| < 1\} \subset S$ is nonempty. Then, either there is a point in $U$ where $Du - X \otimes \beta_S \neq 0$; or else, $U = S$ and $Du = X \otimes \beta_S$ everywhere.
\end{lemma}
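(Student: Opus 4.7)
The plan is to argue in two steps: first propagate the pointwise identity $Du = X\otimes\beta_S$ from $U$ to all of $S$ via unique continuation, and then use the resulting ``horizontal'' rigidity, together with the flow invariance of $\{|\Pi|<1\}$, to conclude $U = S$.

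Assume the first alternative fails, so that $Du = X\otimes\beta_S$ at every $z\in U$. On any simply-connected open $V\subset S$, pick a primitive $b_S\in\smooth(V,\bR)$ with $db_S = \beta_S|V$, and form the gauged map $u^\dag(z) = \phi^{-b_S(z)}(u(z))$ of \eqref{eq:gauge}, which satisfies the honest Cauchy--Riemann equation $(Du^\dag)^{0,1}_{J^\dag_S} = 0$. A direct chain-rule calculation starting from $u(z) = \phi^{b_S(z)}(u^\dag(z))$ yields
\[
Du - X\otimes\beta_S \;=\; (\phi^{b_S})_* Du^\dag
\]
pointwise, so the hypothesis translates to $Du^\dag \equiv 0$ on $V\cap U$. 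Equivalently, $u^\dag$ agrees with its own value $u^\dag(z_0)$ on the nonempty open set $V\cap U$, for any $z_0\in V\cap U$. Both $u^\dag$ and this constant map are $J^\dag_S$-holomorphic on $V$, so Aronszajn's unique continuation theorem for first-order elliptic systems (in the form used routinely for pseudo-holomorphic maps) forces them to coincide on the connected set $V$. Thus $Du = X\otimes\beta_S$ on all of $V$; covering $S$ by simply-connected charts and using connectedness propagates the identity to all of $S$.

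For the conclusion $U=S$, the crucial observation is that the hypersurface $\Sigma = \{|\Pi|=1\}\subset M$ is invariant under the flow $(\phi^t)$. Indeed, by \eqref{eq:hamiltonian}, at every $x\in\Sigma$ the push-forward $D\Pi(X_x) = 2\pi i\,\Pi(x)$ is tangent to the unit circle in $\bC$, so $X$ is tangent to $\Sigma$. A standard first-exit-time contradiction then shows that $\{|\Pi|<1\}$ is itself $(\phi^t)$-invariant: if an orbit from the interior first touched $\Sigma$ at time $t_1$, reversing the flow from that point would stay on $\Sigma$, contradicting the orbit having been strictly inside just before $t_1$. Now, given $z_0\in U$ (so $u(z_0)\in\{|\Pi|<1\}$) and arbitrary $z\in S$, join $z_0$ to $z$ by a path and cover it by finitely many simply-connected charts $V_1,\dots,V_N$ as above; on each $V_i$ the identity $Du = X\otimes\beta_S$ together with the local gauge yields
\[
u(\cdot) \;=\; \phi^{\,b_S(\cdot)-b_S(z_i)}(u(z_i)),
\]
so flow invariance of $\{|\Pi|<1\}$ and induction on $i$ give $u(z)\in\{|\Pi|<1\}$. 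Hence $U = S$.

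The main potential obstacle I see is to invoke Aronszajn-type unique continuation cleanly for $u^\dag$, since $J^\dag_S$ is only $\smooth$ and $z$-dependent; but this is precisely the setting that is standard for pseudo-holomorphic curves (e.g.\ via the Hartman--Wintner inequality, or the elliptic arguments reviewed in McDuff--Salamon), so it should go through with a suitable citation rather than a new argument.
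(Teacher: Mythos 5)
Your proof has a genuine gap in the propagation step: the gauge transformation of \eqref{eq:gauge} requires a primitive $b_S$ with $db_S = \beta_S$, which exists on a simply-connected chart $V$ only when $\beta_S|V$ is \emph{closed}. But in the setting of \eqref{eq:cr} and this lemma, $\beta_S$ is merely as in \eqref{eq:beta0} (in the applications of Section \ref{sec:first-construction}, sub-closed with $d\beta_S \leq 0$, not necessarily zero), so no such local primitive exists in general, and the identity $Du - X\otimes\beta_S = (\phi^{b_S})_*Du^\dag$ is not available. The object you flag as the potential obstacle --- invoking Aronszajn for a smooth $z$-dependent $J^\dag_S$ --- is in fact unproblematic; it is the existence of $u^\dag$ at all that breaks down. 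Your second step (flow-invariance of $\{|\Pi|<1\}$, derived from $X$ being tangent to the level sets of $|\Pi|$ on $\{|\Pi|\geq 1\}$ per \eqref{eq:hamiltonian}) is correct, but it is only useful once the identity $Du = X\otimes\beta_S$ has been extended to all of $S$.

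The paper's proof avoids propagation entirely, and in doing so sidesteps both the closedness issue and any appeal to unique continuation. It observes that $Du - X\otimes\beta_S$ vanishes on $\bar U$ by continuity alone, and then --- assuming $U \neq S$ --- chooses a path $c$ inside $\bar U$ with $c([0,1)) \subset U$ and $c(1) \in \partial U$. Along such a path the ODE $u(c(r)) = \phi^{\int_0^r c^*\beta_S}(u(c(0)))$ holds (this only requires the pointwise identity along $c$, not any global primitive), so $u(c(0))$ and $u(c(1))$ lie on the same $X$-orbit. Since $|\Pi(u(c(1)))| \geq 1$ and $|\Pi|$ is constant along the flow in that region, $|\Pi(u(c(0)))| \geq 1$ as well, contradicting $c(0) \in U$. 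Once $U = S$ is established this way, $Du = X\otimes\beta_S$ on $S$ follows for free. You should adopt this more elementary route; your flow-invariance insight is the right second half of the argument, but the first half needs to be replaced by the first-exit path argument above.
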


\begin{proof}
Let's exclude the first possibility, hence assume that $Du = X \otimes \beta_S$ on $U$, and therefore on $\bar{U}$ as well. 
For any path $c: [0,1] \rightarrow \bar{U}$, we therefore have 
\begin{equation} \label{eq:c-path}
u(c(1)) = \phi^b(u(c(0))), \;\; \text{ where } b = \textstyle \int_c \beta_S.
\end{equation}
Suppose that $U \neq S$. We can then find a path which starts in $U$ but is not contained in it. By making the domain smaller, we can find a path such that $c(r)$ lies in $U$ for $r<1$, while $c(1) \notin U$; that path is obviously contained in $\bar{U}$. Since the point $x = u(c(1))$ satisfies $|\Pi(x)| \geq 1$, the same holds for its entire $X$-flow line, hence for all $u(c(r))$ by \eqref{eq:c-path}, which is a contradiction.
\end{proof}

Finally, note that by assumption, $H$ is bounded below, say $H \geq C$. Hence, we have an a priori energy bound for solutions, because of \eqref{eq:energy-control} and \eqref{eq:energy-and-action}:

\begin{lemma} \label{th:apriori}
Consider an equation \eqref{eq:cr}, where $S$ is a punctured-boundary Riemann surface, and $\beta_S$ is a sub-closed one-form as in \eqref{eq:beta1}. For all solutions of that equation with limits \eqref{eq:zeta-limits},
\begin{equation}
E^{\mathit{geom}}(u) \leq \sum_{\zeta \in \Sigma_S^{\pm}} \mp A(x_\zeta) - C \int_S d\beta_S.
\end{equation}
\end{lemma}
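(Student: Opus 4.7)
The plan is to assemble this a priori bound directly from the two preparatory statements \eqref{eq:energy-control} and \eqref{eq:energy-and-action}; the lemma is essentially their juxtaposition, packaged in a form that will be invoked throughout the paper. No new idea is required, only a careful verification that the hypotheses of both are met in the present setting.

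First I would check the hypotheses of \eqref{eq:energy-control}. The one-form $\beta_S$ is sub-closed by assumption. On the target $M$, the Hamiltonian $H$ is proper on $M \setminus \partial M$ and equals $\pi|\Pi|^2$ outside a compact set, so it is bounded below by some constant $C$; this is the input needed. Plugging into \eqref{eq:energy-control} gives an estimate of $E^{\mathit{geom}}(u)$ by $E^{\mathit{top}}(u)$ plus a term linear in $\int_S d\beta_S$.

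Next I would verify the hypotheses of \eqref{eq:energy-and-action}. Exactness of $\omega_M$ and of all relevant Lagrangian submanifolds is part of the standing assumptions \eqref{eq:exact-symplectic} and \eqref{eq:lagrangian}. Because $\beta_S$ is asymptotically translation-invariant in the sense \eqref{eq:beta1} and the solutions are assumed to converge to chords $x_\zeta$ as in \eqref{eq:zeta-limits}, nondegeneracy of those chords (implicit in the assumption that the Floer limits exist and are well-defined chords, and in any case easily arranged by perturbation) forces the exponential decay discussed after \eqref{eq:floer-limits}. That decay makes the integrals defining $E^{\mathit{geom}}(u)$ and $E^{\mathit{top}}(u)$ absolutely convergent and legitimises the Stokes computation behind \eqref{eq:energy-and-action}, yielding $E^{\mathit{top}}(u) = \sum_{\zeta \in \Sigma_S^\pm} \mp A(x_\zeta)$.

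Substituting the second identity into the first yields the desired bound. I do not anticipate any genuine obstacle: both building blocks are already derived in the text, and the only delicate point is the justification of Stokes on a noncompact surface, which is the exponential-decay argument described above; this is also the only place where nondegeneracy of the limits $x_\zeta$ enters essentially.
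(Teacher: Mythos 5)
Your proposal is correct and takes exactly the paper's route: the text immediately preceding the lemma simply notes that $H \geq C$ by assumption and invokes \eqref{eq:energy-control} together with \eqref{eq:energy-and-action}, which is precisely the two-step combination you carry out.
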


\subsection{Transversality\label{subsec:transversality}}
The class of Cauchy-Riemann equations \eqref{eq:cr} relevant for us is subject to several constraints. We stick to a fixed Hamiltonian \eqref{eq:hamiltonian}, hence can't vary the inhomogeneous term. Moreover, the almost complex structures \eqref{eq:almost-complex} are constrained on part of the target manifold. In spite of this, regularity of the moduli spaces still holds generically within that class, in a suitable sense. The basic argument is classical, following \cite{floer-hofer-salamon94}; hence, the discussion here will be kept brief.

\begin{lemma} \label{th:transversality-1}
Take $(L_0,L_1)$ and $w$, such that
\begin{equation} \label{eq:diff-angles}
\alpha_{L_0} \neq \alpha_{L_1}, \text{ and the intersection $\phi^w(L_0) \cap L_1$ is transverse;}
\end{equation}
Then, a generic choice of almost complex structures $(J_t)$, taken as usual from the class \eqref{eq:almost-complex}, makes all solutions of Floer's equation \eqref{eq:floer}, \eqref{eq:floer-limits} regular.
\end{lemma}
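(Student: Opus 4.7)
The plan is to use the classical universal moduli space / Sard-Smale framework of Floer-Hofer-Salamon \cite{floer-hofer-salamon94}, with the constraints \eqref{eq:almost-complex} on $J$ handled by the preceding geometric lemmas. I would introduce a separable Banach manifold $\mathcal{J}$ of $t$-dependent admissible almost complex structures (a Floer $C^\varepsilon$-neighbourhood of a fixed admissible $J$; both defining conditions in \eqref{eq:almost-complex} are linear and cut out a closed Banach submanifold), form the universal moduli space $\mathcal{M}^{\mathrm{univ}}(x_0,x_1)$ of pairs $(u,J)$ solving \eqref{eq:floer}--\eqref{eq:floer-limits} in weighted Sobolev norms (the nondegeneracy in \eqref{eq:diff-angles} supplying the usual Fredholm setup), and show that the universal linearized operator is surjective at every solution. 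Sard-Smale applied to the projection $\mathcal{M}^{\mathrm{univ}}\to\mathcal{J}$ then produces the desired residual set of regular $J$.

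Surjectivity reduces, as usual, to exhibiting for each non-stationary $u$ a point $z_*\in Z$ at which (a) $u$ is somewhere injective, in the sense that $Du(z_*) - X_{u(z_*)}\otimes\beta_Z(z_*)\neq 0$ and no other $z\in Z$ has $u(z)$ in the $\phi^{\mathbb{R}}$-orbit of $u(z_*)$; and (b) $J$ may be freely perturbed near $u(z_*)$, meaning $u(z_*)\in\mathrm{int}(M)$ and $|\Pi(u(z_*))|<1$. Given such a $z_*$, the standard bump-function perturbation of $J$ supported near $u(z_*)$ pairs nontrivially with any putative nonzero cokernel class, forcing the cokernel to vanish.

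Condition (b) is the geometrically substantive step. Lemma \ref{th:boundary} rules out $\partial M$: since neither chord endpoint lies on $\partial M$ by \eqref{eq:lagrangian}, $u$ cannot be entirely contained in $\partial M$, hence misses it. For the base condition, integrality of $w$ and the fact that $X$ generates rotation on $\{|\Pi|\geq 1\}$ imply that $\phi^w$ preserves every radial ray there; combined with $\alpha_{L_0}\neq\alpha_{L_1}$ and \eqref{eq:lagrangian}, this forces $\phi^w(L_0)\cap L_1\subset\{|\Pi|\leq 1\}$. Hence both chord endpoints lie in the closed unit disc of the base, $|\Pi(u)|\leq 1$ holds outside a compact set, and Lemma \ref{th:compactness-2} propagates this globally. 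If $U=\{|\Pi(u)|<1\}$ were empty, then $v=\Pi\circ u$ would satisfy \eqref{eq:cr-c-strip} with $|v|\equiv 1$; differentiating $|v|^2$ and using the equation yields $\mathrm{Re}(\bar v\,\partial_t v)=0$ and $\mathrm{Im}(\bar v\,\partial_t v)=2\pi w$, hence $\partial_t v = 2\pi i w v$, and substitution then forces $v(s,t)=c\,e^{2\pi i w t}$ with $|c|=1$. The radial boundary conditions require $c=e^{2\pi i\alpha_{L_0}}$ and $\alpha_{L_0}+w\equiv\alpha_{L_1}\pmod{1}$, which together with $\alpha_{L_0},\alpha_{L_1}\in(-\tfrac{1}{2},\tfrac{1}{2})$ and $w\in\mathbb{Z}$ contradicts $\alpha_{L_0}\neq\alpha_{L_1}$. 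Thus $U$ is nonempty, and Lemma \ref{th:injective} supplies a point in $U$ with $Du - X\otimes\beta_Z\neq 0$; upgrading this to a somewhere-injective point is the textbook Floer-strip argument via unique continuation and asymptotic non-degeneracy of the chords.

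The main obstacle I anticipate is exactly the verification that $U\neq\emptyset$; this is precisely where the hypothesis $\alpha_{L_0}\neq\alpha_{L_1}$ enters, by confining the chord set to the ``inner'' region in which the admissibility condition on $J$ is inactive. Once this confinement is in place, the remainder---the Banach manifold structure on $\mathcal{J}$, the somewhere-injective upgrade, and Sard-Smale---proceeds in the standard way.
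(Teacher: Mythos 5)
Your overall strategy---confine the asymptotics of the Floer trajectory to the region where the admissibility constraints on $J$ are vacuous, then run the Floer--Hofer--Salamon universal-moduli / Sard--Smale machinery---is the same as the paper's, and the proposal is essentially correct. There is, however, one small imprecision that as written leaves a genuine hole, together with a larger structural inefficiency.

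The imprecision: you deduce $\phi^w(L_0)\cap L_1\subset\{|\Pi|\leq 1\}$ and then assert that ``$|\Pi(u)|\leq 1$ holds outside a compact set.'' The second statement does not follow from the first: if the chord merely lies in the closed disc, exponential convergence of $u$ to the chord only gives $|\Pi(u)|\leq 1+\epsilon$ for $|s|\geq s_\epsilon$, which is not the hypothesis of Lemma \ref{th:compactness-2}. The fix is that your own ray-disjointness argument actually gives the strict inclusion: on $\{|\Pi|\geq 1\}$, $\Pi(\phi^w(L_0))$ and $\Pi(L_1)$ are the disjoint rays $e^{2\pi i\alpha_{L_0}}\bR^{\geq 1}$ and $e^{2\pi i\alpha_{L_1}}\bR^{\geq 1}$, so $\phi^w(L_0)\cap L_1\subset\{|\Pi|<1\}$. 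Since $X$ is tangent to the circles $\{|\Pi|=r\}$ for $r\geq 1$, the flow line through an endpoint stays in $\{|\Pi|<1\}$, so the whole chord does. This is the observation the paper leads with.

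The structural point: once the chord lies in the \emph{open} set $\Pi^{-1}(\{|w|<1\})$, the detour through Lemma \ref{th:compactness-2}, and in particular the contradiction argument ruling out $|\Pi(u)|\equiv 1$, is superfluous. For $|s|\gg 0$ the strip is automatically inside the unconstrained open region, and FHS Theorem~4.3 (density of regular points for non-translation-invariant $u$) immediately supplies a regular point there; that one point is enough for the surjectivity-of-the-universal-linearization argument. Your route through Lemma \ref{th:injective} and a separate ``somewhere-injective upgrade'' arrives at the same place but duplicates what the FHS regular-point notion already packages.
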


\begin{proof}[Sketch of proof]
Because of \eqref{eq:diff-angles}, the limits of $u$ must be contained in $\Pi^{-1}(\{|w| < 1\}) \subset M$. Hence, for any Floer trajectory $u$, the values $u(s,t)$ for $|s| \gg 0$ lie in a region where the almost complex structures are not constrained by the conditions in \eqref{eq:almost-complex}. Suppose that $u$ is not translation-invariant. Then, among those $(s,t)$, one can find ``regular'' ones in the sense of \cite[Theorem 4.3]{floer-hofer-salamon94}. This suffices to make the rest of the argument from \cite{floer-hofer-salamon94} go through.
\end{proof}

In the same spirit:

\begin{lemma} \label{th:transversality-2}
Let $S$ be a boundary-punctured disc, with an asymptotically translation-invariant sub-closed one-form $\beta_S$, whose behaviour at infinity is described by $(w_0,\dots,w_d)$. Equip this with Lagrangian boundary conditions $(L_0,\dots,L_d)$ satisfying the following general position requirements:
\begin{equation} \label{eq:pairwise}
\mybox{
$\alpha_{L_0} \neq \alpha_{L_d}$, and $\phi^{w_0}(L_0) \cap L_d$ is transverse. Similarly, for $k>0$, $\alpha_{L_{k-1}} \neq \alpha_{L_k}$, and $\phi^{w_k}(L_{k-1}) \cap L_k$ is transverse. 
}
\end{equation}
\begin{equation} \label{eq:multi-transverse}
\mybox{
If $d\beta = 0$, $\phi^{w_1+\cdots+w_d}(L_0) \cap \phi^{w_2+\cdots+w_d}(L_1) \cap \cdots \cap L_d$ is empty; respectively, if $d\beta \neq 0$, the intersection $\phi^{w_0}(L_0) \cap \phi^{w_1+\cdots+w_d}(L_0) \cap \phi^{w_2+\cdots+w_d}(L_1) \cap \cdots \cap L_d$ is disjoint from the zero-set of $X$.
}
\end{equation}
Fix families $J_k = (J_{k,t})_{t \in [0,1]}$ of almost complex structures, for $k = 0,\dots,d$. Then, for a generic family $(J_{S,z})$ of almost complex structures which is asymptotically translation-invariant and converges to the $J_k$, the space of solutions of \eqref{eq:cr}, with limits \eqref{eq:disc-limits}, is regular.
\end{lemma}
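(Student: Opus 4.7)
The plan is to run the universal moduli space argument of Floer-Hofer-Salamon \cite{floer-hofer-salamon94}, adapted to the fact that perturbations of $J_{S,z}$ are only free on the open region $\Pi^{-1}(\{|w|<1\}) \setminus \partial M$. First I would form the Banach manifold $\scrJ$ of asymptotically translation-invariant compatible almost complex structures $J_S$ converging to the prescribed $J_k$ at the ends (using a Floer $C^\varepsilon$-space to obtain separability), together with the universal moduli space $\scrR^{\mathrm{univ}}$ of pairs $(u, J_S)$ solving \eqref{eq:cr} with limits \eqref{eq:disc-limits}. The Sard-Smale theorem reduces genericity to surjectivity of the universal linearization $D^{\mathrm{univ}}_{(u,J_S)}$ at every solution. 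Assuming this fails, elliptic regularity and duality produce a nonzero smooth $(0,1)$-form $\eta$ with values in $u^*TM$, satisfying the formal adjoint equation $(D\bar\partial_J)^*\eta = 0$ and orthogonal to $Y \circ Du \circ j$ for every admissible variation $Y \in T_{J_S}\scrJ$.

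The crux is to exhibit an interior, somewhere-injective point $z_* \in S \setminus \partial S$ satisfying (i) $u(z_*) \in \Pi^{-1}(\{|w|<1\}) \setminus \partial M$, so that by \eqref{eq:almost-complex} no constraint is imposed on $J_{S,z_*}$ near $u(z_*)$, and (ii) $(Du - X \otimes \beta_S)_{z_*} \neq 0$, so that bump-supported variations $Y$ localized near $u(z_*)$ can produce arbitrary vectors in $T_{u(z_*)}M$ through the pairing with $\eta$. Given such a $z_*$, the standard bump-function construction forces $\eta(z_*) = 0$, and Aronszajn unique continuation applied to $(D\bar\partial_J)^*\eta = 0$ propagates this to $\eta \equiv 0$, the desired contradiction.

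Existence of $z_*$ is the main obstacle, and here the hypotheses \eqref{eq:pairwise} and \eqref{eq:multi-transverse} enter. Lemma \ref{th:boundary} rules out contact with $\partial M$. The transversality part of \eqref{eq:pairwise} forces every chord endpoint to lie in $\Pi^{-1}(\{|w|<1\})$, so the open set $U = \{z \in S : |\Pi(u(z))| < 1\}$ contains a neighbourhood of each puncture and is nonempty. Lemma \ref{th:injective} then furnishes two alternatives: either some $z \in U$ already satisfies $(Du - X \otimes \beta_S)_z \neq 0$, or $Du \equiv X \otimes \beta_S$ throughout $S$. In the latter case, the vanishing $\beta_S|\partial S = 0$ forces $u$ to be constant on each arc $I_k$, say $u|I_k = x_k \in L_k$, and integrating $Du = X \otimes \beta_S$ along paths around the punctures yields $x_k = \phi^{w_{k+1}+\cdots+w_d}(x_0)$: in the exact case $d\beta_S = 0$ this produces a point of $\phi^{w_1+\cdots+w_d}(L_0) \cap \cdots \cap L_d$ excluded by \eqref{eq:multi-transverse}; in the non-exact case $d\beta_S \neq 0$, the absence of a global primitive for $\beta_S$ forces the orbit to collapse, i.e.\ $x_0 \in X^{-1}(0)$, again excluded by \eqref{eq:multi-transverse}.

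The set of $z \in U$ at which $(Du - X \otimes \beta_S)_z \neq 0$ is therefore nonempty and open; it contains an interior point, and the classical somewhere-injectivity argument of \cite[Theorem 4.3]{floer-hofer-salamon94}, applied inside this unconstrained patch where the equation is a legitimate perturbed $\bar\partial$-equation with free $J$, upgrades this to the desired $z_*$. A standard Baire-Taubes argument on the separable Banach manifold $\scrJ$ finally promotes pointwise surjectivity of the universal linearization to genericity of $J_S$.
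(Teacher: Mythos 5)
Your proof follows the same route as the paper's: use Lemma~\ref{th:injective} to locate a point in the unconstrained region $U$ where $Du - X\otimes\beta_S\neq 0$, rule out the degenerate case $Du\equiv X\otimes\beta_S$ via the general-position hypotheses \eqref{eq:pairwise}--\eqref{eq:multi-transverse}, and then run the standard Floer--Hofer--Salamon universal moduli space argument (the paper compresses this last step into one sentence; your elaboration is the standard one and correct). Two small slips: the boundary-value relation should read $x_k = \phi^{w_1+\cdots+w_k}(x_0)$, not $\phi^{w_{k+1}+\cdots+w_d}(x_0)$, although the resulting conclusion that the value on the last arc lies in the intersection of \eqref{eq:multi-transverse} is unaffected; and in the case $d\beta_S\neq 0$, the precise mechanism (which your phrase ``absence of a global primitive forces the orbit to collapse'' glosses over) is that $d\beta_S\neq 0$ supplies contractible loops $c$ with $\int_c\beta_S$ nonzero but arbitrarily small, so that \eqref{eq:c-path} forces $\phi^b(x)=x$ for small $b\neq 0$, hence $X(x)=0$, exactly what \eqref{eq:multi-transverse} excludes.
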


\begin{proof}
The assumption on the angles $\alpha_{L_k}$ ensures that all limiting chords are contained in $\{|\Pi(x)| < 1\}$. Hence, the subset $U$ from Lemma \ref{th:injective} is certainly nonempty. If there is a point in that subset where $Du - X \otimes \beta_S$ is nonzero, we can achieve transversality by varying $J_{S,z}$ near $u(z)$. 

Let's suppose then that $Du = X \otimes \beta_S$ everywhere. In particular, $u|\partial S$ is locally constant. By looking at its values on boundary components, one sees that the value of $u$ on the $d$-th boundary component lies in the intersection from \eqref{eq:multi-transverse} (this is actually the same intersection in both cases, the only difference being that the term $\phi^{w_0}(L_0)$ is redundant for $d\beta = 0$, since then $w_0 = w_1 + \cdots + w_d$). If $d\beta = 0$, this is enough to reach a contradiction. In the remaining case, observe that \eqref{eq:c-path} holds for all paths in $S$, which means that $u(S)$ is contained in a single orbit of $X$. By \eqref{eq:multi-transverse}, that orbit is necessarily nonconstant. One can find contractible loops $c$ along which $\int_c \beta_S$ is an arbitrarily small nonzero number, and that leads to a contradiction with \eqref{eq:c-path}.
\end{proof}

\subsection{Consistent choices\label{subsec:choice}}
So far, we have considered a single equation \eqref{eq:cr} in isolation. To build a suitable hierarchy of moduli spaces of solutions, one needs to make sure that the data which define those spaces are suitably correlated. Our first step is to make a suitable selection of Lagrangian submanifolds, within the class \eqref{eq:lagrangian}:
\begin{equation} 
\label{eq:collection}
\mybox{ 
A sufficiently large collection of Lagrangian submanifolds is a countable set $\bfL$ of submanifolds, such that the following holds. Given any $L$ from \eqref{eq:lagrangian} and a number $a<\half$, there is an $L^+ \in \bfL$ which has $\alpha_{L^+} \geq a$ and which is isotopic to $L$; as usual, we mean that the isotopy should remain in \eqref{eq:lagrangian}.
}
\end{equation}
It is unproblematic to show that such collections exist, the basic fact being that two Lagrangian submanifolds which are sufficiently $C^1$-close can be deformed into each other (compare \cite[Remark 3.29 and Definition 3.34]{ganatra-pardon-shende17}). Moreover, by generic perturbations, one can achieve the following:
\begin{equation}
\label{eq:collection-2}
\mybox{
A countable set $\bfL$ of Lagrangian submanifolds is said to be in general position if it has the following properties. For any $L_0,L_1 \in \bfL$ with $\alpha_{L_0} > \alpha_{L_1}$ and any $w$, the intersection $\phi^w(L_0) \cap L_1$ is transverse, and also disjoint from the zero-set of $X$; moreover, for any $L_0,L_1,L_2 \in \bfL$ with $\alpha_{L_0} > \alpha_{L_1} > \alpha_{L_2}$ and any $w_1,w_2$, 
\[
\phi^{w_1+w_2}(L_0) \cap \phi^{w_2}(L_1) \cap L_2 = \emptyset.
\]
}
\end{equation}
We assume from now on that an $\bfL$ as in \eqref{eq:collection}, \eqref{eq:collection-2} has been fixed.
\begin{equation}
\label{eq:floer-2}
\mybox{
Take Lagrangian submanifolds $(L_0,L_1)$ in $\bfL$, such that $\alpha_{L_0} > \alpha_{L_1}$. For any such pair and any $w$, we choose a family of almost complex structures $(J_t) = (J_{L_0,L_1,w,t})$ parametrized by $t \in [0,1]$.
}
\end{equation}
Assume that one-forms as in \eqref{eq:asymptotically-consistent-one-forms} have been chosen on all stable weighted popsicles. When it comes to almost complex structures, we proceed in a parallel way. 
\begin{equation} \label{eq:choice-2}
\mybox{
Let $(S,\sigma)$ be a stable weighted popsicle of type $(d+1,p,w)$. Suppose also that we have Lagrangian submanifolds $(L_0,\dots,L_d)$ in $\bfL$, such that $\alpha_{L_0} > \cdots > \alpha_{L_d}$. For each such datum, we choose a family of almost complex structures $(J_{S,z}) = (J_{S,\sigma,w,L_0,\dots,L_d,z})$ parametrized by $z \in S$, in the class \eqref{eq:almost-complex}, and which is asymptotically translation-invariant \eqref{eq:asymptotically}, with limits on the intervals at infinity equal to $J_{L_0,L_d,w_0}$ ($k = 0$) and $J_{L_{k-1},L_k,w_k}$ ($k>0$). 
}
\end{equation}
We require that these families of almost complex structures should be invariant under the action of $\Aut(p)$, and vary smoothly in the moduli space of popsicles. The latter condition can be expressed by saying that they form a smooth family on the fibrewise compactification $|\scrS^{d+1,p,w}$. 
Let's call this a universal choice of almost complex structures. 
\begin{equation} \label{eq:cr-2}
\mybox{
A universal choice of almost complex structures is asymptotically consistent if the following holds. For every $(d+1,p,w)$, the relevant family can be extended smoothly to $\cornerbar{\scrS}^{d+1,p,w}$. Moreover, if we restrict that extension to any component $\bfS_v$ of a broken popsicle, the outcome agrees with that prescribed by the universal choice for weighted popsicles of type $(|v|,\bfp_v,\bfw_v)$.
}
\end{equation}
In the same spirit as \eqref{eq:beta-gluing-0}, \eqref{eq:beta-gluing}, consider a gluing process involving a broken popsicle two pieces. The almost complex structures that enter into the gluing can be written as
\begin{equation} \label{eq:j-gluing-0}
\left\{
\begin{aligned}
& J_{\bfS_1,s_1,t} = J_t + e^{-\pi s_1} \bfO_{1,\exp(-\pi s_1),t}, \\
& J_{\bfS_2,s_2,t} = J_t + e^{\pi s_2} \bfO_{2,\exp(\pi s_2),t};
\end{aligned}
\right.
\end{equation}
and the glued surface carries a family almost complex structure which, on the neck, has the form
\begin{equation} \label{eq:on-the-neck}
J_{S_l,s,t} = J_{\bfS_1,s+l/2,t} + J_{\bfS_2,s-l/2,t} - J_t + O_{\exp(-\pi s - \pi l/2), \exp(\pi s - \pi l/2), t},
\end{equation}
where the correction term is a smooth family $O_{\sigma_1,\sigma_2,t}$. In particular, if we consider a piece of the neck of a fixed size, then on that piece, $J_{S_l} \rightarrow J_t$ exponentially as $l \rightarrow \infty$. Families satisfying the asymptotically consistency condition can be constructed inductively.

\begin{remark}
In principle, one could instead work with consistent choices \eqref{eq:consistent-ends-and-one-forms}; and similarly, demand that the almost complex structures should be strictly translation-invariant on the ends \eqref{eq:j-strict}, as well as on the necks of glued surfaces (in the notation from \eqref{eq:on-the-neck}, this means equality $J_{S_l,s,t} = J_t$ for $l \gg 0$). However, this leads to complications with transversality, and we therefore prefer the asymptotic version of consistency.
\end{remark}

Having fixed almost complex structures as in \eqref{eq:floer-2}--\eqref{eq:cr-2}, we consider the following moduli spaces (updating the definitions from Section \ref{subsec:fantasy}). For $(L_0,L_1)$ and $w$ as in \eqref{eq:floer-2}, we use the almost complex structures chosen there to define the space $\scrR^{2,p,w}(x_0,x_1)$, $|p| = 0$, of Floer trajectories. For other $(d+1,p,w)$ and $(L_0,\dots,L_d)$ as in \eqref{eq:choice-2}, $\scrR^{d+1,p,w}(x_0,\dots,x_d)$ is the space of triples $(S,\sigma,u)$ where $(S,\sigma)$ is a stable weighted popsicle of type $(d+1,p,w)$, and $u$ is a solution of the associated equation \eqref{eq:cr}, with limits \eqref{eq:disc-limits}. The arguments from Lemmas \ref{th:boundary} and \ref{th:compactness-2} apply to all equations appearing in the definition of our moduli spaces. Hence, solutions are always contained in $\Pi^{-1}(\{|w| \leq 1\}) \setminus \partial M \subset M$. A priori energy bounds are provided by \eqref{eq:e-e}, \eqref{eq:energy-and-action}; and bubbling is of course impossible in an exact situation. With that in mind, the construction of the compactification $\bar\scrR^{d+1,p,w}(x_0,\dots,x_d)$ follows the standard strategy. 

The transversality result says that for generic universal choices of almost complex structures which are asymptotically consistent, as in \eqref{eq:floer-2}--\eqref{eq:cr-2}, all the moduli spaces $\scrR^{d+1,p,w}(x_0,\dots,x_d)$ are regular. By a parallel transversality argument, if we fix a conjugacy class $C$ of subgroups in $\Aut(p)$, then the subspace $\scrR^{d+1,p,w}(x_0,\dots,x_d)_C$ where the underlying popsicle has isotropy exactly $C$ is also generically regular. As a concrete consequence, the combination of this and compactness shows that generically, zero-dimensional spaces $\scrR^{d+1,p,w}(x_0,\dots,x_d)$ are finite sets, equipped with a free action of $\Aut(p)$. In brief, the transversality argument works as follows (see \cite[Sections 8c and 8d]{abouzaid-seidel07} for more details). For Floer trajectories, we use Lemma \ref{th:transversality-1} directly. For all other moduli spaces, the assumptions from Lemma \ref{th:transversality-2} are satisfied. Namely, if $d>1$, we use emptiness of triple intersections in \eqref{eq:collection-2}; and in the remaining case $d = 1$, we necessarily have $|p|>0$, so $d\beta_S \neq 0$, and we appeal to the part of \eqref{eq:collection-2} which says that intersections are disjoint from the zero-set of $X$.  Lemma \ref{th:transversality-2} only states generic regularity of the moduli space for a fixed domain; but the standard proof of that fact, which relies on the surjectivity of the ``universal linearized operator'' which includes deformations of the almost complex structure, ensures that the corresponding result for families holds as well. The requirement of $\Aut(p)$-equivariance, which is part of our notion of universal choice, still allows one to vary the almost complex structure on any single popsicle freely; the choices for different popsicles are correlated, but that does not interfere with the transversality argument (see \cite[Remark 8.10]{abouzaid-seidel07}).

\subsection{Grading and signs\label{subsec:grading-and-signs}}
Recall some elementary terminology: given a real vector space $K$, we write $\mathit{or}(K)$ for the free abelian group generated by the two orientations of $K$, with the relation that those generators add to zero. Hence, a choice of isomorphism $\mathit{or}(K) \iso \bZ$ is the same as an orientation of $K$. We actually want to consider $\mathit{or}(K)$ as a graded abelian group, placed in degree $\mathrm{dim}(K)$. For a Fredholm operator $D$, we write $\mathit{or}(D)$ for the orientation space associated to the virtual index space (kernel minus cokernel). Its degree is the Fredholm index $\mathrm{ind}(D)$.

One standard approach to orientations in Floer theory goes as follows \cite[Section 12b]{seidel04}. To every nondegenerate chord $x \in \mathit{ch}(L_0,L_1;w)$ one associates a Fredholm operator $D_x$, and sets
\begin{equation} \label{eq:orientation-x}
\mathit{or}(x) = \mathit{or}(D_x);
\end{equation}
the degree of this graded group is the Maslov index $\mathrm{ind}(x) = \mathrm{ind}(D_x)$. Note that $D_x$ is not unique; however, our assumption that the Lagrangian submanifolds are graded and {\em Spin} ensures that the space of such operators is connected, and that the local system $\mathit{or}(D_x)$ on it is trivial. Hence, \eqref{eq:orientation-x} is well-defined up to canonical isomorphism. Replacing our earlier definition \eqref{eq:floer-cochain} (to which it  reduces if one is willing to pick orientations), we set
\begin{equation} \label{eq:floer-cochain-2}
\mathit{CF}^*(L_0,L_1;w) = \bigoplus_x \mathit{or}(x).
\end{equation}

In the situation of Lemma \ref{th:compactness-2}, write $D_u$ for the operator that linearizes the Cauchy-Riemann equation at $u$. The gluing theory for Fredholm operators yields
\begin{equation} \label{eq:index-formula}
\mathrm{ind}(D_u) + \mathrm{ind}(x_d) + \cdots + \mathrm{ind}(x_1) = \mathrm{ind}(x_0),
\end{equation}
as well as a canonical isomorphism
\begin{equation} \label{eq:or-1}
\mathit{or}(D_u) \otimes \mathit{or}(x_d) \otimes \cdots \otimes \mathit{or}(x_1) \iso \mathit{or}(x_0).
\end{equation}
Turning more specifically to the case of interest here, consider a regular point $(S,\sigma,u)$ in a moduli space $\scrR^{d+1,p,w}(x_0,\dots,x_d)$. From \eqref{eq:index-formula}, we get
\begin{equation} \label{eq:dimension-formula}
\begin{aligned}
\mathrm{dim}(\scrR^{d+1,p,w}(x_0,\dots,x_d)) & = \mathrm{dim}(\scrR^{d+1,p}) + \mathrm{ind}(D_u) \\ & = 
d-2+|p| + \mathrm{ind}(x_0) - \mathrm{ind}(x_d) - \cdots - \mathrm{ind}(x_1).
\end{aligned}
\end{equation}
By a deformation argument for linearized operators, one obtains a canonical isomorphism
\begin{equation} \label{eq:or-2}
\mathit{or}(T\scrR^{d+1,p}) \otimes \mathit{or}(D_u) \iso \mathit{or}(T\scrR^{d+1,p,w}(x_0,\dots,x_d)).
\end{equation}
Here, the tangent space on the right hand side is at the point $(S,\sigma,u)$, and that on the left at the projection $[S,\sigma]$. Combining \eqref{eq:or-1} and \eqref{eq:or-2} yields a canonical isomorphism
\begin{equation} \label{eq:or-3}
\mathit{or}(T\scrR^{d+1,p,w}(x_0,\dots,x_d)) \otimes \mathit{or}(x_d) \otimes \cdots \otimes \mathit{or}(x_1) \iso
\mathit{or}(T\scrR^{d+1,p}) \otimes \mathit{or}(x_0).
\end{equation}
If $(S,\sigma,u)$ is an isolated point in its moduli space, and we use the orientation of $\scrR^{d+1,p}$ chosen in Section \ref{subsec:orientations}, then \eqref{eq:or-3} becomes
\begin{equation} \label{eq:sign-contributions}
\mathit{sign}(S,\sigma,u): \mathit{or}(x_d) \otimes \cdots \otimes \mathit{or}(x_1) \stackrel{\iso}{\longrightarrow}
\mathit{or}(x_0).
\end{equation}

At this point, we assume that all choices of almost complex structures have been made generically, as discussed at the end of Section \ref{subsec:choice}. Take a zero-dimensional space $\scrR^{d+1,p,w}(x_0,\dots,x_d)$, 
which by compactness is a finite set. We count points in it to define a map as in \eqref{eq:mu-p}, but with the following sign specification:
\begin{equation} \label{eq:sign-contributions-2}
\mu^{d,p,w}\,\big|\,\mathit{or}(x_d) \otimes \cdots \otimes \mathit{or}(x_1) = (-1)^{\ast+\heartsuit+\diamondsuit} \textstyle
\bigoplus_{x_0} \sum_{(S,\sigma,u)} \mathit{sign}(S,\sigma,u),
\end{equation}
where
\begin{align}
\label{eq:gah-1}
& \textstyle \ast = \sum_{k=1}^d (k + w_1 + \cdots + w_{k-1}) \mathrm{ind}(x_k), \\
\label{eq:gah-2}
& \textstyle \heartsuit = \sum_{k=1}^d (d-k)w_k, \\
\label{eq:gah-3}
& \textstyle \diamondsuit = \sum_k \# p^{-1}(\{k+1,\dots,d\}) (w_k + \# p^{-1}(k)).
\end{align}
The story for spaces of Floer trajectories is analogous, but of course more familiar, which is why we omit it here. 

\begin{remark} \label{th:signsign}
Signs of type \eqref{eq:gah-1} always appear when one uses the ``reduced degree'' algebraic conventions for $A_\infty$-operations: the calculus of determinant lines more naturally leads to different conventions involving unreduced degrees, and the difference has to be bridged by these ad hoc sign changes. Next, recall that we had chosen orientations of $\scrR^{d+1,p}$ in a way that was simple to explain but somewhat arbitrary. For those orientations, there is no particular reason why one would expect the $A_\infty$-relations to hold; and indeed, one needs to insert the additional signs \eqref{eq:gah-2}, \eqref{eq:gah-3} to make that work.
\end{remark}

\subsection{The ordered categories\label{subsec:ordered-categories}}
Let's formalize the progress made so far. We construct an $A_\infty$-category $\scrF^{\mathit{ord}}$ whose objects are the Lagrangian submanifolds $L \in \bfL$, with morphisms
\begin{equation} \label{eq:f-ord-morphisms}
\scrF^{\mathit{ord}}(L_0,L_1) = \begin{cases} 
\mathit{CF}^*(L_0,L_1) \oplus \mathit{CF}^{*+1}(L_0,L_1;-1) & \alpha_{L_0} > \alpha_{L_1}, \\
\bZ \, e & L_0 = L_1, \\
0 & \text{otherwise.}
\end{cases}
\end{equation}
The elements $e$ are by definition identity morphisms. The nontrivial $A_\infty$-operations $\mu^d_{\scrF^{\mathit{ord}}}$ involve objects $(L_0,\dots,L_d)$ with $\alpha_{L_0} > \cdots > \alpha_{L_d}$. We define them to be the sum of \eqref{eq:sign-contributions-2} over all weights in $\{-1, 0\}$. If we only take the first summand in the nontrivial morphism spaces \eqref{eq:f-ord-morphisms}, we get an $A_\infty$-subcategory $\scrA^{\mathit{ord}} \subset \scrF^{\mathit{ord}}$. In this case, the $A_\infty$-operations use only boundary-punctured discs $S$ with no sprinkles and weights $0$, for which one can choose $\beta_S = 0$ as part of \eqref{eq:asymptotically-consistent-one-forms}; hence, $\scrA^{\mathit{ord}}$ is an ordered version of the classical Fukaya $A_\infty$-structure.

With the signs \eqref{eq:gah-1}--\eqref{eq:gah-3}, the $A_\infty$-associativity relations \eqref{eq:associativity} is satisfied. The sign computation is the same as in \cite[Section 9]{abouzaid-seidel07}, and will not be repeated here (see Remark \ref{th:signsign} for motivation). Still, there are two aspects that seem important enough to warrant a brief discussion, namely the cancellation of boundary contributions postulated in \eqref{eq:cancel-1}, \eqref{eq:cancel-2}:
\begin{equation}
\mybox{
The (free, because of the genericity assumption) action of $\Aut(p)$ on a zero-dimensional space $\scrR^{d+1,p,w}(x_0,\dots,x_d)$ does not change the pseudo-holomorphic map, or the signs \eqref{eq:gah-1}--\eqref{eq:gah-3}. Hence, its only effect on sign contributions comes from the action on $\scrR^{d+1,p}$. Consider boundary points of a one-dimensional moduli space $\bar\scrR^{d+1,p,w}(x_0,\dots,x_d)$, where the underlying popsicle splits as in \eqref{eq:more-symmetry}. Those boundary points come in pairs, exchanged by the action of $\Aut(\bfp_1) = \bZ/2$ on the moduli space $\scrR^{\bfd_1+1,\bfp_1}$ for the first component, and that action is orientation-reversing. Hence, the contribution from each pair indeed cancels.
}
\end{equation}
\begin{equation} \label{eq:cancel-2b}
\mybox{
Now consider the situation from \eqref{eq:cancel-2}, which gives rise to two boundary points of distinct moduli spaces. These again consist of the same maps, but their combinatorial structure is different, and that affects the signs. Taking $k_1 < k_2$ as in \eqref{eq:switch-sprinkle}, inspection of Lemma \ref{th:boundary-orientations} shows that the orientation discrepancies \eqref{eq:boundary-signs} for the two boundary points differ by $(-1)^\spadesuit$, where 
\[
\spadesuit = \# \bfp_2^{-1}(\{k_1+1,\dots,k_2-1\}).
\]
We also have to take into account the difference between the signs \eqref{eq:gah-1}--\eqref{eq:gah-3} for the two moduli spaces. In fact only \eqref{eq:gah-3} changes, by $(-1)^{\spadesuit + 1}$. Hence, the two boundary points indeed yield opposite signs when one adds up the $\mu^{d,p,w}$ to form $\mu^d_{\scrF^{\mathit{ord}}}$.
}
\end{equation}

\begin{remark} \label{th:abouzaid}
It may be useful to compare the algebraic structure constructed here with that in \cite{abouzaid-seidel07}. Suppose that we took the construction from \cite{abouzaid-seidel07} and adapted it to the present geometric situation. The outcome would be an $A_\infty$-category $\scrC^{\mathit{ord}}$ whose nontrivial morphism spaces, for $\alpha_{L_0} > \alpha_{L_1}$, are
\begin{equation} \label{eq:c-category}
\scrC^{\mathit{ord}}(L_0,L_1) = \bigoplus_{w \geq 0} \mathit{CF}^*(L_0,L_1;w) \oplus \mathit{CF}^*(L_0,L_1;w)q,
\end{equation}
where the formal variable $q$ has degree $-1$. A moduli space of weighted popsicles (now having nonnegative weights) with injective $p$ yields multilinear maps 
\begin{equation}
\label{eq:q-shift}
\mathit{CF}^*(L_{d-1},L_d;w_d) q^{\delta_d} \otimes \cdots 
\otimes \mathit{CF}^*(L_0,L_1;w_1) q^{\delta_1} \longrightarrow \mathit{CF}^*(L_0,L_d;w_0) q^{\delta_0}[2-d], 
\end{equation}
for all $\# p^{-1}(k) \leq \delta_k \leq 1$, $k = 1,\dots,d$, such that $\delta_0 = \delta_1 + \cdots + \delta_d - |p| \leq 1$.
To define $\mu_{\scrC^{\mathit{ord}}}^d$, one takes those operations and adds another term in the differential, namely $\partial_q = \mathit{id}: \mathit{CF}^*(L_0,L_1;w)q \rightarrow \mathit{CF}^*(L_0,L_1;w)$. To put \eqref{eq:f-ord-morphisms} on the same footing, one would think of it as obtained by setting $\delta_k = -w_k$ in \eqref{eq:q-shift}, meaning that
\begin{equation}
\scrF^{\mathit{ord}}(L_0,L_1) = \mathit{CF}^*(L_0,L_1;0) \oplus \mathit{CF}^*(L_0,L_1;-1)q.
\end{equation}
With that in mind, the signs in \eqref{eq:sign-contributions-2} agree with those in \cite[Equations (3.43) and (3.44)]{abouzaid-seidel07}, even though they are written slightly differently.
%
\end{remark}

\subsection{Continuation cocycles\label{subsec:continuation}}
Ultimately, one wants the isomorphism class of a Lagrangian submanifold in the Fukaya category to be constant under isotopies within the class \eqref{eq:lagrangian}, something that's obviously not true in $\scrF^{\mathit{ord}}$ because of the ordering condition. Following  \cite{abouzaid-seidel11b, ganatra-pardon-shende17} we will restore the ``missing morphisms'' by categorical localisation with respect to continuation cocycles. The situation is as follows:
\begin{equation} \label{eq:positive-perturbation}
\mybox{
Take $(L^+,L)$ with $\alpha_{L^+} > \alpha_L$, which intersect transversally. Suppose that they are isotopic within the class \eqref{eq:lagrangian}, and make a choice of isotopy, which is a family $(L_s)_{s \in \bR}$ with $L_s = L^+$ for $s \ll 0$, $L_s = L$ for $s \gg 0$, and such that $\alpha_{L_s}$ is a nonincreasing function of $s$ (one can always modify a given isotopy so that it satisfies the last-mentioned condition, by using the flow of $X$). We then say that $L^+$ is a positive perturbation of $L$.
}
\end{equation}
Consider the closed upper half-plane as a Riemann surface $S = \{\mathrm{im}(z) \geq 0\}$ with one negative point at infinity, $\Sigma_S^- = \{i\infty\}$. We equip this with $\beta_S = 0$ and the moving boundary condition $G = \bigcup_s \{s\} \times L_s$. If we take the one-form $\chi_G$ from \eqref{eq:follows} and evaluate it along a lift of $\partial_s$, the outcome is 
\begin{equation} \label{eq:fibrewise-chi}
\chi_G(\partial_s)\,|\, L_s \cap \{|\pi(x)| \geq 1\} = (\partial_s \alpha_{L_s}) (H|L_s) + K_s,
\end{equation}
where $K_s$ describes the fibrewise part of the deformation $\partial_s L_s$. Since that part is the same in the fibre over any point of $e^{2\pi i \alpha_{L_z}} \bR^{\geq 1}$, it follows that $\chi_G(\partial_s)$ is bounded above. A bound on the topological energy \eqref{eq:corrected-topological-energy} therefore implies a bound on the geometric energy of solutions (of the associated Cauchy-Riemann equation with moving boundary conditions); we have seen this argument before, in a toy model situation \eqref{eq:toy-isotopy}. This yields the analogue of Lemma \ref{th:apriori}. There is also a counterpart of Lemma \ref{th:compactness-2}, obtained by replacing \eqref{eq:integrated} with its analogue for \eqref{eq:moving-radial-lines}. For transversality, we can use the same argument as in Lemma \ref{th:transversality-2}, bearing in mind that $L^+ \cap L$ is disjoint from the zero-locus of $X$, by \eqref{eq:collection-2}.

After these preliminaries, we can count isolated solutions and obtain a cocycle
\begin{equation} \label{eq:continuation-cocycle}
c_{L^+,L} \in \mathit{CF}^0(L^+,L),
\end{equation}
called the continuation cocycle. We will need a few well-known properties of such cocycles.
\begin{equation}
\mybox{
The cohomology class $[c_{L^+,L}] \in \mathit{HF}^0(L^+,L)$ remains unchanged if we deform the isotopy in \eqref{eq:positive-perturbation}, keeping the endpoints fixed.
}
\end{equation}
\begin{equation} \label{eq:chain-perturbation}
\mybox{
Let $L^+$ be a positive perturbation of $L$, and $L$ a positive perturbation of $L^-$. Assuming suitable transverse intersection assumptions, we then find that $L^+$ is a positive perturbation of $L^-$ (in the sense that we can concatenate the isotopies); and the associated continuation cocycles are related by
\[
[c_{L^+,L^-}] = [\mu^2(c_{L,L^-}, c_{L^+,L})] \in \mathit{HF}^*(L^+,L^-).
\]
}
\end{equation}
\begin{equation} \label{eq:above}
\mybox{
Suppose that $L_1$ is a positive perturbation of $L_1^-$. If $L_0$ is another Lagrangian submanifold such that $\alpha_{L_0} > \alpha_{L_1}$, satisfying suitable transverse intersection assumptions, the product
\[
\mu^2(c_{L_1,L_1^-},\cdot): \mathit{CF}^*(L_0,L_1;w) \stackrel{\htp}{\longrightarrow} \mathit{CF}^*(L_0,L_1^-;w)
\]
is a quasi-isomorphism.
}
\end{equation}
\begin{equation} \label{eq:below}
\mybox{
Suppose that $L_0^+$ is a positive perturbation of $L_0$. If $L_1$ is another Lagrangian submanifold such that $\alpha_{L_1} < \alpha_{L_0}$, satisfying suitable transverse intersection assumptions, the product
\[
\mu^2(\cdot, c_{L_0^+,L_0}): \mathit{CF}^*(L_0,L_1;w) \stackrel{\htp}{\longrightarrow} \mathit{CF}^*(L_0^+,L_1;w)
\]
is a quasi-isomorphism.
}
\end{equation}
Here, all the $\mu^2$ are triangle products (with no sprinkles). We omit the proofs, which are by standard gluing arguments (for \eqref{eq:above} and \eqref{eq:below}, the gluing argument shows that the products are homotopic to continuation maps; and one then applies a geometric argument to those maps).

\subsection{Fukaya categories by localisation\label{subsec:af}}
Our first application is to the $A_\infty$-category $\scrA^{\mathit{ord}}$. Given a positive perturbation $L^+$ of $L$, both lying in $\bfL$, we take the cocycle $c_{L^+,L}$, and collect all of them in a set $S$. Consider the localisation, in the sense of \eqref{eq:localisation-2},
\begin{equation}
\scrA \stackrel{\mathrm{def}}{=} S^{-1} \scrA^{\mathit{ord}}.
\end{equation}
This is the definition of the Fukaya category of the Lefschetz fibration from \cite{abouzaid-seidel11b}. To explain its basic properties, we use the algebraic tools from Section \ref{subsec:quotient}. First, note that any object $L_0$ in $\scrA^{\mathit{ord}}$ admits approximately $S$-projective resolutions. Namely, given elements of $S(L_i,L_i^-)$ ($i = 1,\dots,m$), choose a positive perturbation $L_0^+$ of $L_0$ such that $\alpha_{L_0^+} > \alpha_{L_i}$. The desired property \eqref{eq:more-projective} follows directly from \eqref{eq:above}. With that in mind, \eqref{eq:below} and \eqref{eq:quotient-hom} imply the following:
\begin{equation} \label{eq:a-hom}
\mybox{
If $\alpha_{L_0} > \alpha_{L_1}$, the localisation functor gives a quasi-isomorphism $\scrA^{\mathit{ord}}(L_0,L_1) \htp \scrA(L_0,L_1)$; and hence, $H^*(\scrA(L_0,L_1)) \iso \mathit{HF}^*(L_0,L_1)$. For general $(L_0,L_1)$, we therefore have $H^*(\scrA(L_0,L_1)) \iso \mathit{HF}^*(L_0^+,L_1)$, where $L_0^+$ is a positive perturbation of $L_0$ such that $\alpha_{L_0^+} > \alpha_{L_1}$.
}
\end{equation}

The same strategy applies to $\scrF^{\mathit{ord}}$. By definition of $\mu^1_{\scrF^{\mathit{ord}}}$,
\begin{equation} \label{eq:cone-continuation}
(c_{L^+,L},0) \in \scrF^{\mathit{ord},0}(L^+,L)
\end{equation}
is a cocycle. In the situation of \eqref{eq:chain-perturbation}, these cocycles satisfy
\begin{equation} \label{eq:chain-perturbation-2}
[(c_{L^+,L^-},0)] = [\mu^2_{\scrF^{\mathit{ord}}}( (c_{L,L^-},0) , (c_{L^+,L},0))] \in H^0(\scrF^{\mathit{ord}}(L^+,L^-)),
\end{equation}
again by definition of $\mu^2_{\scrF^{\mathit{ord}}}$. By using the long exact sequence associated to 
\begin{equation}
0 \rightarrow \mathit{CF}^*(L_0,L_1;0) \longrightarrow \scrF^{\mathit{ord}}(L_0,L_1) \longrightarrow \mathit{CF}^{*+1}(L_0,L_1;-1) \rightarrow 0,
\end{equation}
one sees that in the situations from \eqref{eq:above} and \eqref{eq:below}, there are quasi-isomorphisms
\begin{align} \label{eq:above-2}
& \mu^2_{\scrF^{\mathit{ord}}}((c_{L_1,L_1^-},0),\cdot): \scrF^{\mathit{ord}}(L_0,L_1) \stackrel{\htp}{\longrightarrow} \scrF^{\mathit{ord}}(L_0,L_1^-), \\
& \label{eq:below-2}
\mu^2_{\scrF^{\mathit{ord}}}(\cdot,(c_{L_0^+,L_0},0)): \scrF^{\mathit{ord}}(L_0,L_1) \stackrel{\htp}{\longrightarrow} \scrF^{\mathit{ord}}(L_0^+,L_1).
\end{align}
This time, take $S = S(\scrF^{\mathit{ord}})$ to be the set of cocycles \eqref{eq:cone-continuation}, and 
\begin{equation}
\scrF \stackrel{\mathrm{def}}{=} S^{-1} \scrF^{\mathit{ord}}.
\end{equation}
Strictly in parallel with \eqref{eq:a-hom}, the properties \eqref{eq:above-2}, \eqref{eq:below-2} imply the following:
\begin{equation} \label{eq:f-success}
\mybox{
If $\alpha_{L_0} > \alpha_{L_1}$, the localisation functor gives a quasi-isomorphism $\scrF^{\mathit{ord}}(L_0,L_1) \htp \scrF(L_0,L_1)$;  and hence, $H^*(\scrF(L_0,L_1)) \iso \mathit{HF}^*(L_0,L_1;\mathit{cont})$ in the notation of \eqref{eq:hf-cont}. For general $(L_0,L_1)$, we therefore have $H^*(\scrF(L_0,L_1)) \iso \mathit{HF}^*(L_0^+,L_1;\mathit{cont})$, where $L_0^+$ is a positive perturbation of $L_0$ such that $\alpha_{L_0^+} > \alpha_{L_1}$.
}
\end{equation}
By the universal property \eqref{eq:dg-quotient-property}, the inclusion $\scrA^{\mathit{ord}} \hookrightarrow \scrF^{\mathit{ord}}$ induces an essentially unique $A_\infty$-functor $\scrA \rightarrow \scrF$. On the cohomology level, this yields the map $\mathit{HF}^*(L_0^+,L_1) \rightarrow \mathit{HF}^*(L_0^+,L_1;\mathit{cont})$.

\begin{remark}
Let's return to the discussion initiated in Remark \ref{th:abouzaid}, but this time taking a wider view. One could introduce a category $\scrZ^{\mathit{ord}}$ whose morphism spaces, in the nontrivial case $\alpha_{L_0} > \alpha_{L_1}$, are
\begin{equation} \label{eq:p-morphisms}
\begin{aligned}
\scrZ^{\mathit{ord}}(L_0,L_1) = & \Big( \prod_{w<0} \mathit{CF}^*(L_0, L_1;w) \oplus \mathit{CF}^*(L_0,L_1;w)q \Big)  \\
\oplus &\Big( \bigoplus_{w \geq 0} \mathit{CF}^*(L_0,L_1;w) \oplus \mathit{CF}^*(L_0,L_1;w)q \Big).
\end{aligned}
\end{equation}
The $A_\infty$-structure would involve all weighted popsicles with injective maps $p$, together with an additional $\partial_q$ term. The direct product on the $w<0$ part of \eqref{eq:p-morphisms} has been introduced so that the decreasing filtration obtained by putting upper bounds on the sum of $w$ and the power of $q$ is complete. The subspaces where that filtration is $\leq 0$ form an $A_\infty$-subcategory, which we denote by $\scrN^{\mathit{ord}}$:
\begin{equation}
\begin{aligned}
\scrN^{\mathit{ord}}(L_0,L_1) = \prod_{w \leq 0} \mathit{CF}^*(L_0, L_1;w) \oplus \prod_{w \leq -1} \mathit{CF}^*(L_0,L_1;w)q.
\end{aligned}
\end{equation}
The projection $\scrN^{\mathit{ord}} \rightarrow \scrF^{\mathit{ord}}$ is an $A_\infty$-functor. Moreover, if we take \eqref{eq:c-category}, then the inclusion $\scrC^{\mathit{ord}} \rightarrow \scrZ^{\mathit{ord}}$ is an $A_\infty$-functor.
Let's localize all these $A_\infty$-categories with respect to continuation cocycles. The $A_\infty$-functors induced by inclusions and projection form a diagram
\begin{equation} \label{eq:5}
\xymatrix{
& \scrA\ar[d] \ar[r] &\ar[d] \scrC \\ 
\scrF & \ar[l] \scrN \ar[r] & \scrZ.
}
\end{equation}

As a formal algebro-geometric analogue (roughly corresponding to \eqref{eq:5} under mirror symmetry), one can think of the situation where we have a divisor $F$ inside a projective variety $A$, both assumed to be smooth for simplicity. Let $C = A \setminus F$ be the complement of the divisor; $N$ its formal neighbourhood in $A$; and $Z$ the punctured formal neighbourhood ($Z$ is not particularly well-behaved within classical algebraic geometry, but one can dodge that bullet by considering only coherent sheaves on it that extend to $N$, which is what we will do here). Then, the associated derived categories sit in a diagram 
\begin{equation} \label{eq:5b}
\xymatrix{
& D^b\mathit{Coh}(A) \ar[r] \ar[d] & \ar[d] D^b\mathit{Coh}(C) \\
D^b\mathit{Coh}(F) & \ar[l] D^b\mathit{Coh}(N) \ar[r] & D^b\mathit{Coh}(Z). \\
}
\end{equation}
It is an open question how far this analogy goes: meaning, to what extent \eqref{eq:5} has the properties of \eqref{eq:5b}, and whether there is an actual correspondence in cases where mirror symmetry applies.
\end{remark}

\subsection{A noncommutative divisor\label{subsec:divisor-1}}
By slightly modifying our previous approach, we will construct a noncommutative divisor $\scrD$ with ambient space $\scrA$, and whose fibre is quasi-equivalent to the previous $\scrF$. The additional step is a transfer argument, which may appear to add unnecessary complexity; but that piece of extra work will pay off later, when considering the analogous issue for noncommutative pencils.

We work over the polynomial ring $\bZ[v]$, and define a homogeneous version of  $\scrF^{\mathit{ord}}$, which is an $A_\infty$-category $\scrD^{\mathit{ord}}$ having the same objects as before, and with \eqref{eq:f-ord-morphisms} replaced by
\begin{equation} \label{eq:d-ord-morphisms}
\scrD^{\mathit{ord}}(L_0,L_1) = \begin{cases} 
\mathit{CF}^*(L_0,L_1)[v] \oplus \mathit{CF}^{*+1}(L_0,L_1;-1)[v] & \alpha_{L_0} > \alpha_{L_1}, \\
\bZ[v] \, e & L_0 = L_1, \\
0 & \text{otherwise.}
\end{cases}
\end{equation}
Following the general design from Section \ref{subsec:nc-linear-system}, we equip this with a weight grading, where $\mathit{CF}^*(L_0,L_1)v^r$ has weight $-r$, and $\mathit{CF}^{*+1}(L_0,L_1;-1)v^r$ weight $-r-1$. The $A_\infty$-operations $\mu_{\scrD^{\mathit{ord}}}^d$ are defined by adding up the $\bZ[v]$-linear extensions of $v^{|p|} \mu^{d,p,w}$. For instance, the piece of $\mu^1_{\scrD^{\mathit{ord}}}$ which comes from $(d = 1,\, |p| = 1,\, w_0 = 0, \, w_1 = -1)$ maps $\mathit{CF}^*(L_0,L_1;-1)v^r$ ($r \geq 1$) to $\mathit{CF}^*(L_0,L_1)v^{r+1}$. One can consider the continuation cocycles \eqref{eq:cone-continuation} as cocycles in \eqref{eq:d-ord-morphisms} of weight $0$. Therefore, their mapping cones are elements of the weight-graded version of the category of twisted complexes over $\scrD^{\mathit{ord}}$. As a result, the localised category $S^{-1}\scrD^{\mathit{ord}}$, formed over $\bZ[v]$, inherits the weight grading, as one can see from \eqref{eq:localised-morphisms}. Inspection of that formula also shows that the positive weight part of the morphism spaces is zero. We can spell that out concretely:
\begin{equation} \label{eq:spell}
\begin{aligned}
& S^{-1}\scrD^{\mathit{ord}}(L_0,L_1)  \\ & = \textstyle \scrD^{\mathit{ord}}(L_0,L_1) \oplus 
\bigoplus_C \big( \scrD^{\mathit{ord},\mathit{tw}}(C,L_1)[1] \otimes_{\bZ[v]} \scrD^{\mathit{ord},tw}(L_0,C) \big) \oplus \cdots \\
& \textstyle = \scrD^{\mathit{ord}}(L_0,L_1) \oplus \bigoplus_{(L,L^+)} \big( 
\scrD^{\mathit{ord}}(L,L_1) \otimes_{\bZ[v]} \scrD^{\mathit{ord}}(L_0,L)
\\ & \qquad
\oplus \scrD^{\mathit{ord}}(L^+,L_1) \otimes_{\bZ[v]} \scrD^{\mathit{ord}}(L_0,L)[-1]
\oplus \scrD^{\mathit{ord}}(L,L_1) \otimes_{\bZ[v]} \scrD^{\mathit{ord}}(L_0,L^+)[1]
\\ & \qquad
\oplus \scrD^{\mathit{ord}}(L^+,L_1) \otimes_{\bZ[v]} \scrD^{\mathit{ord}}(L_0,L^+) \big)
\oplus \cdots
\end{aligned}
\end{equation}
where the sum is over $C$ which are mapping cones of a continuation cocycle $L^+ \rightarrow L$.
Given any two objects $(L_0,L_1)$, consider the subcomplex 
\begin{equation} \label{eq:cut-weight}
\scrD(L_0,L_1) \subset S^{-1}\scrD^{\mathit{ord}}(L_0,L_1)
\end{equation}
generated over $\bZ[v]$ in weights $\{-1,0\}$, as in \eqref{eq:weight01}. Concretely, if we take the expression in \eqref{eq:spell} and write the nontrivial morphism spaces in it as a sum of two pieces as in \eqref{eq:d-ord-morphisms}, then $\scrD(L_0,L_1)$ contains those tensor product in which at most one Floer cochain group with $w = -1$ appears as a factor. Choose a positive perturbation $L_0^+$ such that $\alpha_{L_0^+} > \alpha_{L_1}$. Consider the diagram
\begin{equation} \label{eq:extract-weight}
\xymatrix{
\scrD(L_0,L_1) 
\ar[r]^-{\htp}
\ar@{^{(}->}[d]
&
\scrD(L_0^+,L_1) 
\ar@{^{(}->}[d]
&
\ar[l]_-{\htp} 
\scrD^{\mathit{ord}}(L_0^+,L_1)
\ar@{=}[d]
\\
S^{-1}\scrD^{\mathit{ord}}(L_0,L_1)
\ar[r]^-{\htp}
&
S^{-1}\scrD^{\mathit{ord}}(L_0^+,L_1)
&
\ar[l]_-{\htp}
\scrD^{\mathit{ord}}(L_0^+,L_1).
}
\end{equation}
On the bottom row, $\rightarrow$ is given by composition with the image of $(c_{L_0^+,L_0},0)$ in $S^{-1}\scrD^{\mathit{ord}}$. By construction, it is a homotopy equivalence, homogeneous with respect to weights. The $\leftarrow$ next to it is part of the localisation functor, which has the same homotopy equivalence property by the $\bZ[v]$-linear version of \eqref{eq:f-success}. To get the top row, we extract the subcomplexes generated in weights $\{-1,0\}$. Since that process, applied to modules which are trivial in positive weights, is functorial, we again get homotopy equivalences. Diagram-chasing shows that the inclusions \eqref{eq:cut-weight} are homotopy equivalences, again in a weight-homogeneous sense. Let $\scrD$ be the noncommutative divisor obtained by applying \eqref{eq:weight01} to \eqref{eq:cut-weight}. 

\begin{proposition} \label{th:divisor-1}
The ambient space of $\scrD$ is $\scrA$, and its fibre is quasi-equivalent to the previously constructed $\scrF$.
\end{proposition}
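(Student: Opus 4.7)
The strategy is to extract both statements from how the weight grading behaves under localisation, and under base change along $\bZ[v] \to \bZ$, $v \mapsto 1$.

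For the ambient space, observe that $\scrD^{\mathit{ord}}(L_0,L_1)^{(0)} = \mathit{CF}^*(L_0,L_1) = \scrA^{\mathit{ord}}(L_0,L_1)$, and the weight-zero contributions to $\mu^d_{\scrD^{\mathit{ord}}}$ come only from popsicles with $|p| = 0$ and all weights zero, reproducing $\mu^d_{\scrA^{\mathit{ord}}}$. Each continuation cocycle $(c_{L^+,L},0)$ sits in weight zero, so its cone in the weight-graded category of twisted complexes is again weight-zero. Inspection of \eqref{eq:localised-morphisms} then identifies the weight-zero part of $S^{-1}\scrD^{\mathit{ord}}$ with $S^{-1}\scrA^{\mathit{ord}} = \scrA$. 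The subcomplex $\scrD(L_0,L_1)$ is generated over $\bZ[v]$ in weights $\{-1,0\}$, so its weight-zero piece coincides with $\scrA(L_0,L_1)$, and the transfer \eqref{eq:weight01} preserves the weight-zero morphism spaces and compositions exactly. Hence the ambient space of $\scrD$ is $\scrA$.

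For the fibre, I will construct a chain of $A_\infty$-quasi-equivalences
\[
\scrF_1 \;=\; \scrD|_{v=1} \;\xrightarrow{\simeq}\; (S^{-1}\scrD^{\mathit{ord}})|_{v=1} \;\xrightarrow{\iso}\; \scrF.
\]
The equality holds by definition of the fibre (setting the weight-variable $v$ to $1$ in \eqref{eq:splitting} produces $\scrA \oplus \scrQ[1]$, with the expected $A_\infty$-operations). The first arrow is obtained by specializing the transfer $A_\infty$-quasi-equivalence $\scrD \to S^{-1}\scrD^{\mathit{ord}}$ at $v=1$: since each $\scrD(L_0,L_1)$ is a free bigraded $\bZ[v]$-module and the linear term of the transfer is the weight-graded quasi-isomorphism \eqref{eq:cut-weight}, base-change along $v \mapsto 1$ preserves the quasi-equivalence. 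The second arrow is induced by the $A_\infty$-functor $\scrD^{\mathit{ord}} \to \scrF^{\mathit{ord}}$ obtained by setting $v = 1$ (which sends $v^{|p|}\mu^{d,p,w}$ to $\mu^{d,p,w}$ and each cocycle \eqref{eq:cone-continuation} in $\scrD^{\mathit{ord}}$ to the corresponding continuation cocycle in $\scrF^{\mathit{ord}}$) through the universal property \eqref{eq:dg-quotient-property}; it is an isomorphism on morphism complexes because the tensor products in \eqref{eq:localised-morphisms} are over $\bZ[v]$ of free $\bZ[v]$-modules and hence commute with the base change $\bZ \otimes_{\bZ[v]} -$.

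The main technical point is the compatibility of localisation with the specialization $v \mapsto 1$, i.e.\ the isomorphism in the last step. This reduces to the observation that each continuation cocycle sits in weight zero, so the mapping cones generating $\scrC_S \subset \scrD^{\mathit{ord},\mathit{tw}}$ specialize exactly to the mapping cones of the corresponding continuation cocycles in $\scrF^{\mathit{ord},\mathit{tw}}$, and no correction terms appear when one extracts morphism complexes via \eqref{eq:localised-morphisms}. This is essentially bookkeeping with the explicit formulas and uses only the freeness of the morphism modules.
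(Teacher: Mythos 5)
Your proof is correct and follows essentially the same route as the paper: compute the weight-zero part of $S^{-1}\scrD^{\mathit{ord}}$ from \eqref{eq:localised-morphisms} (respectively \eqref{eq:spell}) to identify the ambient space with $\scrA$, then specialize the transfer quasi-equivalence $\scrD \to S^{-1}\scrD^{\mathit{ord}}$ at $v=1$ and observe that $(S^{-1}\scrD^{\mathit{ord}})|_{v=1} \iso S^{-1}\scrF^{\mathit{ord}} = \scrF$. Your write-up makes explicit the bookkeeping behind the paper's ``by construction'' (continuation cocycles live in weight zero, so their cones specialize correctly, and freeness of the $\bZ[v]$-modules makes \eqref{eq:localised-morphisms} commute with base change along $v\mapsto 1$), which is a helpful elaboration rather than a different argument.
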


\begin{proof}
The weight zero part of \eqref{eq:spell}, which by definition is the ambient space of the divisor, is given by
\begin{equation}
\begin{aligned}
& \scrD(L_0,L_1)^{(0)} = S^{-1}\scrD^{\mathit{ord}}(L_0,L_1)^{(0)} \\ 
&= \textstyle \scrA^{\mathit{ord}}(L_0,L_1) \oplus
\bigoplus_C \big( \scrA^{\mathit{ord},\mathit{tw}}(C,L_1)[1] \otimes \scrA^{\mathit{ord},tw}(L_0,C) \big) \oplus \cdots \\
& \textstyle = \scrA^{\mathit{ord}}(L_0,L_1) \oplus \bigoplus_{(L,L^+)} \big( 
\scrA^{\mathit{ord}}(L,L_1) \otimes \scrA^{\mathit{ord}}(L_0,L)
\\ & \qquad
\oplus \scrA^{\mathit{ord}}(L^+,L_1) \otimes \scrA^{\mathit{ord}}(L_0,L)[-1]
\oplus \scrA^{\mathit{ord}}(L,L_1) \otimes \scrA^{\mathit{ord}}(L_0,L^+)[1]
\\ & \qquad
\oplus \scrA^{\mathit{ord}}(L^+,L_1) \otimes \scrA^{\mathit{ord}}(L_0,L^+) \big)
\oplus \cdots \\
& = \scrA(L_0,L_1).
\end{aligned}
\end{equation}
Moreover, as noted in \eqref{eq:weight01}, the $A_\infty$-structure $\mu_{\scrD}^*$ agrees with $\mu_{S^{-1}\scrD^{\mathit{ord}}}^*$ on the weight zero part, hence is equal to $\mu_{\scrA}^*$. 

Next, we can take the $A_\infty$-functor $\scrD \rightarrow S^{-1}\scrD^{\mathit{ord}}$ provided by the transfer theorem, and specialize it to $v = 1$. This yields an $A_\infty$-functor between the fibre of $\scrD$ and $\scrF = S^{-1}\scrF^{\mathit{ord}}$, which by construction is a quasi-isomorphism on each morphism space.
\end{proof}

\section{The restriction functor\label{sec:restriction}}

We will now show that the $A_\infty$-category $\scrF$ from Section \ref{sec:first-construction} is quasi-equivalent to a full subcategory of the classical Fukaya category of the fibre of the fibration. The fact that the Fukaya category of the fibre appears in such a context (of a noncommutative divisor) is not fundamentally new (it can be traced back to \cite{seidel12b} and further, even if not in the same form), but it emerges here in a particularly natural way.

\subsection{Geometric setup\label{subsec:restriction-setup}}
We revisit the situation from Section \ref{subsec:target}, introducing some technical restrictions, which do not detract from its generality in any important way.
\begin{figure}
\begin{centering}
\begin{picture}(0,0)%
\includegraphics{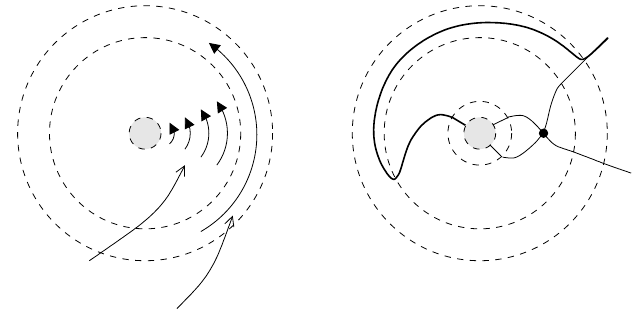}%
\end{picture}%
\setlength{\unitlength}{3355sp}%
\begingroup\makeatletter\ifx\SetFigFont\undefined%
\gdef\SetFigFont#1#2#3#4#5{%
  \reset@font\fontsize{#1}{#2pt}%
  \fontfamily{#3}\fontseries{#4}\fontshape{#5}%
  \selectfont}%
\fi\endgroup%
\begin{picture}(5952,3112)(-1214,-3875)
\put(3782,-2270){\makebox(0,0)[lb]{\smash{{\SetFigFont{8}{9.6}{\rmdefault}{\mddefault}{\updefault}{\color[rgb]{0,0,0}$3/5$}%
}}}}
\put(-1199,-3436){\makebox(0,0)[lb]{\smash{{\SetFigFont{10}{12.0}{\rmdefault}{\mddefault}{\updefault}{\color[rgb]{0,0,0}rotates with speed $\pi$}%
}}}}
\put(-1199,-3811){\makebox(0,0)[lb]{\smash{{\SetFigFont{10}{12.0}{\rmdefault}{\mddefault}{\updefault}{\color[rgb]{0,0,0}radial increase of speed to $2\pi$}%
}}}}
\put(4190,-2393){\makebox(0,0)[lb]{\smash{{\SetFigFont{8}{9.6}{\rmdefault}{\mddefault}{\updefault}{\color[rgb]{0,0,0}$L_1$}%
}}}}
\put(4175,-1580){\makebox(0,0)[lb]{\smash{{\SetFigFont{8}{9.6}{\rmdefault}{\mddefault}{\updefault}{\color[rgb]{0,0,0}$L_0$}%
}}}}
\put(751,-886){\makebox(0,0)[lb]{\smash{{\SetFigFont{8}{9.6}{\rmdefault}{\mddefault}{\updefault}{\color[rgb]{0,0,0}$|w| = 1$}%
}}}}
\put(2574,-2458){\makebox(0,0)[lb]{\smash{{\SetFigFont{8}{9.6}{\rmdefault}{\mddefault}{\updefault}{\color[rgb]{0,0,0}$\phi^{-1}(L_0)$}%
}}}}
\put(-524,-1561){\makebox(0,0)[lb]{\smash{{\SetFigFont{8}{9.6}{\rmdefault}{\mddefault}{\updefault}{\color[rgb]{0,0,0}$|w| = 4/5$}%
}}}}
\put(-224,-2311){\makebox(0,0)[lb]{\smash{{\SetFigFont{8}{9.6}{\rmdefault}{\mddefault}{\updefault}{\color[rgb]{0,0,0}$|w| = 1/5$}%
}}}}
\put(3076,-1636){\makebox(0,0)[lb]{\smash{{\SetFigFont{8}{9.6}{\rmdefault}{\mddefault}{\updefault}{\color[rgb]{0,0,0}$|w| = 2/5$}%
}}}}
\end{picture}%
\caption{\label{fig:restriction}On the left, the Hamiltonian vector field of the function from \eqref{eq:restriction-hamiltonian}. On the right, two examples of Lagrangian submanifolds from \eqref{eq:restriction-lagrangian}, with $\alpha_{L_0} > \alpha_{L_1}$; and the effect of applying the time $(-1)$ map of the Hamiltonian.}
\end{centering}
\end{figure}
\begin{equation} \label{eq:restriction-geometry}
\mybox{
We take a symplectic fibration with singularities \eqref{eq:pi}, but now assume that it is symplectically locally trivial over a larger subset of the base, namely $\{|w| \geq 1/5\} \subset \bC$. Our reference fibre for the following considerations will be $F = M_{3/5}$.
}
\end{equation}
\begin{equation} \label{eq:restriction-almost-complex}
\mybox{When considering almost complex structures as in \eqref{eq:almost-complex}, we additionally require that $D\Pi$ should be $J$-holomorphic on $\Pi^{-1}(\{ 1/5 \leq |w| \leq 2/5\})$.
}
\end{equation}
\begin{equation} \label{eq:restriction-lagrangian}
\mybox{
Fix, once and for all, a function $a: [1/5,\infty) \rightarrow [-1,1]$ which satisfies
\[
\left\{
\begin{aligned}
& a|[1/5,2/5] = -1, \\
& a(3/5) = 0, \\
& a|[4/5,\infty) = 1, \\
& a'(3/5) > 0, \\
& a' \geq 0\; \text{everywhere.}
\end{aligned}
\right.
\]
Within the class of Lagrangian submanifolds $L \subset M$ from \eqref{eq:lagrangian}, we restrict the angles to (the right half-circle) $\alpha_L \in (-\quarter,\quarter)$, and additionally require that 
\[
\Pi(L) \cap \{|w| \geq 1/5\} = \{|w| \geq 1/5, \; \mathrm{arg}(w) = 2\pi \alpha_L a(|w|)\}.
\]
Hence, $L_F = L \cap F$ is itself a (closed exact) Lagrangian submanifold of $F$.
}
\end{equation}
\begin{equation} \label{eq:restriction-hamiltonian}
\mybox{
We take a Hamiltonian $H \in \smooth(M,\bR)$, belonging to the class \eqref{eq:hamiltonian}, and such that 
\[
H(x) = h(\pi |\Pi(x)|^2) \; \text{ on $\Pi^{-1}(\{|w| \geq 1/5\})$},
\]
where
\[
\left\{
\begin{aligned}
& h'(p) = 1/2 \; \text{ for $\sqrt{p/\pi} \in [1/5, 4/5]$,} \\
& h'' \geq 0 \; \text{ everywhere,} \\
& h(p) = p \; \text{ for $\sqrt{p/\pi} \geq 1$}
\end{aligned}
\right.
\]
(see again Figure \ref{fig:restriction}). We also retain the technical condition that the critical point set of $H$ should have no interior points (which is compatible with the other requirements).
}
\end{equation}


\begin{lemma} \label{th:intersect-in-fibre}
Let $(L_0,L_1)$ be as in \eqref{eq:restriction-lagrangian}, with $\alpha_{L_0} > \alpha_{L_1}$. Then
\begin{equation} \label{eq:l-lf}
L_0 \cap L_1 \subset \Pi^{-1}(\{|w| < 1/5\}) \cup F.
\end{equation}
We call intersection points that lie in $\Pi^{-1}(\{|w| < 1/5\})$ interior ones, and the others exterior ones. An exterior point is a transverse intersection point in $M$ iff it is transverse as intersection point of $L_{0,F}$ and $L_{1,F}$ inside $F$.

\end{lemma}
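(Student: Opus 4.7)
My approach is to analyze the intersection purely at the level of the projection $\Pi$. Suppose $x \in L_0 \cap L_1$ with $|\Pi(x)| \geq 1/5$; by \eqref{eq:restriction-lagrangian}, the image $\Pi(x)$ lies on both curves $\{\arg(w) = 2\pi \alpha_{L_k} a(|w|)\}$, which means $(\alpha_{L_0} - \alpha_{L_1})\, a(|\Pi(x)|) \in \bZ$. Since $\alpha_{L_0}, \alpha_{L_1} \in (-\quarter,\quarter)$ with $\alpha_{L_0} > \alpha_{L_1}$, we have $\alpha_{L_0}-\alpha_{L_1} \in (0,\half)$, and since $a$ takes values in $[-1,1]$, the product lies in $(-\half,\half)$, forcing it to be zero. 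Hence $a(|\Pi(x)|)=0$. The conditions $a' \geq 0$ and $a'(3/5)>0$ force $a$ to be strictly increasing in a neighbourhood of $3/5$, while global monotonicity prevents any other zero, so $a^{-1}(0)=\{3/5\}$; therefore $|\Pi(x)|=3/5$ and $x \in F$.

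\textbf{Plan for the second statement.} At an exterior intersection point $x \in F$ I will compare tangent spaces directly, using the symplectic local triviality of $\Pi$ on $\Pi^{-1}(\{|w| \geq 1/5\})$ from \eqref{eq:restriction-geometry}. This triviality yields a splitting $T_x M = T_x F \oplus T_x^h$ with $D\Pi$ restricting to an isomorphism $T_x^h \iso T_{3/5}\bC$. Each $T_x L_k$ fibres accordingly: it contains $T_x L_{k,F} = T_x L_k \cap T_x F$, which has dimension $n-1$, together with a horizontal complement mapping isomorphically to $D\Pi(T_x L_k)$. The curve $\gamma_k(r) = r e^{2\pi i \alpha_{L_k} a(r)}$ parametrising $\Pi(L_k)$ has derivative $1 + \tfrac{6\pi i}{5}\alpha_{L_k} a'(3/5)$ at $r=3/5$; because $a'(3/5)>0$ and $\alpha_{L_0} \neq \alpha_{L_1}$, the two lines $D\Pi(T_x L_0)$ and $D\Pi(T_x L_1)$ are distinct and together span $T_{3/5}\bC$.

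\textbf{Concluding the equivalence of transversality.} With the splitting in place the equivalence $L_0 \pitchfork L_1$ in $M$ iff $L_{0,F} \pitchfork L_{1,F}$ in $F$ follows by a short dimension count. For the ``if'' direction, assuming $T_x L_{0,F} + T_x L_{1,F} = T_x F$, the sum $T_x L_0 + T_x L_1$ contains $T_x F$, and its image under $D\Pi$ contains the two distinct lines above, hence equals $T_{3/5}\bC$; so $T_x L_0 + T_x L_1 = T_x M$. For the converse, $T_x L_{0,F} \cap T_x L_{1,F}$ is contained in $T_x L_0 \cap T_x L_1$, which is zero by transversality in $M$, and then a dimension count in $F$ gives $T_x L_{0,F} + T_x L_{1,F} = T_x F$.

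The only subtlety I anticipate is the verification that $a^{-1}(0) = \{3/5\}$ from the stated properties of $a$; once this is in hand, both parts of the lemma reduce to elementary linear algebra in the fibred tangent space.
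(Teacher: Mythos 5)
Your proof is correct and follows essentially the same line of reasoning as the paper's. For part (a), both arguments observe that an intersection over $\{|w| \geq 1/5\}$ forces $(\alpha_{L_0}-\alpha_{L_1})\,a(r) \in \bZ$, hence $a(r)=0$ and $r=3/5$; your extra paragraph verifying $a^{-1}(0)=\{3/5\}$ from $a'\geq 0$ and $a'(3/5)>0$ fills in a detail the paper leaves implicit. For part (b), the paper states tersely that the two Lagrangians are ``locally near $3/5$ fibered over transversally intersecting paths,'' and your explicit derivative computation $\gamma_k'(3/5)=1+\tfrac{6\pi i}{5}\alpha_{L_k}a'(3/5)$ together with the dimension count is exactly the unpacking of that sentence. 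Same argument, just a bit more spelled out.
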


\begin{proof}
By definition, $(L_0,L_1)$ can intersect in $\Pi^{-1}(\{|w| = r\})$, $r \geq 1/5$, only if 
$\alpha_{L_0} a(r) \equiv \alpha_{L_1} a(r) \; \text{mod } \bZ$.
Since $\alpha_{L_0} - \alpha_{L_1} \in (0,1/2)$ and $a(r) \in [-1,1]$, this can happen only when $a(r) = 0$, which by definition means $r = 3/5$. This singles out the fibre at $3/5$. Since $\alpha_{L_0} \neq \alpha_{L_1}$ and $a'(3/5) \neq 0$, the two manifolds are, locally near $3/5$, fibered over transversally intersecting paths. This proves the equivalence of transverse intersections in $M$ and $F$. 
\end{proof}

\begin{lemma} \label{th:do-not-intersect-in-fibre}
Let $(L_0,L_1)$ be as in Lemma \ref{th:intersect-in-fibre}. Then
\begin{equation} \label{eq:l-lf2}
\phi^{-1}(L_0) \cap L_1 \subset \Pi^{-1}(\{|w| < 1/5\}).
\end{equation}
In the same terminology as in the previous Lemma, only interior intersection points (in other words, interior chords) exist in this case.
\end{lemma}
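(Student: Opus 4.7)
The plan is to rule out exterior intersection points by direct angular computation, reducing the problem to an elementary arithmetic constraint that the hypotheses on $\alpha_{L_0}-\alpha_{L_1}$, $a$, and $h'$ make unsolvable.

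First, I would analyze the action of $\phi^{-1}$ on the region $\Pi^{-1}(\{|w|\geq 1/5\})$. By \eqref{eq:hamiltonian} combined with \eqref{eq:restriction-hamiltonian}, the Hamiltonian there is $H=h(\pi|\Pi|^2)$, which depends only on $|\Pi|$; hence $dH$ annihilates $\mathit{TM}^v$, so $X$ is horizontal, and $D\Pi(X)=2\pi i\, h'(\pi|w|^2)\,w\,\partial_w$. Therefore $\phi^t$ preserves each hypersurface $\{|\Pi|=r\}$, $r\geq 1/5$, acts as symplectic parallel transport along concentric circles in the base, and projects to rotation by the angle $2\pi t\,h'(\pi r^2)$. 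In particular, $\phi^{-1}$ sends the fibre $M_w$ to $M_{e^{-2\pi i h'(\pi|w|^2)}w}$.

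Second, I would use the description in \eqref{eq:restriction-lagrangian} to identify $L_j\cap \Pi^{-1}(\{|w|\geq 1/5\})$ with the parallel transport of $L_{j,F}$ along the spiral $\{\arg w = 2\pi\alpha_{L_j}\,a(|w|)\}$. A point $x\in\phi^{-1}(L_0)\cap L_1$ with $|\Pi(x)|=r\geq 1/5$ must therefore satisfy simultaneously $\arg\Pi(x)\equiv 2\pi\alpha_{L_1}a(r)$ and $\arg\Pi(x)+2\pi h'(\pi r^2)\equiv 2\pi\alpha_{L_0}a(r)\pmod{2\pi}$, which rearranges to
\[
(\alpha_{L_0}-\alpha_{L_1})\,a(r)\,-\,h'(\pi r^2)\in\bZ.
\]

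Third, I would exclude this constraint by a short case analysis using $\alpha_{L_0}-\alpha_{L_1}\in(0,\tfrac12)$, the step profile of $a$, and the monotonicity $h''\geq 0$: on $[1/5,2/5]$ we have $a=-1$, $h'=1/2$, giving a value in $(-1,-1/2)$; on $[2/5,4/5]$ we have $h'=1/2$ and $(\alpha_{L_0}-\alpha_{L_1})\,a(r)\in(-1/2,1/2)$, giving a value in $(-1,0)$; on $[4/5,1]$ we have $a=1$ and $h'\in[1/2,1]$, giving a value in $(-1,0)$; on $[1,\infty)$ we have $a=1$, $h'=1$, giving a value in $(-1,-1/2)$. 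In every case the left-hand side lies in the open interval $(-1,0)$, which contains no integer, so no exterior intersection exists; the remaining intersections must be interior, i.e.\ lie in $\Pi^{-1}(\{|w|<1/5\})$.

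The main obstacle is less the arithmetic than the geometric reduction in the first two steps: one must justify carefully that, in the cylindrical region, $\phi^{-1}$ acts as the horizontal lift of a base rotation, and that the Lagrangians $L_j$ are themselves horizontal lifts of their base-projection spirals. Once those identifications are in place, the statement is reduced to checking that the piecewise-explicit scalar equation displayed above has no solution, which is immediate from the hypotheses on $a$, $h'$, and the range of $\alpha_{L_j}$.
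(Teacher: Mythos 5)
Your proof is correct and follows the same strategy as the paper's: project to $\bC$, use that $\phi^{-1}$ preserves $|\Pi|$ and rotates the argument by $-2\pi h'(\pi r^2)$, observe that the Lagrangians project to the spirals $\arg w = 2\pi\alpha_{L_j}a(r)$, and conclude by an elementary angle count. Your packaging of the constraint as $(\alpha_{L_0}-\alpha_{L_1})a(r)-h'(\pi r^2)\in\bZ$ and four-case split is a little finer than the paper's two-case argument (which bounds the two arguments independently on $[1/5,4/5]$ and leaves $|w|\geq 1$ implicit), but the content is the same.
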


\begin{proof}
In $\{1/5 \leq |w| \leq 4/5\}$, the argument of the path $\Pi(\phi^{-1}(L_0))$, divided by $2\pi$, lies in $[-1,1]\alpha_{L_0} + 1/2 \subset (1/4,3/4)$, while that of $\Pi(L_1)$ lies in $[-1,1]\alpha_{L_1} \subset (-1/4,1/4)$; obviously, there can't be any intersections between the two. 

In $\{4/5 \leq |w| \leq 1\}$, the argument of $\Pi(\phi^{-1}(L_0))$, again divided by $2\pi$, lies in $\alpha_{L_0} + [-1,-1/2] = [\alpha_{L_0},\alpha_{L_0} + 1/2]$; while that of $\Pi(L_1)$ remains constant at $\alpha_{L_1}$. By assumption, $-1/4 < \alpha_{L_1} < \alpha_{L_0} < 1/4$, so these are once more disjoint.
\end{proof}

\subsection{Energy and transversality arguments\label{subsec:partial-energy}}
Let's return to the familiar Cauchy-Riemann equation \eqref{eq:cr}, where:
\begin{equation} \label{eq:restriction-cr}
\mybox{
$S$ is a boundary-punctured disc with a sub-closed one-form $\beta_S$ as in \eqref{eq:beta1}, such that the weights associated to the ends lie in $\{-1,0\}$. The almost complex structures are taken from \eqref{eq:restriction-almost-complex}; the boundary conditions from \eqref{eq:restriction-lagrangian}, assuming moreover that $\alpha_{L_0} > \cdots > \alpha_{L_d}$, and that all chords which can appear as limits are nondegenerate; and the Hamiltonian is as in \eqref{eq:restriction-hamiltonian}.
}
\end{equation}
The aim is to constrain the behaviour of solutions $u$.
\begin{equation} \label{eq:tilde}
\mybox{
Let $\tilde\omega_{\bC} \in \Omega^2(\bC)$ be a rotationally invariant form which is everywhere nonnegative; positive exactly in $\{1/5 < |w| < 2/5\}$; and has integral $1$. Write it as $\tilde\omega_{\bC} = d\tilde\theta_{\bC}$, where $\tilde\theta_{\bC}$ is again rotationally invariant, and vanishes in $\{|w| \leq 1/5\}$. Pull both back by \eqref{eq:pi} to get $\tilde\omega_M$, $\tilde\theta_M$.
}
\end{equation}
\begin{equation}
\mybox{
Lagrangian submanifolds from \eqref{eq:restriction-lagrangian} are isotropic with respect to $\tilde\omega_M$, and exact with respect to $\tilde\theta_M$. Given such an $L$, we choose the primitive $\tilde{K}_L$ for $\tilde\theta_M|L$ which vanishes on $\Pi^{-1}(\{|w| \leq 1/5\})$; this can also be constructed by integrating $\tilde\theta_{\bC}$ along $\Pi(L) \cap \{|w| \geq 1/5\}$, and then pulling back the outcome to $L$.}
\end{equation}
\begin{equation}
\mybox{
The vector field $X$ which generates $(\phi^t)$ has an associated function $\tilde{H} \in \smooth(M,\bR)$, such that $d\tilde{H} = \tilde\omega_M(\cdot,X)$. Again, this is pulled back from $\bC$, and we'll choose it so that it vanishes on $\Pi^{-1}(\{|w| \leq 1/5\})$. By looking at radial derivatives, one sees that $\tilde{H} \geq 0$ everywhere, and $\tilde{H} = \half \int_{\bC} \tilde\omega_{\bC} = \half$ on $\Pi^{-1}(\{|w| \geq 2/5\})$.
}
\end{equation}
The concepts from Section \ref{subsec:energy} can be modified as follows (this is similar to the idea underlying the integrated maximum principle from Section \ref{subsec:maps-to-c}, and its application in Lemma \ref{th:compactness-2}).
\begin{equation} \label{eq:partial-e-geom}
\mybox{
One defines the partial geometric energy to be
\[
\tilde{E}^{\mathit{geom}}(u) = \int_S u^*\tilde\omega_M - d(u^*\tilde{H}) \wedge \beta_S
\]
At all points where the integrand is nonzero, $D\Pi$ is $J$-holomorphic, and $D\Pi(X) = \pi i w\partial_w$. Hence, the partial geometric energy can also be written as the energy of $v = \Pi(u)$ weighted with respect to $\tilde\omega_{\bC}$:
\[
\tilde{E}^{\mathit{geom}}(u) = \int_S |Dv - \pi i v \beta_S |^2 \, v^*(\tilde\omega_{\bC}/\omega_{\bC}).
\]
This shows that $\tilde{E}^{\mathit{geom}}(u) \geq 0$. Equality can hold only if the part of $v$ lying in $\{1/5 < |w| < 2/5\}$ satisfies $Dv = \pi i v \beta_S$. If we assume that that part is nonempty, we get a contradiction with the limiting behaviour, by an argument parallel to the proof of Lemma \ref{th:injective}. The conclusion is that $\tilde{E}^{\mathit{geom}}(u) = 0$ iff $u$ never enters the region $\Pi^{-1}(\{1/5 < |w| < 2/5\})$.}
\end{equation}
\begin{equation}
\mybox{
There is a similar partial version of the topological energy,
\[
\tilde{E}^{\mathit{top}}(u) = \int_S u^*\tilde{\omega}_M - d(u^*\tilde{H}\, \beta_S) =
\tilde{E}^{\mathit{geom}}(u) - \int_S (u^*\tilde{H})\, d\beta_S \geq \tilde{E}^{\mathit{geom}}(u).
\]
Chords have partial actions 
\[
\tilde{A}(x) = \int x^*(-\tilde{\theta}_M + w \tilde{H} \mathit{dt}) \,+ \tilde{K}_{L_1}(x(1)) - \tilde{K}_{L_0}(x(0)), 
\]
with respect to which the analogue of \eqref{eq:energy-and-action} holds. 
}
\end{equation}

\begin{lemma} \label{th:interior-exterior-action}
Take $(L_0,L_1)$ with $\alpha_{L_0} > \alpha_{L_1}$. For $w \in \{0,1\}$, the interior intersection points (or chords) have $\tilde{A}(x) = 0$. For $w = 0$, the exterior intersection points have $\tilde{A}(x) = \alpha_{L_1} - \alpha_{L_0} < 0$.
\end{lemma}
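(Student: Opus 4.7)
The argument is a direct computation using the formula for $\tilde A(x)$ in \eqref{eq:partial-e-geom}, together with the explicit structure over the region $\Pi^{-1}(\{|w|\geq 1/5\})$.

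First I would separate the interior and exterior cases. For an interior chord $x$, its endpoints lie in $\Pi^{-1}(\{|w|<1/5\})$. For $w=0$ the chord is constant, so it stays there; for $w=-1$ the chord is $t\mapsto \phi^{-t}(x(0))$, and since $X$ is tangent to each hypersurface $\{|\Pi|=r\}$ with $r\geq 1/5$ (because $D\Pi(X)=2\pi i w\partial_w$ on $\{|\Pi|\geq 1\}$, and more generally by the radial structure of $H$ in \eqref{eq:restriction-hamiltonian}), the flow preserves $\Pi^{-1}(\{|w|<1/5\})$. By construction \eqref{eq:tilde}, $\tilde\theta_M$, $\tilde H$, and each $\tilde K_{L_i}$ vanish on this region, so every summand in the definition of $\tilde A(x)$ is zero.

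Next I would observe that, by Lemma \ref{th:do-not-intersect-in-fibre}, no exterior chord exists when $w=-1$. So the exterior case occurs only for $w=0$, and then $x$ is a constant path at a point of $L_0\cap L_1\cap F$; the integral term in $\tilde A(x)$ vanishes, and one is left with $\tilde A(x)=\tilde K_{L_1}(x)-\tilde K_{L_0}(x)$.

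The main computation is then to show that $\tilde K_L(x)=\alpha_L$ for any $x\in L\cap F$. Write $\tilde\omega_{\bC}$ in polar form and pick $\tilde\theta_{\bC}=g(r)\,d\theta$ with $g$ rotationally invariant, $g(r)=0$ for $r\leq 1/5$ and $g(r)=1/(2\pi)$ for $r\geq 2/5$ (by the normalization $\int\tilde\omega_{\bC}=1$). Since $\tilde K_L$ is obtained by integrating $\tilde\theta_{\bC}$ along $\Pi(L)\cap\{|w|\geq 1/5\}$, which is parametrized by $r\mapsto (r,\,2\pi\alpha_L a(r))$, I get
\begin{equation*}
\tilde K_L(x)=\int_{1/5}^{3/5}g(r)\cdot 2\pi\alpha_L\, a'(r)\,dr
=\alpha_L\int_{2/5}^{3/5}a'(r)\,dr
=\alpha_L\bigl(a(3/5)-a(2/5)\bigr)=\alpha_L,
\end{equation*}
using $a|[1/5,2/5]=-1$ (so $a'=0$ there), $g\equiv 1/(2\pi)$ on $[2/5,3/5]$, and $a(3/5)=0$, $a(2/5)=-1$. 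Combining, $\tilde A(x)=\alpha_{L_1}-\alpha_{L_0}$, which is negative by the hypothesis $\alpha_{L_0}>\alpha_{L_1}$.

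The only genuinely delicate point is the invariance of $\Pi^{-1}(\{|w|<1/5\})$ under $\phi^t$ needed in the interior case; everything else is a bookkeeping exercise with the carefully chosen cutoff functions $a$ and $g$. The rest is just substituting the known normalizations into $\tilde A$.
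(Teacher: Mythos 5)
Your proof is correct. The interior case matches the paper; for the exterior case you take a more computational but equivalent route. The paper considers a map $v:[0,1]^2\to\{|w|\geq 1/5\}$ interpolating between $\Pi(L_0)$ and $\Pi(L_1)$, applies Stokes to identify $-\tilde A(x)=\tilde K_{L_0}(x)-\tilde K_{L_1}(x)$ with $\int_{[0,1]^2}v^*\tilde\omega_{\bC}$, and then evaluates the surface integral via rotational invariance of $\tilde\omega_{\bC}$, using that the boundary paths sit at constant angles $-2\pi\alpha_{L_i}$ on the support of the form. You instead write $\tilde\theta_{\bC}=g(r)\,d\theta$ and integrate along $\Pi(L)\cap\{|w|\geq 1/5\}$ to obtain $\tilde K_L(x)=\alpha_L$ directly; that is the same Stokes computation unwound into a one-dimensional integral, and it relies on the same alignment of cutoffs (namely $a'\equiv 0$ on $[1/5,2/5]$, where $\tilde\omega_{\bC}$ is supported). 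You also make explicit two points that the paper leaves implicit: that for $w=-1$ the whole chord, not just its endpoints, remains in $\Pi^{-1}(\{|w|<1/5\})$ because that region is flow-invariant; and that by Lemma \ref{th:do-not-intersect-in-fibre} exterior chords only occur for $w=0$, so the exterior computation reduces to constant paths. Both observations are correct and clarify the argument.
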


\begin{proof}
The first part is clear: all ingredients that enter into the definition of $\tilde{A}(x)$ vanish on $\Pi^{-1}(\{|w| < 1/5\})$. For the second part, we consider a map
\begin{equation}
\left\{
\begin{aligned} 
& v: [1/5,3/5] \times [0,1] \longrightarrow \{|w| \geq 1/5\} \subset \bC, \\
& |v(1/5,t)| = 1/5, \\
& v(3/5,t) = 3/5, \\
& v(s,0) \in \Pi(L_0), \\
& v(s,1) \in \Pi(L_1).
\end{aligned}
\right.
\end{equation}
By Stokes, 
\begin{equation}
\int  v^*\tilde\omega_{\bC} = \int_{[1/5,3/5] \times \{0\}} v^*\tilde\theta_{\bC} - \int_{[1/5,3/5] \times \{1\}} v^*\tilde\theta_{\bC}
= \tilde{K}_{L_0}(x) - \tilde{K}_{L_1}(x) = -\tilde{A}(x). 
\end{equation}
This is independent of the choice of map; one could choose
\begin{equation}
v(s,t) = s  \exp\big(2 \pi i a(s) (t\alpha_{L_1} + (1-t)\alpha_{L_0})\big).
\end{equation}
Note that $\int v^*\tilde{\omega}_{\bC}$ is nonzero only on $[1/5,2/5] \times [0,1]$. The image of that region under $v$ is bounded by the radial axes $\mathrm{arg}(w) = 2\pi \alpha_{L_0}$, $\mathrm{arg}(w) = 2\pi \alpha_{L_1}$, and segments of the circles $|w| = 1/5$, $|w| = 2/5$. Because $\tilde{\omega}_{\bC}$ is rotationally invariant, the relevant integral is an $(\alpha_{L_1}-\alpha_{L_0})$ fraction of the integral of $\tilde{\omega}_{\bC}$ over the entire annulus, which is $1$ by definition (see Figure \ref{fig:restriction}).
\end{proof}

The following two results are direct consequences:

\begin{lemma} \label{th:interior-interior}
Let $u$ be a solution, such that all limits $(x_0,\dots,x_d)$ are interior intersection points. Then the map $u$ remains in $\Pi^{-1}(\{|w| \leq 1/5\})$.
\end{lemma}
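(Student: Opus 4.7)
The plan is to combine the vanishing of partial action at interior chords (Lemma \ref{th:interior-exterior-action}) with the nonnegativity properties packaged into the partial energies of \eqref{eq:partial-e-geom}, and then close the argument with a connectedness observation.

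First, I would record that because every limit $x_\zeta$ is an interior chord, $\tilde{A}(x_\zeta) = 0$ for all $\zeta \in \Sigma_S^{\pm}$. By the partial-action analogue of \eqref{eq:energy-and-action}, this forces $\tilde{E}^{\mathit{top}}(u) = 0$. Next, I would use the two inequalities built into the partial energies: since $\beta_S$ is sub-closed and $\tilde{H} \geq 0$, the term $-\int_S (u^*\tilde{H})\,d\beta_S$ is nonnegative, and $\tilde{E}^{\mathit{geom}}(u) \geq 0$ in its own right. Chaining these inside
\[
0 = \tilde{E}^{\mathit{top}}(u) = \tilde{E}^{\mathit{geom}}(u) - \int_S (u^*\tilde{H})\,d\beta_S
\]
sandwiches $\tilde{E}^{\mathit{geom}}(u)$ between two zeros, so it must vanish.

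Now I would invoke the dichotomy from \eqref{eq:partial-e-geom}: vanishing of the partial geometric energy exactly means that $u$ never enters the open annular region $\Pi^{-1}(\{1/5 < |w| < 2/5\})$. Therefore the two closed subsets $u^{-1}(\Pi^{-1}(\{|w| \leq 1/5\}))$ and $u^{-1}(\Pi^{-1}(\{|w| \geq 2/5\}))$ are disjoint and cover $S$. Since $S$ is a boundary-punctured disc, it is connected, so one of them is empty. Neighbourhoods of the ends of $u$ are mapped into $\Pi^{-1}(\{|w| < 1/5\})$ because the limiting chords $x_\zeta$ are interior, which rules out the second subset and yields the desired containment $u(S) \subset \Pi^{-1}(\{|w| \leq 1/5\})$.

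There is no real obstacle: the equality $\tilde{A}(x_\zeta) = 0$ at interior chords, the nonnegativity features of $\tilde{E}^{\mathit{geom}}$ and $-\int u^*\tilde{H}\,d\beta_S$, and the ``empty annulus implies each side is empty or full'' dichotomy are all already in hand. The only small point to double-check is that the ends of $u$ really lie in the interior region rather than on $\{|w| = 1/5\}$, but this is immediate from the definition of ``interior'' as $|\Pi(x)|<1/5$, together with Lemma \ref{th:do-not-intersect-in-fibre} which handles the $w=-1$ case.
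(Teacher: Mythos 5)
Your argument is correct and is precisely the route the paper has in mind: the partial-action formula gives $\tilde{E}^{\mathit{top}}(u)=0$ since all limits have $\tilde{A}(x_\zeta)=0$, the chain $\tilde{E}^{\mathit{top}}\geq\tilde{E}^{\mathit{geom}}\geq 0$ then pins $\tilde{E}^{\mathit{geom}}(u)=0$, the dichotomy recorded in \eqref{eq:partial-e-geom} says the image avoids $\Pi^{-1}(\{1/5<|w|<2/5\})$, and connectedness of $S$ plus the interior limiting behaviour at the ends selects the $\{|w|\leq 1/5\}$ side. Nothing is missing.
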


\begin{lemma} \label{th:interior-exterior}
Suppose that $w_0 = 0$. Let $u$ be a solution, such that the limit $x_0$ is an exterior intersection point. Then all the other limits $(x_1,\dots,x_d)$ must be exterior intersection points, which means that necessarily $w_1 = \cdots = w_d = 0$. Moreover, the map $u$ remains in $\Pi^{-1}(\{|w| \geq 2/5\})$.
\end{lemma}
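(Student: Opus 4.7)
The plan is to run the energy--action identity with the partial action $\tilde A$ from \eqref{eq:partial-e-geom} and use the non--negativity $\tilde E^{\mathit{top}}(u) \ge \tilde E^{\mathit{geom}}(u) \ge 0$, which holds because $\tilde H\ge 0$ and $d\beta_S\le 0$. Concretely, the analogue of \eqref{eq:energy-and-action} gives
$$
\tilde E^{\mathit{top}}(u) \;=\; \tilde A(x_0) \;-\; \sum_{k=1}^{d} \tilde A(x_k).
$$
I would evaluate each term using Lemma~\ref{th:interior-exterior-action} and Lemma~\ref{th:do-not-intersect-in-fibre}. Since $w_0=0$ and $x_0$ is exterior (between $L_0$ and $L_d$), $\tilde A(x_0)=\alpha_{L_d}-\alpha_{L_0}$, which telescopes as $\sum_{k=1}^d(\alpha_{L_k}-\alpha_{L_{k-1}})$. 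For each $k\ge 1$: if $w_k=-1$, then Lemma~\ref{th:do-not-intersect-in-fibre} forces $x_k$ to be interior and $\tilde A(x_k)=0$; if $w_k=0$, Lemma~\ref{th:interior-exterior-action} gives $\tilde A(x_k)=0$ (interior case) or $\tilde A(x_k)=\alpha_{L_k}-\alpha_{L_{k-1}}$ (exterior case). Thus
$$
\tilde E^{\mathit{top}}(u) \;=\; \sum_{k=1}^{d} \bigl( (\alpha_{L_k}-\alpha_{L_{k-1}}) - \tilde A(x_k) \bigr),
$$
and the strict ordering $\alpha_{L_{k-1}}>\alpha_{L_k}$ from \eqref{eq:restriction-cr} makes every summand $\le 0$, with equality if and only if $x_k$ is exterior (which in particular forces $w_k=0$).

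Combining with $\tilde E^{\mathit{top}}(u)\ge 0$, every summand must vanish, so every $x_k$ is exterior and $w_1=\cdots=w_d=0$, proving the first assertion.

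For the $C^0$-statement, the vanishing of all summands also yields $\tilde E^{\mathit{top}}(u)=0$, and then $-\int_S u^*\tilde H\,d\beta_S\ge 0$ together with $\tilde E^{\mathit{geom}}(u)\ge 0$ forces $\tilde E^{\mathit{geom}}(u)=0$. By the characterization in \eqref{eq:partial-e-geom}, this means $u$ never enters the open annular region $\Pi^{-1}(\{1/5<|w|<2/5\})$. The limits all lie in the fibre $F\subset\Pi^{-1}(\{|w|=3/5\})$, so $\Pi\circ u$ extends continuously to $|S|$ with values on the intervals at infinity in $\{|w|=3/5\}$; since $|S|$ is connected and the image avoids the open annulus separating $\{|w|\le 1/5\}$ from $\{|w|\ge 2/5\}$, the image is forced into $\{|w|\ge 2/5\}$.

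The only delicate point is to make sure the telescoping sign argument is water-tight: the input from \eqref{eq:restriction-cr} that $\alpha_{L_{k-1}}>\alpha_{L_k}$ strictly is essential, for otherwise an interior chord in some middle slot would contribute zero rather than a strictly negative amount, and one could no longer rule it out by the inequality $\tilde E^{\mathit{top}}(u)\ge 0$ alone. Everything else is routine bookkeeping with the primitives and with the splitting $\tilde E^{\mathit{top}}=\tilde E^{\mathit{geom}}-\int_S u^*\tilde H\,d\beta_S$.
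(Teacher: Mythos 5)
Your proof is correct and matches the paper's intended argument: the paper labels this lemma (together with Lemma~\ref{th:interior-interior}) a ``direct consequence'' of the partial-energy framework \eqref{eq:partial-e-geom} and Lemma~\ref{th:interior-exterior-action}, which is exactly the telescoping energy--action computation you carried out. The two supporting points you flagged — the strict inequalities $\alpha_{L_{k-1}} > \alpha_{L_k}$ and the appeal to Lemma~\ref{th:do-not-intersect-in-fibre} to rule out exterior length-$(-1)$ chords — are indeed the precise ingredients that make the argument close.
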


We can also consider a generalization to moving boundary conditions. Let's focus on the special case which is primarily of interest to us: the equation underlying the definition of \eqref{eq:continuation-cocycle}, meaning that $S$ is the upper half-plane with $\beta_S = 0$, and the boundary conditions are as in \eqref{eq:positive-perturbation}, but within the class \eqref{eq:restriction-lagrangian}. We then have an analogue of \eqref{eq:follows} on the graph $G$ of the isotopy, which is a one-form $\tilde{\chi}_G$ satisfying $d\tilde{\chi}_G = \tilde{\omega}_M|G$. One can write this explicitly, following the same idea as in \eqref{eq:toy-chi} and \eqref{eq:fibrewise-chi}:
\begin{equation} \label{eq:tilde-chi}
\tilde\chi_G(\partial_s)\,|\,L_s = -2(\tilde{H}|L) \partial_s\alpha_s,
\end{equation}
where the $-2$ factor reflects the fact that, over the annulus on which $\tilde{\omega}_M$ is supported, $X$ generates a rotation with speed $\pi$, whereas changing $\alpha$ has the effect of rotating that part of our Lagrangian submanifolds with speed $-2\pi$; in other words, this reflects the relation of $h'|[1/5,2/5] = 1/2$ in \eqref{eq:restriction-hamiltonian} with $a|[1/5,2/5] = -1$ in \eqref{eq:restriction-lagrangian}. As in \eqref{eq:corrected-topological-energy}, we can adjust the topological energy to take the moving boundary condition into account, in this case modifying the formula in \eqref{eq:partial-e-geom} to
\begin{equation}
\tilde{E}^{\mathit{top}}(u) = \int_S u^*\tilde{\omega}_M - d(u^*\tilde{H}\, \beta_S) - \int_{\partial S} u^*\tilde{\chi}_G.
\end{equation}
In view of \eqref{eq:tilde-chi}, one then has
\begin{equation} \label{eq:eeee}
\tilde{E}^{\mathit{geom}}(u) = \tilde{E}^{\mathit{top}}(u) + \int_{\partial S} u^*\tilde{\chi}_G
\leq \tilde{E}^{\mathit{top}}(u) - \int_{\bR} \partial_s \alpha_s = \tilde{A}(x) + \alpha_{L^+} - \alpha_{L},
\end{equation}
where $x$ is the limit of $u$. If $x$ is an exterior point, Lemma \ref{th:interior-exterior-action} says that $\tilde{A}(x) = \alpha_L - \alpha_{L^+}$, hence the right hand side of \eqref{eq:eeee} is zero. As an immediate consequence, one gets:

\begin{lemma} \label{th:exterior-cont}
Let $u$ be a solution of the equation underlying the definition of \eqref{eq:continuation-cocycle}, such that the unique limit $x$ is an exterior point of $L^+ \cap L$. Then, the map remains within $\Pi^{-1}(\{|w| \geq 2/5\})$.
\end{lemma}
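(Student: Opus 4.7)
The statement follows almost immediately from the machinery set up in the paragraph preceding it, so the plan is just to assemble the pieces and finish with a connectedness argument.

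First I would apply the inequality \eqref{eq:eeee} to the solution $u$. The hypothesis that $x$ is an exterior intersection point places us in the second case of Lemma \ref{th:interior-exterior-action}, giving $\tilde{A}(x) = \alpha_L - \alpha_{L^+}$. Substituting into \eqref{eq:eeee} yields
\begin{equation*}
0 \leq \tilde{E}^{\mathit{geom}}(u) \leq \tilde{A}(x) + \alpha_{L^+} - \alpha_L = 0,
\end{equation*}
where the left-hand inequality is the nonnegativity of partial geometric energy established in \eqref{eq:partial-e-geom}. Hence $\tilde{E}^{\mathit{geom}}(u) = 0$.

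Next I would invoke the characterization of vanishing partial geometric energy from \eqref{eq:partial-e-geom}: equality $\tilde{E}^{\mathit{geom}}(u) = 0$ forces $u$ to never enter the open annular region $\Pi^{-1}(\{1/5 < |w| < 2/5\})$. Therefore the domain $S$ splits as a disjoint union of the two closed sets $A = u^{-1}(\Pi^{-1}(\{|w| \geq 2/5\}))$ and $B = u^{-1}(\Pi^{-1}(\{|w| \leq 1/5\}))$, each of which is consequently open as well.

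Finally, I would use connectedness of the upper half-plane. The limit at $i\infty$ is the exterior point $x$, which satisfies $|\Pi(x)| = 3/5$; by nondegeneracy of $x$ (built into the setup of the continuation cocycle) the convergence $u(z) \to x$ is exponential, so $A$ contains a neighborhood of $i\infty$ and is nonempty. Since $S$ is connected and $A, B$ are disjoint closed-and-open subsets with $A \neq \emptyset$, we conclude $B = \emptyset$ and $A = S$, which is precisely the claim.

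The main obstacle, if any, is bookkeeping rather than substance: one must confirm that the energy inequality \eqref{eq:eeee} genuinely applies in the moving-boundary, $\beta_S = 0$ setup of the continuation cocycle equation with the single negative puncture at $i\infty$; and that the one-form $\tilde{\chi}_G$ in \eqref{eq:tilde-chi} is consistent with our choice of primitives $\tilde{K}_L$. Both are straightforward verifications from the definitions in \eqref{eq:tilde} and \eqref{eq:partial-e-geom}, so the proof is essentially just \eqref{eq:eeee} combined with the energy-zero clause of \eqref{eq:partial-e-geom} and a one-line topological argument.
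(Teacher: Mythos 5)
Your proof is correct and takes the same route as the paper: you combine the sign constraint $\tilde{E}^{\mathit{geom}}(u) \geq 0$ with the inequality \eqref{eq:eeee} and Lemma \ref{th:interior-exterior-action} to get $\tilde{E}^{\mathit{geom}}(u)=0$, invoke the characterization of zero partial energy from \eqref{eq:partial-e-geom}, and finish with the connectedness argument (which the paper leaves implicit as an ``immediate consequence''). Your closing caveats are unnecessary — the paragraph preceding the lemma already establishes \eqref{eq:eeee} and \eqref{eq:tilde-chi} precisely for the continuation-cocycle equation with moving boundary conditions, so no further verification is required.
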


%

Let's turn briefly to transversality arguments. Compared to the situation from Section \ref{sec:first-construction}, the significant restriction is on the almost complex structures \eqref{eq:restriction-almost-complex}. However, we can compensate for that by proving a version of Lemma \ref{th:injective} where the relevant subset is $U = \{z \in S \;:\; |\Pi(u(z))| \in [0,1/5) \cup (2/5,1)\}$. With that in mind, the proofs of Lemmas \ref{th:transversality-1} and \ref{th:transversality-2} go through as before.

\subsection{Filtering the morphism spaces\label{subsec:q-category}} 
For $(L_0,L_1$) with $\alpha_{L_0} > \alpha_{L_1}$ and $w \in \{-1,0\}$, consider the subgroup 
\begin{equation} \label{eq:interior-only} 
\mathit{CF}^*_{\mathit{int}}(L_0,L_1;w) \subset \mathit{CF}^*(L_0,L_1;w)
\end{equation}
spanned by interior intersection points (for the sake of brevity, we will not explicitly mention the transverse intersection conditions which are required to make the Floer cochain groups well-defined). As a consequence of Lemma \ref{th:interior-exterior}, the subgroups \eqref{eq:interior-only} are subcomplexes, and are preserved by the continuation map \eqref{eq:continuation-map}. Moreover:
\begin{equation} \label{eq:tilde-cont2}
\mybox{
For $\alpha_{L_0} > \alpha_{L_1}$, the continuation map restricts to a homotopy equivalence
\[
\mathit{CF}^*_{\mathit{int}}(L_0,L_1;-1) \stackrel{\htp}{\longrightarrow} \mathit{CF}^*_{\mathit{int}}(L_0,L_1).
\]
}
\end{equation}
Here, Lemma \ref{th:interior-interior} applies, showing that \eqref{eq:tilde-cont2} counts only maps which remain inside the subset $\Pi^{-1}(\{|w| \leq 1/5\})$. With that in mind, the statement is standard, since during the isotopy $\phi^t(L_0)$, $t \in [-1,0]$, no intersection points with $L_1$ enter or leave that subset.
We will need some more statements of a similar kind, which are filtered versions of \eqref{eq:above} and \eqref{eq:below}:
\begin{equation} \label{eq:above2}
\mybox{
Suppose that $L_1$ is a positive perturbation of $L_1^-$, and $L_0$ another Lagrangian submanifold such that $\alpha_{L_0} > \alpha_{L_1}$. Then, the product with the continuation cocycle preserves the subspaces generated by interior points, and induces a quasi-isomorphism
\[
\mu^2(c_{L_1,L_1^-},\cdot): \mathit{CF}^*(L_0,L_1)/\mathit{CF}^*_{\mathit{int}}(L_0,L_1) \stackrel{\htp}{\longrightarrow} \mathit{CF}^*(L_0,L_1^-)/\mathit{CF}^*_{\mathit{int}}(L_0,L_1^-).
\]
}
\end{equation}
\begin{equation} \label{eq:below2}
\mybox{
Suppose that $L_0^+$ is a positive perturbation of $L_0$, and $L_1$ another Lagrangian submanifold such that $\alpha_{L_1} < \alpha_{L_0}$. Then, the product with the continuation cocycle preserves the subspaces generated by interior points, and induces a quasi-isomorphism
\[
\mu^2(\cdot, c_{L_0^+,L_0}): \mathit{CF}^*(L_0,L_1)/\mathit{CF}^*_{\mathit{int}}(L_0,L_1) \stackrel{\htp}{\longrightarrow} \mathit{CF}^*(L_0^+,L_1)/\mathit{CF}^*_{\mathit{int}}(L_0^+,L_1).
\]
}
\end{equation}
The fact that the products from \eqref{eq:above2}, \eqref{eq:below2} preserve the interior subspaces is an application of Lemma \ref{th:interior-exterior}. To see that they induce quasi-isomorphisms on the quotients, one proceeds roughly as follows. To fix the notation, let's suppose that we are considering \eqref{eq:below2}. As a consequence of Lemma \ref{th:exterior-cont}, the image of $c_{L_0^+,L_0}$ in $\mathit{CF}^*(L_0^+,L_0)/\mathit{CF}^*_{\mathit{int}}(L_0^+,L_0)$ involves only maps whose image remains in $\Pi^{-1}(\{|w| \geq 2/5\})$. This, together with Lemma \ref{th:interior-exterior}, tells us the same thing for the map
\begin{equation} \label{eq:q-map}
\mu^2(\cdot, c_{L_0^+,L_0}): \mathit{CF}^*(L_0,L_1) / \mathit{CF}^*_{\mathit{int}}(L_0,L_1)\longrightarrow \mathit{CF}^*(L_0^+,L_1) / \mathit{CF}^*_{\mathit{int}}(L_0^+,L_1).
\end{equation}
By a gluing argument, that map is chain homotopic to a kind of continuation map (still defined by considering moving boundary conditions). There is an analogue of Lemma \ref{th:exterior-cont} for that map, meaning that it can be described using only $\Pi^{-1}(\{|w| \geq 2/5\})$. At this point, a geometric argument similar to that mentioned above for \eqref{eq:tilde-cont2} can be brought to bear, based on the fact that during isotopy from $L_0$ to $L_0^+$, all the exterior intersection points with $L_1$ remain in $F = M_{3/5}$.

Let's put those kinds of considerations on a more systematic footing. Define categories $\scrA^{\mathit{ord}}$ and $\scrF^{\mathit{ord}}$ as in Section \ref{subsec:ordered-categories}, but obviously taking into account our more specific geometric context. From \eqref{eq:interior-only}, we get subgroups $\scrA^{\mathit{ord}}_{\mathit{int}}(L_0,L_1)$ and $\scrF^{\mathit{ord}}_{\mathit{int}}(L_0,L_1)$ (defined to be zero if $L_0 = L_1$). Let's write
\begin{equation} \label{eq:q-morphisms}
\begin{aligned}
\scrQ^{\mathit{ord}}(L_0,L_1) &\; = \scrA^{\mathit{ord}}(L_0,L_1)/\scrA^{\mathit{ord}}_{\mathit{int}}(L_0,L_1)
\\ &\; = \scrF^{\mathit{ord}}(L_0,L_1) / \scrF^{\mathit{ord}}_{\mathit{int}}(L_0,L_1),
\end{aligned}
\end{equation}
Thanks to Lemma \ref{th:do-not-intersect-in-fibre}, it makes no difference whether one thinks of the quotient in terms of $\scrA^{\mathit{ord}}$ or $\scrF^{\mathit{ord}}$. As a direct consequence of Lemma \ref{th:interior-exterior}, the spaces \eqref{eq:q-morphisms} inherit an $A_\infty$-structure, which admits the following interpretation: generators (in the nontrivial case $\alpha_{L_0} > \alpha_{L_1}$) are exterior intersection points, and the nontrivial $A_\infty$-products count only solutions of the relevant Cauchy-Riemann equation which are contained in $\Pi^{-1}(\{|w| \geq 2/5\})$. As a consequence of \eqref{eq:tilde-cont2}, projection is a homotopy equivalence
\begin{equation} \label{eq:project-to-q}
\scrF^{\mathit{ord}}(L_0,L_1) \stackrel{\htp}{\longrightarrow} \scrQ^{\mathit{ord}}(L_0,L_1).
\end{equation}
Moreover, in the situations of \eqref{eq:above2} and \eqref{eq:below2}, the continuation cocycle, projected to the appropriate $\scrQ^{\mathit{ord}}$ space, induces homotopy equivalences
\begin{align}
& \mu^2(c_{L_1,L_1^-},\cdot): \scrQ^{\mathit{ord}}(L_0,L_1) \stackrel{\htp}{\longrightarrow} \scrQ^{\mathit{ord}}(L_0,L_1^-), \\
& \mu^2(\cdot,c_{L_0^+,L_0}): \scrQ^{\mathit{ord}}(L_0,L_1) \stackrel{\htp}{\longrightarrow} \scrQ^{\mathit{ord}}(L_0^+,L_1).
\end{align}

As before, let $S$ be the set of continuation cocycles. To be more precise, there are three versions of this, $S(\scrA^{\mathit{ord}})$, $S(\scrF^{\mathit{ord}})$, and $S(\scrQ^{\mathit{ord}})$, where the latter is obtained from one of the previous two by projection to \eqref{eq:q-morphisms}. Let $\scrA$, $\scrF$, and $\scrQ$ be the localisations with respect to the appropriate $S$. The first two categories are familiar from Section \ref{subsec:af}, and $\scrQ$ shares their essential properties. By this, we mean that morphism spaces can be computed, up to chain homotopy, by using the product with a continuation cocycle together with the quotient functor:
\begin{equation}
\scrQ(L_0,L_1) \stackrel{\htp}{\longrightarrow} \scrQ(L_0^+,L_1) \stackrel{\htp}{\longleftarrow}
\scrQ^{\mathit{ord}}(L_0^+,L_1),
\end{equation}
where $L_0^+$ is a positive perturbation with $\alpha_{L_0^+} > \alpha_{L_1}$. From that and \eqref{eq:project-to-q}, we immediately obtain the following:

\begin{lemma} \label{th:f-q}
Projection $\scrF^{\mathit{ord}} \rightarrow \scrQ^{\mathit{ord}}$ induces a quasi-equivalence $\scrF \rightarrow \scrQ$.
\end{lemma}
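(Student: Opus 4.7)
The plan is to verify that the induced functor $\scrF \to \scrQ$ is cohomologically full and faithful on each morphism space; combined with the fact that both categories share the same set of objects (namely $\bfL$), this will establish the quasi-equivalence. The induced functor exists via the universal property \eqref{eq:dg-quotient-property}, since by construction of $S(\scrQ^{\mathit{ord}})$ as the projection of $S(\scrF^{\mathit{ord}})$, the $A_\infty$-functor $\scrF^{\mathit{ord}} \to \scrQ^{\mathit{ord}}$ sends continuation cocycles to (representatives of) continuation cocycles.

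First I would treat the case $\alpha_{L_0} > \alpha_{L_1}$. Here both $\scrF$ and $\scrQ$ satisfy \eqref{eq:quotient-hom-2}, so the localization functors give homotopy equivalences $\scrF^{\mathit{ord}}(L_0,L_1) \htp \scrF(L_0,L_1)$ and $\scrQ^{\mathit{ord}}(L_0,L_1) \htp \scrQ(L_0,L_1)$ (the second uses the same strategy as \eqref{eq:f-success}, relying on approximately $S$-projective resolutions from positive perturbations together with the $\scrQ^{\mathit{ord}}$ versions of \eqref{eq:above-2}, \eqref{eq:below-2}, which follow from \eqref{eq:above2}, \eqref{eq:below2} by the same filtration argument). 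These homotopy equivalences fit into a square with the chain-level projection $\scrF^{\mathit{ord}}(L_0,L_1) \to \scrQ^{\mathit{ord}}(L_0,L_1)$ and the induced map $\scrF(L_0,L_1) \to \scrQ(L_0,L_1)$; the square commutes at the level of inclusion as the first summand of \eqref{eq:localised-morphisms}. Since \eqref{eq:project-to-q} makes the left vertical arrow a homotopy equivalence, so is the right one.

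For a general pair $(L_0,L_1)$, pick a positive perturbation $L_0^+$ with $\alpha_{L_0^+} > \alpha_{L_1}$. Consider the diagram
\[
\xymatrix{
\scrF(L_0,L_1) \ar[r]^{\htp} \ar[d] & \scrF(L_0^+,L_1) \ar[d] & \ar[l]_-{\htp} \scrF^{\mathit{ord}}(L_0^+,L_1) \ar[d] \\
\scrQ(L_0,L_1) \ar[r]^{\htp} & \scrQ(L_0^+,L_1) & \ar[l]_-{\htp} \scrQ^{\mathit{ord}}(L_0^+,L_1)
}
\]
in which the horizontal arrows are (left) composition with the image of $c_{L_0^+,L_0}$ in the localisation, and (right) the localisation functors. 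This commutes up to chain homotopy, because the projection $\scrF^{\mathit{ord}} \to \scrQ^{\mathit{ord}}$ is a strict $A_\infty$-functor carrying the chosen representative of $c_{L_0^+,L_0}$ to its image. The horizontal equivalences are provided by the discussion preceding the lemma together with the previous paragraph, and the rightmost vertical arrow is a homotopy equivalence by \eqref{eq:project-to-q}. A two-out-of-three argument gives the desired equivalence of the leftmost column.

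The main obstacle is really establishing the $\scrQ$-side horizontal equivalences. Specifically, one needs the analogues of \eqref{eq:above-2}--\eqref{eq:below-2} for $\scrQ^{\mathit{ord}}$, used in showing that $X_0$ admits approximately $S(\scrQ^{\mathit{ord}})$-projective resolutions and that the relevant composition maps are homotopy equivalences. These come from the results \eqref{eq:above2}, \eqref{eq:below2} already proved in Section \ref{subsec:q-category}, upgraded to the full $\scrF^{\mathit{ord}}$-level products by the same filtration argument that was used to promote \eqref{eq:above}, \eqref{eq:below} to \eqref{eq:above-2}, \eqref{eq:below-2}; once these are in place, the diagram chase is routine.
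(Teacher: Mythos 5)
Your proposal is correct and follows essentially the same route as the paper. The paper states the needed ingredients (the $\scrQ$ analogue of \eqref{eq:f-success}, the equivalences \eqref{eq:project-to-q}, and the product-with-continuation homotopy equivalences just before the lemma) and then declares the conclusion "immediate"; your argument simply unpacks that diagram chase explicitly, first for the ordered case and then by a two-out-of-three argument after composing with a continuation cocycle. One small remark: splitting off the case $\alpha_{L_0} > \alpha_{L_1}$ as a preliminary step is slightly redundant, since the general argument (pick $L_0^+$, compare via continuation) applies uniformly and already covers it; the paper goes directly to the general case.
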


\subsection{Geometry of the $A_\infty$-category $\scrQ$\label{subsec:more-q}}
At this point, we have shown that $\scrF$ is quasi-equivalent to the category $\scrQ$, which is defined as a version of the ordinary Fukaya category (using boundary-punctured discs with no popsicles and $\beta_S = 0$) on the part of $M$ lying over the annulus $\{2/5 \leq |w| \leq 1\}$. In fact, by looking at the position of the Lagrangian submanifolds and intersection points, one sees that we could lift everything to the universal cover of that annulus. Then, in a version of the language of \cite{sylvan16}, what we are looking at is (a subcategory of) the partially wrapped Fukaya category of $[0,1] \times \bR \times F$ with ``stops'' at $\{(0,0)\} \times F$ and $\{(1,0)\} \times F$, which is the ``stabilization of $F$'' as in \cite[Definition 2.11]{sylvan16}. This is related to the standard Fukaya category of $F$ by a special case of the Kunneth theorem \cite[Theorem 1.5]{ganatra-pardon-shende18}. The outcome of these considerations is:

\begin{lemma} \label{th:gps}
$\scrQ$ is quasi-equivalent to the full subcategory of the Fukaya category of $F$ whose objects are $L_F$, for $L$ a Lagrangian submanifold in our collection $\bfL$ (with the induced grading and {\em Spin} structure).
\end{lemma}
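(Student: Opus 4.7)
The plan is to implement the sketch given just above the lemma: realize $\scrQ$ as a full subcategory of the partially wrapped Fukaya category of the ``stabilization'' of $F$ (in the sense of \cite[Definition 2.11]{sylvan16}), and then invoke the Künneth theorem of \cite[Theorem 1.5]{ganatra-pardon-shende18}.

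First, the symplectic local triviality from \eqref{eq:restriction-geometry}, combined with the restriction $\alpha_L \in (-1/4,1/4)$, allows one to pass to the universal cover of the annulus $\{|w| \geq 1/5\}$ and identify $\Pi^{-1}(\{2/5 \leq |w| \leq 1\})$ with a product $A \times F$, where $A$ is a two-ended strip-like Liouville surface. Under this identification, each $L_k \in \bfL$ becomes the product of $L_{k,F}$ with a radial half-arc of angle $\alpha_{L_k}$ passing through the reference point $w = 3/5$. By the construction of $\scrQ^{\mathit{ord}}$ in Section \ref{subsec:q-category} and by Lemma \ref{th:interior-exterior}, the $A_\infty$-operations on $\scrQ^{\mathit{ord}}$ count only honestly holomorphic maps (no sprinkles, $\beta_S = 0$) into this product, asymptotic to chords contained in $F$; and the continuation-cocycle localisation producing $\scrQ$ from $\scrQ^{\mathit{ord}}$ corresponds, on the product side, to partial wrapping with stops at the two basepoints lying over $3/5$. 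Thus $\scrQ$ is identified, up to quasi-equivalence, with the full subcategory of the partially wrapped Fukaya category of $A \times F$ with those two stops, on the objects $\mathrm{arc} \times L_F$.

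Second, with this identification in hand, the Künneth theorem for partially wrapped Fukaya categories yields a quasi-equivalence between the partially wrapped Fukaya category of $A \times F$ with product stops and the classical Fukaya category $\scrF(F)$, because $A$ with one stop at each end has partially wrapped Fukaya category equivalent to perfect modules over a point. The equivalence sends $\mathrm{arc} \times L_F$ to $L_F$ with its induced grading and {\em Spin} structure, proving the claim.

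The main obstacle is the dictionary of the first step: matching the bespoke formalism of Sections \ref{sec:algebra}--\ref{sec:first-construction} --- asymptotically consistent $(\beta_S,J_{S,z})$ as in \eqref{eq:cr-2}, together with localisation by continuation cocycles \eqref{eq:localisation-2} --- to the conventions of \cite{sylvan16, ganatra-pardon-shende18}. One must verify, using invariance of the Fukaya category under choices of Floer data and the existence within the classes \eqref{eq:restriction-almost-complex}, \eqref{eq:restriction-hamiltonian} of a cofinal family of product-type Hamiltonian and almost-complex data on $A \times F$, that the two descriptions agree up to quasi-equivalence. Once this bookkeeping is done, Künneth delivers the statement.
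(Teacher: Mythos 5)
Your proof is a reasonable fleshing-out of the sketch the paper gives in the paragraph immediately preceding the lemma --- identifying $\scrQ$ with a piece of the partially wrapped Fukaya category of the stabilization $[0,1]\times\bR\times F$, and then applying the K\"unneth theorem of \cite{ganatra-pardon-shende18} --- but this is not the route the paper actually takes in its proof. After stating Proposition \ref{th:restriction-functor}, the paper explicitly declines to rely on \cite{sylvan16, ganatra-pardon-shende18} and instead outlines a self-contained argument ``using only classical techniques.'' Concretely, it imposes the extra condition \eqref{th:restriction-almost-complex-2} that $\Pi$ is $J$-holomorphic over a small disc around $3/5$, so that $F$ becomes an almost complex submanifold; proves Lemma \ref{th:degree-f} (via a partial energy estimate) that every solution with all exterior asymptotics and $\beta_S=0$ is contained in $F$; and then invokes the classical transversality argument for curves constrained to a fibre (\cite[Lemma 14.7]{seidel04}, \cite[Lemma 7.1]{seidel12b}) together with the grading/sign comparison \cite[Lemma 14.9]{seidel04}. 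This directly identifies $\scrQ^{\mathit{ord}}$ with an ordered model of the Fukaya category of $F$, and an analogue of Lemma \ref{th:degree-f} for continuation cocycles handles the localisation step. The two approaches trade off differently: your K\"unneth route outsources the hard analysis to the general machinery, at the cost of the bookkeeping you flag as ``the main obstacle'' --- matching the paper's bespoke popsicle/asymptotically-consistent data and localisation to the Floer-data conventions of \cite{sylvan16, ganatra-pardon-shende18}, which is a real translation effort rather than a formality. The paper's classical route avoids that dictionary entirely, at the cost of requiring the additional restriction \eqref{th:restriction-almost-complex-2} on almost complex structures (needed so that the ``confine to $F$'' argument works) and a direct transversality-in-a-fibre argument. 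Both are valid; yours is more modular, the paper's is more self-contained.
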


By combining this with Lemma \ref{th:f-q}, one arrives at the main result of this section:

\begin{prop} \label{th:restriction-functor}
$\scrF$ is quasi-equivalent to a full subcategory of the Fukaya category of $F$, defined as in Lemma \ref{th:gps}.
\end{prop}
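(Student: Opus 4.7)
The proof is essentially a composition of the two preceding lemmas, so the plan is to verify carefully that their combination yields exactly the claimed quasi-equivalence (as opposed to a mere zigzag of functors at the localized level) and to identify the resulting object-identification.

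First, I would invoke Lemma \ref{th:f-q} to obtain a quasi-equivalence $\scrF \to \scrQ$ induced by the projection $\scrF^{\mathit{ord}} \to \scrQ^{\mathit{ord}}$ of \eqref{eq:q-morphisms}. This already identifies objects on both sides with the Lagrangian submanifolds $L \in \bfL$, so no bookkeeping is needed at the object level. Then I would compose with the quasi-equivalence furnished by Lemma \ref{th:gps} between $\scrQ$ and the full subcategory $\scrF(F)_{\bfL} \subset \scrF(F)$ on the objects $L_F$, which sends $L \mapsto L_F$. The composition is the desired quasi-equivalence from $\scrF$ to a full subcategory of $\scrF(F)$.

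The main thing I would double-check is that the object assignment $L \mapsto L_F$ is well-defined on isomorphism classes and honest at the level of the chosen collection $\bfL$: distinct $L, L'\in \bfL$ may well yield the same $L_F$, so ``full subcategory'' has to be interpreted in the appropriate homotopy sense (i.e.\ as the essential image). This is automatic provided the Lagrangian data (grading and {\em Spin} structure) restrict compatibly to $F$, which follows from \eqref{eq:restriction-lagrangian} since the intersection $L \cap F$ is cut out transversally and carries induced grading/Spin data canonically.

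The only potentially nontrivial step is the appeal to Lemma \ref{th:gps}; but its proof is already essentially reduced to the Künneth theorem of \cite{ganatra-pardon-shende18} applied to the stabilization $[0,1] \times \bR \times F$, together with the translation-invariance argument for the partially wrapped structure on the annulus $\{2/5 \leq |w| \leq 1\}$ discussed in Section \ref{subsec:more-q}. So once those inputs are in place, the proof of Proposition \ref{th:restriction-functor} is literally the one-line composition, and I would record it as such.
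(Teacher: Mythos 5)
Your proposal is correct and matches the paper's argument exactly: the paper also obtains Proposition \ref{th:restriction-functor} as the immediate composition of the quasi-equivalence $\scrF \to \scrQ$ from Lemma \ref{th:f-q} with the identification of $\scrQ$ as a full subcategory of the Fukaya category of $F$ from Lemma \ref{th:gps}. Your additional remarks about the object assignment $L \mapsto L_F$ and interpreting ``full subcategory'' up to essential image are sensible sanity checks but not strictly needed beyond what those two lemmas already supply.
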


As explained above, Lemma \ref{th:gps} is a version of a known result. Nevertheless, in the interest of keeping the discussion self-contained, we outline a proof which uses only classical techniques \cite{seidel04, seidel12b}. For that, we further specialize \eqref{eq:restriction-almost-complex} by imposing the following additional condition on almost complex structures:
\begin{equation} \label{th:restriction-almost-complex-2}
\mybox{Over a fixed small closed disc $\Delta \subset \bC$ centered at $3/5$, the projection $\Pi$ is $J$-holomorphic. In particular, $F$ itself is an almost complex submanifold.
}
\end{equation}
For maps as in \eqref{eq:restriction-cr}, this implies the following additional property:

\begin{lemma} \label{th:degree-f}
Suppose that $\beta_S = 0$, and let $u$ be a solution all of whose limits are exterior intersection points. Then, the map $u$ must remain in $F$.
\end{lemma}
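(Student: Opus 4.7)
My plan is to reduce to showing that $v = \Pi \circ u$ is constant equal to $3/5$. First, since $\beta_S = 0$ forces all weights $w_k = 0$ in the sense of \eqref{eq:restriction-cr}, in particular $w_0 = 0$, Lemma~\ref{th:interior-exterior} applies and tells us that $u(S) \subset \Pi^{-1}(\{|w| \geq 2/5\})$. Because all limits $x_k$ are exterior intersection points (hence contained in $F$), we have $\Pi(x_k) = 3/5$, so $v$ converges to the constant value $3/5$ at each end. Moreover the boundary condition \eqref{eq:restriction-lagrangian} forces $v|I_k$ to lie on the smooth curve $C_k = \{w : |w|\geq 1/5,\ \arg(w) = 2\pi \alpha_{L_k} a(|w|)\}$, which passes through $3/5$ when $|w| = 3/5$.

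The central step is that $v$ is holomorphic on its entire domain. On $\Pi^{-1}(\{|w|\leq 2/5\})$ and $\Pi^{-1}(\Delta)$ this follows from \eqref{eq:restriction-almost-complex} and the strengthening \eqref{th:restriction-almost-complex-2}, together with $\beta_S = 0$; on $\Pi^{-1}(\{|w|\geq 1\})$ from \eqref{eq:almost-complex}. In the remaining ``intermediate'' region $\Pi^{-1}(\{2/5 < |w| < 1\}\setminus\Delta)$, $\Pi$ is not required to be $J$-holomorphic. I will assume that the class of almost complex structures has been arranged so that this intermediate obstruction is absent (e.g.\ by enlarging $\Delta$ to cover $\{2/5 \leq |w|\leq 1\}$, or equivalently imposing compatibility of $J$ with $\Pi$ throughout $\Pi^{-1}(\{|w|\geq 1/5\})$); this is compatible with \eqref{eq:restriction-almost-complex-2} and the transversality arguments of Section~\ref{subsec:partial-energy}.

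Granted holomorphicity of $v$, the argument is now an energy-action identity in the plane. Each $C_k$ is an exact Lagrangian in $(\bC, \omega_\bC, \theta_\bC)$, and I choose the primitive $K_{L_k}$ of $\theta_\bC|C_k$ normalized so that $K_{L_k}(3/5) = 0$. By Stokes,
\begin{equation*}
E^{\mathit{geom}}(v) \;=\; \int_S v^*\omega_\bC \;=\; \sum_k \int_{I_k} d(K_{L_k}\circ v) \;+\; \sum_{\zeta \in \Sigma_S^\pm} \mp \lim_{s\to\pm\infty} \int_{[0,1]} v(\epsilon_\zeta(s,\cdot))^* \theta_\bC .
\end{equation*}
Each boundary component $I_k$ runs between two ends, where $v \to 3/5$; by the normalization $K_{L_k}(3/5) = 0$, every $\int_{I_k}$ vanishes. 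The contribution from each end limits to $\int_0^1 (3/5)^*\theta_\bC = 0$, since the limiting chord is constant. Hence $E^{\mathit{geom}}(v) = 0$, which for a holomorphic map forces $dv \equiv 0$, so $v \equiv 3/5$ by continuity, and therefore $u(S) \subset \Pi^{-1}(3/5) = F$.

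The main obstacle is the bookkeeping about where $\Pi$ is holomorphic: without a preliminary bound forcing $v(S) \subset \Delta$, the intermediate annulus must be absorbed into the holomorphicity assumption. The cleanest fix is to strengthen \eqref{th:restriction-almost-complex-2} (as indicated above); alternatively one could try a bootstrap using a second partial energy as in \eqref{eq:partial-e-geom}, with a rotationally invariant two-form supported in $\{3/5 + \epsilon < |w| < 1\}$ and another in $\{2/5 < |w| < 3/5-\epsilon\}$, computing the corresponding actions of exterior chords to conclude $v$ never leaves a small annular neighborhood of $3/5$, at which point holomorphicity of $\Pi$ on $\Delta$ suffices.
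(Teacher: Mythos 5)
Your energy-action identity is correct and would finish the proof if $v = \Pi\circ u$ were holomorphic on all of $S$; the issue you flag at the end is indeed the crux, but neither of your proposed fixes works.

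The ``cleanest fix'' --- strengthening the almost complex structure conditions so that $\Pi$ is $J$-holomorphic over all of $\{|w|\geq 1/5\}$ --- is not innocuous. The transversality argument in this section relies on being able to vary $J$ freely somewhere along any given solution: the end of Section~\ref{subsec:partial-energy} explicitly relocates the ``free'' region of Lemma~\ref{th:injective} to $\{z\;:\;|\Pi(u(z))| \in [0,1/5)\cup(2/5,1)\}$. Solutions with all exterior limits remain in $\Pi^{-1}(\{|w|\geq 2/5\})$ by Lemma~\ref{th:interior-exterior}, so with your strengthened condition $J$ would be constrained along the \emph{entire} image of every such solution. Transversality for maps already known to lie in $F$ is handled by a separate fibre/base argument, but that does not help here: Lemma~\ref{th:degree-f} is precisely what is needed to rule out solutions in $\Pi^{-1}(\{|w|\geq 2/5\})\setminus F$, so invoking it to justify the strengthened assumption is circular. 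The ``bootstrap'' alternative fails for a more elementary reason: both of your proposed annular two-forms, supported in $\{3/5+\epsilon < |w| < 1\}$ and $\{2/5 < |w| < 3/5-\epsilon\}$, live exactly in the region where $D\Pi$ is \emph{not} required to be $J$-holomorphic (compare \eqref{eq:restriction-almost-complex}). The nonnegativity of the partial geometric energy in \eqref{eq:partial-e-geom} relies on $D\Pi$ being $J$-holomorphic on the support of the two-form, so neither choice yields an estimate.

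The fix that actually works, and is what the paper does, is to run your Stokes computation for a \emph{partial} energy whose two-form is pulled back from one supported on $\Delta$ itself --- precisely the disc where \eqref{th:restriction-almost-complex-2} already guarantees $J$-holomorphicity of $\Pi$. Since $\beta_S=0$, partial geometric and topological energies agree. All the exterior chords project to the \emph{centre} $3/5$ of $\Delta$, so choosing a rotationally invariant primitive that vanishes at $3/5$ and normalizing the $\tilde K_{L_k}$ accordingly makes every boundary and action term vanish, just as in your identity. Nonnegativity of the partial geometric energy (valid because the two-form is supported where $\Pi$ is $J$-holomorphic) then forces $v$ to be locally constant wherever it meets the support, and the limit condition $v\to 3/5$ at every end propagates this to $v\equiv 3/5$, i.e.\ $u(S)\subset F$. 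So your computation was the right one; the only missing ingredient was to support the two-form on $\Delta$ rather than on the full plane or on the intermediate annuli.
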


\begin{proof}
In this situation, we are looking at an ordinary pseudo-holomorphic map equation (no inhomogeneous term). We can carry out a partial energy argument as before, but where now the relevant two-form is pulled back from one supported on $\Delta$. All our solutions have zero topological and hence geometric energy, which means that $u$ must be locally constant on the open subset $u^{-1}\Pi^{-1}(\Delta \setminus \partial \Delta)$. Because of the limit condition, this implies the desired result.
%
%
\end{proof}

Transversality for solutions contained in $F$ requires separate considerations of fibre and base components, where the latter involves the ordering condition on the $\alpha_{L_k}$; we refer to \cite[Lemma 14.7]{seidel04} (where instead of an ordering condition, a more general assumption \cite[Equation (14.10)]{seidel04} on the inhomogeneous term is used) or \cite[Lemma 7.1]{seidel12b}. The comparison of gradings and signs between $M$ and $F$ is given by \cite[Lemma 14.9]{seidel04}. These results show that $\scrQ^{\mathit{ord}}$ can be identified with an ordered version of the Fukaya category of $F$. Lemma \ref{th:degree-f} has an analogue for continuation cocycles, and in the resulting localisation is obviously quasi-equivalent to a subcategory of the Fukaya category of $F$ defined in the more conventional way (say, following \cite{seidel04}).

\section{Ample hypersurfaces\label{sec:anticanonical}}

We now turn to a slightly different geometric situation, namely, closed monotone symplectic manifolds equipped with an anticanonical (hence ample) hypersurface, and a section of the normal bundle to that hypersurface. The issues most relevant to us play out in the neighbourhood of the hypersurface. The behaviour of pseudo-holomorphic curves in such a situation is a well-explored topic, see e.g.\ \cite{cieliebak-mohnke07, charest-woodward17, tonkonog17}; our approach borrows from \cite[Section 7]{seidel16} as well. The outcome will be another noncommutative divisor, which encodes pseudo-holomorphic maps that intersect the hypersurface. Note that, in spite of the notational overlaps with Section \ref{sec:first-construction}, the algebraic objects in this section are not the same (they play a parallel role, but their definition is different).

\subsection{Local geometric setup\label{subsec:local-geometry}}
Let $D$ be a closed $(2n-2)$-dimensional symplectic manifold, with the following additional data:
\begin{equation}
\mybox{
$P \rightarrow D$ is a complex line bundle, such that $c_1(P)$ is an integral lift of $[\omega_D]$. It comes with a section $Z$ which transversally cuts out a symplectic hypersurface $B = Z^{-1}(0) \subset D$; and the symplectic orientation of $B$ should agree with that inherited as a zero-locus. We identify $D$ with the zero-section in $P$. We write $D_c \subset P$ for the image of $cZ$, $c \in \bC$, so that $D_0 = D$.
}
\end{equation}
From elementary symplectic geometry, recall that a metric $\|\cdot\|$ and hermitian connection $\nabla$ on $P$ determine a closed two-form $\omega_P$ on its total space. Explicitly, if we work in a local trivialization of $P$ compatible with $\|\cdot\|$, and write $\nabla = d+iA$ for $A \in \Omega^1(D)$, then
\begin{equation} \label{eq:omega-p}
\omega_P = \omega_{\bC} + \omega_D + d(\half |y|^2 A),
\end{equation}
where $y \in \bC$ is the fibre coordinate. In a neighbourhood of the zero-section, $\omega_P$ is symplectic. On the complement of the zero-section, it is exact. More concretely, the curvature of the connection is $2\pi i (d\theta_D - \omega_D)$ for some $\theta_D \in \Omega^1(D)$, and the choice of $\theta_D$ determines a primitive $\theta_{P \setminus D}$ for $\omega_P\,|\,(P \setminus D)$. In a local trivialization as before, this means $\omega_D = -dA/2\pi + d\theta_D$, and 
\begin{equation} \label{eq:theta-p}
\theta_{P \setminus D} = \textstyle \theta_{\bC} + \theta_D + \half |y|^2 A - \frac{1}{2\pi}(A + d\mathrm{arg}(y)),
\end{equation}
where $\theta_{\bC} = \frac{i}{4}(y\, d\bar{y} - \bar{y} \, dy)$. 
We actually want to adopt a particular choice of metric and connection:
\begin{equation} \label{eq:z-trivialization}
\mybox{
Fix $\|\cdot\|$ and $\nabla$ so that, outside an open neighbourhood $U_B \subset D$ of $B$, $Z$ has norm $1$ and is covariantly constant. Hence, if we use $Z$ to partially trivialize $P$,
\[
P\,|\, (D \setminus U_B) \iso \bC \times (D \setminus U_B), 
\]
then $\omega_P = \omega_{\bC} + \omega_D$ in that trivialization; and similarly, $\theta_{P \setminus D} = \theta_{\bC} + \theta_D - \frac{1}{2\pi} d\mathrm{arg}(y)$, as well as $d\theta_D = \omega_D$ (everything outside $U_B$, of course).
}
\end{equation}
Other classes of geometric objects are adjusted accordingly:
\begin{equation} \label{eq:local-hamiltonian}
\mybox{
We use a function $H$ on $P$ which has vanishing value and first derivative along the zero-section. Moreover, its restriction to the partial trivialization should be $H = -\pi \|y\|^2$, so that the associated Hamiltonian vector field is clockwise rotation there.
}
\end{equation}
Really, the notion of Hamiltonian vector field makes sense only in a neighbourhood of the zero-section, where $\omega_P$ is symplectic. The same caveat will apply tacitly at many points throughout the following discussion.
\begin{equation} 
\label{eq:local-lagrangian}
\mybox{
We consider submanifolds with boundary $L \subset P$ of the following form. Pick a Lagrangian submanifold $L_D \subset D$ which is disjoint from $\bar{U}_B$, and some $\alpha_L \in (-\half,\half)$. Over each point $x \in L_D$, take the ray $\bR^+ e^{-2\pi i\alpha_L} Z_x$, and let $L$ be their union. Equivalently, in the partial trivialization from \eqref{eq:z-trivialization}, $L$ is the product $\bR^+ e^{-2\pi i\alpha_L} \times L_D$. Hence, it is an $\omega_P$-Lagrangian submanifold, with boundary $\partial L = L_D$. 
}
\end{equation}
\begin{equation} \label{eq:local-almost-complex}
\mybox{
We consider almost complex structures $J$, defined on a neighbourhood of the zero-section in $P$, and compatible with $\omega_P$, such that the following holds. Both $D$ and $B$ are almost complex submanifolds with respect to $J$. Moreover, in the partial trivialization from \eqref{eq:z-trivialization}, projection to the $\bC$ factor is $J$-holomorphic.
}
\end{equation}

\begin{lemma} \label{th:local-transversality}
For a generic $J$ as in \eqref{eq:local-almost-complex}, every simple (not multiply-covered) $J$-holomorphic sphere which is not contained in $D$ is regular.
\end{lemma}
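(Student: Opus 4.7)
My plan is to follow the standard Sard--Smale strategy for transversality of simple pseudo-holomorphic spheres (as in \cite{floer-hofer-salamon94} or \cite[Section 8]{abouzaid-seidel07}), and to verify that the constraints of \eqref{eq:local-almost-complex} leave enough freedom for the linearized operator $D_u$ to be made surjective by varying $J$. The critical constraint is that in the trivialization \eqref{eq:z-trivialization} over $D \setminus U_B$, the structure $J$ must split as a product (so that $\pi_{\bC}$ is $J$-holomorphic); the requirements that $D$ and $B$ be almost complex are automatic for perturbations supported away from those submanifolds. The fully unconstrained region is therefore $\mathcal{U} := \pi_P^{-1}(U_B) \setminus D$, and it suffices to find, for each simple $J$-holomorphic sphere $u$ not contained in $D$, a somewhere-injective immersion point $z \in S^2$ with $u(z) \in \mathcal{U}$; the usual perturbation argument then produces any desired cokernel element of $D_u$.

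The first case to dispose of is $u^{-1}(\mathcal{U}) \neq \emptyset$. Since the somewhere-injective immersion points of a simple $J$-holomorphic sphere form a dense open subset of $S^2$ (McDuff), they meet the open set $u^{-1}(\mathcal{U})$, and we are done.

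The remaining case is $u(S^2) \subset D \cup \pi_P^{-1}(D \setminus U_B)$. I claim that in fact $u(S^2) \subset \pi_P^{-1}(D \setminus U_B)$: if $u(z_0)$ lay in $\pi_P^{-1}(U_B)$ for some $z_0$, it would have to lie in $U_B \subset D$, and continuity together with the case hypothesis would force $u$ to map a whole neighbourhood of $z_0$ into $U_B \subset D$, contradicting discreteness of $u^{-1}(D)$ for $u \not\subset D$ (positivity of intersection of a $J$-holomorphic curve with the $J$-holomorphic hypersurface $D$). With $u$ entirely contained in the trivialization region, $\pi_{\bC} \circ u \colon S^2 \to \bC$ is a globally defined holomorphic map, hence constant with value $c$; and $c = 0$ would give $u \subset D$, so $c \neq 0$ and $u$ lies in a single fibre $\{c\} \times (D \setminus U_B)$, disjoint from $D$ and from $B$.

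In this reduced situation the linearized operator splits in the product coordinates as $D_u = D_u^{\mathrm{hor}} \oplus D_u^{\mathrm{ver}}$, where $D_u^{\mathrm{ver}}$ is the standard $\bar\partial$-operator on the trivial $\bC$-bundle over $S^2$ (automatically surjective, with two-dimensional kernel), and $D_u^{\mathrm{hor}}$ is the linearization of $u$ viewed as a simple $J_D$-holomorphic sphere in $D$ avoiding $B$. Perturbations of $J$ in the trivialization region correspond precisely to perturbations of $J_D$ on $D \setminus U_B$, and the somewhere-injective immersion points of $u$ all lie in $D \setminus U_B \subset D \setminus B$, where $J_D$ is unconstrained; the classical transversality theorem for simple spheres in $D$ then yields surjectivity of $D_u^{\mathrm{hor}}$ for generic $J_D$. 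The step I expect to require most care in the full write-up is the positivity-of-intersection argument in the second case that rules out $u$ touching $\pi_P^{-1}(U_B)$, since it is what allows the otherwise constrained transversality problem on $P$ to decouple into two unconstrained ones; beyond that, the argument is routine Sard--Smale machinery.
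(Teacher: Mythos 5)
Your Case~1, namely finding a somewhere-injective immersion point with image in the unconstrained region $\mathcal{U} = \pi_P^{-1}(U_B)\setminus D$, is the heart of the matter and matches the paper. But you then spend considerable effort on Case~2 without noticing that it is vacuous, which is exactly what the paper's proof observes. In the partial trivialization \eqref{eq:z-trivialization} one has $\omega_P = \omega_{\bC} + \omega_D = d(\theta_{\bC} + \theta_D)$ over $D\setminus U_B$, so the symplectic form is \emph{exact} on the closed set $\pi_P^{-1}(D\setminus U_B)$. A non-constant $J$-holomorphic sphere has strictly positive $\omega_P$-energy, and so cannot be entirely contained in any region where $\omega_P$ is exact. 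Combined with the fact (which you correctly invoke) that $u^{-1}(D)$ is finite, this immediately produces a point of the sphere lying neither in the trivialization region nor in $D$, i.e.\ in $\mathcal{U}$. So Case~2 simply never occurs, and there is no need for the projection to $\bC$, the product decomposition of $D_u$, or the secondary transversality argument on $D$.

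To be clear about what this means for your write-up: the content of your second paragraph is not wrong, but it expends effort on a situation that the exactness argument eliminates in one line. Moreover, even if you did not want to use exactness of $\omega_P$ on the full trivialization region, the very last step of your Case~2 suffers from the same blind spot one level down: a sphere confined to $\{c\}\times(D\setminus U_B)$ has zero $\omega_D$-energy (again by exactness of $\omega_D$ on $D\setminus U_B$), hence is constant, so there is nothing to make transverse there either. In short, the missing observation is the exactness of the relevant symplectic forms over the partially trivialized region; once that is in place, your Case~1 plus ``$u^{-1}(D)$ finite'' is the entire proof, which is precisely how the paper argues.
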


\begin{proof}
The $J$-holomorphic sphere can't be contained in the closed subset where \eqref{eq:z-trivialization} applies, since the symplectic form is exact there. On the other hand, it can intersect $D$ only at finitely many points. It follows that there is a point lying on it which is neither inside \eqref{eq:z-trivialization} nor in $D$. Near that point, the almost complex structure can be varied freely, which is enough to make the standard argument go through.
\end{proof}

The restriction that the curve should not be contained in $D$ could be dropped, because of the ``positivity'' of the normal bundles of $D \subset P$ and $B \subset D$; but we will not need that.

\begin{remark} \label{th:local-almost-complex-2}
It is possible (after a preliminary deformation of $Z$ near $B$) to find almost complex structures which, in addition to the conditions in \eqref{eq:local-almost-complex}, make all the $D_c$ into almost complex submanifolds. Namely, set $\omega_B = \omega_P|B$; choose a connection $A_B$ on $P|B$; and consider the total space of the rank two bundle $Q = P|B \oplus P|B$, with its associated symplectic form
\begin{equation}
\omega_Q = \omega_{\bC^2} + \omega_B + d(\half |y_1|^2 A_B + \half |y_2|^2 A_B).
\end{equation}
One can identify a neighbourhood of the zero-section in $Q$ with a neighbourbood of $B$ inside the total space of $P$. 
After deforming $Z$ locally near $B$ (within the class of sections that vanish transversally along $B$), we may assume that with respect to this identification, 
\begin{equation} \label{eq:local-d-c}
D_c = \{y_2 = cy_1\} \subset Q.
\end{equation}
Any compatible almost complex structure $J_B$ induces an almost complex structure $J_Q$ on $Q$, which is the standard complex structure on each $\bC^2$ fibre, and is then characterized by
\begin{equation}
(J_Q)_{x,y_1,y_2}(\xi,\, -iy_1 A_B(\xi),\, -iy_2 A_B(\xi)) = (J_B\xi,\, -iy_1 A_B(J_B\xi),\, -iy_2 A_B(J_B \xi)).
\end{equation}
With respect to any such almost complex structure, all the \eqref{eq:local-d-c} are almost complex submanifolds.
\end{remark}

\subsection{Intersection numbers\label{subsec:local-intersection}}
Consider solutions of an equation \eqref{eq:cr}, where the target space is (a neighbourhood of the zero-section inside) $M = P$, and the remaining data are as in \eqref{eq:local-hamiltonian}-\eqref{eq:local-almost-complex}. In the region where the partial trivialization from \eqref{eq:z-trivialization} applies, we can project to the $\bC$-factor, and get solutions of a slightly modified version of \eqref{eq:cr-c}. Concretely, this is
\begin{equation} \label{eq:v-equation}
\left\{
\begin{aligned}
& v: S \longrightarrow \bC, \\
& \bar\partial v +  2\pi i v \beta_S^{0,1}= 0, \\
& v(I) \subset e^{-2\pi i \alpha_I} \bR^+;
\end{aligned}
\right.
\end{equation}
which differs from \eqref{eq:cr-c} in the sign of the inhomogeneous term (and of $\alpha_I$). The sign change means that instead of the maximum principle \eqref{eq:integrated}, we have a minimum principle (for $|v|$, assuming that never vanishes); the other results from Section \ref{subsec:maps-to-c} are unaffected. 
%
%

\begin{lemma} \label{th:discrete-2}
For a solution $u$ which is not contained in $D$, $u^{-1}(D)$ is discrete. 
\end{lemma}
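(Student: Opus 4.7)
The plan is local around each point $z_0 \in u^{-1}(D)$: show $z_0$ is isolated in $u^{-1}(D)$, or else $u$ must be contained in $D$. I would split into two cases according to whether $u(z_0)$ lies in $B$.

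\emph{Case 1: $u(z_0) \notin B$.} In a neighborhood of $u(z_0)$ the partial trivialization \eqref{eq:z-trivialization} applies, identifying $P$ locally with $\bC \times (D \setminus U_B)$. Writing $u = (v, u_D)$ in this splitting, the $\bC$-component $v$ satisfies \eqref{eq:v-equation}, and $u^{-1}(D) = v^{-1}(0)$ locally. The Lagrangian boundary conditions of the form \eqref{eq:local-lagrangian} restrict $v$ to a radial half-line $e^{-2\pi i\alpha_L}\bR^+$, which is totally real, so \eqref{eq:taylor} applies verbatim (the sign flip in \eqref{eq:v-equation} versus \eqref{eq:cr-c} does not affect that argument). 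Hence, either $v \equiv 0$ in a neighborhood of $z_0$, or $z_0$ is isolated in $v^{-1}(0)$. If $z_0 \in \partial S$, then $u(z_0) \in L \cap D = L_D$, which by \eqref{eq:local-lagrangian} is disjoint from $\bar{U}_B$, so boundary points are automatically in this case.

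\emph{Case 2: $u(z_0) \in B$ (and $z_0$ is necessarily interior).} Here the partial trivialization is unavailable, but we can still choose local coordinates on $P$ near $u(z_0)$ of the form $(x,y) \in D \times \bC$ in which $D = \{y = 0\}$. Writing $u = (u_D, v)$, I would extract the $\bC$-component of the equation $(Du - X\otimes \beta_S)^{0,1} = 0$. Since $D$ is a $J$-almost complex submanifold by \eqref{eq:local-almost-complex}, the principal symbol of this component in the $y$-direction is a $\bar\partial$-type operator; and since $X$ vanishes on $D$ by \eqref{eq:local-hamiltonian}, the inhomogeneous contribution and the deviation of $J$ from the product complex structure are both $O(v)$. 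The equation therefore takes the form
\begin{equation*}
\bar\partial v + a(z)\, v + b(z)\, \bar v = 0
\end{equation*}
with bounded measurable coefficients $a, b$ (absorbing $\beta_S$, the linearization of $X$ along $D$, and the connection on $P$). The Carleman similarity principle yields a factorization $v(z) = e^{\phi(z)} h(z)$ with $h$ holomorphic and $\phi$ continuous, so again $v$ vanishes either only at isolated points or identically.

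In the alternative (that $v$ vanishes on an open neighborhood of $z_0$), $u$ maps an open subset of $S$ into $D$; but on $D$ the term $X \otimes \beta_S$ vanishes, so there $u$ satisfies the ordinary pseudo-holomorphic map equation into $(D, J|_D)$. Standard unique continuation for pseudo-holomorphic maps then forces $u(S) \subset D$, contradicting the hypothesis, and therefore $z_0$ is isolated, as required.

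The main obstacle is the local analysis in Case 2: one needs to set up the local coordinates on $P$ near a point of $B$ carefully so that the equation for $v$ genuinely reduces to a $\bar\partial$ operator plus a zero-order perturbation (this uses the $J$-invariance of $TD$ crucially), and to check that the Carleman similarity principle applies with only $L^\infty$ regularity on the coefficients. Case 1 is essentially a bookkeeping exercise, reducing directly to the known local behavior of solutions to \eqref{eq:v-equation} established in \eqref{eq:taylor}.
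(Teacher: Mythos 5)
Your proof is correct, but the decomposition and the key tool for interior points differ from the paper's. The paper handles all interior points uniformly via the Gromov trick \eqref{eq:graph}: since $X$ vanishes along $D$, the product $S \times D$ is an almost complex submanifold of $S \times P$ with respect to $J_S^*$, so discreteness of the intersection of the graph $u^*$ with $S \times D$ (failing which $u^*$ would be contained in $S \times D$) is cited as standard pseudo-holomorphic curve theory; boundary points are reduced separately to \eqref{eq:taylor} exactly as in your Case 1, using $L \cap D = L_D \subset D \setminus \bar{U}_B$. You split instead according to whether $u(z_0) \in B$, and in the interior case near $B$ you re-derive the relevant standard fact by hand: extracting the $\bC$-normal component of the equation in coordinates adapted to $D$, using $X|_D = 0$ and $J$-invariance of $TD$ to get $O(|v|)$ perturbation terms, then invoking the Carleman similarity principle. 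That is a legitimate and more self-contained route; the Gromov trick avoids having to check the $O(|v|)$ bound explicitly, whereas your argument makes transparent exactly where those two geometric hypotheses enter. One minor imprecision at the end: ``standard unique continuation for pseudo-holomorphic maps'' does not by itself conclude $u(S) \subset D$ from $u$ landing in $D$ on an open set; what one actually runs is the open-closed argument (the interior of $u^{-1}(D)$ is open by definition, and closed because your local Carleman analysis at any of its boundary points forces local containment), which is precisely what the standard result packages up.
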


\begin{proof}
Since $X$ vanishes along $D$, $S \times D \subset S \times P$ is an almost complex submanifold with respect to the almost complex structure $J^*_S$ from \eqref{eq:graph}. Hence, at interior points of $S$, the result follows from \eqref{eq:graph} and standard pseudo-holomorphic curve theory. If $u$ meets $D$ at some point $z \in I \subset \partial S$, then $u(z) \in L_I \cap D \subset D \setminus \bar{U}_B$. Projecting to $\bC$ as in \eqref{eq:v-equation} reduces the problem to \eqref{eq:taylor}.
\end{proof}

Take $u$ as in Lemma \ref{th:discrete-2}. We associate to any point $z \in S$ a nonnegative integer $\mu_z(u) \geq 0$, as follows. If $u(z) \notin D$, $\mu_z(u) = 0$. For interior points of $u^{-1}(D)$, $\mu_z(u)$ is the local intersection multiplicity of $u$ and $D$, which is the same as that of $u^*$ and $S \times D$ in the associated equation \eqref{eq:graph}, hence finite and positive by standard pseudo-holomorphic curve theory. For a boundary point, we define it as the order of vanishing $\mu_z(v)$ of the associated map \eqref{eq:v-equation}, in the sense of \eqref{eq:taylor}. Assuming that $u^{-1}(D)$ is finite, we set $\mu(u) = \sum_z \mu_z(u)$. Since the zero-section is Poincar\'{e} dual to the symplectic class, these intersection multiplicities can also be expressed through integral formulae, of which we give two ``local'' versions for specific domains.
\begin{equation} \label{eq:energy-defect-1}
\mybox{
Let $T \subset \bC$ be the closed unit disc, and $u: T \rightarrow P$ be a solution of our Cauchy-Riemann equation, but without Lagrangian boundary conditions; instead, we require that $u(\partial T) \cap D = \emptyset$. Then
\[
\mu(u) = \int_T u^*\omega_P - \int_{\partial T} u^*\theta_{P \setminus D}.
\]
}
\end{equation}
\begin{equation} \label{eq:energy-defect-2}
\mybox{
Take the closed half-disc $T^+ = \{ z \in \bC \;:\; |z| \leq 1, \, \mathrm{im}(z) \geq 0 \}$, and write $\partial_{\mathit{in}} T^+ = \{ |z| = 1 \} \cap T^+$, $\partial_{\mathit{out}} T^+ = \{\mathrm{im}(z) = 0 \} \cap T^+$. Suppose that $L \subset P$ is as in \eqref{eq:local-lagrangian}, and that its interior is exact, $\theta_{P \setminus D} | (L \setminus \partial L) = dK_L$. Let $u: T^+ \rightarrow P$ be a solution of our Cauchy-Riemann equation, with the partial boundary condition $u(\partial_{\mathit{out}}T^+) \subset L$, and such that $u(\partial_{\mathit{in}}T^+) \cap D = \emptyset$. Then
\[
\mu(u) = \int_{T^+} u^*\omega_P\; - \int_{\partial_{\mathit{in}} T^+} u^*\theta_{P \setminus D} \; - K_L(u(1)) + K_L(u(-1)).
\]
}
\end{equation}
Topologically, it is sometimes simpler to consider the intersection with $D_c$, $c<0$, instead of $D = D_0$, since those hypersurfaces are disjoint from our Lagrangian submanifolds. Here is an application of this idea, again formulated only for a particular kind of domain:

\begin{lemma} \label{th:bound-intersection}
Take a half-strip $Z^{\pm}$, with a one-form $\beta_{Z^{\pm}}$ which is closed, vanishes on $\{t = 0,1\}$, and is asymptotic to $w \mathit{dt}$ in the sense of \eqref{eq:beta1}. As boundary conditions, use Lagrangian submanifolds $(L_0,L_1)$ as in \eqref{eq:local-lagrangian}, which intersect transversally and satisfy $\alpha_{L_0} > \alpha_{L_1}$. Finally, choose a family $(J_{s,t})$ of almost complex structures \eqref{eq:local-almost-complex} which is asymptotically translation-invariant. Let $u$ be a solution of the associated equation \eqref{eq:cr} with boundary conditions $u(s,0) \in L_0$, $u(s,1) \in L_1$; such that $u(0,t) \notin D$ for all $t$; and whose limit is a point of $L_0 \cap L_1 = L_{D,0} \cap L_{D,1} \subset D$. Then $u^{-1}(D)$ is finite; and for all sufficiently small $c<0$, we have the intersection number inequality
\begin{equation} \label{eq:minimum-intersection}
u \cdot D_c \geq \mu(u)  + \begin{cases} -w & \text{in the $Z^-$ case,} \\
w+1 & \text{in the $Z^+$ case.}
\end{cases}
\end{equation}
\end{lemma}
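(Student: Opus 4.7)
The plan is to reduce the statement to a winding-number computation for the projection $v$ of $u$ to $\bC$ via the partial trivialization \eqref{eq:z-trivialization}, which is valid in a neighborhood of the limit point (since it lies in $L_{D,0} \cap L_{D,1} \subset D \setminus \bar{U}_B$). There, $v$ satisfies \eqref{eq:v-equation} with totally real boundary conditions $v(s,0) \in e^{-2\pi i\alpha_{L_0}}\bR^+$, $v(s,1) \in e^{-2\pi i\alpha_{L_1}}\bR^+$, and decays exponentially to $0$. The substitution $\tilde v = e^{2\pi i wt}v$ converts this to a homogeneous holomorphic equation with modified boundary angles, whose standard Fourier asymptotic analysis (parallel to \eqref{eq:v-dag-2}, with signs adapted to \eqref{eq:v-equation}) yields an expansion
\[
v(s,t) \sim c'\, e^{\lambda s}\, e^{i(\lambda - 2\pi w)t},
\]
where the admissible exponents are $\lambda = 2\pi(w + \alpha_{L_0} - \alpha_{L_1} + k)$ for integer $k$ fixed by the angle-matching condition, and the leading term corresponds to the slowest admissible decay: on $Z^-$, the constraint $\lambda > 0$ gives $k = -w$ and $\lambda = 2\pi(\alpha_{L_0} - \alpha_{L_1})$; on $Z^+$, the constraint $\lambda < 0$ gives $k = -w-1$ and $\lambda = 2\pi(\alpha_{L_0} - \alpha_{L_1} - 1)$. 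When $c' \neq 0$, $v$ is nowhere zero for $|s|$ sufficiently large, which combined with the discreteness from Lemma \ref{th:discrete-2} confines $u^{-1}(D)$ to a compact subset and so gives its finiteness.

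For the intersection inequality, fix $c < 0$ so small that both $u^{-1}(D)$ and $u^{-1}(D_c)$ (which by the ray condition $\arg(cZ) = \pi \neq -2\pi\alpha_{L_i}$ is entirely interior) lie in a compact sub-rectangle $T = [a,b]\times[0,1]$ whose far side (at $s = b$ for $Z^+$, or $s = a$ for $Z^-$) is in the trivialization zone and satisfies $|v| < |c|/2$. Applying the integral formulas \eqref{eq:energy-defect-1}--\eqref{eq:energy-defect-2} with primitives of $\omega_P$ on $P \setminus D$ and on $P \setminus D_c$ that differ by $\tfrac{1}{2\pi}(d\arg y - d\arg(y-c))$ gives
\[
u \cdot D_c - \mu(u) \;=\; \frac{1}{2\pi}\oint_{\partial T} d\bigl(\arg(v-c) - \arg v\bigr),
\]
where any boundary zeros of $v$ are handled by a local perturbation: the normal form $v(\xi) = c_0\xi^{2\mu}+\cdots$ with $\arg c_0 = -2\pi\alpha_L$ yields exactly $\mu$ interior preimages of $c$ at each such zero, matching the winding correction so they cancel in the difference. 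Computing piece by piece: the near side $\{s = a\}$ contributes $0$ in the limit $c \to 0$ (generically $u(a,t) \notin D$, so $v(a,\cdot)$ stays bounded away from $0$); each Lagrangian side contributes $\pm 2\pi\alpha_{L_i}$, because $\arg v$ is constant while $\arg(v-c)$ transitions smoothly from $-2\pi\alpha_{L_i}$ (where $|v| \gg |c|$) to $0 = \arg(-c)$ (where $|v| \ll |c|$); the far side contributes $\mp(\lambda - 2\pi w)$ to $\arg v$ but essentially $0$ to $\arg(v-c)$, since $|v| \ll |c|$ there forces $v - c \approx -c$. Summing these pieces and plugging in the values of $\lambda$ above yields exactly $-w$ on $Z^-$ and $w+1$ on $Z^+$.

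The inequality rather than equality reflects the degenerate case $c' = 0$, where the leading role is taken over by the next admissible mode, with $k$ shifted by $+1$ on $Z^-$ or by $-1$ on $Z^+$; this strictly increases $|\lambda - 2\pi w|$ by a positive multiple of $2\pi$ and hence increases the far-side winding contribution, giving at least one additional intersection. The main technical point I anticipate is ensuring that the near side $s = a$ can be chosen uniformly with $u(a,t) \notin D$ for all $t$ while still enclosing all of $u^{-1}(D) \cup u^{-1}(D_c)$, and that the far side satisfies $|v(b,\cdot)| < |c|/2$; both follow from finiteness of $u^{-1}(D)$ combined with the uniform exponential rate in the asymptotic expansion.
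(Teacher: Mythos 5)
Your proof is correct and reaches the same conclusion by a genuinely different bookkeeping. The paper compactifies $Z^\pm$ by adding the point at infinity, tracks the preimages of $D_c$ as $c \to 0$ (which cluster at points of $u^{-1}(D)$ or escape to the added point), and computes the contribution at infinity via the explicit winding number of a radial-plus-$v(s,\cdot)$-plus-radial loop around $c$. You instead truncate to a rectangle $T$, write $u \cdot D_c - \mu(u)$ as the boundary integral of $\tfrac{1}{2\pi}d\bigl(\arg(v-c) - \arg v\bigr)$, and decompose that integral edge by edge. Both arguments use the same inputs (the Fourier asymptotics of $v$ at infinity, the ray boundary conditions pinning $\arg v$ on the Lagrangian sides, the even-order boundary zeros of $v$, and the observation that vanishing of the leading Fourier coefficient only steepens the decay and hence increases the intersection count), and your accounting of the edges agrees with the paper's local-contribution count after the dust settles, so the inequality comes out identical. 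One remark: you worry about choosing the near side so that $u$ misses $D$ along it, but this is exactly what the hypothesis $u(0,t) \notin D$ for all $t$ is there for — you need not appeal to genericity. Also, the claim that $\theta_{P\setminus D}$ and a corresponding primitive for $D_c$ differ by $\tfrac{1}{2\pi}\bigl(d\arg y - d\arg(y-c)\bigr)$ is cleanest near the trivialization zone; the cleaner way to justify the integral identity is to observe that $\tfrac{1}{2\pi}\oint_{\partial T} d\arg(v-c)$ (resp.\ $d\arg v$) is, by the argument principle, the algebraic count of preimages of $c$ (resp.\ $0$) in $\mathrm{int}\,T$ once the boundary zeros of $v$ are excised, which is the degree-theoretic content the paper uses directly.
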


\begin{proof}
Let's compactify the domain by adding a point $\{s = \pm \infty\}$. If $c<0$ is small, the restriction of $u$ to the boundary of the compactified domain is disjoint from $D_c$. Therefore, the intersection number $u \cdot D_c$ is well-defined, and independent of $c$ as long as that remains sufficiently small.

In the limit $c \rightarrow 0$, points of $u^{-1}(D_c)$ converge to points of $u^{-1}(D) \cup \{\pm\infty\}$. Hence, $u \cdot D_c$ can be written as a sum of contributions associated to such points. If $z \in u^{-1}(D)$ is an interior point, the contribution is obviously $\mu_z(u)$. For a boundary point, we can locally project to $\bC$ and get a solution of \eqref{eq:v-equation} which vanishes exactly to order $2\mu_z(u)$, and has boundary values on $e^{2\pi i \alpha_{L_0}}\bR^+$ or $e^{2\pi i \alpha_{L_1}} \bR^+$. It therefore has degree $\mu_z(u)$ over every small negative point, which implies that the contribution is again $\mu_z(u)$.

Finally, we have to consider the contribution at infinity. Let's look at the case of $Z^-$ for concreteness, and again focus on the $\bC$-projection $v$. Expanding it as in \eqref{eq:v-dag}, \eqref{eq:v-dag-2}, but taking into account the sign in \eqref{eq:v-equation}, yields the asymptotic expression 
\begin{equation} \label{eq:asymp-minus}
v(s,t) \sim c\, e^{2\pi (\alpha_{L_0} - \alpha_{L_1} + \nu) s} e^{2\pi i(\alpha_{L_0} - \alpha_{L_1} + \nu - w) t}
\end{equation}
for some integer $\nu \geq \alpha_{L_1}-\alpha_{L_0}$, which given our assumptions is equivalent to $\nu \geq 0$, and $c \in e^{-2\pi i \alpha_{L_0}} (\bR^+ \setminus \{0\})$. From this and its counterpart for $Z^+$, one concludes that the path $t \mapsto v(s,t) \in \bC^*$, for a fixed $\pm s \gg 0$, goes through an angle $2\pi(\alpha_{L_0}-\alpha_{L_1}+\nu-w)$, where $\nu \geq 0$ in the $Z^-$ case, and $\nu \leq -1$ in the $Z^+$ case. Consider the loop in $\bC$ that goes radially from $0$ to $v(s,0)$, then along $v(s,t)$, and finally radially back from $v(s,1)$ to $0$. Then, what we have just shown is that this loop has winding number $\nu-w$ around any sufficiently small $c<0$. Hence, the contribution of $\pm\infty$ to $u \cdot D_c$ is $\mp (\nu-w)$, which yields \eqref{eq:minimum-intersection}.
\end{proof}

Let's look briefly at Cauchy-Riemann equations with moving boundary conditions, along the same lines as in Section \ref{subsec:continuation}. On the region where \eqref{eq:z-trivialization} applies, projection to the $\bC$-factor leads to a version of \eqref{eq:v-equation} where the boundary condition has been modified as in \eqref{eq:moving-radial-lines}:
\begin{equation} \label{eq:moving-negative-lines}
\mybox{
$v(z) \in e^{-2\pi i \alpha_{\partial S}(z)} \bR^+$ for $z \in \partial S$.
}
\end{equation}
As already pointed out at the end of Section \ref{subsec:maps-to-c}, the local analysis from \eqref{eq:taylor} carries over, hence so does our definition of $\mu_z(u)$. 

\subsection{Global geometric setup\label{subsec:global-geometry}}
Let $M^{2n}$ be a closed symplectic manifold which is monotone, \eqref{eq:monotone}. Let $K_M^{-1}$ be its anticanonical bundle, which is the complex line bundle $\Lambda^n_{\bC}(\mathit{TM})$ with respect to some compatible almost complex structure. We assume that:
\begin{equation}
\mybox{
$K_M^{-1}$ comes with a section $Y$ which transversally cuts out a symplectic hypersurface $D \subset M$; and the symplectic orientation of $D$ should agree with that inherited as a zero-locus. Moreover, $K_M^{-1}|D$ comes with another section $Z$, which cuts out a symplectic hypersurface $B \subset D$ in the same way.
}
\end{equation}
Since $Y$ trivializes $K_M^{-1}$ away from $D$, we have $c_1(M \setminus D) = 0$. Similarly,  one can use the first derivative of $Y$ along $D$ to give an identification of the normal bundle (as an oriented real plane bundle) $\nu_D \iso K_M^{-1}|D$,
and this determines a trivialization (unique up to homotopy) of $K_D^{-1} \iso (K_M^{-1}|D) \otimes \nu_D^{-1}$; hence, $c_1(D) = 0$.
\begin{equation} \label{eq:normal-bundle}
\mybox{
Fix a metric and connection on $P = K_M^{-1}$, as in Section \ref{subsec:local-geometry}. Equip the total space of $P$ with the symplectic form \eqref{eq:omega-p}, and fix a symplectic diffeomorphism, for small $\epsilon>0$,
\[
\{\text{closed neighbourhood $\bar{U}_D \subset M$}\} \iso \{ \|y\| \leq \epsilon\} \subset P.
\]
}
\end{equation}
The existence of such a neighbourhood of $D$ follows from the normal form theorem for symplectic submanifolds. We use \eqref{eq:normal-bundle} to introduce a family of hypersurfaces $D_c$ deforming $D_0 = D$, for small $c \in \bC$, carried over from those of the same name in Section \ref{subsec:local-geometry}. In particular, $D_c \cap D = B$ for all $c \neq 0$. 

On $K_M^{-1}$ choose a metric, and a connection whose curvature is $-2\pi i \omega_M$. Outside the zero-section, this connection is determined by a one-form whose exterior derivative is the pullback of $\omega_M$. Let's use the section $Y$ to pull back that one-form to $\theta_{M \setminus D} \in \Omega^1(M \setminus D)$. This will satisfy $d\theta_{M \setminus D} = \omega_M$, and moreover:
\begin{equation} \label{eq:poincare}
\mybox{
If $S$ is a compact oriented surface with boundary, and $u: S \rightarrow M$ a map such that $u(\partial S) \cap D = \emptyset$, then 
\[
\int_S u^*\omega_M = (u \cdot D) + \int_{\partial S} \theta_{M \setminus D}.
\]
}
\end{equation}
Let's restrict $\theta_{M \setminus D}$ to $\bar{U}_D \setminus D$, and compare that under \eqref{eq:normal-bundle} with the one-form $\theta_{P \setminus D_0}$ from \eqref{eq:theta-p}. The difference between the two is a closed one-form $\delta$ on $\{ 0 < \|y\| \leq \epsilon\} \subset P$, which moreover has the property that its integral over any fibre circle vanishes. The last-mentioned property implies that the cohomology class of $\delta$ is pulled back from $H^1(D;\bR)$ by projection. By changing the choice of $\theta_D$ used to define $\theta_P$, one can achieve that $\delta$ is exact. Having done that, we modify $\theta_{M \setminus D}$ so that it agrees with $\theta_{P \setminus D_0}$ under \eqref{eq:normal-bundle}, while retaining \eqref{eq:poincare}; which is the primitive we will work with from now on. Combining this construction with \eqref{eq:z-trivialization} yields:
\begin{equation}
\label{eq:elsewhere}
\mybox{
Let $V_B \subset \bar{U}_D$ be the neighbourhood of $B$ which, in the tubular neighbourhood from \eqref{eq:normal-bundle}, corresponds to the part of $P$ lying over $U_B$ (see \eqref{eq:z-trivialization} for this notation). Then, we get a diffeomorphism
\[
\bar{U}_D \setminus V_B \iso \{|y|\leq \epsilon\} \times (D \setminus U_B) \subset \bC \times (D \setminus U_B),
\]
under which $\omega_M$ corresponds to $\omega_{\bC} + \omega_D$, and $\theta_{M \setminus D}$ to $\theta_{\bC} + \theta_D - \frac{1}{2\pi} d\mathrm{arg}(y)$.
}
\end{equation}

\begin{remark} \label{th:lefschetz-pencil}
The usual notion of a symplectic pencil involves a family $(D_c)_{c \in \bC \cup \{\infty\}}$ of symplectic hypersurfaces, which intersect along $B$. The Lefschetz condition says that all but finitely many $D_c$ are smooth, and prescribes the local structure of the singularities of the remaining ones \cite[Definition 1.4]{gompf04}. Since here we only consider the $D_c$ for $c \approx 0$, we can get away with assuming them to be smooth, and do not have to discuss singularities at all. Re-imposing the Lefschetz condition would be useful for further developments, since that jumpstarts the construction of Lagrangian submanifolds as Lefschetz thimbles, and thereby provides a source of ``sufficiently many'' objects in the Fukaya category (which is important for properties such as homological smoothness).
\end{remark}

As for the rest of the data:
\begin{equation} \label{eq:global-hamiltonian}
\mybox{
We fix an $H \in \smooth(M,\bR)$ whose restriction to $\bar{U}_D$ is of type \eqref{eq:local-hamiltonian} with respect to the identification \eqref{eq:normal-bundle}. For the associated vector field $X$, we additionally require that $X^{-1}(0) \subset M$ should have no interior points.
}
\end{equation}
\begin{equation} \label{eq:global-almost-complex}
\mybox{
We consider compatible almost complex structures $J$ on $M$ which, on $\bar{U}_D$, are of the class introduced in \eqref{eq:local-almost-complex}.
}
\end{equation}
\begin{equation} \label{eq:global-lagrangian}
\mybox{
We consider Lagrangian submanifolds $L \subset M$ with boundary $L_D = \partial L \subset D$. These should be such that  $L \cap \bar{U}_D$ is disjoint from $V_B$ and, under \eqref{eq:elsewhere}, corresponds to $[0,\epsilon] e^{-2\pi i \alpha_L} \times L_D$, for some $\alpha_L \in (-\half,\half)$. The interior $L \setminus \partial L$ must be exact with respect to $\theta_{M \setminus D}$, graded with respect to the given trivialization of $K_{M \setminus D}^{-1}$, and carry a {\em Spin} structure.
}
\end{equation}

\subsection{Intersection numbers revisited}
Let's look at the resulting class of equations \eqref{eq:cr}. Our setting is as follows:
\begin{equation} \label{eq:global-equation}
\mybox{
Let $S$ be a punctured-boundary Riemann surface with sub-closed one-form $\beta_S$ as in \eqref{eq:beta1}, with integral weights $w_\zeta \leq 0$ at the ends. The almost complex structures are taken from \eqref{eq:global-almost-complex}; the boundary conditions from \eqref{eq:global-lagrangian}, assuming moreover that $\alpha_{L_{\zeta,0}} > \alpha_{L_{\zeta,1}}$, and that all chords which can appear as limits are nondegenerate (this includes the assumption that the intersection $L_{\zeta,0,D} \cap L_{\zeta,1,D} = \partial L_{\zeta,0} \cap \partial L_{\zeta,1}$ inside $D$ is transverse); and the Hamiltonian is as in \eqref{eq:global-hamiltonian}.
}
\end{equation}
The observations from Section \ref{subsec:local-intersection} carry over to this context, leading to the definition of $\mu_z(u)$ and $\mu(u)$ for solutions not contained in $D$. From Lemma \ref{th:bound-intersection} we get:
\begin{equation} \label{eq:global-u-dot-dc}
\mybox{
Suppose that $u$ is not contained in $D$, and has limits $x_\zeta$. For small $c<0$,
\[
u \cdot D_c \geq \mu(u)  + \sum_{\zeta \in \Sigma^+} 
\begin{cases}
w_{\zeta}+1 & x_\zeta \in D, \\
0 & \text{otherwise}
\end{cases}
- \sum_{\zeta \in \Sigma^-} \begin{cases}
w_\zeta & x_\zeta \in D, \\
0 & \text{otherwise.}
\end{cases}
\]
If all $x_\zeta$ lie outside $D$, equality holds, meaning that $u \cdot D_c = \mu(u)$.
}
\end{equation}
Recall that by assumption, $B \subset D$ is an almost complex submanifold for each of our almost complex structures. Moreover, the vector field $X$ vanishes on $D$. This yields the following counterpart of \eqref{eq:global-u-dot-dc}:
\begin{equation} \label{eq:u-dot-dc-2}
\mybox{
Consider a solution $u$ which is contained in $D$. Then for small $c < 0$,
\[
u \cdot D_c \geq 0.
\]
Equality holds iff $u$ is contained in $D \setminus B$.
}
\end{equation}

Since our Lagrangian submanifolds are exact  in the complement of $D$, chords outside $D$ have well-defined actions. The local computations \eqref{eq:energy-defect-1} and \eqref{eq:energy-defect-2} yield the following:
\begin{equation} \label{eq:global-energy}
\mybox{
Suppose that $u$ is a solution whose limits lie outside $D$. Then
\[
E^{\mathit{top}}(u) = (u \cdot D) + \sum_{\zeta \in \Sigma_S^{\pm}} \mp A(x_\zeta).
\]
}
\end{equation}
Similarly, chords outside $D$ have well-defined indices, and we have the following modified  version of \eqref{eq:index-formula}:
\begin{equation} \label{eq:global-index}
\mybox{
Let $S$ be a boundary-punctured disc. Let $u$ be a solution whose limits lie outside $D$. Then, the linearization $D_u$ at that solution satisfies
\[
\mathrm{ind}(D_u) + \mathrm{ind}(x_d) + \cdots + \mathrm{ind}(x_1) = \mathrm{ind}(x_0) + 2 (u \cdot D).
\]
}
\end{equation}

%
%

\subsection{Definition of the algebraic structure\label{subsec:formal-c}}
The formal aspect of our construction proceeds along lines that are entirely parallel to those in Sections \ref{subsec:choice}--\ref{subsec:af}. We fix a countable set $\bfL$ of Lagrangian submanifolds \eqref{eq:global-lagrangian}, satisfying the same conditions as in \eqref{eq:collection}, \eqref{eq:collection-2}. For $\alpha_{L_0} > \alpha_{L_1}$, we define the Floer cochains using only chords outside $D$:
\begin{equation} \label{eq:modified-floer}
\mathit{CF}^*(L_0,L_1;w) = \bigoplus_{x^{-1}(D) = \emptyset} \mathit{or}(x).
\end{equation}
Next, we choose sub-closed one-forms \eqref{eq:asymptotically-consistent-one-forms} and almost complex structures \eqref{eq:floer-2}, \eqref{eq:cr-2}, within the class \eqref{eq:global-almost-complex}. We then define moduli spaces $\scrR^{d+1,p,w}(x_0,\dots,x_d)$ of stable weighted popsicles together with a solution of the relevant equation \eqref{eq:cr}, subject to the following additional constraints:
\begin{equation} \label{eq:constraints-1}
\mybox{
The limits $x_k$ are chords lying outside $D$.
}
\end{equation}
\begin{equation} \label{eq:constraints-2}
\mybox{
If $z \in S$ is a point where $k$ sprinkles are located, then $u$ intersects $D$ at that point with local intersection multiplicity $k$. Everywhere else, $u$ is disjoint from $D$. In other words,
\[
\mu_z(u) = \# \{f: \sigma_f = z\}.
\]
}
\end{equation}
The assumption \eqref{eq:constraints-2} implies that $u(\partial S) \cap D = \emptyset$. Together with \eqref{eq:constraints-1}, this shows that the intersection number $u \cdot D$ is well-defined, and that in fact (where as usual $c<0$ is small)
\begin{equation} \label{eq:total-intersection}
u \cdot D = u \cdot D_c = \mu(u) = |p|.
\end{equation}
Hence, \eqref{eq:global-energy} and \eqref{eq:e-e} can be used to obtain an upper bound on the energies of solutions. The term $2(u \cdot D_c) = 2(u \cdot D)$ in \eqref{eq:global-index} cancels out with the reduction in dimension effected by the evaluation constraints \eqref{eq:constraints-2}; hence, the expected dimension of our moduli space is still \eqref{eq:dimension-formula}. Moreover, the linearization of the constraints in \eqref{eq:constraints-2} involves the normal bundle to $D$, which comes with a canonical orientation. Hence, the version of the linearized operator which includes the constraints \eqref{eq:constraints-2} still satisfies \eqref{eq:or-1}. Assuming suitable compactness and transversality results (which we will provide below), one then defines maps $\mu^{d,p,w}$ as in \eqref{eq:sign-contributions-2}. 

The simplest form of the resulting algebraic construction is to form a category $\scrA^{\mathit{ord}}$, using only boundary-punctured discs with no sprinkles. By \eqref{eq:constraints-2}, this means that the maps $u$ remain inside $M \setminus D$. This $\scrA^{\mathit{ord}}$ sits inside a category $\scrF^{\mathit{ord}}$ whose morphism spaces are as in \eqref{eq:f-ord-morphisms}, and whose $A_\infty$-structure is obtained by adding up the $\mu^{d,p,w}$ with weights in $\{-1,0\}$. The strategy of proof of the $A_\infty$-relations is based on the same observations concerning popsicle moduli spaces as before. The main additional worry is the possibility that in the compactification, chords or entire components could lie in $D$; explaining why this does not happen will take up most of our subsequent discussion, Sections \ref{subsec:compactness-ex-1}--\ref{subsec:general-compactness}. 

Building on that, we can introduce positive perturbations as before \eqref{eq:positive-perturbation}, within our set $\bfL$ of Lagrangian submanifolds \eqref{eq:global-lagrangian}; and each such perturbation gives rise to a continuation cocycle \eqref{eq:continuation-cocycle}. Here, the Floer cochain groups are understood in the sense of \eqref{eq:modified-floer}, and the solutions of the continuation map equation are required to remain disjoint from $D$. Localizing with respect to such continuation cocycles yields $A_\infty$-categories $\scrA$ and $\scrF$. Finally, exactly as in Section \ref{subsec:divisor-1}, one can modify the algebraic formulation of $\scrF$ to obtain a noncommutative divisor $\scrD$, which satisfies the analogue of Proposition \ref{th:divisor-1}.

\begin{remark}
Our $\scrA$ is closely related to the category of the same name from Section \ref{subsec:af}, where the ``total space'' would be $M \setminus D$. The differences are partly ones of geometric setup, and partly technical, having to do with how exactly maps $u$ are prevented from escaping to infinity (here, we are using intersection numbers with the divisor $D$ at infinity; while the previous construction was based on the integrated maximum principle). One could relate the two versions rigorously, but we prefer not to do that. Instead, we will adapt arguments from the previous discussion to the present context on the single occasion where that becomes necessary (for Proposition \ref{th:restriction-functor-2}).
\end{remark}

To conclude this initial overview, we want to mention how our construction, when restricted to closed Lagrangian submanifolds, is related to the disc-counting invariant \eqref{eq:wl}. The connection is straightforward but can be useful, since there are well-developed techniques for computing such enumerative invariants. Let $L$ be a Lagrangian submanifold as in \eqref{eq:global-lagrangian}, but assumed to be closed, and hence actually an exact Lagrangian submanifold in $M \setminus D$. Take $\hat{S}$ to be the closed unit disc, and equip it with a generic family of almost complex structures as in \eqref{eq:global-almost-complex}. Consider holomorphic discs intersecting the divisor once, at a specified point $\hat\sigma \in \hat{S} \setminus \partial \hat{S}$:
\begin{equation} \label{eq:holo-disc}
\left\{
\begin{aligned}
& \hat{u}: \hat{S} \longrightarrow M, \\
& (D\hat{u})^{0,1} = 0, \\
& \hat{u}(\partial \hat{S}) \subset L, \\
& \hat{u}^{-1}(D) = \{\hat\sigma\}, \; \text{ and } \mu_{\hat\sigma}(\hat{u}) = 1.
\end{aligned}
\right.
\end{equation}
Transversality ensures that the moduli space of such maps is smooth, of dimension $n$, for a generic choice of almost complex structures; it is also oriented. Since these are the lowest energy holomorphic discs, the only possible bubbling is into a constant holomorphic disc with a holomorphic sphere attached, where that sphere has intersection number $1$ with $D$. One easily shows that this phenomenon generically has codimension $2$. Additionally, fix $\hat{z} \in \partial \hat{S}$. For a generic point of $L$, there are $W(L)$ solutions (counted with sign) of \eqref{eq:holo-disc} such that $\hat{u}(\hat{z})$ equals that point. Now consider the map which enters into the definition of the endomorphisms of $L$ as an object of $\scrF$. On the Floer cohomology level, this has the form
\begin{equation} \label{eq:not-continuation}
\mathit{HF}^*(L^+,L;-1) \longrightarrow \mathit{HF}^*(L^+,L;0),
\end{equation}
where $L^+$ is a positive perturbation of $L$. Since these are closed Lagrangian submanifolds, and the Floer cohomologies are taken in $M \setminus D$, the two Floer groups in \eqref{eq:not-continuation} are canonically isomorphic, by the continuation map (they are also canonically isomorphic to the ordinary cohomology of $L$).

\begin{prop} \label{th:disc-counting}
The map in \eqref{eq:not-continuation} is $W(L)$ times the continuation isomorphism.
\end{prop}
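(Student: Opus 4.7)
The map $\gamma$ is induced by the chain-level operation $\mu^{1,p,(0,-1)}$ with $|p| = 1$. Geometrically, it counts rigid inhomogeneous pseudo-holomorphic strips $u: Z \to M$ with boundary on $L^+$ and $L$, asymptotic at $s \to -\infty$ to a chord $x_1$ of length $-1$ outside $D$, asymptotic at $s \to +\infty$ to a chord $x_0$ of length $0$ outside $D$, and carrying a single interior marked point $\sigma$ (the sprinkle, pinned to the popsicle stick, which in strip coordinates is the middle line $\{t = \tfrac12\}$) at which $u$ meets $D$ with local multiplicity one, and with $u$ otherwise disjoint from $D$.

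My plan is a one-parameter family / neck-stretching argument. Introduce a parameter $R \in [0,\infty)$ and replace the defining popsicle by one in which the length of the neck between the sprinkle $\sigma$ and the $x_1$ puncture is stretched by $R$; equivalently, glue a long cylindrical neck of length $R$ into that segment of $Z$. For each finite $R$ the resulting cochain-level operation is chain homotopic to $\gamma$, via a standard homotopy of the inhomogeneous data and almost complex structures consistent with \eqref{eq:asymptotically-consistent-one-forms} and \eqref{eq:cr-2}. As $R \to \infty$, Gromov compactness combined with the energy bound from \eqref{eq:global-energy} forces every rigid solution to break along an intermediate chord $x_\ast$ of length $-1$ outside $D$ into two pieces: a pure Floer inhomogeneous continuation strip from $x_1$ to $x_\ast$ (a term in the chain-level continuation map $\gamma_{\mathrm{cont}}$), glued to a homogeneous pseudo-holomorphic half-plane with boundary on $L$, carrying one interior marked point forced onto $D$ and asymptotic at infinity to $x_0$. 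Conformally compactifying the half-plane produces a holomorphic disc $\hat u$ of the kind appearing in \eqref{eq:holo-disc}, equipped with a boundary marked point $\hat z$ whose image on $L$ is prescribed by the gluing at $x_\ast$.

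By the definition of $W(L)$ recalled immediately after \eqref{eq:holo-disc}, for a generic prescribed point $p \in L$ exactly $W(L)$ such discs exist, counted with sign. Summing over all gluing configurations therefore yields $[\gamma] = W(L) \cdot \gamma_{\mathrm{cont}}$ on Floer cohomology.

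The main obstacle will be controlling compactness of the stretched moduli spaces to ensure that no other limits contribute. Three potential bubbling phenomena must be ruled out: (i) sphere bubbles in $M$, which by monotonicity $c_1(M) = [\omega_M]$ have minimal Chern number at least one and hence appear in codimension $\ge 2$ for generic data; (ii) disc bubbles with boundary on $L^+$ or $L$ disjoint from $D$, which are excluded by exactness in $M \setminus D$ (since $L, L^+$ are closed in $M \setminus D$); and (iii) disc bubbles on $L$ passing through $D$, whose minimal energy matches that of the discs counted by $W(L)$ and which are absorbed into the expected count via a careful enumeration, as in the transversality setup for \eqref{eq:holo-disc}. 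A secondary but important task is checking that the sign prefactor is precisely $+W(L)$: this requires reconciling the orientation conventions \eqref{eq:sign-contributions-2} for operations carrying a sprinkle with the standard orientation used in defining $W(L)$, which reduces to an explicit sign computation for a single rigid disc.
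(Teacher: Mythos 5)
Your degeneration mechanism is not the one the paper uses, and as stated it does not work. You propose to stretch a neck between the sprinkle $\sigma$ and the $x_1$ puncture and let the strip break at an intermediate chord $x_*$. But for $d=1$, $|p|=1$ the popsicle moduli space $\scrR^{2,p}\iso\bR^{|p|-1}$ is a single point, so there is no domain modulus to stretch; and if one instead interprets "stretching" as a one-parameter family of the one-form $\beta_S$ (lengthening the transition region), the resulting degeneration is a Floer trajectory or continuation-strip breaking at a chord. After such a breaking, \emph{both} pieces are strips, each with the two boundary conditions $(L^+,L)$ and asymptotic to chords at both ends. Your description of one of them as a "half-plane with boundary on $L$, asymptotic at infinity to $x_0$, carrying the marked point" is incoherent in that picture: a component that still carries the $s\to-\infty$ puncture is not a half-plane and does not conformally compactify to a disc with boundary entirely on $L$. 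Moreover, the intermediate chord $x_*$ you produce has weight $-1$ (from \eqref{eq:weights-and-sprinkles}), so the piece you call the "continuation strip from $x_1$ to $x_*$" is really a plain Floer trajectory at weight $-1$, not a term of the continuation map; and the piece carrying $\sigma$ is then exactly the same type of strip you started with, so nothing has simplified.

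The paper's mechanism is qualitatively different and is the key idea you are missing. It exploits the remark that, since $\scrR^{2,p}$ is a point, the position of $\sigma$ in $Z\setminus\partial Z$ can be prescribed arbitrarily without changing the count (the resulting operations are chain homotopic). One then moves $\sigma$ to a boundary point of $Z$. The constraint $u(\sigma)\in D$ from \eqref{eq:constraints-2} is incompatible with the Lagrangian boundary condition $u(\partial Z)\subset L$ (as $L$ is disjoint from $D$), so as $\sigma$ approaches $\partial Z$ a holomorphic \emph{disc bubble} $\hat u$ with boundary on $L$ must form, carrying the intersection with $D$; what remains is a strip $u$ with $u^{-1}(D)=\emptyset$ — i.e.\ a genuine continuation strip — attached to $\hat u$ at a boundary point. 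This boundary disc bubbling, not a Floer/continuation breaking at a chord, is what produces the configuration $(u,\hat u)$ and recovers the disc count $W(L)$ via the evaluation constraint at $\hat z$. Your list of bubbling phenomena to "rule out" also mislabels item (iii): the boundary discs through $D$ are precisely the contributions one wants to \emph{collect}, not exclude.
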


\begin{proof}[Sketch of proof]
By definition, the moduli space underlying \eqref{eq:not-continuation} consists of solutions of an equation \eqref{eq:cr} on $Z = \bR \times [0,1]$, where the boundary conditions are always $L$, and the one-form is asymptotic to zero as $s \rightarrow -\infty$, respectively to $-\mathit{dt}$ as $s \rightarrow \infty$. Moreover, the solutions must satisfy a special case of \eqref{eq:constraints-2}:
\begin{equation} \label{eq:1-sprinkle}
u^{-1}(D) = \{\sigma\}, \; \text{ and } \mu_{\sigma}(u) = 1;
\end{equation}
where the sprinkle $\sigma$ is a fixed point on the popsicle stick $Q = \bR \times \{\half\} \subset Z$. What we do is to construct a degeneration which moves $\sigma$ to a boundary point. Then, solutions degenerate into pairs $(u,\hat{u})$, where $\hat{u}$ is as in \eqref{eq:holo-disc}, and $u$ is a solution of the same equation as before, but where the intersection condition \eqref{eq:1-sprinkle} is replaced with $u^{-1}(D) = \emptyset$. The two components are related by having a common value at a specified boundary point. Counting maps $u$ by themselves just constructs the continuation isomorphism, and counting pairs $(u,\hat{u})$ multiplies that by $W(L)$.
\end{proof}

\subsection{Transversality\label{subsec:global-transversality}}
We now begin our discussion of the technical details underlying the construction from Section \ref{subsec:formal-c}. The counterpart of Lemma \ref{th:injective}, applied directly to the moduli spaces of interest here, is the following:

\begin{lemma} \label{th:injectivity-2}
For $(S,\sigma,u) \in \scrR^{d+1,p,w}(x_0,\dots,x_d)$, there is a point $z \in S$ where $u(z) \notin \bar{U}_D$ and $Du - X \otimes \beta_S \neq 0$.
\end{lemma}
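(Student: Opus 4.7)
I would argue by contradiction: suppose that at every point $z\in V:=\{z\in S:u(z)\notin\bar U_D\}$, the equation $Du=X\otimes\beta_S$ holds, and derive a contradiction. First I would check that $V$ is nonempty. Since $X$ acts by clockwise rotation in the fibres of the normal bundle, the flow $\phi^t$ preserves $\bar U_D$, so every chord $x$ of integer length $w$ lies either entirely inside or entirely outside $\bar U_D$. In the local picture \eqref{eq:normal-bundle}, a chord of integer length $w$ lying in $\bar U_D\setminus D$ between Lagrangians of the type \eqref{eq:global-lagrangian} would force $\alpha_{L_{k-1}}-\alpha_{L_k}+w\in\bZ$, which is excluded by $\alpha_{L_{k-1}}-\alpha_{L_k}\in(0,1)$ and $w\in\bZ$. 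Hence each limit $x_k$, which by \eqref{eq:constraints-1} is outside $D$, must actually be contained in the open set $M\setminus\bar U_D$, so $u$ maps some neighborhood of each puncture into $M\setminus\bar U_D$, giving $V\neq\emptyset$.

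The core step is to upgrade the hypothesis $Du=X\otimes\beta_S$ from $V$ to all of $S$ by unique continuation. Let $\Omega$ be the interior of the closed set $\{z\in S:Du(z)=X\otimes\beta_S|_z\}$; by assumption $V\subset\Omega$, so $\Omega\neq\emptyset$. To see that $\Omega$ is also closed, fix $z_0\in\bar\Omega$ and work in a small (half-)disk chart around $z_0$ on which $\beta_S=db$. Apply the gauge transformation \eqref{eq:gauge} to produce a $J^\dagger$-holomorphic map $u^\dagger=\phi^{-b}\circ u$; then $Du=X\otimes\beta_S$ at a point translates to $Du^\dagger=0$ there, so $u^\dagger$ is locally constant on $\Omega\cap\{\text{chart}\}$, which has nonempty interior near $z_0$. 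Aronszajn-type unique continuation for $J^\dagger$-holomorphic maps (applied up to the totally real boundary if $z_0\in\partial S$) forces $u^\dagger$ to be constant on the chart, and in particular $z_0\in\Omega$. Thus $\Omega$ is clopen and nonempty in the connected surface $S$, hence $\Omega=S$, and $u(S)$ is contained in a single $X$-orbit $\gamma$.

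It remains to derive a contradiction in each combinatorial case. Since $X$ vanishes identically on $D$, any $X$-orbit is either a single point of $D$ or is disjoint from $D$; as $u$ takes values outside $D$ on $V\neq\emptyset$, the orbit $\gamma$ must be disjoint from $D$. If $|p|\geq 1$, the constraint \eqref{eq:constraints-2} yields $u(\sigma_f)\in D$, contradicting $\gamma\cap D=\emptyset$. If $|p|=0$ and $d\geq 2$, then $\beta_S|\partial S=0$ forces $u$ to be constant (say $p_k$) on each boundary arc $I_k$, with $p_k=\phi^{w_k}(p_{k-1})$; this puts $p_2\in L_2\cap\phi^{w_2}(L_1)\cap\phi^{w_1+w_2}(L_0)$, contradicting the triple-intersection emptiness built into \eqref{eq:collection-2}. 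Finally, in the strip case $d=1$, $|p|=0$, we have $\beta_S=w\,\mathit{dt}$, so $Du=X\otimes\beta_S$ gives $\partial_s u=0$; but stationary maps are excluded from $\scrR^{2,p,w}(x_0,x_1)$ by convention.

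The main obstacle is the unique continuation step: one must handle the fact that $\beta_S$ is only asymptotically translation-invariant and sub-closed (not globally exact), which forces a local passage to the gauge-transformed pseudo-holomorphic equation \eqref{eq:gauge}, and one must invoke unique continuation carefully at boundary points of $S$ to conclude that $\Omega$ is closed.
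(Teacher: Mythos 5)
Your first paragraph (nonemptiness of $V$) and the final case analysis are both correct, and the $|p|\geq 1$ case is handled by a nice argument (the sprinkle incidence condition \eqref{eq:constraints-2} forces $u$ to hit $D$, which contradicts $u(S)$ lying in a single $X$-orbit disjoint from $D$); the paper instead reuses the loop argument of Lemma \ref{th:transversality-2}, so yours is a pleasant alternative there.

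The problem is in the middle step, where you try to propagate $Du=X\otimes\beta_S$ from $V$ to all of $S$ by unique continuation. You write: ``work in a small (half-)disk chart around $z_0$ on which $\beta_S=db$.'' This requires $\beta_S$ to be locally exact (equivalently, $d\beta_S=0$) on that chart, but $\beta_S$ is only \emph{sub-closed}: $d\beta_S\le 0$, and when $|p|\ge 1$ one has $\int_S d\beta_S=-|p|<0$, so $d\beta_S$ is genuinely nonvanishing on part of $S$. One can check that on the set where $Du=X\otimes\beta_S$ and $X(u)\neq 0$, the form $\beta_S$ is locally exact (since $\beta_S=d\tau$ for the orbit parameter $\tau$); but there is no reason for $d\beta_S$ to vanish on a whole neighborhood of a boundary point $z_0\in\partial\Omega$, which is exactly where you need the gauge transformation \eqref{eq:gauge}. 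Without it, $u^\dagger$ is not defined, and the Aronszajn-type continuation has nothing to apply to. (If instead one works directly with $w=Du-X\otimes\beta_S$, one gets an inhomogeneous first-order equation whose source term is $X\otimes d\beta_S$, so vanishing of $w$ on an open set does not propagate.) The paper avoids this issue entirely by a softer argument, taken over from Lemma \ref{th:injective}: it shows directly that the set $V=\{u(z)\notin\bar U_D\}$ is closed, using only the ODE consequence $u(c(1))=\phi^{\int_c\beta_S}(u(c(0)))$ for paths $c$ in $\bar V$ together with the $\phi^t$-invariance of $\bar U_D$. That step needs no exactness or closedness of $\beta_S$, and it delivers $V=S$ by connectedness, after which $Du=X\otimes\beta_S$ holds everywhere for free. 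You should replace the unique-continuation step by that flow-invariance argument.
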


\begin{proof}
By assumption, the chords $x_k$ lie outside $D$; because the Lagrangians involved have pairwise different angles $\alpha_{L_k}$, they must therefore lie outside $\bar{U}_D$. Hence, $U = \{z \in S\;:\; u(z) \notin \bar{U}_D\}$ is a nonempty subset of $S$. Suppose that $Du = X \otimes \beta_S$ holds on all of $U$. Since $\bar{U}_D$ is invariant under the flow of $X$ by assumption, the same argument as in Lemma \ref{th:injective} shows that $U$ is also closed, hence all of $S$. But the assumption that $Du = X \otimes \beta_S$ everywhere leads to a contradiction to the properties of our collection $\bfL$ of Lagrangian submanifolds, exactly as in Lemma \ref{th:transversality-2}.
\end{proof}

Outside $\bar{U}_D$, the conditions in \eqref{eq:global-almost-complex} do not constrain the almost complex structures. Hence, Lemma \ref{th:injectivity-2} easily leads to generic regularity for the moduli spaces $\scrR^{d+1,p,w}(x_0,\dots,x_d)$ (for the technical aspects of setting up transversality arguments for maps with tangency conditions, we refer to \cite[Section 6]{cieliebak-mohnke07}).

\subsection{Compactness by example: convergence and bubbling\label{subsec:compactness-ex-1}}
To complete the construction, we will need to discuss compactness issues for the relevant moduli spaces. In order to make this as transparent as possible, we will first work through a number of exercises which are special cases of the general argument. We assume that asymptotically consistent choices of one-forms and almost complex structures have been fixed.
\begin{equation} \label{eq:convergence}
\mybox{
Fix boundary conditions $(L_0,\dots,L_d)$ in $\bfL$, with $\alpha_{L_0} > \cdots > \alpha_{L_d}$, as well as weights $(w_0,\dots,w_d)$. Let $(S_k,\sigma_k,u_k)$ be a sequence in $\scrR^{d+1,p,w}(x_0,\dots,x_d)$, such that the underlying popsicle converges to some $(S,\sigma)$ in $\scrR^{d+1,p}$, and correspondingly, the maps $u_k$ converge uniformly to some $u: S \rightarrow M$. Note that then, $u$ has the same limits $(x_0,\dots,x_d)$, hence automatically satisfies \eqref{eq:constraints-1}.
}
\end{equation}

\begin{lemma} \label{th:exercise-1}
In the situation \eqref{eq:convergence}, the map $u$ again satisfies \eqref{eq:constraints-2}.
\end{lemma}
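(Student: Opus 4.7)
The strategy is to combine two facts: (i) intersection numbers with the nearby hypersurfaces $D_c$, for small $c<0$, are topological and thus continuous under compact smooth convergence, and (ii) the inequality \eqref{eq:global-u-dot-dc}, with equality exactly when the limits lie outside $D$, rigidly pins down where and with what multiplicity intersections with $D$ occur.

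First I would verify that $u$ is not contained in $D$. This is immediate: by hypothesis each $u_k$ satisfies \eqref{eq:constraints-1}, so its limit has the same chord endpoints $x_0,\ldots,x_d$, all of which lie in the complement of $D$, so $u$ cannot be entirely contained in $D$. Next, fix $c<0$ small enough that $D_c$ is disjoint from every Lagrangian boundary condition and from each limiting chord. By \eqref{eq:total-intersection}, $u_k\cdot D_c=|p|$ for every $k$. Since $(S_k,\sigma_k,u_k)\to(S,\sigma,u)$ by assumption without domain breaking or bubble formation, this topological intersection count is preserved in the limit: $u\cdot D_c=|p|$. Applying the equality case of \eqref{eq:global-u-dot-dc}---valid because all limits of $u$ lie outside $D$ and $u$ itself is not contained in $D$---then yields $\mu(u)=|p|$.

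To conclude, I would localize this at each $z\in S$. Upper semicontinuity of the local intersection multiplicity under $C^\infty_{\mathrm{loc}}$-convergence gives
\[
\mu_z(u)\;\geq\;\sum_{z_k\to z}\mu_{z_k}(u_k),
\]
the sum ranging over sequences of points of $u_k^{-1}(D)$ converging to $z$. Since each $u_k$ satisfies \eqref{eq:constraints-2}, the set $u_k^{-1}(D)$ consists precisely of the sprinkle positions of $\sigma_k$, with multiplicities counting coincident sprinkles; and since the sprinkles are smooth sections of $\scrS^{d+1,p}\to\scrR^{d+1,p}$, the convergences $\sigma_{k,f}\to\sigma_f$ make the right-hand side equal to $\#\{f:\sigma_f=z\}$. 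Summing these pointwise inequalities over $z$ and using $\mu(u)=|p|=\sum_z\#\{f:\sigma_f=z\}$ forces equality at every point, which is exactly \eqref{eq:constraints-2}.

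The main subtlety lies in the second step, namely asserting that the topological intersection count is preserved under passage to the limit. In this exercise it is essentially automatic: finite energy is built in (through \eqref{eq:global-energy} together with the fixed action and intersection data), and the assumed convergence already excludes both domain breaking and bubble formation. In the more general Gromov-type compactness arguments to follow, however, bubbling off of holomorphic spheres or discs meeting $D$ will siphon away part of the $|p|$ intersection count, and tracking how the intersection number is apportioned between the principal limit map and each bubble is where the genuine work will lie.
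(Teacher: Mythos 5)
Your argument is correct, but it is organized differently from the paper's. The paper proves \eqref{eq:constraints-2} purely locally: for an interior $z$, it invokes topological invariance of the intersection number $(u|T)\cdot D$ on a small disc $T$ around $z$, which directly yields $\mu_z(u) = \#\{f : \sigma_f = z\}$; for a boundary $z$ with $u(z) \in D$ it runs a Stokes argument via the integral formula \eqref{eq:energy-defect-2} (applied first to $u_k$, where the integral vanishes, then passed to the limit) to force $\mu_z(u) = 0$ and reach a contradiction. Your route is instead global: deduce $u\cdot D_c = |p|$ from topological invariance of the intersection with the \emph{displaced} hypersurface $D_c$, use the equality clause of \eqref{eq:global-u-dot-dc} (valid since all $x_\zeta$ lie outside $D$) to get $\mu(u)=|p|$, establish the pointwise lower bound $\mu_z(u)\geq \#\{f:\sigma_f=z\}$ at interior points by local topological invariance, and then close by counting, since both sides of \eqref{eq:constraints-2} sum to $|p|$. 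Both routes are sound. Yours has the merit of being stylistically uniform with the harder splitting lemmas (Lemmas \ref{th:exercise-3}, \ref{th:exercise-4} and the general compactness argument), where the $D_c$-intersection count and \eqref{eq:global-u-dot-dc} are the essential tools, and it handles boundary points without invoking the separate Stokes argument; the paper's version for this lemma is somewhat more hands-on. One cosmetic remark: what you call ``upper semicontinuity'' is really the topological invariance of the intersection number on a small disc, which actually gives equality rather than just the inequality; the one-sided form you wrote is correct and suffices for the counting step, but the terminology is slightly off. You should also confirm, as the paper does via Lemma \ref{th:bound-intersection}, that $u^{-1}(D)$ is finite before quoting the equality case of \eqref{eq:global-u-dot-dc}; this follows because the limits $x_\zeta$ lie outside $D$, so $u^{-1}(D)$ is contained in a compact subset where it is discrete by Lemma \ref{th:discrete-2}.
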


\begin{proof}
Consider some $z \in S \setminus \partial S$. Take a small closed disc $T \subset S$ around that point, such that $T \setminus \{z\}$ does not contain any sprinkles, and with $u^{-1}(D) \cap T$ containing only the original point. Then, the local multiplicity $\mu_z(u _\infty)$ is the intersection number $(u|T) \cdot D$. This  is a topological invariant, which can be computed from similar intersection numbers for $u_k$, $k \gg 0$. By \eqref{eq:constraints-2}, that intersection number equals the number of sprinkles on $S_k$ that converge to $z$ in the limit. Therefore, $\mu_z(u) = \#\{f \;:\; \sigma_f = z\}$, as desired.

Suppose that we have a $z \in \partial S$ such that $u(z) \in D$. Take a closed half-disc $T^+ \subset S$ around that point, which contains no sprinkles. Let $L$ be the relevant Lagrangian boundary condition. Since $u_k|T^+$ is disjoint from $D$, we have, in the notation from \eqref{eq:energy-defect-2},
\begin{equation} \label{eq:stokes-uk}
\int_{T^+} u_k^*\omega_P - \int_{\partial_{\mathit{in}}T^+} u_k^*\theta_{P \setminus D} - K_L(u_k(1)) - K_L(u_k(-1)) = 0
\end{equation}
by Stokes; passing to the limit and using \eqref{eq:energy-defect-2} yields $\mu_z(u) = 0$, a contradiction (we could have used a similar integral argument, appealing to the right hand side of \eqref{eq:energy-defect-1}, for interior points).
\end{proof}

Let's look at a slightly more complicated situation:
\begin{equation} \label{eq:bubbling}
\mybox{
Consider \eqref{eq:convergence}, with the following modification: $(u_k)$ converges to $u$ together with a single (sphere or disc) bubble $\hat{u}$.
}
\end{equation}

\begin{lemma} \label{th:exercise-2}
In \eqref{eq:bubbling}, the bubble is necessarily a sphere bubble, attached at a point $z \in S$ where at least one sprinkle is located. The limiting map $u$ satisfies \eqref{eq:constraints-2} except at $z$, where we instead have
\begin{equation} \label{eq:drop-tangency}
\mu_z(u) + \hat{u} \cdot D = \# \{f \;: \; \sigma_f = z\}.
\end{equation}
\end{lemma}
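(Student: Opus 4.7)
The strategy is to track intersection numbers with the auxiliary hypersurface $D_c$, for $c<0$ sufficiently small, which is disjoint from all boundary conditions and chord limits under consideration. By \eqref{eq:total-intersection} we have $u_k \cdot D_c = |p|$ for every $k$, and topological invariance of this intersection number under the Gromov-type convergence of \eqref{eq:bubbling} gives
\[
(u \cdot D_c) + (\hat{u} \cdot D_c) = |p|,
\]
each term being a well-defined integer when the corresponding map is not entirely contained in $D$; the contained case is handled by \eqref{eq:u-dot-dc-2}, together with the observation that $\hat{u}\cdot D = \hat{u}\cdot D_c$ whenever both sides are defined.

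I would then localize this intersection count around each distinguished point. For any $z' \in S$ distinct from the bubbling point $z$, choose a small neighbourhood $T \ni z'$ containing no other distinguished points; by $C^1$-convergence $u_k \to u$ away from $z$, the restricted intersection $(u_k|T)\cdot D_c$ is eventually constant and, by \eqref{eq:constraints-2} for $u_k$, equals the number of sprinkles accumulating in $T$. Setting $m_{z'} := \#\{f : \sigma_f = z'\}$, this gives $\mu_{z'}(u) = m_{z'}$, verifying \eqref{eq:constraints-2} at $z'$. Applying the same localization at $z$ (with a neighbourhood that now absorbs all mass concentrating on the bubble), the residual intersection count is $m_z$ but splits between the limit $u$ and the bubble as
\[
\mu_z(u) + \hat{u} \cdot D = m_z,
\]
which is \eqref{eq:drop-tangency}.

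The hard part is ruling out disc bubbles. If $\hat{u}$ were a disc attached at a boundary point $z \in I \subset \partial S$, then no sprinkle could be located at $z$: in the upper-half-plane picture \eqref{eq:upper-half-plane}, popsicle sticks $Q_k$ are vertical rays with endpoints only at the punctures $\zeta_0,\zeta_k$ at infinity, so they lie in the open interior of $S$. Hence $m_z = 0$, forcing $\mu_z(u) = 0$ and $\hat{u}\cdot D = 0$. But if $\hat{u} \not\subset D$, the interior $L_I \setminus \partial L_I$ is exact with respect to $\theta_{M\setminus D}$, so \eqref{eq:poincare} applied to the disc yields $\hat{u}\cdot D = \int \hat{u}^*\omega_M > 0$, a contradiction; and if $\hat{u}\subset D$, then $\hat u \cdot D_c = 0$ combined with \eqref{eq:u-dot-dc-2} forces $\hat{u}\subset D\setminus B$, in which case the matching condition at the node gives $u(z) \in D$, hence $\mu_z(u) \geq 1$, again a contradiction. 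So $\hat{u}$ must be a sphere attached at an interior point $z \in S \setminus \partial S$. The same dichotomy then shows $m_z \geq 1$: if $\hat{u} \not\subset D$, ampleness of $D$ via $[\omega_M] = c_1(M)$ gives $\hat{u}\cdot D = \int \hat{u}^*\omega_M > 0$; and if $\hat{u}\subset D$, then $u(z)\in D$ forces $\mu_z(u) \geq 1$.

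The main technical obstacle is the intersection-number bookkeeping itself: upgrading the a priori semicontinuity inequality $\mu_{z'}(u) \geq m_{z'}$ to equality requires carefully using conservation of the total intersection number with $D_c$ (not $D$), and knowing that all of the count $|p|$ is accounted for by the distinguished points $\sigma_f$ together with the bubble, with no stray tangencies appearing in the limit. The auxiliary deformation $D_c$ plays an essential role here: it makes transverse intersection numbers stable under the convergence, so that their distribution among the limiting components can be extracted locally.
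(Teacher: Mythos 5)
Your proposal arrives at the right conclusion, but it takes a genuinely different route from the paper's, and it has one step that needs shoring up.

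The paper's proof is a one-paragraph argument built entirely on the local Stokes identities \eqref{eq:energy-defect-1} and \eqref{eq:energy-defect-2}: apply the relevant formula to $u_k$ on a small disc (or half-disc) $T$ around $z$; the right-hand side is then the count of sprinkles of $(S_k,\sigma_k)$ lying in $T$, which for $k\gg 0$ equals $m_z := \#\{f:\sigma_f=z\}$. Passing to the Gromov limit, the right-hand side converges to $\mu_z(u)+\int\hat{u}^*\omega_P$, because the bubble absorbs the missing $\omega_P$-mass, and $\int\hat{u}^*\omega_P=\hat{u}\cdot D$ by monotonicity. This gives \eqref{eq:drop-tangency} directly, and the disc-bubble exclusion falls out: at a boundary bubbling point $m_z=0$, yet $\int\hat{u}^*\omega_P>0$ for any nonconstant pseudo-holomorphic map.

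Your argument instead conserves the topological intersection number against the auxiliary hypersurface $D_c$ --- starting from $u_k\cdot D_c=|p|$ via \eqref{eq:total-intersection} --- and then localizes, with a separate case analysis according to whether $\hat{u}\subset D$ or $\hat{u}\not\subset D$. This is precisely the style of argument the paper reserves for the later Lemmas \ref{th:exercise-3}, \ref{th:exercise-4} and \ref{th:big-compactness-2}, so you are anticipating that machinery; but for the present lemma it is heavier than needed, since the energy-defect identity handles everything in one stroke, and it is agnostic to whether the bubble touches $D$ along its boundary.

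The step that needs attention: in ruling out disc bubbles in the case $\hat{u}\not\subset D$, you invoke \eqref{eq:poincare} to conclude $\hat{u}\cdot D = \int\hat{u}^*\omega_M > 0$. That identity only applies if $\hat{u}(\partial\hat{S})\cap D = \emptyset$, i.e.\ if the boundary of the bubble disc remains in $L_I\setminus\partial L_I$; this is not automatic, since the bubble boundary lies in $L_I$ and could in principle limit into $\partial L_I\subset D$. One can patch this by working with $D_c$ (which is strictly disjoint from $L_I$ for $c<0$) rather than $D$, or more simply by running the positivity argument through $\int\hat{u}^*\omega_P$ as the paper does --- that quantity is defined and positive for any nonconstant pseudo-holomorphic map, with no boundary disjointness hypothesis needed.
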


\begin{proof}
Compared to Lemma \ref{th:exercise-1}, only the behaviour of the limit $u$ at the bubbling point $z$ needs renewed discussion. One again starts with the right hand side of \eqref{eq:energy-defect-1} or \eqref{eq:energy-defect-2} for the maps $u_k$ and a suitable neighbourhood $T \subset S$ or $T^+ \subset S$. Passing to the limit now yields $\mu_z(u) + \int \hat{u}^*\omega_M = \# \{f \;:\; \sigma_f = z\}$, which is equivalent to the desired statement.
\end{proof}

Let's briefly consider the codimension in which this bubbling occurs. The principal component $(S,\sigma,u)$ is a solution of the same Cauchy-Riemann equation, and differs only in its tangency condition \eqref{eq:drop-tangency}, which does not affect the expected dimension. If we take into account the coincidence of sprinkles, we find that compared to the original moduli space $\scrR^{d+1,p,w}(x_0,\dots,x_d)$, the codimension of the space of such $(S,\sigma,u)$ is $\geq \# \{f \;:\; \sigma_f = z\}-1$. For us, only bubbling in codimension $<2$ matters, and the relevant cases are:
\begin{equation}
\mybox{
Suppose that $\# \{f\;:\; \sigma_f = z\} = 1$. Then, by \eqref{eq:drop-tangency} we necessarily have $\mu_z(u) = 0$ and $\hat{u} \cdot D = \hat{u} \cdot c_1(M) = 1$. While the space of such $(S,\sigma,u)$ has codimension $0$ compared to $\scrR^{d+1,p,w}(x_0,\dots,x_d)$, the condition that $u(z)$ should lie on a Chern number $1$ holomorphic sphere is of codimension $2$, assuming suitable transversality.
}
\end{equation}
\begin{equation}
\mybox{
Suppose that $\# \{f\;:\; \sigma_f = z\} = 2$. This of course implies that for the original popsicles, there were at least two sprinkles on the same popsicle stick. Our $A_\infty$-structure does not involve counting such popsicles. They do occur as components in boundary points of the compactification of one-dimensional moduli spaces, see \eqref{eq:cancel-1}, \eqref{eq:cancel-2}. However, in that case, the set of boundary points itself has dimension zero, hence the fact that the principal component of the bubble itself has positive codimension (because two sprinkles coincide) will be enough to rule out bubbling.
}
\end{equation}

\subsection{Compactness by example: splitting\label{subsec:compactness-ex-2}}
In the previously considered cases, property \eqref{eq:constraints-1} was obviously preserved under convergence. This is no longer obvious if the domain degenerates into a broken popsicle. We will now introduce the arguments that are necessary in order to deal with that. Here is the test case:
\begin{equation} \label{eq:two-component-splitting}
\mybox{
Take $(S_k,\sigma_k,u_k)$ as in \eqref{eq:convergence}, but now assume that the underlying popsicles converge to a point in a codimension one boundary stratum of $\bar\scrR^{d+1,p}$, corresponding to a broken popsicle with two components $(\bfS_v, \bfsigma_v)$ ($v = 1,2$; see \eqref{eq:two-vertex-tree} for the combinatorics). Correspondingly, the maps converge to a limit with components $\bfu_v: \bfS_v \rightarrow M$.
}
\end{equation}

\begin{lemma} \label{th:exercise-3}
In \eqref{eq:two-component-splitting}, the limit maps $\bfu_v$ again satisfy \eqref{eq:constraints-1}, \eqref{eq:constraints-2}.
\end{lemma}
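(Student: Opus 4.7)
Write in the notation of \eqref{eq:two-vertex-tree}: the finite edge of $T$ joins the $i$-th end of $\bfS_1$ (a positive puncture) to the $0$-th end of $\bfS_2$ (the negative puncture), carrying a common weight $w_* := \bfw_{1,i} = \bfw_{2,0}$, with common chord limit $x_*$. The other limits of $\bfu_1$ and $\bfu_2$ are drawn from $\{x_0,\dots,x_d\}$ and hence lie outside $D$; thus the content of \eqref{eq:constraints-1} for $\bfu_1,\bfu_2$ is precisely the single claim $x_* \notin D$. Neither $\bfu_v$ can itself be contained in $D$, since each has at least one limit outside $D$.

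First I would adapt the proof of Lemma \ref{th:exercise-1} to every interior or boundary point $z \in \bfS_v$ away from the intervals at infinity: taking a small closed disc or half-disc around $z$ and transporting it into $S_k$ for $k$ large via the gluing charts, the integral identities \eqref{eq:energy-defect-1}--\eqref{eq:energy-defect-2} pass to the limit and yield $\mu_z(\bfu_v) = \#\{f : \bfsigma_{v,f} = z\}$ at each such point. In particular $\mu(\bfu_v) \geq |\bfp_v|$, with equality if and only if $\bfu_v$ has no additional tangencies with $D$, i.e.\ if and only if \eqref{eq:constraints-2} holds for $\bfu_v$.

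The main step is to rule out $x_* \in D$ by comparing topological intersection numbers with $D_c$ for small $c<0$. The key observation is that $x_*$ is itself disjoint from $D_c$ regardless of whether it lies in $D$: if $x_* \subset D$, then since $X$ vanishes on $D$ by \eqref{eq:global-hamiltonian}, the equation $x_*' = w_* X$ forces $x_*$ to be constant, with unique value $x_*(0) \in L_{*,0} \cap L_{*,1} \cap D \subset D \setminus \bar U_B$; as $D_c \cap D = B$ for small $c \neq 0$, this point is at positive distance from $D_c$. Hence the degeneration is a cobordism of relative cycles in $(M, M \setminus D_c)$ and topological intersection numbers are additive,
\[
|p| \;=\; u_k \cdot D_c \;=\; \bfu_1 \cdot D_c \,+\, \bfu_2 \cdot D_c.
\]
Applying \eqref{eq:global-u-dot-dc} to each $\bfu_v$, the semi-infinite ends contribute zero, while the glued edge contributes $[x_* \in D](w_*+1)$ to $\bfu_1$ (a positive end) and $[x_* \in D](-w_*)$ to $\bfu_2$ (a negative end), summing to $[x_* \in D]$. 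Combining with the bound $\mu(\bfu_v) \geq |\bfp_v|$ from the previous step,
\[
|p| \;\geq\; \mu(\bfu_1) + \mu(\bfu_2) + [x_* \in D] \;\geq\; |\bfp_1| + |\bfp_2| + [x_* \in D] \;=\; |p| + [x_* \in D],
\]
forcing $x_* \notin D$ (which is \eqref{eq:constraints-1}) and simultaneously turning both inequalities into equalities, so that $\mu(\bfu_v) = |\bfp_v|$ and \eqref{eq:constraints-2} holds for both $v$.

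The main technical obstacle is justifying the additivity of intersection numbers under the degeneration: even when $x_* \notin D$, intersection points of $u_k$ with $D_c$ could in principle drift along the long neck and be miscounted; and in the hypothetical case $x_* \in D$, $u_k$ genuinely meets $D_c$ on the neck. Both issues are resolved by the compactification interpretation of \eqref{eq:global-u-dot-dc}: its end-contribution terms, derived via the asymptotic analysis of Lemma \ref{th:bound-intersection} applied to the normal projection of $u_k$, precisely count the intersections escaping to each interval at infinity, so that $\bfu_1 \cdot D_c + \bfu_2 \cdot D_c$ on the compactified $|\bfS_v|$ account for every intersection of $u_k$ with $D_c$ in the limit.
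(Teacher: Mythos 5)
Your proof is correct and follows essentially the same route as the paper's: decompose $u_k \cdot D_c$ additively over the components, propagate \eqref{eq:constraints-2} to each $\bfu_v$ by the argument of Lemma \ref{th:exercise-1}, and derive a contradiction from \eqref{eq:global-u-dot-dc} if the new common chord were to lie in $D$. The extra clarifications you supply — that a chord in $D$ must be constant (since $X|D = 0$) and hence lands in $D \setminus \bar U_B$ away from $D_c$, and that Lemma \ref{th:bound-intersection} accounts for intersection points escaping to the neck — are correct justifications of points the paper leaves implicit, not a different line of reasoning.
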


\begin{proof} 
Convergence implies that (for small $c<0$)
\begin{equation} \label{eq:add-intersection-numbers}
\bfu_1 \cdot D_c + \bfu_2 \cdot D_c = u_k \cdot D_c, \;\; k \gg 0.
\end{equation}
We know that no component $\bfu_v$ is entirely contained in $D$, since at least one of its limiting chords lies outside $D$. The same argument as in Lemma \ref{th:exercise-1} shows that the $\bfu_v$ satisfy \eqref{eq:constraints-2}. The only issue we have to consider is whether the new limiting chord (common to both components, and which appears as the result of breaking up the surface) can lie in $D$. Suppose that this is the case. By applying \eqref{eq:global-u-dot-dc} to each component and adding up the outcome, we get
\begin{equation} \label{eq:add-c-intersections}
\bfu_1 \cdot D_c + \bfu_2 \cdot D_c \geq (\mu(\bfu_1) + \bfw_{1,i} + 1) + (\mu(\bfu_2) - \bfw_{2,0})
= |\bfp_1| + |\bfp_2| + 1 = |p| + 1.
\end{equation}
On the other hand, $u_k \cdot D_c = |p|$, which leads to a contradiction with \eqref{eq:add-intersection-numbers}.
\end{proof}

For more complicated broken popsicles, an additional issue appears: it is no longer a priori clear that components cannot entirely lie in $D$.
\begin{equation} \label{eq:three-component-splitting}
\mybox{
Let's modify the situation from \eqref{eq:two-component-splitting} by asking that the $(S_k,\sigma_k)$ should converge to a point in a codimension two boundary stratum of $\bar\scrR^{d+1,p}$. That stratum corresponds to a tree with three vertices. We ask that this should be a linear tree (there is a path from the root to a leaf going through all vertices), and label the vertices by $v \in \{1,2,3\}$, starting with the one next to the root. We ask that the second vertex should have valency $2$ and $|\bfp_2| = 1$. If the second vertex is attached to the $i$-th edge of the first vertex, the weights satisfy
\[
\bfw_{1,i} = \bfw_{2,0} = \bfw_{2,1} + 1 = \bfw_{3,0} + 1.
\]
Correspondingly, our limit point is described by a broken popsicle with three pieces $(\bfS_{v},\bfsigma_{v})$, where $\bfS_{2} \iso Z$ carries one sprinkle (see Figure \ref{fig:three-components}); and the limit of our sequence $(u_k: S_k \rightarrow M)$ will have three components $\bfu_{v}: \bfS_{v} \rightarrow M$.
}
\end{equation}
\begin{figure}
\begin{centering}
\begin{picture}(0,0)%
\includegraphics{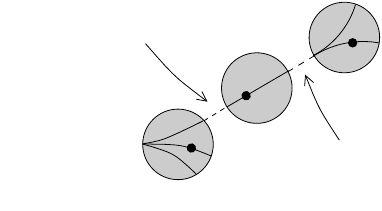}%
\end{picture}%
\setlength{\unitlength}{3355sp}%
\begingroup\makeatletter\ifx\SetFigFont\undefined%
\gdef\SetFigFont#1#2#3#4#5{%
  \reset@font\fontsize{#1}{#2pt}%
  \fontfamily{#3}\fontseries{#4}\fontshape{#5}%
  \selectfont}%
\fi\endgroup%
\begin{picture}(3581,1966)(-1342,-1638)
\put(-1327, 26){\makebox(0,0)[lb]{\smash{{\SetFigFont{10}{12.0}{\rmdefault}{\mddefault}{\updefault}{\color[rgb]{0,0,0}weight $\bfw_{1,i} = \bfw_{2,0} = \bfw_{2,1}+1$}%
}}}}
\put(1674,-1187){\makebox(0,0)[lb]{\smash{{\SetFigFont{10}{12.0}{\rmdefault}{\mddefault}{\updefault}{\color[rgb]{0,0,0}weight $\bfw_{2,1} = \bfw_{3,0}$}%
}}}}
\put(131,-1574){\makebox(0,0)[lb]{\smash{{\SetFigFont{10}{12.0}{\rmdefault}{\mddefault}{\updefault}{\color[rgb]{0,0,0}$\bfS_{1}$}%
}}}}
\put(1799,-552){\makebox(0,0)[lb]{\smash{{\SetFigFont{10}{12.0}{\rmdefault}{\mddefault}{\updefault}{\color[rgb]{0,0,0}$\bfS_3$}%
}}}}
\put(970,-1021){\makebox(0,0)[lb]{\smash{{\SetFigFont{10}{12.0}{\rmdefault}{\mddefault}{\updefault}{\color[rgb]{0,0,0}$\bfS_{2}$}%
}}}}
\end{picture}%
\caption{\label{fig:three-components}The degeneration from \eqref{eq:three-component-splitting}. The interesting case is when the image of $\bfS_{2}$ is entirely contained in $D$.}
\end{centering}
\end{figure}

\begin{lemma} \label{th:exercise-4}
In \eqref{eq:three-component-splitting}, the limit maps $\bfu_{v}$ satisfy \eqref{eq:constraints-1}, \eqref{eq:constraints-2}.
\end{lemma}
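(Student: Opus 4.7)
The argument divides into two parts: the verbatim extensions of Lemma~\ref{th:exercise-1} (tangency order at each non-nodal point) and the genuinely new issue (the strip component $\bfu_2$ has \emph{both} of its limiting chords created by the degeneration, so a~priori nothing forces its image to avoid $D$). For the easy part, at each point $z$ which does not lie above a node, I would apply the local Stokes identity \eqref{eq:energy-defect-1} (interior $z$) or \eqref{eq:energy-defect-2} (boundary $z$) to a small (half-)disc around $z$, in the same way as in the proof of Lemma~\ref{th:exercise-1}; this gives $\mu_z(\bfu_v)=\#\{f:\bfsigma_{v,f}=z\}$ for any component not contained in $D$. Moreover $\bfu_1$ and $\bfu_3$ each retain an original puncture whose limit is one of $x_0,\dots,x_d$, which lies outside $D$ by \eqref{eq:constraints-1} for $u_k$; so these two components are automatically not contained in $D$. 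This reduces the lemma to ruling out three possibilities: $y_-\in D$, $y_+\in D$, or $\bfu_2\subset D$, where I write $y_-$ for the chord shared between $\bfu_1$ and $\bfu_2$, and $y_+$ for the chord shared between $\bfu_2$ and $\bfu_3$.

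To rule these out I would carry out a single intersection-number count against $D_c$ for $c<0$ small. By \eqref{eq:total-intersection}, $u_k\cdot D_c=|p|=|\bfp_1|+1+|\bfp_3|$ for all large $k$. Since the Lagrangian boundary conditions are disjoint from $D_c$ (their boundaries lie in $L_D$, which is disjoint from $B=D\cap D_c$), and each chord of $u_k$ lies outside $D$ hence outside $D_c$ for $c$ small, the intersection number passes to the limit: $\sum_v \bfu_v\cdot D_c=|p|$. I now bound each term from below. Using \eqref{eq:global-u-dot-dc} for $\bfu_1$ and $\bfu_3$, and the fact that $y_-$ is at a positive puncture of $\bfu_1$ (of weight $\bfw_{1,i}$) but at the negative puncture of $\bfu_2$ (of weight $\bfw_{2,0}$), and symmetrically for $y_+$,
\begin{equation*}
\bfu_1\cdot D_c\geq |\bfp_1|+[y_-\!\in\! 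D](\bfw_{1,i}+1),\qquad
\bfu_3\cdot D_c\geq |\bfp_3|-[y_+\!\in\! D]\,\bfw_{3,0},
\end{equation*}
while $\bfu_2\cdot D_c\geq 1-[y_-\!\in\! D]\bfw_{2,0}+[y_+\!\in\! D](\bfw_{2,1}+1)$ by \eqref{eq:global-u-dot-dc} if $\bfu_2\not\subset D$, and $\bfu_2\cdot D_c\geq 0$ by \eqref{eq:u-dot-dc-2} if $\bfu_2\subset D$ (the boundary of $\bfu_2$ lies in $L_D$, disjoint from $B$, so the intersection number with $D_c$ is well defined).

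The plan is then to verify in each of the three bad configurations that summing these inequalities and invoking the weight identities $\bfw_{1,i}-\bfw_{3,0}=1$ and $\bfw_{2,0}-\bfw_{2,1}=1$ yields $\sum_v\bfu_v\cdot D_c\geq |p|+1$, a contradiction. For instance, if $\bfu_2\subset D$ (which forces $y_\pm\in D$) the sum is at least $|\bfp_1|+\bfw_{1,i}+1+0+|\bfp_3|-\bfw_{3,0}=|p|-1+(\bfw_{1,i}-\bfw_{3,0})+1=|p|+1$; each of the remaining cases similarly produces $|p|+1$ on the nose, with the extra $+1$ supplied by the difference of weights across the middle component. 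Consequently $y_-,y_+\notin D$ and $\bfu_2\not\subset D$, so $\bfu_2$ also satisfies \eqref{eq:constraints-1} and \eqref{eq:constraints-2}. The main obstacle is purely bookkeeping — carefully tracking, for each of the four $D$-incidence subcases, whether each shared chord contributes via a positive- or a negative-puncture term in \eqref{eq:global-u-dot-dc} — but no analytic input beyond Lemma~\ref{th:bound-intersection} is required.
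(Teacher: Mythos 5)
Your proposal is correct and follows essentially the same strategy as the paper: compare $\sum_v \bfu_v \cdot D_c = u_k \cdot D_c = |p|$ against lower bounds from \eqref{eq:global-u-dot-dc} and \eqref{eq:u-dot-dc-2}, using the weight relations across the middle strip to produce the contradictory $|p|+1$. The only difference is presentational — you make the case split over $y_\pm\in D$ and $\bfu_2\subset D$ explicit, where the paper treats the case $\bfu_2\not\subset D$ by pointing back to Lemma~\ref{th:exercise-3} and only spells out the genuinely new case $\bfu_2\subset D$.
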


\begin{proof}
If none of the $\bfu_{v}$ is contained in $D$, we can apply the same kind of argument as in Lemma \ref{th:exercise-3}. Let's consider the only other possibility, namely, that $\bfu_2$ is contained in $D$. By \eqref{eq:u-dot-dc-2},
\begin{equation}
\bfu_1 \cdot D_c + \bfu_{3} \cdot D_c \leq \bfu_{1} \cdot D_c + \bfu_{2} \cdot D_c + \bfu_{3} \cdot D_c = u_k \cdot D_c, \;\; k \gg 0.
\end{equation}
Applying \eqref{eq:global-u-dot-dc} to the components $\bfu_{v}$ for $v \neq 2$, we get
\begin{equation}
\begin{aligned}
\bfu_{1} \cdot D_c + \bfu_{3} \cdot D_c & \geq \mu(\bfu_{1}) + \mu(\bfu_{3}) + \bfw_{1,i} + 1- \bfw_{3,0}
\\ & = |\bfp_1| + |\bfp_3| + \bfw_{1,i} - \bfw_{3,0} + 1 = |\bfp_1| + |\bfp_2| +| \bfp_3| + 1 = |p| + 1,
\end{aligned}
\end{equation}
which again leads to a contradiction.
\end{proof}

\subsection{The general compactness argument\label{subsec:general-compactness}}
Consider a sequence $(S_k,\sigma_k, u_k)$ in a moduli space $\scrR^{d+1,p,w}(x_0,\dots,x_d)$, assumed to be Gromov convergent, to a limit of the following general type.
\begin{equation} \label{eq:possible-limit-1}
\mybox{
The combinatorial structure of the limit is governed by a broken popsicle of type $(T,\bfp,\bfr)$. For each vertex $v$ of the tree $T$ we have an extended popsicle, which means a popsicle $(\bfS_v, \bfsigma_v)$ with finitely many bubble trees $\hat{\bfS}_{v,m}$ attached at (pairwise distinct) points $\bfz_{v,m} \in \bfS_v$. More precisely, if $\bfz_{v,m}$ is an interior point, the bubble tree is a nodal genus zero curve (a finite collection of spheres, forming a tree), with a marked point $\hat{\bfz}_{v,m}$; for a boundary point, it is a nodal disc (a finite collection of discs and spheres, forming a tree, where the discs are attached to each other along boundary points, and the spheres are attached along interior points), again with marked point $\hat{\bfz}_{v,m}$.
}
\end{equation}\begin{equation}
\label{eq:possible-limit-1b}
\mybox{
Each component of the popsicle comes with a solution $\bfu_v: \bfS_v \rightarrow M$ of the relevant Cauchy-Riemann equation. The limits of these maps along the semi-infinite edges are the given $(x_0,\dots,x_d)$; the other limits can be any chords of the length given by the relevant weights (obviously, for each finite edge, the limits from the two adjacent components must coincide). Each bubble tree comes with a map $\hat{\bfu}_{v,m}: \hat{\bfS}_{v,m} \rightarrow M$, which is pseudo-holomorphic with respect to the almost complex structure associated to the weighted popsicle $(\bfS_v,\bfsigma_v,\bfw_v)$ at the point $\bfz_{v,m}$; similarly, if $\bfz_{v,m}$ is a boundary point, the Lagrangian boundary condition at that point is inherited by the disc components of the bubble tree. We also have the connectedness conditions $\hat{\bfu}_{v,m}(\hat{\bfz}_{v,m}) = \bfu_{v}(\bfz_{v,m})$. Finally, the entire limit must satisfy the standard stability condition (automorphism group is finite). Besides the classical condition on each bubble tree, this means the following: if $v$ is a two-valent vertex with $|\bfp_v| = 0$, then either the map $\bfu_v$ is a non-constant Floer trajectory, or it can be constant with at least one bubble attached.
}
\end{equation}
In \eqref{eq:possible-limit-1b} we have omitted any mention of incidence conditions at the sprinkles (indeed, what we have described is the Gromov compactification after forgetting those incidence conditions). Hence, the question of what limits actually occur requires further discussion.

\begin{lemma} \label{th:big-compactness-1}
Take a component $\bfS_v$ such that $\bfu_v$ is not entirely contained in $D$. Then, the bubble trees attached to it contain only spheres, and at least one sprinkle is located at each of the attaching points $\bfz_{v,m}$. The map $\bfu_v$ satisfies \eqref{eq:constraints-2} except at those points, where instead we have
\begin{equation} \label{eq:weaker-tangency}
\mu_{\bfz_{v,m}}(\bfu_v) + \hat{\bfu}_{v,m} \cdot D = \# \{f : \bfsigma_{v,f} = \bfz_{v,m}\}.
\end{equation}
\end{lemma}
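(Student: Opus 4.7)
The proof will mimic the pattern of Lemmas \ref{th:exercise-1}--\ref{th:exercise-4}, localising around each attaching point via the Stokes-type intersection formulas \eqref{eq:energy-defect-1}, \eqref{eq:energy-defect-2}. For each $\bfz = \bfz_{v,m}$, I would choose a closed disc (or half-disc) $T \subset \bfS_v$ centred at $\bfz$, small enough that it contains no other sprinkles of $\bfu_v$ and no other point of $\bfu_v^{-1}(D)$. By Gromov convergence, this corresponds, for $k \gg 0$, to a domain $T_k \subset S_k$ whose boundary stays a uniform distance from $D$, and which absorbs both the sprinkles converging to $\bfz$ and the rescaling region that produces the bubble tree $\hat{\bfu}_{v,m}$. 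The intersection count \eqref{eq:total-intersection} applied to $u_k|T_k$ gives $(u_k|T_k)\cdot D_c = \#\{f : \bfsigma_{k,f} \in T_k\}$ for small $c<0$, by \eqref{eq:constraints-2}. Passing to the limit, the left-hand side decomposes into $\mu_{\bfz}(\bfu_v)$ plus $\hat{\bfu}_{v,m}\cdot D_c$, where the latter is interpreted topologically (which makes sense because components of the bubble contained in $D$ still contribute non-negatively by \eqref{eq:u-dot-dc-2}), while the right-hand side equals $\#\{f : \bfsigma_{v,f} = \bfz\}$. This yields \eqref{eq:weaker-tangency}, and the same computation at non-attaching points gives \eqref{eq:constraints-2}.

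To exclude disc bubbles, I would observe that every sprinkle lies on a popsicle stick, hence in the interior of $\bfS_v$; so if $\bfz_{v,m}$ were a boundary point, \eqref{eq:weaker-tangency} would force $\mu_{\bfz}(\bfu_v) + \hat{\bfu}_{v,m}\cdot D_c = 0$. But a stable boundary bubble tree contains at least one non-constant disc component whose boundary lies on some Lagrangian $L$ from \eqref{eq:global-lagrangian}; since $L \setminus \partial L \subset M\setminus D$ and the primitive $\theta_{M\setminus D}$ restricts to an exact form there, a disc bubble whose image is disjoint from $D$ would have vanishing symplectic area and hence be constant. Thus the disc must meet $D$, which combined with positivity of intersections against the almost-complex divisor $D_c$ (using a nearby almost complex structure of the type in Remark \ref{th:local-almost-complex-2}) gives $\hat{\bfu}_{v,m}\cdot D_c \geq 1$, a contradiction. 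It follows that every attaching point is interior and every bubble tree consists purely of spheres. At such an interior attaching point, stability forces the tree to be non-constant; monotonicity $[D]=c_1(M)=[\omega_M]$ makes each non-constant sphere not contained in $D$ satisfy $\hat{\bfu}\cdot D\geq 1$, while if the entire tree lies in $D$ then $\bfu_v(\bfz)\in D$ and $\mu_{\bfz}(\bfu_v)\geq 1$ by Lemma \ref{th:discrete-2}. Either way, the right-hand side of \eqref{eq:weaker-tangency} is positive, which gives the sprinkle statement.

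The step I expect to be the main obstacle is the integral bookkeeping in the limit when some components of the bubble tree lie inside $D$: one must work throughout with $D_c$ for small $c<0$ rather than $D$, verifying that the Stokes argument for $u_k|T_k$ converges to the expected sum $\mu_{\bfz}(\bfu_v) + \hat{\bfu}_{v,m}\cdot D_c$ without spurious contributions from the breakup, and that the positivity assertion $\hat{\bfu}_{v,m}\cdot D_c \geq 1$ for a non-constant boundary bubble holds in the full generality of almost complex structures of class \eqref{eq:global-almost-complex}. This is handled by the continuity of topological intersection numbers along Gromov-convergent sequences, together with the fact that the boundary integrals in \eqref{eq:energy-defect-1}, \eqref{eq:energy-defect-2} involve only $\theta_{M\setminus D}$ evaluated along curves confined to a fixed compact subset of $M\setminus D$.
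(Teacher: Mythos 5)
Your proof is correct and takes essentially the same approach as the paper, which proves this by a one-line reference back to Lemma~\ref{th:exercise-2}: localise near each attaching point, apply the Stokes-type formulas \eqref{eq:energy-defect-1}, \eqref{eq:energy-defect-2} to $u_k$, and pass to the Gromov limit to obtain the balance $\mu_z(\cdot) + (\text{bubble contribution}) = \#\{\text{sprinkles}\}$, with stability plus positivity of intersections/area then ruling out disc bubbles and sprinkle-free attaching points. Your phrasing in terms of topological intersection numbers with $D_c$ rather than $\omega_P$-integrals is an equivalent reformulation, since $[D]$ is Poincar\'e dual to $[\omega_M]$.
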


This is proved exactly as in Lemma \ref{th:exercise-2}.

\begin{lemma} \label{th:big-compactness-2}
All chords which appear as limits of the components $\bfu_v$ in \eqref{eq:possible-limit-1b} lie outside $D$. In particular, no component can be contained in $D$.
\end{lemma}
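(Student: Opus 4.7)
The strategy is to compare the total intersection number $u_k \cdot D_c = |p|$ (valid for large $k$ and small $c<0$) with what the Gromov limit would contribute if a chord in $D$ were allowed to appear. Let $V_D \subset V(T)$ be the set of vertices $v$ with $\bfu_v$ contained in $D$ and $V_{\mathrm{out}}$ its complement; since the external chords $(x_0,\dots,x_d)$ all lie outside $D$, every semi-infinite edge attaches to a vertex in $V_{\mathrm{out}}$. The argument is by contradiction, assuming that some chord at a finite edge of $T$ lies in $D$. (This covers the case $V_D \neq \emptyset$ as well, since the chord limits of any $\bfu_v \subset D$ automatically lie in $D$.)

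For each $v \in V_{\mathrm{out}}$, I apply \eqref{eq:global-u-dot-dc} together with the identity $\mu(\bfu_v) = |\bfp_v| - \sum_m \hat{\bfu}_{v,m} \cdot D$, which follows from Lemma \ref{th:big-compactness-1}; for each $v \in V_D$, I apply \eqref{eq:u-dot-dc-2}. All bubbles satisfy $\hat{\bfu}_{v,m} \cdot D_c \geq 0$ by positivity of intersection, and those attached to $V_{\mathrm{out}}$ are spheres, so $\hat{\bfu}_{v,m} \cdot D = \hat{\bfu}_{v,m} \cdot D_c$ and these contributions cancel against the $\mu$-identity. Summing the estimates and comparing to $|p| = \sum_v \bfu_v \cdot D_c + \sum_{v,m} \hat{\bfu}_{v,m} \cdot D_c$ yields $E \leq \sum_{v \in V_D} |\bfp_v|$, where
$$E = \sum_{v \in V_{\mathrm{out}}}\Bigl[ \sum_{\zeta\in\Sigma^+_v,\, x_\zeta\in D} (w_\zeta+1) \;-\; \sum_{\zeta\in\Sigma^-_v,\,x_\zeta\in D} w_\zeta\Bigr]$$
collects all contributions from finite-edge chords in $D$.

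The heart of the argument is to evaluate $E$ exactly. Each finite edge $e$ with chord in $D$ contributes $(w_e+1)+(-w_e) = 1$ when both endpoints lie in $V_{\mathrm{out}}$; contributes $w_e+1$ or $-w_e$ when exactly one endpoint is in $V_D$, depending on whether $V_{\mathrm{out}}$ is the parent or the child of $V_D$; and nothing when both endpoints are in $V_D$. Summing the popsicle weight identity $\bfw_{v,0} - \sum_{k>0}\bfw_{v,k} = |\bfp_v|$ over $v \in V_D$, the contributions from edges interior to $V_D$ telescope to zero, and the boundary edges of $V_D$ yield
$$\sum_{e\colon V_{\mathrm{out}}\text{ is parent}} w_e \;-\; \sum_{e\colon V_D\text{ is parent}} w_e \;=\; \sum_{v \in V_D} |\bfp_v|.$$
Substituting back gives
$$E \;=\; \#\{\text{edges interior to } V_{\mathrm{out}} \text{ with chord in } D\} \;+\; \sum_{v \in V_D} |\bfp_v| \;+\; N,$$
where $N$ is the number of $V_D$--$V_{\mathrm{out}}$ boundary edges whose $V_{\mathrm{out}}$-endpoint is closer to the root.

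To conclude: under the contradiction hypothesis, either the first term is already $\geq 1$, or $V_D$ is nonempty. In the latter case, since the root vertex sits in $V_{\mathrm{out}}$ and $T$ is connected, each connected component of $V_D$ must join $V_{\mathrm{out}}$ through at least one edge whose $V_{\mathrm{out}}$-endpoint is closer to the root, so $N \geq \#\pi_0(V_D) \geq 1$. Either way, $E \geq \sum_{v \in V_D} |\bfp_v| + 1$, contradicting the earlier inequality $E \leq \sum_{v \in V_D} |\bfp_v|$. The main obstacle I anticipate is the bookkeeping in the third paragraph: the individual edge contributions $w_e+1$ need not be nonnegative when weights below $-1$ appear in a broken popsicle (cf.\ Lemma \ref{th:wrong-weights}), and it is only after the weight identity is summed over $V_D$ that the sign ambiguity disappears into the clean combinatorial term $N$.
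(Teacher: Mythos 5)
Your proof is correct, and it reaches the same conclusion from the same geometric inputs, namely the intersection bounds \eqref{eq:global-u-dot-dc} and \eqref{eq:u-dot-dc-2}, the tangency identity from Lemma~\ref{th:big-compactness-1}, and the equality $u_k \cdot D_c = |p|$ from Gromov convergence. Where you differ is the combinatorial packaging. The paper cuts the tree $T$ along every finite edge whose chord lies in $D$; since any vertex with $\bfu_v \subset D$ has all of its finite edges cut, it lands in a singleton sub-tree, and the quantity $\Delta^\gamma$ attached to each piece satisfies $\Delta^\gamma \geq |p^\gamma|$ with the excess localised to cut edges appearing as inputs. The cancellation of weight terms along internal edges is then automatic in the definition of $\Delta^\gamma$, and the contradiction falls out because every cut edge is simultaneously the root of one sub-tree and an input to another. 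You instead sum over individual vertices without cutting, which forces you to carry the weight terms explicitly and make the telescoping over $V_D$ by hand; this is the step you correctly flag as the subtle one, since before telescoping the individual contributions $w_e + 1$ can be negative (cf.~Lemma~\ref{th:wrong-weights}). The payoff is an explicit closed formula for the total excess $E$ with the geometrically readable term $N = \#\pi_0(V_D)$ counting components of $V_D$, which is a bit more information than the paper extracts. Both arguments are equally rigorous; the paper's is somewhat more economical (no need for the final trichotomy), yours is somewhat more transparent about exactly which edges produce the excess.

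Two small points worth being explicit about in a final write-up. First, the cancellation $\sum_m \hat{\bfu}_{v,m}\cdot D_c = \sum_m \hat{\bfu}_{v,m}\cdot D$ for $v \in V_{\mathrm{out}}$ indeed uses that those bubbles are spheres (from Lemma~\ref{th:big-compactness-1}), so the intersection number is a homological invariant unaffected by the choice of representative $D_c$ of $\mathrm{PD}[\omega_M]$; for disc bubbles one would have to say more about the boundary. Second, when invoking $\hat{\bfu}_{v,m}\cdot D_c \geq 0$ for bubbles attached at $V_D$ vertices, this is positivity of symplectic area combined with monotonicity rather than positivity of intersection in the almost-complex sense (the bubble need not meet $D_c$ transversally, and $D_c$ need not be $J$-holomorphic for the almost complex structure at the attaching point); the paper relies on the same fact implicitly in \eqref{eq:big-delta-1}.
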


\begin{proof}
Cut open every finite edge of $T$ whose corresponding chord lies in $D$, so as to produce two semi-infinite ones. The outcome is a finite collection of sub-trees $T^\gamma$, which inherit the same data as $T$ itself. We will be interested in these quantities:
\begin{equation}
|p^\gamma| \stackrel{\mathrm{def}}{=} \sum_{v \in \mathit{Ve}(T^\gamma)} |\bfp_v|,
\end{equation}
where the sum is over vertices of $T^\gamma$ (in words, $|p^\gamma|$ is the number of sprinkles ``belonging to'' $T^\gamma$); and
\begin{equation} \label{eq:big-delta-nu}
\Delta^\gamma \stackrel{\mathrm{def}}{=} \sum_{v \in \mathit{Ve}(T^\gamma)} \Big( \bfu_v \cdot D_c + \sum_m\hat{\bfu}_{v,m} \cdot D_c \Big) +
\begin{cases} w^\gamma_0 & \text{if $x^\gamma_0 \in D$,} \\
0 & \text{otherwise}
\end{cases}
- 
\sum_{j>0}
\begin{cases}
w^\gamma_j & \text{if $x^\gamma_j \in D$,} \\
0 & \text{otherwise,}
\end{cases}
\end{equation}
where the $w^\gamma_j$ are the weights associated to the semi-infinite edges of $T^\gamma$, and $x^\gamma_j$ the corresponding chords. Adding up \eqref{eq:big-delta-nu} produces (via pairwise cancellation of the weight terms)
\begin{equation} \label{eq:sum-delta-gamma}
\sum_\gamma \Delta^\gamma = \sum_{v \in \mathit{Ve}(T)} \Big(\bfu_v \cdot D_c + \sum_m \hat{\bfu}_{v,m} \cdot D_c \Big).
\end{equation}
Because of Gromov convergence, the right hand side of \eqref{eq:sum-delta-gamma} equals $u_k \cdot D_c$, $k \gg 0$, which by \eqref{eq:constraints-2} is $|p|$; hence,
\begin{equation} \label{eq:delta-is-p}
\sum_{\gamma} \Delta^\gamma = \sum_{\gamma} |p^\gamma|.
\end{equation}
Let's examine each of the \eqref{eq:big-delta-nu} in more detail.
\begin{equation} \label{eq:big-delta-1}
\mybox{
$T^\gamma$ may be a single-vertex tree where the map $\bfu_v$ is contained in $D$. In that case, \eqref{eq:u-dot-dc-2} implies that 
\[
\Delta^\gamma \geq w_0^\gamma - \sum_{j>0} w_j^\gamma = |p^\gamma|.
\]
}
\end{equation}
\begin{equation} \label{eq:big-delta-2}
\mybox{
Consider any other sub-tree, which by definition has only components $\bfu_v$ not contained in $D$. By applying \eqref{eq:global-u-dot-dc} and Lemma \ref{th:big-compactness-1} to each vertex of $T^\gamma$, one sees that
\[
\Delta^\gamma \geq 
\sum_{v \in \mathit{Ve}(T^\gamma)} \Big( \mu(\bfu_v) + \sum_m \hat{\bfu}_{v,m} \cdot D \Big) = |p^\gamma|.
\] 
Moreover, because of the additional $+1$ in \eqref{eq:global-u-dot-dc}, equality holds if and only if all chords corresponding to edges of our sub-tree, except possibly for that associated to the root, lie outside $D$.
}
\end{equation}
Adding up the results from \eqref{eq:big-delta-1} and \eqref{eq:big-delta-2} shows that $\sum_\gamma \Delta^\gamma \geq \sum_{\gamma} |p^\gamma|$. Moreover, because of the last observation in \eqref{eq:big-delta-2}, equality can hold only if all the chords involved lie outside $D$; comparing that with \eqref{eq:delta-is-p} yields the desired result.
\end{proof}

At this point, we add transversality arguments to our discussion. Every principal component $\bfu_v: \bfS_v \rightarrow M$ of our limit belongs to a moduli space which is roughly of the same general kind as the original maps, except that it possibly comes with a condition \eqref{eq:weaker-tangency} prescribing a lower order of tangency. We also want to take into account that the underlying popsicle $(\bfS_v,\bfsigma_v)$ may have several coincident sprinkles, hence belongs to the subspace $\scrR^{\bfd_v+1,\bfp_v}_{\bfC_v}$ of popsicles fixed by some conjugacy class of subgroups $\bfC_v$ in $\mathit{Aut}(\bfp_v)$. The virtual dimension is unaffected by orders of tangency; hence, for such a $\bfu_v$, it is given by an expression similar to \eqref{eq:dimension-formula},
\begin{equation} \label{eq:vertex-expression}
\mathrm{dim}(\scrR^{\bfd_v+1,\bfp_v}_{\bfC_v}) + \mathrm{ind}(\bfx_{v,0}) - \textstyle \sum_{j>0}
\mathrm{ind}(\bfx_{v,j}),
\end{equation}
wgere the $\bfx_{v,j}$ are the limits of $\bfu_v$. Adding up this over all $v$, we find that the expected codimension of the entire collection $\{\bfu_v\}$, with respect to the original space $\scrR^{d+1,p,w}(x_0,\dots,x_d)$, is
\begin{equation} \label{eq:codimension}
|\mathrm{Ve}(T)| - 1 + \sum_v \mathrm{codim}( \scrR^{\bfd_v+1,\bfp_v}_{\bfC_v} \subset \scrR^{\bfd+1,\bfp_v}).
\end{equation}
The same arguments as in Section \ref{subsec:global-transversality} show generic regularity for moduli spaces of $(\bfS_v,\bfsigma_v,\bfu_v)$. Together with transversality for holomorphic spheres (as in Lemma \ref{th:local-transversality}), this also shows that the space of such $(\bfS_v,\bfsigma_v,\bfu_v)$ together with a bubbles $\hat{\bfu}_{v,m}$ satisfying $\hat{\bfu}_{v,m} \cdot D = 1$, and not contained in $D$, are generically regular; each such bubble causes the dimension to drop by $2$. Let's assume from now that the (asymptotically consistent) almost complex structures have been chosen so that all this holds.

\begin{lemma} \label{th:big-compactness-3}
Suppose that the original moduli space $\scrR^{d+1,p,w}(x_0,\dots,x_d)$ is zero-dimensional. Then all limits lie in that space (it is compact with respect to Gromov convergence).
\end{lemma}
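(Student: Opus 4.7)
The plan is to combine Gromov compactness with the structural results from Lemmas \ref{th:big-compactness-1} and \ref{th:big-compactness-2}, and then close the argument via a virtual-dimension count against the transversality built into the generic asymptotically consistent almost complex structures.

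First, I would extract a Gromov-convergent subsequence to obtain a limit of the form \eqref{eq:possible-limit-1}. The two preceding lemmas then tell me that no principal component $\bfu_v$ lies in $D$, that all bubble trees consist of spheres attached at sprinkle points, and that the tangency balance \eqref{eq:weaker-tangency} holds. The key specialization is that our weights lie in $\{-1,0\}$, so by \eqref{eq:0-1} the map $p$ is injective, and because the popsicle sticks $Q_k \subset H$ are pairwise disjoint vertical lines, the sprinkles of $S$ occupy pairwise distinct points. Propagating injectivity through the broken-popsicle combinatorics via \eqref{eq:check-inequality} and Lemma \ref{th:wrong-weights}, each bubbling point $\bfz_{v,m}$ carries exactly one sprinkle in every stratum in which the inherited weights on the components remain in $\{-1,0\}$; the exceptional strata with $\bfw_{v,i} < -1$ are themselves of positive popsicle codimension, and will be absorbed into the count below.

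Next I would identify the Chern class of each bubble. In the generic case just described, \eqref{eq:weaker-tangency} reduces to $\mu_{\bfz_{v,m}}(\bfu_v) + \hat{\bfu}_{v,m}\cdot D = 1$, and since $\hat{\bfu}_{v,m}\cdot D = c_1(M)\cdot [\hat{\bfu}_{v,m}] \geq 1$ for any non-constant holomorphic sphere in our monotone setup, this forces $\hat{\bfu}_{v,m}\cdot D = 1$ and $\mu_{\bfz_{v,m}}(\bfu_v) = 0$. A Chern-one class is indivisible among positive classes, so each such bubble is automatically simple, and Lemma \ref{th:local-transversality} then supplies the regularity giving the codimension-$2$ contribution per bubble stated after \eqref{eq:codimension}. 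Assembling all contributions, the total virtual codimension of a limit stratum inside $\scrR^{d+1,p,w}(x_0,\dots,x_d)$ reads
\[
|\mathrm{Ve}(T)| - 1 \;+\; \sum_v \mathrm{codim}\bigl(\scrR^{\bfd_v+1,\bfp_v}_{\bfC_v}\subset \scrR^{\bfd_v+1,\bfp_v}\bigr) \;+\; 2\cdot \#\{\text{bubbles}\},
\]
with each summand non-negative.

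The conclusion is then immediate: since the ambient moduli space has virtual dimension zero, any stratum of strictly positive codimension has strictly negative virtual dimension, hence is empty by the transversality hypothesis. All three summands must therefore vanish, i.e.\ the tree has a single vertex, all sprinkles are distinct, and no bubbles are attached, so the limit itself lies in $\scrR^{d+1,p,w}(x_0,\dots,x_d)$. The main obstacle I anticipate is the bookkeeping for the exceptional strata where the inherited weights leave $\{-1,0\}$: there a higher-Chern bubble at a coincident sprinkle point could a priori appear, and one must confirm via \eqref{eq:check-inequality} and \eqref{eq:weaker-tangency} that the additional popsicle-coincidence codimension incurred always dominates the weakened codimension contribution of such a bubble. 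This is combinatorial rather than geometric, but it is the part of the argument that genuinely requires care.
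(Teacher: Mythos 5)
Your argument is correct, but it reaches the distinct-sprinkle conclusion by a detour that the paper avoids. You bring in the constraint that the weights lie in $\{-1,0\}$ (so $p$ is injective) and then worry about propagating injectivity across broken-popsicle strata via \eqref{eq:check-inequality} and Lemma~\ref{th:wrong-weights}, flagging ``exceptional strata'' where the inherited weights leave $\{-1,0\}$ as the delicate point. None of that is needed: the paper observes that once the popsicle-theoretic codimension $|\mathrm{Ve}(T)|-1+\sum_v\mathrm{codim}\bigl(\scrR^{\bfd_v+1,\bfp_v}_{\bfC_v}\subset\scrR^{\bfd_v+1,\bfp_v}\bigr)$ is forced to vanish, the limit is a single popsicle with trivial isotropy, and trivial isotropy \emph{by itself} is equivalent to pairwise distinct sprinkles (a coincident pair on the same popsicle stick would give a nontrivial element of $\Aut(p)$ fixing the configuration), with no reference to the weights. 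In particular your exceptional strata never arise. Your unified codimension expression with the $2\cdot\#\{\text{bubbles}\}$ term folded in is a legitimate consolidation of what the paper presents in two stages (force popsicle codimension zero via \eqref{eq:codimension}, then exclude bubbling as a codimension-$\geq 2$ phenomenon), and your remark that a Chern-one sphere is automatically simple is the correct justification for invoking the evaluation-transversality result --- the paper makes the same point by noting that such bubbles cannot be contained in $D$. So the substance agrees, but the weight hypothesis and the Lemma~\ref{th:wrong-weights} bookkeeping can be deleted, which also shows the statement does not depend on the $\{-1,0\}$ restriction.
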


\begin{proof}
The codimension in \eqref{eq:codimension} is necessarily zero. Hence, we have a limit with a single vertex, and where the symmetry group $C = \{\mathit{id}\}$ is trivial, meaning that the limiting popsicle has pairwise distinct sprinkles. In that case, Lemma \ref{th:big-compactness-1} implies that the only possible bubbling goes as follows: we have sphere bubbles $\hat{u}_k$ with $\hat{u}_k \cdot D = 1$, at most one per sprinkle, and where the attaching points $\hat{u}_k(\hat{z}_k)$ lie outside $D$. Such bubbles can't be entirely contained in $D$, hence are covered by our transversality-of-evaluation result. This shows that bubbling is a codimension $\geq 2$ phenomenon, hence ruled out in our case.  
\end{proof}

\begin{lemma} \label{th:big-compactness-4}
Suppose that the original moduli space $\scrR^{d+1,p,w}(x_0,\dots,x_d)$ is one-dimensional, and that the weights $(w_0,\dots,w_d)$ involved lie in $\{-1,0\}$. Then, all the possible limits either lie in $\scrR^{d+1,p,w}(x_0,\dots,x_d)$ itself, or else involve the popsicle splitting into two pieces, with no sphere bubbling.
\end{lemma}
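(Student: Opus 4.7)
The plan is to carry out the codimension bookkeeping behind the proof of Lemma \ref{th:big-compactness-3} under a looser budget. Because $(w_0, \dots, w_d) \in \{-1, 0\}^{d+1}$, the discussion following \eqref{eq:0-1} shows that $p$ is injective, so $\Aut(p)$ is trivial and every sprinkle sits alone on its popsicle stick. Lemma \ref{th:big-compactness-2} guarantees that every limiting chord on each component lies outside $D$, so no component is entirely contained in $D$, and Lemma \ref{th:big-compactness-1} then applies at every vertex: each bubble is a sphere, attached at a point carrying at least one sprinkle, with \eqref{eq:weaker-tangency} holding there.

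I would next exhibit each Gromov limit of a sequence in $\scrR^{d+1,p,w}(x_0,\dots,x_d)$ as a codimension $\geq c$ phenomenon, where
\[
c \;=\; |\mathrm{Ve}(T)| - 1 \;+\; \sum_v \mathrm{codim}\bigl(\scrR^{\bfd_v+1,\bfp_v}_{\bfC_v} \subset \scrR^{\bfd_v+1,\bfp_v}\bigr) \;+\; 2 N_{\mathrm{bub}},
\]
with $N_{\mathrm{bub}}$ the total number of sphere bubbles. The first two summands are the contribution \eqref{eq:codimension} from the principal components; the ``$+2$ per sphere bubble'' reflects the fact that a Chern-number-one sphere in $M$ passing through a prescribed point is a codimension-two condition, which one obtains by combining the global analogue of Lemma \ref{th:local-transversality} with transversality of the attaching evaluation $\hat{\bfu}_{v,m}(\hat{\bfz}_{v,m}) = \bfu_v(\bfz_{v,m})$. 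That only Chern-one bubbles can occur is forced by our weight assumption: \eqref{eq:weaker-tangency} with $\#\{f : \bfsigma_{v,f} = \bfz_{v,m}\} = 1$ (from injectivity of $p$) yields $\mu_{\bfz_{v,m}}(\bfu_v) = 0$ and $\hat{\bfu}_{v,m} \cdot D = 1$, and monotonicity then gives $c_1(\hat{\bfu}_{v,m}) = 1$.

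Given one-dimensionality of the ambient moduli, only limits with $c \leq 1$ can arise. This rules out any sphere bubble ($N_{\mathrm{bub}} \geq 1$ would already force $c \geq 2$), and constrains the contribution from \eqref{eq:codimension} to be at most one. The remaining possibilities are $|\mathrm{Ve}(T)| = 1$ with trivial $\bfC_v$ (which is automatic since $\Aut(\bfp) = \Aut(p) = 1$), giving a point already in $\scrR^{d+1,p,w}(x_0,\dots,x_d)$; and $|\mathrm{Ve}(T)| = 2$ with trivial $\bfC_v$ on each vertex, which is precisely the two-component split without sphere bubbling described in the statement. A two-vertex limit with non-trivial $\bfC_v$ on some component would push the total codimension to at least two.

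The principal obstacle is the codimension-two input from sphere bubbles: one has to upgrade Lemma \ref{th:local-transversality} to a generic global transversality statement for the moduli of Chern-one holomorphic spheres in $M$ with one marked point, and verify that the attaching evaluation map can be made transverse. Once that is in place, the remainder is routine codimension arithmetic.
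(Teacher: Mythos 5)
Your proof follows essentially the same route as the paper's: because the moduli space is one-dimensional, the codimension estimate \eqref{eq:codimension} is at most $1$; since $p$ is injective the only ways to realize codimension $0$ or $1$ are a single-vertex tree with trivial isotropy, or a two-vertex tree with trivial isotropy on both components; and in either case bubbling is ruled out as in Lemma~\ref{th:big-compactness-3} because a Chern-number-one sphere through a prescribed point is codimension two. The paper presents this as a case analysis on the value of \eqref{eq:codimension}; you fold the sphere-bubble contribution into a single running total $c$. That is the same argument, just bookkept differently.

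One imprecision in your write-up: you assert that $\#\{f : \bfsigma_{v,f} = \bfz_{v,m}\} = 1$ ``from injectivity of $p$.'' That inference is not automatic. When a popsicle breaks, the induced maps $\bfp_v$ on the components need not be injective even though $p$ is (this is exactly the phenomenon \eqref{eq:more-symmetry} in Lemma~\ref{th:wrong-weights}), so a component could carry two sprinkles on the same popsicle stick, and those could coincide. What actually excludes this is the codimension budget itself, which you do note afterwards: any coincidence of sprinkles forces a nontrivial $\bfC_v$ and thus pushes \eqref{eq:codimension} beyond $1$. So the logical order should be: budget $\Rightarrow$ trivial isotropy on every component $\Rightarrow$ all sprinkles pairwise distinct on every component $\Rightarrow$ each attaching point carries a single sprinkle $\Rightarrow$ every bubble has $\hat{\bfu}_{v,m}\cdot D = 1$ $\Rightarrow$ codimension $2$ per bubble $\Rightarrow$ no bubbles. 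With that reordering the argument is exactly the paper's.
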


\begin{proof}
If \eqref{eq:codimension} is $0$, we are in the same situation as in Lemma \ref{th:big-compactness-3}, and the argument there suffices to rule out bubbling, ensuring that the limit lies in $\scrR^{d+1,p,w}(x_0,\dots,x_d)$ itself. Let's focus on the remaining case, where \eqref{eq:codimension} is $1$. In principle, that could be achieved by having a single vertex, where the associated popsicle has a $\bZ/2$-symmetry. However, that would require two sprinkles to coincide, which is excluded by the condition on the weights. The other option is to have a tree with two vertices, where each of the two components is a popsicle having pairwise distinct sprinkles. In that case, bubbling can again be ruled out as in Lemma \ref{th:big-compactness-3}.
\end{proof}

Together, these results cover all the moduli spaces that appear in the definition of $\scrF^{\mathit{ord}}$ in Section \ref{subsec:formal-c}. Localising to $\scrF$ additionally involves continuation cocycles, which are derived from Cauchy-Riemann equations with moving boundary conditions, but considering only solutions which do not intersect $D$ at all. Compactness for such maps is an easy argument involving intersection multiplicities, which (as pointed out at the end of Section \ref{subsec:local-intersection}) generalize to this case. Details are left to the reader.

\subsection{A variant\label{subsec:variant}}
Let's generalize the setup from Section \ref{subsec:global-geometry} by asking for $Y$ and $Z$ to be sections of $K_M^{-N}$ for some $N>0$. This primarily affects the index formulae: in the situation of \eqref{eq:global-index}, we have
\begin{equation}
\mathrm{ind}(D_u) + \mathrm{ind}(x_d) + \cdots + \mathrm{ind}(x_1) = \mathrm{ind}(x_0) + 2N (u \cdot D_c);
\end{equation}
in view of \eqref{eq:constraints-2}, the dimension formula \eqref{eq:dimension-formula} then becomes
\begin{equation}
\mathrm{dim}(\scrR^{d+1,p,w}(x_0,\dots,x_d)) =  d - 2 + (2N-1)|p| + \mathrm{ind}(x_0) - \mathrm{ind}(x_d) - \cdots - \mathrm{ind}(x_1).
\end{equation}
The implication is that we can still define a $\bZ$-graded $A_\infty$-category $\scrF^{\mathit{ord}}$, but with nontrivial morphism spaces
\begin{equation}
\scrF^{\mathit{ord}}(L_0,L_1) = \mathit{CF}^*(L_0,L_1) \oplus \mathit{CF}^{*+2N-1}(L_0,L_1;-1),  \quad \alpha_{L_0} > \alpha_{L_1}. 
\end{equation}
The degree of the continuation cocycles is unaffected by this, since they lie in the first summand, and are defined in terms of holomorphic curves in $M \setminus D$. Hence, one can localize to $\scrF$ as before. The analogue of Section \ref{subsec:divisor-1} would say that there is a graded noncommutative divisor $\scrD$ in the sense of \eqref{eq:graded-linear-system}, over $\bZ[v]$ with grading $|v| = 2N-2$, and whose fibre is quasi-equivalent to $\scrF$.

Concerning technical aspects, the only significant difference appears in the last step of the compactness argument, where the codimension formula \eqref{eq:codimension} should be replaced by
\begin{equation} \label{eq:codimension-2}
|\mathrm{Ve}(T)| - 1 + \sum_v \mathrm{codim}( \scrR^{\bfd_v+1,\bfp_v}_{\bfC_v} \subset \scrR^{\bfd+1,\bfp_v}) + (2N-2) \sum_{v,m} (\hat{u}_{v,m} \cdot D).
\end{equation}
The previously discussed case is $N = 1$. For $N>1$, the additional term in \eqref{eq:codimension-2} means that any bubbling further increases codimension, hence making our task easier: transversality considerations for pseudo-holomorphic spheres as in the proofs of Lemmas \ref{th:big-compactness-3}, \ref{th:big-compactness-4} become unnecessary. 

\section{The noncommutative pencil\label{sec:flavour}}
We have used popsicles in two different ways: in Section \ref{sec:first-construction}, the position of the marked points (sprinkles) were used as parameters (moduli) to govern the choice of one-forms, but otherwise, had no geometric meaning in relation to the target manifold; while in Section \ref{sec:anticanonical}, we imposed intersection conditions at those points. We now combine the two ideas, while remaining in the same general situation as in Section \ref{sec:anticanonical}. The outcome is the noncommutative pencil announced in Construction \ref{th:1} (and its variant, Construction \ref{th:2}).

\subsection{Flavoured popsicles}
We introduce a version of popsicles which carry sprinkles of two flavours (``$\va$nilla'' and ``$\ch$ocolate''). 
\begin{equation}
\mybox{
Take $d \geq 1$ and $m_\va \geq 0$, $m_\ch \geq 0$, as well as nondecreasing maps $p_\va: \{1,\dots,m_\va\}  \rightarrow \{1,\dots,d\}$, $p_\ch: \{1,\dots,m_\ch\} \rightarrow \{1,\dots,d\}$. We usually write $|p_\va| = m_\va$, $|p_\ch|= m_\ch$. A flavoured popsicle of type $(d,p_\va,p_\ch)$ is a disc with $(d+1)$ boundary punctures $S$, together with sprinkles $\sigma_{\va,f} \in Q_{p_\va(f)}$ ($1 \leq f \leq m_\va$) and $\sigma_{\ch,f} \in Q_{p_\ch(f)}$ ($1 \leq f \leq m_\ch$). 
}
\end{equation}
Stable flavoured popsicles form a moduli space $\scrR^{d+1,p_\va,p_\ch}$, which is a copy of $\scrR^{d+1,p}$ for a suitable $p$ with $|p| = |p_\va| + |p_\ch|$. To be precise, the identification between those moduli spaces, which forgets the distinction between the flavours, is not quite canonical: it requires one to choose a bijection
\begin{equation} \label{eq:forget-flavours}
\{1,\dots,|p|_\va\} \sqcup \{1,\dots,|p|_\ch\} \iso \{1,\dots,|p|\}
\end{equation}
so that the map $p$ obtained from $p_\va$ and $p_\ch$ is nondecreasing. On a related note, the natural automorphism group $\Aut(p_\va, p_\ch)$ which acts on $\scrR^{d+1,p_\va,p_\ch}$ can be smaller than $\Aut(p)$, since it allows only permutations that preserve flavours. Similarly, we have broken flavoured popsicles. Compared to the original definition \eqref{eq:broken-popsicle-data}, these also come with two labeling maps for each vertex, $\bfr_{\va,v}: \{1,\dots,\bfm_{\va,v}\} \rightarrow \{1,\dots,m_\va\}$ and $\bfr_{\ch,v}: \{1,\dots,\bfm_{\ch.v}\} \rightarrow \{1,\dots,m_\ch\}$. This gives rise to a  compactification $\bar\scrR^{d+1,p_\va,p_\ch}$, which is again a copy of $\bar\scrR^{d+1,p}$.
One can also introduce weights as in Section \ref{subsec:weights}. As usual, our attention is focused on the case where the weights lie in $\{-1,0\}$. The counterpart of Lemma \ref{th:wrong-weights} is this:

\begin{lemma} \label{th:wrong-weights-2}
Take a space $\bar\scrR^{d+1,p_\va,p_\ch,w}$ where the weights lie in $\{-1,0\}$. Consider a boundary stratum of codimension $1$, such that the induced weights for the components of broken popsicles in that stratum do not lie in $\{-1,0\}$. Then, in the notation from \eqref{eq:two-vertex-tree}, one of the three following applies:
\begin{equation} \label{eq:more-symmetry-2}
\mybox{$\# \bfp_{1,\va}^{-1}(i) \geq 2$ or $\# \bfp_{1,\ch}^{-1}(i) \geq 2$; equivalently, $\Aut(\bfp_{1,\va}, \bfp_{1,\ch})$ is nontrivial.}
\end{equation}
\begin{equation} \label{eq:two-flavour-stick}
\mybox{
$\bfp_{1,\va}^{-1}(i) = \{f_\va\}$ and $\bfp_{1,\ch}^{-1}(i) = \{f_\ch\}$ both consist of one element. Then, there are $\{k_1, k_2\}$ as in \eqref{eq:switch-sprinkle}; $p_{\va}(\bfr_{\va,1}(f_\va))$ is one of them, and $p_{\ch}(\bfr_{\ch,1}(f_\ch))$ is the other one.
}
\end{equation}
\begin{equation} \label{eq:switch-sprinkle-2}
\mybox{$\bfw_{1,0} = w_0 =  -1$, $\bfw_{1,i} = \bfw_{2,0} = -2$, $\bfp_{1,\va}^{-1}(i) = \{f\}$ consists of one element, and $\bfp_{1,\ch}^{-1}(i) = \emptyset$ (or vice versa). In that case, $p(\bfr_{1,\va}(f))$ is one of the $\{k_1,k_2\}$ from \eqref{eq:switch-sprinkle}.
}
\end{equation}
\end{lemma}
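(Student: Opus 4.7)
The argument will closely mirror the proof of Lemma \ref{th:wrong-weights}, with the essential new feature being that the two flavors must be tracked independently. First, I would establish the flavored analog of the key inequality \eqref{eq:check-inequality}. Applying the ``conservation of sprinkles'' calculation separately to each flavor and then summing yields
\[
\#\bfp_{1,\va}^{-1}(i) + \#\bfp_{1,\ch}^{-1}(i) \;\leq\; -\bfw_{1,i},
\]
where the bound comes from the upper-bound constraint \eqref{eq:upper-bound} (in its evident flavored form: the total number of sprinkles on any stick of the second component is at most the negative of the weight at that stick) applied to each index in $\{i,\dots,i+\bfd_2-1\}$ and then telescoped with the weight-balance relation \eqref{eq:weights-and-sprinkles} across the finite edge, just as in \eqref{eq:check-inequality}.

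Next, I would invoke the hypothesis $\bfw_{1,i} = \bfw_{2,0} \leq -2$ and split on how the (at most $-\bfw_{1,i}$) sprinkles on stick $i$ distribute between flavors. If either flavor contributes two or more sprinkles to stick $i$, then swapping two of them gives a nontrivial element of $\Aut(\bfp_{1,\va},\bfp_{1,\ch})$, producing case \eqref{eq:more-symmetry-2}. If each flavor contributes exactly one, the inequality saturates and forces $\bfw_{1,i} = -2$; the same tracing of ``excess'' sprinkles through the second component as in the unflavored argument \eqref{eq:switch-sprinkle} identifies two indices $k_1 < k_2 \in \{i,\dots,i+\bfd_2-1\}$ with $w_{k_j} = \bfw_{2,k_j-i+1} = -1$ and $\bfp_{2,\va}^{-1}(k_j-i+1) = \bfp_{2,\ch}^{-1}(k_j-i+1) = \emptyset$, one of which receives the vanilla sprinkle and the other the chocolate, yielding \eqref{eq:two-flavour-stick}. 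In the remaining case, exactly one flavor (say vanilla) contributes a single sprinkle at $i$ and the other contributes none, so the left side of the inequality is $1$ while the right is at least $2$; the slack of at least one unit can be absorbed only in the manner of Lemma \ref{th:wrong-weights}, which forces $\bfw_{1,i} = -2$, $w_0 = -1$, and $p_\va(\bfr_{1,\va}(f)) \in \{k_1,k_2\}$, giving \eqref{eq:switch-sprinkle-2}.

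The last two cases reduce to the unflavored analysis after choosing the bijection \eqref{eq:forget-flavours}, so no new geometric or algebraic input is required. The main (mild) obstacle I anticipate is purely bookkeeping: keeping the labeling maps $\bfr_{\va,v}$ and $\bfr_{\ch,v}$ consistent with the combined $\bfr_v$ used in \eqref{eq:glue-popsicles}, so that the identification of the excess-sprinkle index as an element of $\{k_1,k_2\}$ genuinely respects the flavor structure. Once that bookkeeping is fixed, exhaustiveness of the three cases follows by inspection of the trichotomy ``both flavors contribute / exactly one of each / only one flavor contributes a single sprinkle,'' which covers every way the sprinkle-count inequality on stick $i$ can be met or fail to be saturated.
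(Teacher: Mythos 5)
The paper offers no written proof of Lemma \ref{th:wrong-weights-2} (it is presented, like its unflavoured precursor, as a "straightforward result"), so I can only compare your argument against the correct reasoning. Your three-way case split is exhaustive and your handling of cases \eqref{eq:more-symmetry-2} and \eqref{eq:switch-sprinkle-2} is sound. However, the treatment of case \eqref{eq:two-flavour-stick} contains a real gap: the assertion that "the inequality saturates and forces $\bfw_{1,i} = -2$" is unjustified, and in fact false.

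The inequality \eqref{eq:check-inequality} is not automatically an equality; it should be refined. Since the original weights lie in $\{-1,0\}$, the map $p$ is injective, and one checks that
\begin{equation*}
\# \bfp_1^{-1}(i) \;=\; -\bfw_{1,i} - \epsilon,
\end{equation*}
where $\epsilon \in \{0,1\}$ equals $1$ exactly when $w_0 = -1$ and the exceptional index from \eqref{eq:0-1} (the unique $j$ with $w_j = -1$ and $p^{-1}(j) = \emptyset$) lies in $\{i,\dots,i+\bfd_2-1\}$. Hence when $\# \bfp_1^{-1}(i) = 2$, one only gets $\bfw_{1,i} \in \{-2,-3\}$. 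A concrete counterexample to your claim: take $d = 3$, $w = (-1,-1,-1,-1)$, $p_\va: \{1\} \to \{1,2,3\}$ with value $1$, $p_\ch: \{1\} \to \{1,2,3\}$ with value $3$, and a boundary stratum with $\bfd_1 = 1$, $\bfd_2 = 3$, $i = 1$, and $|\bfp_2| = 0$. Then $\bfw_{1,1} = \bfw_{2,0} = -3$, both sprinkles sit on stick $1$ of the first component (one of each flavour), and there are three (not two) indices $k \in \{1,2,3\}$ with $w_k = -1$ and $\bfp_2^{-1}(k) = \emptyset$. The conclusion you want — that the vanilla and chocolate sprinkles land on two distinct such indices — does still hold, but it does not follow from "saturation," and reading \eqref{eq:switch-sprinkle} literally there are not "exactly two" such $k$. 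You should either rework case \eqref{eq:two-flavour-stick} using the equality with the $\epsilon$-correction above (accommodating both $\bfw_{1,i} = -2$ and $\bfw_{1,i} = -3$), or point out explicitly that the pair $\{k_1,k_2\}$ in \eqref{eq:two-flavour-stick} must be taken to be the two indices actually hit by $p_\va(\bfr_{\va,1}(f_\va))$ and $p_\ch(\bfr_{\ch,1}(f_\ch))$, rather than all indices satisfying the condition in \eqref{eq:switch-sprinkle}.
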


We make an asymptotically consistent choice of one-forms on the universal families of stable flavoured popsicles, exactly as in \eqref{eq:asymptotically-consistent-one-forms} (one could pull back this choice from $\scrR^{d+1,p,w}$ by the flavour-forgetting map, but there is no particular reason to impose such a restriction). 

\subsection{Definition of the algebraic structure}
We work in the setup from Section \ref{subsec:formal-c}, and consider the same kind of collection $\bfL$ of Lagrangian submanifolds. We make an asymptotically consistent choice of families of almost complex structures, in the usual sense of \eqref{eq:floer-2}, \eqref{eq:cr-2}, for each stable flavoured popsicle with boundary conditions $(L_0,\dots,L_d)$ taken from $\bfL$, with $\alpha_{L_0} > \cdots > \alpha_{L_d}$. We consider moduli spaces $\scrR^{d+1,p_\va,p_\ch,w}(x_0,\dots,x_d)$, where: the domains are flavoured popsicles; the limits again satisfy \eqref{eq:constraints-1}; and \eqref{eq:constraints-2} applies only to the chocolate flavour:
\begin{equation} \label{eq:constraints-3}
\mu_z(u) = \# \{f: \sigma_{\mathit{ch},f} = z\}.
\end{equation}
Counting points in zero-dimensional moduli spaces yields operations $\mu^{d,p_\va,p_\ch,w}$ as in \eqref{eq:mu-p}, where the Floer groups are those defined in \eqref{eq:modified-floer}. Consider a two-variable symmetric algebra $\bZ[v_\va,v_\ch]$. In a preliminary version of the noncommutative pencil along the lines of \eqref{eq:d-ord-morphisms}, we set
\begin{equation}
\scrP^{\mathit{ord}}(L_0,L_1) = \begin{cases}
\mathit{CF}^*(L_0,L_1)[v_\va,v_\ch] \oplus \mathit{CF}^{*+1}(L_0,L_1;-1)[v_\va,v_\ch] & \alpha_{L_0} > \alpha_{L_1}, \\
\bZ[v_\va,v_\ch] e & L_0 = L_1, \\
0 & \text{otherwise.}
\end{cases}
\end{equation}
Here, $\mathit{CF}^*(L_0,L_1)v_{\va}^{r_\va} v_\ch^{r_\ch}$ has weight $-r_\va-r_\ch$, and $\mathit{CF}^{*+1}(L_0,L_1;-1)v_{\va}^{r_\va} v_\ch^{r_\ch}$ weight $-r_\va-r_\ch-1$. The $A_\infty$-structure $\mu^d_{\scrP^{\mathit{ord}}}$ is defined by adding up $v_\va^{|p_\va|} v_\ch^{|p_\ch|} \mu^{d,p_\va,p_\ch,w}$ over all $(p_\va,p_\ch,w)$ where the weights lie in $\{-1,0\}$. Transversality and compactness arguments repeat those from Section \ref{subsec:global-transversality} and \ref{subsec:general-compactness}). A minor tweak is required in the proof of the $A_\infty$-relations, because of the extra case \eqref{eq:two-flavour-stick} in Lemma \ref{th:wrong-weights-2}, but the strategy follows that from \eqref{eq:cancel-2b}: each such degeneration appears twice as a boundary point of our moduli spaces, with different combinatorics (switching $k_1,k_2$; see Figure \ref{fig:two-flavours} for the simplest example). 
\begin{figure}
\begin{centering}
\begin{picture}(0,0)%
\includegraphics{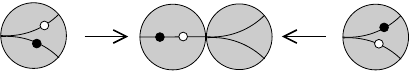}%
\end{picture}%
\setlength{\unitlength}{3355sp}%
\begingroup\makeatletter\ifx\SetFigFont\undefined%
\gdef\SetFigFont#1#2#3#4#5{%
  \reset@font\fontsize{#1}{#2pt}%
  \fontfamily{#3}\fontseries{#4}\fontshape{#5}%
  \selectfont}%
\fi\endgroup%
\begin{picture}(3854,648)(-196,-1021)
\end{picture}%
\caption{\label{fig:two-flavours}A broken flavoured popsicle as in \eqref{eq:two-flavour-stick}, appearing as the boundary point of two moduli spaces. Both of those spaces have $|p_\va| = |p_\ch| = 1$. On the right, $p_\va(1) = 1$, $p_\ch(1) = 2$; while the values are switched on the left.}
\end{centering}
\end{figure}

To complete our argument, we need to return briefly to sign considerations. The definition of $\mu^{d,p_\va,p_\ch,w}$ requires a choice of orientation of $\scrR^{d,p_\va,p_\ch}$. The only case that matters to us is where at most one of the popsicles lies on each popsicle stick. Then, there is a unique suitable choice of \eqref{eq:forget-flavours}, and hence a unique identification $\scrR^{d+1,p_\va,p_\ch} \iso \scrR^{d+1,p}$, which we use to pull back the previously chosen orientation of $\scrR^{d+1,p}$. After that, we insert further signs as in \eqref{eq:sign-contributions-2}. The only necessary additional argument concerns \eqref{eq:two-flavour-stick}: 
\begin{equation}
\mybox{
Consider a point of type \eqref{eq:two-flavour-stick} in a codimension one boundary face of a moduli space $\bar\scrR^{d,p_\va,p_\ch,w}$, where the weights lie in $\{-1,0\}$. After exchanging $p_{\va}(\bfr_{\va,1}(f_\va))$ and $p_{\ch}(\bfr_{\ch,1}(f_\ch))$, this becomes a boundary point of a different moduli space of the same kind. If we forget the flavours (meaning that we look at the identification $\bar\scrR^{d,p_\va,v_\ch,w} \iso \bar\scrR^{d,p}$), this change involves a transposition in the ordering of two popsicles. Hence, the two moduli spaces inherit opposite orientations from that identification. This difference of orientation carries over to $\bar\scrR^{d,p_\va,p_\ch,w}(x_0,\dots,x_d)$, which ensures that boundary contributions where the popsicles degenerate in this way cancel in pairs.
}
\end{equation} 

The resulting $\scrP^{\mathit{ord}}$ is a weight-graded $A_\infty$-category over $\bZ[v_\va,v_\ch]$. By construction, the weight zero part is the same $\scrA^{\mathit{ord}}$ as in Section \ref{subsec:formal-c}, hence contains continuation cocycles. We localize with respect to those, then consider the subcomplex generated in weights $\{-1,0\}$, and apply \eqref{eq:weight01}. The outcome is the noncommutative pencil $\scrP$ whose existence was announced in Construction \ref{th:1}. Note that even though the algebraic structure treats the variables $v_\va$ and $v_\ch$ symmetrically, they have different geometric meaning. We take this into account by slightly tweaking the terminology:
\begin{equation} \label{eq:terminology-fibres}
\mybox{
We write $\scrF_w$, for $w \in \bZ$, for the fibre of $\scrP$ obtained by setting $(v_\va,v_\ch) = (w,1)$. Similarly, the fibre $\scrF_\infty$ is that obtained by setting $(v_\va,v_\ch) = (1,0)$.
}
\end{equation}
One can treat the situation from Section \ref{subsec:variant} in the same way: $v_\va$ still has degree zero, but $v_\ch$ has degree $2N-2$.

Setting $v_\va = 0$ in $\scrP$ yields a noncommutative divisor, which by definition is exactly that from Section \ref{sec:anticanonical}. In particular, the fibre $\scrF_0$ is the category $\scrF$ defined there. It remains to consider the opposite extreme, which means the fibre $\scrF_\infty$, obtained by setting $v_\ch = 0$. By construction, this means that the maps $u$ involved in its definition remain in $M \setminus D$. The desired counterpart of Proposition \ref{th:restriction-functor} is:

\begin{proposition} \label{th:restriction-functor-2}
The fibre $\scrF_\infty$ of the noncommutative pencil is quasi-equivalent to the full subcategory the Fukaya category of $D \setminus B$ whose objects are $\partial L$, for $L$ a Lagrangian submanifold in our collection $\bfL$ (with the induced gradings and {\em Spin} structures). 
\end{proposition}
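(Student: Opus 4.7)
The plan is to follow the template of Proposition~\ref{th:restriction-functor}, with the hypersurface $D\setminus B$ playing the role of the fibre $F$ of Section~\ref{sec:restriction} and the tubular neighbourhood $\bar U_D$ playing the role of the annular region $\Pi^{-1}(\{|w|\geq 1/5\})$. The Lagrangians \eqref{eq:global-lagrangian} already have the product structure $[0,\epsilon]\,e^{-2\pi i\alpha_L}\times \partial L$ in the coordinates \eqref{eq:elsewhere}, which is the direct analogue of the condition in \eqref{eq:restriction-lagrangian}; after minor adjustments so that $H$ and the almost complex structures are of product form on $\bar U_D\setminus V_B$ (mimicking \eqref{eq:restriction-hamiltonian}--\eqref{eq:restriction-almost-complex}, and using Remark~\ref{th:local-almost-complex-2} to handle the geometry near $V_B$), the setup is in place.

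First, I would introduce a partial geometric and topological energy parallel to \eqref{eq:partial-e-geom}, using a two-form $\tilde\omega_M$ supported on a concentric annular subset $\{\epsilon_1\le \|y\|\le \epsilon_2\}$ of the normal disc bundle of $D$. The radial computation underlying Lemma~\ref{th:interior-exterior-action} then gives a dichotomy for chords between $(L_0,L_1)$ with $\alpha_{L_0}>\alpha_{L_1}$: interior chords (lying away from $\bar U_D$) have $\tilde A = 0$, while exterior chords lie on a fixed fibre-radius hypersurface $D'\subset \bar U_D$ diffeomorphic to $D\setminus U_B$ and have $\tilde A = \alpha_{L_1}-\alpha_{L_0}<0$. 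Under the product decomposition \eqref{eq:elsewhere}, exterior chords of $(L_0,L_1)$ are in natural bijection with chords of $(\partial L_0,\partial L_1)$ inside $D\setminus B$.

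With this in hand, the chain-level analogues of Lemmas~\ref{th:interior-interior}--\ref{th:exterior-cont} go through without essential change, yielding $A_\infty$-subcomplexes of interior Floer cochains that are preserved by all popsicle operations appearing in $\scrP^{\mathit{ord}}$ after setting $v_\ch=0$. Introducing the quotient category $\scrQ^{\mathit{ord}}_\infty$ and its localisation $\scrQ_\infty$ as in Section~\ref{subsec:q-category}, the same chain of homotopy equivalences culminating in Lemma~\ref{th:f-q} shows $\scrF_\infty \simeq \scrQ_\infty$. Finally, $\scrQ_\infty$ is identified with the asserted full subcategory of the Fukaya category of $D\setminus B$ by the Künneth/stabilisation argument of Section~\ref{subsec:more-q}: one further specialises the almost complex structures so that the $\bC$-projection (in the coordinates \eqref{eq:elsewhere}) is holomorphic on a tubular neighbourhood of $D'$, applies the analogue of Lemma~\ref{th:degree-f} to show that solutions with exterior limits are entirely contained in $D'$, and matches the resulting operations with those of the ordinary Fukaya category of $D\setminus B$.

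The main technical obstacle I anticipate is the interaction of the vanilla popsicle data with the partial energy argument. In contrast to Section~\ref{sec:restriction}, the sub-closed one-forms $\beta_S$ involved in the definition of $\scrF_\infty$ are genuinely non-trivial, so the correction term $-\int_S u^*\tilde H\,d\beta_S$ in $\tilde E^{\mathit{top}}-\tilde E^{\mathit{geom}}$ does not vanish automatically, and could a priori spoil the sign of $\tilde E^{\mathit{geom}}$ that underpins the dichotomy of chord actions. The expected fix is to refine the asymptotically consistent choice \eqref{eq:asymptotically-consistent-one-forms} so that $d\beta_S$ is supported away from the collar $\{\epsilon_1\le \|y\|\le \epsilon_2\}$; this is feasible because the corresponding space of sub-closed one-forms is still contractible, and the inductive construction along the strata of $\bar\scrR^{d+1,p_\va,p_\ch,w}$ can be carried out with this extra constraint imposed from the outset.
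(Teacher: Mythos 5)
Your overall strategy (repeat the partial--energy/interior-versus-exterior dichotomy of Section~\ref{sec:restriction} and appeal to the localisation machinery) is the right one and matches the paper. However, the specific auxiliary two-form you propose is wrong, and this is where the proof would actually fail. The paper does not take $\tilde\omega$ supported on a radial collar $\{\epsilon_1\le\|y\|\le\epsilon_2\}$; instead it takes $\tilde\omega_{P\setminus D}$ to be the pullback of $\tilde\omega_{\bC}$ under the map $(x,y)\mapsto Z_x/y$, whose level sets are precisely the hypersurfaces $D_c$. The support is then $\bigcup_{5/2\le|c|\le5}(D_c\setminus B)$, a region that \emph{pinches down onto $D$ as the base point approaches $B$}, rather than a constant-radius shell. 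This matters for the one nontrivial analytic input: one needs $\tilde\omega(\cdot,J\cdot)\ge 0$ on the support, for all almost complex structures in the admissible class. The paper secures this by first deforming $Z$ and rescaling as in Remark~\ref{th:local-almost-complex-2} so that the $D_c$ over the relevant range of $c$ become $J$-almost-complex (condition~\eqref{eq:local-almost-complex-2}), and then proving a pointwise lemma that $\tilde\omega_{P\setminus D}(\cdot,J\cdot)$ is symmetric nonnegative with kernel $T(D_c)$. For a constant-$\|y\|$ collar, the level sets near $B$ have no relation to $J$; condition \eqref{eq:local-almost-complex} only makes $D$ and $B$ almost-complex and gives you nothing about radial shells near $B$, so the nonnegativity of your $\tilde\omega_M$ would genuinely fail there. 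Merely invoking Remark~\ref{th:local-almost-complex-2} ``to handle the geometry near $V_B$'' without replacing the collar by the $D_c$-level annulus does not close this gap.

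The ``main technical obstacle'' you anticipate, on the other hand, is a red herring, and your proposed fix does not parse. The category $\scrF^{\mathit{ord}}$ of Section~\ref{sec:restriction} already involves nontrivial sub-closed $\beta_S$ (every weight $w_k=-1$ forces one), and the partial energy inequality $\tilde{E}^{\mathit{geom}}(u)\le\tilde{E}^{\mathit{top}}(u)$ in \eqref{eq:partial-e-geom} already absorbs that: the correction term $-\int_S (u^*\tilde H)\,d\beta_S$ is nonnegative because $\tilde H\ge 0$ and $d\beta_S\le 0$. Nothing further is needed, and in particular asking that ``$d\beta_S$ be supported away from the collar'' is a category error --- $d\beta_S$ is a two-form on the domain $S$, while the collar lives in the target $M$; one cannot arrange a priori that $u$ avoids a target region on $\mathrm{supp}(d\beta_S)$. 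The fact that solutions with all-exterior asymptotics end up having all weights zero (hence effectively $\beta_S=0$) is a \emph{consequence} of the action dichotomy, not an input you need to engineer.
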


We will not go through the entire proof, since the idea is largely as before, but we will explain how to construct the crucial geometric ingredients, which underlie a partial energy argument as in Section \ref{subsec:partial-energy}. Note that Proposition \ref{th:restriction-functor-2} completes the proof of the claims concerning our noncommutative pencil made in Constructions \ref{th:1} and \ref{th:2}.

\subsection{The local geometry, revisited}
Let's return to the situation from Section \ref{subsec:local-geometry}. We make the following slight modifications of the basic geometric objects considered there:
\begin{equation}
\mybox{Within the class of Hamiltonians as in \eqref{eq:local-hamiltonian}, we specify the choice $H = -\|y\|^2$.
}
\end{equation}
We apply a deformation of $Z$ as in Remark \ref{th:local-almost-complex-2}, and then rescale it by large constant (which has the effect of bringing $D_c$, for fixed value of $c$, closer to $D$). After that, we can use Remark \ref{th:local-almost-complex-2} to show that there are almost complex structures satisfying the following additional condition:
\begin{equation} \label{eq:local-almost-complex-2}
\mybox{
For the almost complex structures from \eqref{eq:local-almost-complex}, we additionally ask that $D_c$, for $\{5/2 < |c| < 5\}$, should be almost complex submanifolds.
}
\end{equation}
The counterpart of the basic object from \eqref{eq:tilde} in our context is:
\begin{equation}
\mybox{
Take $\tilde{\omega}_{\bC} \in \Omega^2(\bC)$ as in \eqref{eq:tilde}. We define $\tilde{\omega}_{P \setminus D}$ to be the pullback of $\tilde{\omega}_{\bC}$ under
\[
\Pi(x,y) \mapsto Z_x/y: P \setminus D \longrightarrow \bC.
\]
}
\end{equation}

\begin{lemma}
Let $J$ be as in \eqref{eq:local-almost-complex-2}, and $x$ a point in $D_c \setminus B$, for $5/2 < |c| < 5$. On $TP_x$, the form $\tilde{\omega}_{P \setminus D}(\cdot, J\cdot)$ is symmetric, nonnegative, and 
\begin{equation}
\tilde{\omega}_{P \setminus D}(X,JX) = 0 
\Longleftrightarrow\; \text{$X$ is tangent to $D_c$.}
\end{equation}
\end{lemma}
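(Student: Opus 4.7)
\emph{Proof proposal.} The three conclusions of the lemma reduce to a single orientation statement: at every $x$ in the open set $A = \Pi^{-1}(\{1/5 < |w| < 2/5\})$, which contains each $D_c \setminus B$ in question, the induced real isomorphism $\overline{D\Pi_x}\colon T_xP/T_xD_c \to T_{\Pi(x)}\bC$ preserves the orientation, where the source is oriented by the induced almost complex structure $J^{\mathrm{quot}}$ (well defined because $T_xD_c$ is $J$-invariant by \eqref{eq:local-almost-complex-2}) and the target by the standard $j$.

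Granting this, I would derive the lemma as follows. Writing $\tilde\omega_{\bC} = f(|w|^2)\,\omega_{\bC}$ with $f$ rotationally invariant, nonnegative, and strictly positive on the annulus, the pullback unfolds as $\tilde\omega_{P\setminus D}(\xi,J\eta) = f(|\Pi(x)|^2)\,\omega_{\bC}(D\Pi_x\xi,\,j'\,D\Pi_x\eta)$, where $j' = \overline{D\Pi_x}\circ J^{\mathrm{quot}}\circ\overline{D\Pi_x}^{-1}$. A direct calculation specific to real dimension two shows that for \emph{any} almost complex structure $j'$ on a symplectic $\bR^2$, the bilinear form $\omega_{\bC}(\cdot,j'\cdot)$ is automatically symmetric, and is positive definite precisely when $j'$ induces the standard orientation---that is, precisely under our assumed orientation condition. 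Symmetry and nonnegativity of $\tilde\omega_{P\setminus D}(\cdot, J\cdot)$ on $T_xP$, with degeneracy locus $\ker D\Pi_x = T_xD_c$, then follow immediately.

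For the orientation itself, which is a locally constant function of $x \in A$, I would use connectedness of $A$ together with a single-point check. Connectedness of $A$ follows from connectedness of each fibre $D_c \setminus B$ of $\Pi|_A$ (since $B$ has real codimension two in the connected $D_c$) and of the base annulus. To fix the sign, pick any $x \in A$ whose base point lies in $D \setminus \bar U_B$: under the partial trivialization from \eqref{eq:z-trivialization}, $\Pi$ becomes $y \mapsto 1/y$ on the $\bC$-factor, which by \eqref{eq:local-almost-complex} is $J$-holomorphic, hence orientation-preserving.

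The main obstacle, in my view, is not any single step but the careful orientation bookkeeping---together with the geometric verification that $A$ is connected and that the partial-trivialization region intersects $A$ nontrivially---that allows the sign computed near $\partial U_B$ to be propagated throughout $A$, including into the neighbourhood of $B$ where only \eqref{eq:local-almost-complex-2} is available.
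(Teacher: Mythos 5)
Your argument is correct, but it runs along genuinely different lines from the one in the paper. The paper works pointwise at each $x\in D_c\setminus B$: it takes the $\omega_P$-orthogonal complement $K_x$ of $T(D_c)_x$, which is $J$-invariant because $D_c$ is an almost complex submanifold, and shows that $D\Pi_x|K_x\to\bC$ is orientation-preserving by observing that $K_x$ is $C^0$-close to the vertical (fibrewise) direction---this is where the rescaling of $Z$ enters, since that is what makes $T(D_c)_x$ close to $T(D)$, and on the vertical direction $D\Pi$ is literally complex-linear. Your proof instead fixes $c$, reduces the whole lemma to a single orientation statement about $\overline{D\Pi_x}$ on the quotient $T_xP/T_xD_c$ (your 2D observation that $\omega_\bC(\cdot,j'\cdot)$ is automatically symmetric and definite for any linear complex structure $j'$ is a nice way to organize this), and then propagates that sign over the connected set $A$, fixing it at a reference point over $D\setminus\bar U_B$ where the trivialization \eqref{eq:z-trivialization} makes $\Pi = 1/\pi_\bC$ manifestly $J$-holomorphic. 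In effect the paper's argument uses $c\to 0$ (the degeneration of $D_c$ to the zero-section) as the reference, while yours uses a base point far from $B$ as the reference. Yours has the advantage of being qualitative---it needs no quantitative closeness estimate---but it requires $D$ (and hence $A$) to be connected, and it requires knowing that the reference region $P\,|\,(D\setminus\bar U_B)$ actually meets $A$ within the domain where $J$ and its compatibility with the trivialization are available; both of these are harmless in the setting of the paper but are genuine extra steps, whereas the pointwise argument via $K_x$ sidesteps them entirely at the cost of invoking the rescaling explicitly. One small imprecision in your write-up: \eqref{eq:local-almost-complex} only gives $J$-holomorphicity of the $\bC$-factor projection $\pi_\bC$; you then need to post-compose with the holomorphic inversion $w\mapsto 1/w$ to get $J$-holomorphicity of $\Pi$ (which you use implicitly, but it is worth stating).
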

 
\begin{proof}
Consider the $\omega_P$-orthogonal splitting $TP_x = T(D_c)_x \oplus K_x$, and give $K_x$ its symplectic orientation. For $c = 0$, $K_x$ would just be the tangent space in direction of the fibres of $P$. For the $c$ relevant here, this is no longer the case, but (due to our previous rescaling of $Z$) $K_x$ is still close to the fibrewise tangent subspaces. In particular, $D\Pi_x: K_x \rightarrow \bC$ is an orientation-preserving isomorphism. We know that $J$ preserves $K_x$, and is compatible with its orientation, while $\tilde{\omega}_{P \setminus D}$ is a positive two-form on that space. This implies the desired properties.
\end{proof}

We can transplant the whole situation to the context from Section \ref{subsec:global-geometry}, where we end up with a two-form $\tilde{\omega}_{M \setminus D}$ supported on the union of $D_c \setminus B$, $5/2 \leq |c| \leq 5$. Removing that annular region from $M \setminus D$ disconnects it into two parts, one of which is the ``exterior region'', the union of $D_c \setminus B$ for $|c| < 5/2$. One can arrange, through additional technical assumptions, that the Lagrangian intersection points are divided into interior and exterior ones. Partial energy arguments as in Sections \ref{subsec:partial-energy} and \ref{subsec:q-category} then show that the fibre $\scrF_\infty$ is quasi-equivalent to an $A_\infty$-category $\scrQ$ involving only maps to the exterior region. Further analysis of that category, as in Section \ref{subsec:more-q}, then completes the proof of Proposition \ref{th:restriction-functor-2}.
 
\section{An example\label{sec:example}}
In this section, we consider hyperplane pencils on quadrics. Geometrically, these are the simplest nontrivial Lefschetz pencils; from our perspective, they are not quite straightforward, since the resulting Floer-theoretic structures fall into the framework of graded noncommutative pencils (Construction \ref{th:2}). Those two aspects make this a particularly appealing case for exploring what information (in an algebraic as well as geometric sense) is carried by our constructions. Because of the need to call on various computational techniques, this section is less self-contained than the rest of the paper.
To make the algebraic computations simpler, we work with $\bQ$-coefficients (instead of our usual $\bZ$).

\subsection{The graded Kronecker quiver}
The classical Kronecker quiver ($n = 1$ in our notation) describes coherent sheaves on the projective line. The graded version is discussed in \cite{seidel04c}, with the same symplectic geometry motivation as here. The definition is:
\begin{equation}
\label{eq:kronecker}
\mybox{
For $n \geq 1$, consider the graded quiver
\[
\protect{
\xymatrix{
e \ar@/^1pc/[rrr]^-{a} \ar@/_1pc/[rrr]^-{b} &&& f
}
\qquad |a| = 0, \; |b| = n-1.
}
\]
To this we associate a graded algebra over $\bQ$ spanned (as a vector space) by $e,f,a,b$. Our convention is to think of the product in categorical (right-to-left) order:
\[
e^2 = e, \; f^2 = f, \; ef = fe = 0, \; f b e = b, \; f a e = a.
\]
Equivalently, one can view this as an $A_\infty$-category $\scrA$ with objects $X$ and $Y$, and
\[
\scrA(X,X) = \bQ e, \; \scrA(Y,Y) = \bQ f,\;  \scrA(X,Y) = \bQ a \oplus \bQ b, \; \scrA(Y,X) = 0.
\]
The $A_\infty$-structure is trivial, except for left and right multiplication with the identity morphisms $e$ and $f$.
}
\end{equation}
We want to introduce two noncommutative divisors whose ambient space is $\scrA$ (see \eqref{eq:ambient-space} for the terminology).
\begin{equation} \label{eq:f-plus}
\mybox{
Consider the path algebra of the graded quiver
\[
\protect{
\xymatrix{
e \ar@/^2pc/[rrr]^-{a} \ar@/^1pc/[rrr]^-{b} &&& \ar@/^2pc/[lll]_-{a^*} \ar@/^1pc/[lll]_{b^*} f
}
\qquad |a| = 0, \; |b| = n-1, \; |a^*| = n-1, \;|b^*| = 0.
}
\]
We impose relations
\[
aa^* = bb^*, \;
a^*a = b^* b, \;
b a^* = 0, \; 
a^* b = 0, \;
b^* a = \lambda e, \;
a b^* = \lambda f
\]
for some $\lambda \in \bQ^\times$. The outcome is a Frobenius (Calabi-Yau) algebra of dimension $n-1$, over $\bQ e \oplus \bQ f$. One can again think of it as an $A_\infty$-category $\scrF_\infty$ with objects $(X,Y)$, and
\[
\begin{aligned}
& \scrF_\infty(X,X) = \bQ e \oplus \bQ a^*a, && \scrF_\infty(Y,Y) = \bQ f \oplus \bQ aa^*,\\ & \scrF_\infty(X,Y) = \bQ a \oplus \bQ b, && \scrF_\infty(Y,X) = \bQ a^* \oplus \bQ b^*.
\end{aligned}
\]
As one sees from the inclusion $\scrA \subset \scrF_\infty$, this is in fact the fibre of a noncommutative divisor, with ambient space $\scrA$ and dual bundle $(\Delta_{\scrF_\infty}/\Delta_\scrA)[-1] \iso \Delta_{\scrA}^\vee[-n]$; see \eqref{eq:dual-bundle}, \eqref{eq:dual-bimodule} for terminology (and recall that $[1]$ stands for a downwards shift in the grading).
}
\end{equation}
We have omitted the parameter $\lambda$ from the notation, since it affects the outcome only in a minor way (any two such algebras $\scrF_\infty$ are related by rescaling $a^*,b^*$; equivalently, the noncommutative divisors are related by rescaling the generator of the polynomial ring over which they are defined). Because of the last two relations in \eqref{eq:f-plus}, the objects $X$ and $Y$ are isomorphic in $\scrF_\infty$; hence, a simpler way to describe $\scrF_\infty$ itself would be as the $A_\infty$-category formed by two isomorphic spherical objects \cite{seidel-thomas99} of dimension $(n-1)$. 

For the second construction, we essentially reverse the gradings (even though the actual formulation is a little different, to ensure that it still contains $\scrA$ as a subalgebra).
\begin{equation} \label{eq:f-minus}
\mybox{
In the same vein as in \eqref{eq:f-plus}, consider 
\[
\protect{
\xymatrix{
e \ar@/^2pc/[rrr]^-{a} \ar@/^1pc/[rrr]^-{b} &&& \ar@/^2pc/[lll]_-{a^*} \ar@/^1pc/[lll]_{b^*} f
}}
\qquad |a| = 0, \; |b| = n-1, \; |a^*| = 1-n, \;|b^*| = 2-2n,
\]
with the relations
\[
aa^* = bb^*, \;
a^* a = b^* b, \;
b^* a = 0, \;
a b^* = 0, \;
a^* b = \lambda e, \;
b a^* = \lambda f.
\]
Let's denote the resulting $A_\infty$-category by $\scrF_0$. It has the same properties as before, with the opposite degrees; in particular, $(\Delta_{\scrF_0}/\Delta_\scrA)[-1] \iso \Delta_{\scrA}^\vee[n-2]$.
}
\end{equation}

We will need some computations with twisted complexes over $\scrA$ (results stated without proof).
\begin{equation}
\mybox{
For each $d \geq 0$, introduce a twisted complex
\[
\begin{aligned}
Y_d = & \,\mathit{Cone}\big( X[1-n] \oplus X[2-2n] \oplus \cdots \oplus X[d(1-n)] \\
& \qquad \qquad \longrightarrow Y \oplus Y[1-n] \oplus \cdots Y[d(1-n)] \big),
\end{aligned}
\]
where the mapping cone (between the direct sums, in the given order) is taken along the degree zero map
\[
 \left(\begin{smallmatrix} 
0 &&&& \\
1 & 0 &&& \\
& 1 & 0 \\
&& \cdots \\
&&& 1 & 0 \\
&&&& 1
\end{smallmatrix}\right) \otimes a
+
\left(\begin{smallmatrix}
1 &&&& \\
0 & 1 \\
& 0 & 1 \\
&& \cdots \\
&&& 0 & 1 \\
&&&& 0
\end{smallmatrix}\right) \otimes b.
\]
}
\end{equation}
Note that $Y_0 = Y$. We also set $Y_{-1} = X$. (In terms of the general theory of exceptional collections \cite{rudakov90}, the $Y_d$ are the right half of the helix obtained from the two original objects.) We have
\begin{equation}
\left\{
\begin{aligned}
& H^*(\scrA^{\mathit{tw}}(X,Y_d)) = \bQ \oplus \bQ[1-n] \oplus \cdots \oplus \bQ[(d+1)(1-n)], \\
& H^*(\scrA^{\mathit{tw}}(Y,Y_d)) = \bQ \oplus \bQ[1-n] \oplus \cdots \oplus \bQ[d(1-n)], \\
& H^*(\scrA^{\mathit{tw}}(Y_d,Y_{d-2})) = \bQ[n-2].
\end{aligned}
\right.
\end{equation}
Moreover, the composition maps
\begin{equation} \label{eq:nondegenerate-1}
\left\{
\begin{aligned}
& H^*(\scrA^{\mathit{tw}}(X,Y_{d-2})) \otimes H^{2-n-*}(\scrA^{\mathit{tw}}(Y_d,X)) \longrightarrow H^{2-n}(\scrA^{\mathit{tw}}(Y_{d}, Y_{d-2})) \iso \bQ,
\\
& H^*(\scrA^{\mathit{tw}}(Y,Y_{d-2})) \otimes H^{2-n-*}(\scrA^{\mathit{tw}}(Y_d,Y)) \longrightarrow H^{2-n}(\scrA^{\mathit{tw}}(Y_{d}, Y_{d-2})) \iso \bQ
\end{aligned}
\right.
\end{equation}
are nondegenerate pairings.

At this point, we reduce the grading mod $(2n-2)$, still keeping the same notation $\scrA^{\mathit{tw}}$ as before, for simplicity. Consider the following family of $\bZ/(2n-2)$-graded twisted complexes, parametrized by $\mu \in \bQ^\times$:
\begin{equation} \label{eq:t-mu}
T_{\mu} = \mathit{Cone}\big( X \oplus X[1-n] 
\xrightarrow{\left(\begin{smallmatrix} 1 & \\ & \mu \end{smallmatrix}\right) \otimes a + \left(\begin{smallmatrix} & 1 \\ 1 &  \end{smallmatrix}\right) \otimes b}
Y \oplus Y[1-n] \big).
\end{equation}
Then
\begin{equation} \label{eq:t-mu-hom}
\left\{
\begin{aligned}
& H^*(\scrA^{\mathit{tw}}(X,T_\mu)) = \bQ \oplus \bQ[1-n], \\
& H^*(\scrA^{\mathit{tw}}(T_\mu,X)) = \bQ[-1] \oplus \bQ[-n], \\
& H^*(\scrA^{\mathit{tw}}(Y,T_\mu)) = \bQ \oplus \bQ[1-n], \\
& H^*(\scrA^{\mathit{tw}}(T_\mu,Y)) = \bQ[-1] \oplus \bQ[-n], \\
& H^*(\scrA^{\mathit{tw}}(T_\mu, T_\mu)) \iso \bQ \oplus \bQ[-1] \oplus \bQ[1-n] \oplus \bQ[n-2],
\end{aligned}
\right.
\end{equation}
Moreover, for $n>2$, the products
\begin{equation} \label{eq:nondegenerate-2}
\left\{
\begin{aligned}
& H^*(\scrA^{\mathit{tw}}(X,T_\mu)) \otimes H^{2-n-*}(\scrA^{\mathit{tw}}(T_\mu,X))
\longrightarrow H^{2-n}(\scrA^{\mathit{tw}}(T_\mu,T_\mu)) \iso \bQ, \\
& H^*(\scrA^{\mathit{tw}}(Y,T_\mu)) \otimes H^{2-n-*}(\scrA^{\mathit{tw}}(T_\mu,Y))
\longrightarrow H^{2-n}(\scrA^{\mathit{tw}}(T_\mu,T_\mu)) \iso \bQ \\
\end{aligned}
\right.
\end{equation}
are nondegenerate pairings. Finally:
\begin{equation} \label{eq:t-mu-dies}
\mybox{
The image of $T_\mu$ under the map $\scrA^{\mathit{tw}} \rightarrow \scrF_\infty^{\mathit{tw}}$ is quasi-isomorphic to zero. Indeed, the morphism that forms the cone in \eqref{eq:t-mu} has an inverse in $\scrF_\infty^{\mathit{tw}}$. The same holds for $\scrF_0$.
}
\end{equation}

\subsection{Bimodules}
Before continuing, we need to review some additional algebraic language. Consider the categories of left and right $A_\infty$-modules over $\scrA$, denoted by $\scrA^{\mathit{left}}$ and $\scrA^{\mathit{right}}$, together with their Yoneda embeddings
\begin{equation}
\left\{
\begin{aligned}
& \scrI^{\mathit{left}}: \scrA^{\mathit{tw}} \longrightarrow (\scrA^{\mathit{left}})^{\mathit{opp}}, \\
& \scrI^{\mathit{right}}: \scrA^{\mathit{tw}} \longrightarrow \scrA^{\mathit{right}};
\end{aligned}
\right.
\end{equation}
and the functors of taking duals,
\begin{equation}
\left\{
\begin{aligned}
& (\cdot)^\vee: (\scrA^{\mathit{left}})^{\mathit{opp}} \longrightarrow \scrA^{\mathit{right}}, \\
& (\cdot)^\vee: \scrA^{\mathit{right}} \longrightarrow (\scrA^{\mathit{left}})^{\mathit{opp}}.
\end{aligned}
\right.
\end{equation}
Let $\scrA^{\mathit{bi}}$ be the category of $A_\infty$-bimodules. Given two right modules $\scrM_0$ and $\scrM_1$, one can form $\mathit{Hom}(\scrM_0,\scrM_1)$ (the graded space of $\bQ$-linear maps), which is naturally a bimodule. If $\scrM_1$ is proper (meaning that it is has finite-dimensional cohomology when evaluated at any object), there is a quasi-isomorphism
\begin{equation} \label{eq:dual-module-1}
\scrM_0^\vee \otimes \scrM_1 \stackrel{\htp}{\longrightarrow} \mathit{Hom}(\scrM_0,\scrM_1),
\end{equation}
where the tensor product on the left is also over $\bQ$. One recovers the actual module homomorphisms from $\mathit{Hom}(\scrM_0,\scrM_1)$ by using the diagonal bimodule,
\begin{equation} \label{eq:dual-module-2}
\scrA^{\mathit{bi}}(\Delta_{\scrA}, \mathit{Hom}(\scrM_0,\scrM_1)) \stackrel{\htp}{\longrightarrow} \scrA^{\mathit{right}}(\scrM_0,\scrM_1).
\end{equation}
In our application, $\scrA$ is proper and homologically smooth (see e.g.\ \cite{kontsevich-soibelman06} for the terminology). Hence, the image of the Yoneda embedding consists of all the proper modules up to quasi-isomorphism. As a consequence, $\scrA^{\mathit{tw}}$ has an essentially canonical autoequivalence, the Serre functor $\scrS$. One point of view is as follows (see e.g.\ \cite{shklyarov07b}): the bimodule $\Delta_{\scrA}^\vee$ is invertible with respect to tensor product. It induces an autoequivalence of $\scrA^{\mathit{right}}$, which takes $\scrI^{\mathit{right}}(Z)$ to $\scrI^{\mathit{left}}(Z)^\vee$ up to quasi-isomorphism for any object $Z$, hence preserves properness; one obtains $\scrS$ by restricting to the image of the Yoneda embedding. More formally, what we are saying is that Serre functor fits into the homotopy commutative diagram
\begin{equation} \label{eq:serre}
\xymatrix{
\ar[dd]_-{\scrI^{\mathit{left}}} \ar[dr]^-{\scrI^{\mathit{right}}}
\scrA^{\mathit{tw}} \ar[rr]^-{\scrS}_-{\htp} && \scrA^{\mathit{tw}} \ar[dd]^-{\scrI^{\mathit{right}}} \\ 
& \scrA^{\mathit{right}} 
\ar[dr]^-{\cdot\, \otimes_{\scrA} \Delta_\scrA^{\vee}}_{\htp} 
& \\
(\scrA^{\mathit{left}})^{\mathit{opp}} \ar[rr]^-{(\cdot)^\vee} && \scrA^{\mathit{right}}.
}
\end{equation}

\begin{lemma} \label{th:hh}
For any two objects $Z_0,Z_1$ of $\scrA$, 
\begin{equation}
H^*\big(\scrA^{\mathit{bi}}(\Delta_{\scrA}, \scrI^{\mathit{left}}(Z_0) \otimes \scrI^{\mathit{right}}(Z_1))\big) \iso
H^*(\scrA(\scrS Z_0,Z_1)).
\end{equation}
\end{lemma}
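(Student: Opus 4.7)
The plan is to reduce the left-hand side to an $A_\infty$-right-module $\mathit{Hom}$ and then invoke the Yoneda embedding. The crucial rewriting is done using the Serre functor identification from \eqref{eq:serre} together with the two natural quasi-isomorphisms \eqref{eq:dual-module-1} and \eqref{eq:dual-module-2}.

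First, I would extract from the diagram \eqref{eq:serre} the identification $(\scrI^{\mathit{left}}(Z_0))^\vee \simeq \scrI^{\mathit{right}}(\scrS Z_0)$ in $\scrA^{\mathit{right}}$. Dualising once more, and using the fact that properness of $\scrA$ makes the morphism modules under consideration finite-dimensional (so that double-dual is canonically an isomorphism), I get
\[
\scrI^{\mathit{left}}(Z_0) \simeq (\scrI^{\mathit{right}}(\scrS Z_0))^\vee
\]
in $(\scrA^{\mathit{left}})^{\mathit{opp}}$. Substituting this into the left-hand side of the lemma rewrites the bimodule $\scrI^{\mathit{left}}(Z_0) \otimes \scrI^{\mathit{right}}(Z_1)$ as $(\scrI^{\mathit{right}}(\scrS Z_0))^\vee \otimes \scrI^{\mathit{right}}(Z_1)$.

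Next I would apply \eqref{eq:dual-module-1} with $\scrM_0 = \scrI^{\mathit{right}}(\scrS Z_0)$ and $\scrM_1 = \scrI^{\mathit{right}}(Z_1)$. Both are proper right modules (the Yoneda images of objects in a proper $A_\infty$-category are proper), so the natural map is a quasi-isomorphism of bimodules
\[
(\scrI^{\mathit{right}}(\scrS Z_0))^\vee \otimes \scrI^{\mathit{right}}(Z_1) \stackrel{\htp}{\longrightarrow} \mathit{Hom}(\scrI^{\mathit{right}}(\scrS Z_0), \scrI^{\mathit{right}}(Z_1)).
\]
Composing with \eqref{eq:dual-module-2} turns the bimodule homomorphism group on the left-hand side of the lemma into the right-module morphism complex
\[
\scrA^{\mathit{right}}\big(\scrI^{\mathit{right}}(\scrS Z_0), \scrI^{\mathit{right}}(Z_1)\big).
\]
By the Yoneda lemma, this is homotopy equivalent to $\scrA^{\mathit{tw}}(\scrS Z_0, Z_1)$, and taking cohomology yields $H^*(\scrA(\scrS Z_0, Z_1))$ as claimed.

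The main obstacle is keeping the left/right/bimodule book-keeping consistent through the dualisations, in particular checking that the identification $\scrI^{\mathit{left}}(Z_0) \simeq (\scrI^{\mathit{right}}(\scrS Z_0))^\vee$ supplied by \eqref{eq:serre} is compatible (on the nose, up to the natural quasi-isomorphism) with the tensor product structure used in \eqref{eq:dual-module-1}; and verifying that the properness hypothesis needed to invoke \eqref{eq:dual-module-1} really does apply to the Yoneda image of $\scrS Z_0$. Once these compatibilities are nailed down, the argument is just a chain of four canonical quasi-isomorphisms.
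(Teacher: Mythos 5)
Your argument is correct and follows the paper's own proof step for step: extract $\scrI^{\mathit{right}}(\scrS Z_0) \htp (\scrI^{\mathit{left}}(Z_0))^\vee$ from \eqref{eq:serre}, dualize using properness, apply \eqref{eq:dual-module-1} and \eqref{eq:dual-module-2}, and finish with full-faithfulness of the Yoneda embedding. The only difference is that you make the intermediate dualization step explicit where the paper compresses it into one line.
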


\begin{proof}
Since the Yoneda modules are proper, double dualization is quasi-isomorphic to the identity. From that and \eqref{eq:serre}, one gets $\scrI^{\mathit{left}}(Z_0) \htp \scrI^{\mathit{right}}(\scrS Z_0)^\vee$. In view of \eqref{eq:dual-module-1}, it follows that $\scrI^{\mathit{left}}(Z_0) \otimes \scrI^{\mathit{right}}(Z_1) \htp \mathit{Hom}(\scrI^{\mathit{right}}(\scrS Z_0), \scrI^{\mathit{right}}(Z_1))$. Now apply \eqref{eq:dual-module-2} and use the fact that the Yoneda embedding is full and faithful on cohomology.
\end{proof}

Let's return to the specifics of our algebra \eqref{eq:kronecker}. The previous computations \eqref{eq:nondegenerate-1}, \eqref{eq:nondegenerate-2} say that
\begin{align}
\label{eq:serre-1}
& \scrS Y_d \htp Y_{d-2}[2-n], \\
\label{eq:serre-2}
& \scrS T_\mu \htp T_\mu[2-n].
\end{align}
We also need another piece of information (which is a special case of Beilinson-style resolutions of the diagonal associated to exceptional collections): there is a quasi-isomorphism in $\scrA^{\mathit{bi}}$,
\begin{equation} \label{eq:beilinson}
\Delta_{\scrA} \htp 
\mathit{Cone}\big(\scrI^{\mathit{left}}(Y_1) \otimes \scrI^{\mathit{right}}(X)[1-n]
\longrightarrow \scrI^{\mathit{left}}(Y) \otimes \scrI^{\mathit{right}}(Y)\big).
\end{equation}

\begin{lemma} \label{th:beilinson}
Denote by $(\Delta_{\scrA}^\vee)^{\otimes_{\scrA} r}$ the $r$-fold tensor product of the bimodule $\Delta_{\scrA}^\vee$.
For any $r \geq 0$, there is a long exact sequence
\begin{equation}
\begin{aligned}
\cdots \rightarrow H^{*+(r+1)(n-2)}(\scrA(Y,Y_{2r+2})) \longrightarrow & \,
H^*(\scrA^{\mathit{bi}}( (\Delta_{\scrA}^\vee)^{\otimes_{\scrA} r},\Delta_{\scrA})) \\ & \qquad \quad
\longrightarrow H^{*+r(n-2)}(\scrA(X,Y_{2r-1})) \rightarrow \cdots
\end{aligned}
\end{equation}
\end{lemma}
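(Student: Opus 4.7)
The plan is to compute $H^*(\scrA^{\mathit{bi}}((\Delta_\scrA^\vee)^{\otimes_\scrA r}, \Delta_\scrA))$ by applying the Beilinson-type resolution \eqref{eq:beilinson} in combination with the Serre-functor description \eqref{eq:serre}--\eqref{eq:serre-1}. First I would produce a two-term resolution of $(\Delta_\scrA^\vee)^{\otimes_\scrA r}$ itself, by tensoring \eqref{eq:beilinson} with $(\Delta_\scrA^\vee)^{\otimes_\scrA r}$ on the right. Using that $\scrI^{\mathit{right}}(Z) \otimes_\scrA \Delta_\scrA^\vee \htp \scrI^{\mathit{right}}(\scrS Z)$, a direct consequence of \eqref{eq:serre} and the double-dualization identity already used in the proof of Lemma \ref{th:hh}, this resolution takes the form
\[
(\Delta_\scrA^\vee)^{\otimes_\scrA r} \htp \mathit{Cone}\big(\scrI^{\mathit{left}}(Y_1) \otimes \scrI^{\mathit{right}}(\scrS^r X)[1-n] \longrightarrow \scrI^{\mathit{left}}(Y) \otimes \scrI^{\mathit{right}}(\scrS^r Y)\big).
\]

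Next I would apply $\scrA^{\mathit{bi}}(-, \Delta_\scrA)$ to this resolution, obtaining a distinguished triangle in which the term of interest is flanked by bimodule Hom spaces out of external tensor products of Yoneda modules. A bimodule Yoneda identification $\scrA^{\mathit{bi}}(\scrI^{\mathit{left}}(Z_0) \otimes \scrI^{\mathit{right}}(Z_1), \Delta_\scrA) \htp \scrA^{\mathit{tw}}(Z_1, Z_0)$, determined by the image of the pair of identity morphisms, then converts those bimodule Homs into single ordinary Hom spaces in $\scrA^{\mathit{tw}}$, yielding, up to shifts, $\scrA^{\mathit{tw}}(\scrS^r Y, Y)$ and $\scrA^{\mathit{tw}}(\scrS^r X, Y_1)$.

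To bring these terms into the form displayed in the Lemma, I would apply Serre duality to exchange the side on which $\scrS$ sits, then use the inverted helix relation from \eqref{eq:serre-1}, namely $\scrS^{-k} Y_d \htp Y_{d+2k}[k(n-2)]$, to rewrite $\scrS^{-(r+1)} Y$ as $Y_{2r+2}[(r+1)(n-2)]$ and $\scrS^{-r} X$ as $Y_{2r-1}[r(n-2)]$; combining those identifications with the $[1-n]$ shift already present in \eqref{eq:beilinson} and the shift inherent in the Yoneda step should reproduce precisely the numerical shifts $(r+1)(n-2)$ and $r(n-2)$ in the statement.

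The main obstacle will be the bookkeeping of all these shifts, since the $[1-n]$ from \eqref{eq:beilinson}, the $k(n-2)$ from each inverse Serre application, the Serre-duality step, and the Yoneda identification all contribute, and their combination must reconcile cleanly. As a safeguard I would first verify the statement for $r = 0$, where the middle term is Hochschild cohomology $HH^*(\scrA) = H^*(\scrA^{\mathit{bi}}(\Delta_\scrA, \Delta_\scrA))$ and the claimed sequence reduces to the one obtained by applying $\scrA^{\mathit{bi}}(\Delta_\scrA, -)$ directly to \eqref{eq:beilinson}, and then for $r = 1$, where the Serre twist first enters nontrivially, before committing to the general argument.
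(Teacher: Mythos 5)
Your strategy — resolve one slot of $\scrA^{\mathit{bi}}(\cdot,\cdot)$ using \eqref{eq:beilinson}, identify the external-tensor pieces via a Yoneda-type statement, and then push through the helix identities — is close in spirit to the paper's proof, but the particular slot you resolve and the particular Yoneda lemma you use lead to different helix objects than those in the statement. The paper first uses invertibility of $\Delta_\scrA^\vee$ to pass to $\scrA^{\mathit{bi}}(\Delta_\scrA,(\Delta_\scrA^\vee)^{\otimes_\scrA -r})$, resolves the \emph{covariant} argument, and identifies the pieces via Lemma~\ref{th:hh}, which carries a Serre twist ($\scrA^{\mathit{bi}}(\Delta_\scrA,\scrI^{\mathit{left}}(Z_0)\otimes\scrI^{\mathit{right}}(Z_1))\htp\scrA(\scrS Z_0,Z_1)$). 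You instead resolve the \emph{contravariant} argument $(\Delta_\scrA^\vee)^{\otimes_\scrA r}$ and identify via the plain bimodule Yoneda $\scrA^{\mathit{bi}}(\scrI^{\mathit{left}}(Z_0)\otimes\scrI^{\mathit{right}}(Z_1),\Delta_\scrA)\htp\scrA^{\mathit{tw}}(Z_1,Z_0)$, which has \emph{no} Serre twist. That is a genuinely correct fact, but it is exactly the missing $\scrS$ that causes the mismatch below.

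Carry your computation through. From the resolution of $(\Delta_\scrA^\vee)^{\otimes_\scrA r}$ you get, via the bimodule Yoneda and applying $\scrS^{-r}$ to both arguments,
$\scrA(\scrS^r Y,Y)\htp\scrA(Y,\scrS^{-r}Y)\htp\scrA(Y,Y_{2r})[r(n-2)]$ and, after the $[1-n]$ and cone shifts, $\scrA(\scrS^r X,Y_1)[n-1]\htp\scrA(X,\scrS^{-r}Y_1)[n-1]\htp\scrA(X,Y_{2r+1})[(r+1)(n-2)+1]$. The resulting long exact sequence is therefore
\begin{equation*}
\cdots\rightarrow H^{*+(r+1)(n-2)}(\scrA(X,Y_{2r+1}))\rightarrow H^*(\scrA^{\mathit{bi}}((\Delta_\scrA^\vee)^{\otimes_\scrA r},\Delta_\scrA))\rightarrow H^{*+r(n-2)}(\scrA(Y,Y_{2r}))\rightarrow\cdots
\end{equation*}
whereas the statement has $\scrA(Y,Y_{2r+2})$ and $\scrA(X,Y_{2r-1})$ in the corresponding slots. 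The numerical shifts $(r+1)(n-2)$ and $r(n-2)$ match, and the Poincar\'e polynomials of the flanking groups coincide with those in the statement, but the objects themselves are different helix entries and there is no Serre power carrying $Y_{2r+1}$ to $Y_{2r+2}$ (or $Y_{2r}$ to $Y_{2r-1}$) because $\scrS$ shifts the helix index by $2$, not by $1$. Your proposed reconciliation step, ``rewrite $\scrS^{-(r+1)}Y$ as $Y_{2r+2}[\ldots]$ and $\scrS^{-r}X$ as $Y_{2r-1}[\ldots]$'', reads off the right targets from the Lemma, but those Serre powers never arise from your setup: what you actually produce is $\scrS^{-r}Y$ and $\scrS^{-r}Y_1$. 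In short, your proof establishes a valid, analogous long exact sequence whose consequences (for instance for Lemma~\ref{th:classification}, where only the degree ranges matter) are the same, but it does not produce the sequence stated in the Lemma. To get that one you should follow the paper and move $(\Delta_\scrA^\vee)^{\otimes_\scrA r}$ to the other side using invertibility, thereby placing the resolution in the covariant slot and using Lemma~\ref{th:hh}, whose built-in Serre twist supplies the extra $\scrS^{-1}$ needed to land on $Y_{2r+2}$ and $Y_{2r-1}$.
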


\begin{proof}
For $r \geq 0$, denote by $(\Delta_{\scrA}^{\vee})^{\otimes_{\scrA} -r}$ the tensor product of $r$ copies of the bimodule inverse to $\Delta_{\scrA}^\vee$. Then
\begin{equation}
H^*\big(\scrA^{\mathit{bi}}((\Delta_\scrA^\vee)^{\otimes_{\scrA} r}, \Delta_{\scrA})\big) \iso
H^*\big(\scrA^{\mathit{bi}}(\Delta_{\scrA}, (\Delta_{\scrA}^\vee)^{\otimes_{\scrA} -r})\big)
\end{equation}
From \eqref{eq:beilinson}, \eqref{eq:serre}, and \eqref{eq:serre-1}, we get
\begin{equation} \label{eq:iterated-tensor}
\begin{aligned}
& (\Delta_{\scrA}^\vee)^{\otimes_{\scrA} -r} \htp 
\Delta_{\scrA} \otimes_{\scrA} (\Delta_{\scrA}^\vee)^{\otimes_{\scrA} -r} \\
& \htp 
\mathit{Cone}\big(\scrI^{\mathit{left}}(Y_1) \otimes \scrI^{\mathit{right}}(\scrS^{-r} X)[1-n]
\longrightarrow \scrI^{\mathit{left}}(Y) \otimes \scrI^{\mathit{right}}(\scrS^{-r} Y) \big) \\
& \htp
\mathit{Cone}\big(\scrI^{\mathit{left}}(Y_1) \otimes \scrI^{\mathit{right}}(Y_{2r-1})[(r-1)(n-2)-1]
\longrightarrow \scrI^{\mathit{left}}(Y) \otimes \scrI^{\mathit{right}}(Y_{2r})[r(n-2)] \big).
\end{aligned}
\end{equation}
By Lemma \ref{th:hh} and another application of \eqref{eq:serre-1},
\begin{equation} \label{eq:iterated-tensor-2}
\left\{
\begin{aligned}
& H^*(\scrA^{\mathit{bi}}\big(\Delta_{\scrA},  \scrI^{\mathit{left}}(Y_1) \otimes \scrI^{\mathit{right}}(Y_{2r-1})[(r-1)(n-2)])\big) \\ & \qquad \qquad
\iso H^{*+(r-1)(n-2)}(\scrA(\scrS Y_1, Y_{2r-1})) \iso H^{*+r(n-2)}(\scrA(X,Y_{2r-1})), \\
& H^*(\scrA^{\mathit{bi}}(\Delta_{\scrA}, \scrI^{\mathit{left}}(Y) \otimes \scrI^{\mathit{right}}(Y_{2r})[r(n-2)])\big) 
\\ & \qquad \qquad
\iso
H^{*+r(n-2)}(\scrA(\scrS Y, Y_{2r})) \iso H^{*+(r+1)(n-2)}(\scrA(Y,Y_{2r+2})).
\end{aligned}
\right.
\end{equation}
The morphisms from the diagonal bimodule into \eqref{eq:iterated-tensor} obviously fit into a long exact sequence involving the parts \eqref{eq:iterated-tensor-2}.
\end{proof}


\begin{lemma} \label{th:classification}
Any noncommutative divisor with ambient space $\scrA$ and dual bundle $\Delta_{\scrA}^\vee[n-2]$, for $n>2$, is either trivial or quasi-isomorphic to that underlying \eqref{eq:f-minus} (for some value of the parameter $\lambda \in \bQ^\times$).
\end{lemma}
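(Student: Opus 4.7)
The plan is to classify noncommutative divisors with the prescribed ambient space and dual bundle by reducing the problem to linear data. Following Remark \ref{th:switch-q}, since we are working over the field $\bQ$, the $L_\infty$-deformation formalism presents isomorphism classes of such divisors as $\bQ^\times$-orbits of the section class $[\sigma] \in H^0(\scrA^{\mathit{bi}}(\scrQ, \Delta_\scrA))$, where $\bQ^\times$ acts by rescaling the generator of the polynomial ring over which the divisor is defined. My plan is therefore to show that $H^0(\scrA^{\mathit{bi}}(\scrQ, \Delta_\scrA))$ is one-dimensional for $n > 2$, and to identify a representative of its nonzero class with the section of the divisor underlying \eqref{eq:f-minus}.

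Unfolding the shift in $\scrQ = \Delta_\scrA^\vee[n-2]$, the space of sections is $H^{n-2}(\scrA^{\mathit{bi}}(\Delta_\scrA^\vee, \Delta_\scrA))$, interpreted modulo $(2n-2)$. I would compute this using Lemma \ref{th:beilinson} at $r = 1$, which produces a long exact sequence whose immediate neighbours of the middle term are $H^{3(n-2)}(\scrA(Y,Y_4))$ and $H^{2(n-2)}(\scrA(X,Y_1))$, with further adjacent terms $H^{1+2(n-2)}(\scrA(X,Y_1))$ and $H^{1+3(n-2)}(\scrA(Y,Y_4))$. The relevant cohomology groups are supported (as $\bZ$-graded modules) in degrees $\{0, n-1, 2n-2\}$ and $\{0, n-1, 2n-2, 3n-3, 4n-4\}$ respectively; after reduction mod $(2n-2)$ these collapse onto $\{0, n-1\}$. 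A case-by-case inspection, using $n > 2$ to rule out low-degree coincidences, reduces the sequence to a short exact piece in which the middle group is sandwiched between a zero term and a one-dimensional one, forcing the answer to be at most $\bQ$.

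The next step is to exhibit \eqref{eq:f-minus} as a noncommutative divisor with nonzero section, thereby confirming that the bound is attained. Reading off the structure of $\scrF_0$, the quotient $\scrF_0/\scrA \iso \scrQ[1]$ sends $a^*, b^*$ to generators of $\scrQ[1](Y,X)$, and the relations $b^*a = \lambda e$, $a^*b = \lambda f$ encode the bimodule section $\sigma: \scrQ \to \Delta_\scrA$ as the map sending these generators to $\lambda a, \lambda b \in \Delta_\scrA$ in the appropriate degrees. Its image under the map to $H^{2(n-2)}(\scrA(X, Y_1)) \iso \bQ$ is $\lambda$ times the generator, which is nonzero. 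Hence $\scrF_0$ realizes the nonzero class in the one-dimensional space of sections, and by the classification any noncommutative divisor with ambient $\scrA$ and dual bundle $\scrQ$ either has trivial section (and is therefore the trivial extension $\scrA \oplus \scrQ[1]$) or has a nonzero section, which coincides with the one coming from $\scrF_0$ for some $\lambda \in \bQ^\times$ after rescaling, yielding a divisor quasi-isomorphic to \eqref{eq:f-minus}.

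The main obstacle will be the connecting map computation in the Beilinson sequence: one-dimensionality depends on this map being zero rather than an isomorphism, and making the identification requires unwinding the resolution \eqref{eq:beilinson} together with the Serre functor identity \eqref{eq:serre-1}. A subsidiary issue is justifying that the Maurer-Cartan classification reduces cleanly to $H^0$ modulo rescaling --- a standard consequence of working in characteristic zero with a proper, homologically smooth $\scrA$, as referenced in Remark \ref{th:switch-q}.
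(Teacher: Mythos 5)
Your proposal has a genuine gap in its foundational step: you assert that ``the $L_\infty$-deformation formalism presents isomorphism classes of such divisors as $\bQ^\times$-orbits of the section class $[\sigma] \in H^0(\scrA^{\mathit{bi}}(\scrQ, \Delta_\scrA))$'', and later characterize this as ``a standard consequence of working in characteristic zero with a proper, homologically smooth $\scrA$''. This is not correct. A noncommutative divisor contains higher-order data beyond the section (maps with multiple $\scrQ$-factor inputs, organized by the weight grading), and the $L_\infty$-deformation formalism does \emph{not} automatically collapse the classification to the first-order term. For that collapse, one needs to verify the vanishing of the higher-order groups in the filtered $L_\infty$-algebra controlling the deformation, which is precisely what the paper's proof does: it applies Lemma \ref{th:beilinson} for \emph{all} $r \geq 1$ (not just $r = 1$), showing that the groups $H^*(\scrA^{\mathit{bi}}((\Delta_{\scrA}^\vee[n-2])^{\otimes_{\scrA} r}, \Delta_\scrA))$ vanish in the relevant range of degrees, and then invokes \cite[Lemma 2.12]{seidel14b}, which is the formal statement of the classification under those vanishing hypotheses. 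By only applying Lemma \ref{th:beilinson} at $r = 1$, you have omitted exactly the step that makes the reduction-to-linear-data legitimate.

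There is a secondary computational concern. You reduce degrees mod $(2n-2)$ when computing $H^{n-2}(\scrA^{\mathit{bi}}(\Delta_\scrA^\vee, \Delta_\scrA))$, but the classification in the lemma concerns $\bZ$-graded noncommutative divisors (the mod-$(2n-2)$ reduction appears in the paper only later, for the $T_\mu$ objects and the generic fibres of the pencil); the $Y_d$ and the groups in Lemma \ref{th:beilinson} are $\bZ$-graded. Moreover, you flag the connecting-map question in the Beilinson sequence as the ``main obstacle''; this is not a peripheral issue but exactly where the degree analysis (for each of $r = 1, 2, 3, \ldots$) must be carried out, and for low values of $n$ the relevant graded pieces of $\scrA(Y, Y_{2r+2})$ and $\scrA(X, Y_{2r-1})$ do overlap nontrivially, so one cannot hope to read off the answer from support alone. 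The correct strategy is the paper's: establish the vanishing for all $r \geq 1$ via Lemma \ref{th:beilinson}, then cite the classification result. The second half of your argument (exhibiting the nontrivial divisor and its section on the generators $a^*, b^*$) is a reasonable sanity check but not logically required for the lemma, which is an ``at most'' statement; the nontriviality of \eqref{eq:f-minus} is actually established separately in Lemma \ref{th:detecting-element}.
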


\begin{proof}
Thanks to Lemma \ref{th:beilinson}, the shifted groups $H^*(\scrA^{\mathit{bi}}( (\Delta_{\scrA}^\vee[n-2])^{\otimes_{\scrA} r}, \Delta_\scrA))$, $r \geq 1$, vanish in negative degrees; and in degree zero, only the $r = 0$ group is nontrivial, and that is one-dimensional. One now applies \cite[Lemma 2.12]{seidel14b}.
\end{proof}

Consider a noncommutative divisor as in Lemma \ref{th:classification}. In view of \eqref{eq:serre-2}, its section (see \eqref{eq:dual-section} for the terminology) gives rise to an element of
\begin{equation} \label{eq:detecting-element}
\begin{aligned}
& H^0\big(\scrA^{\mathit{right}}(\scrI^{\mathit{right}}(T_\mu) \otimes_{\scrA} \Delta_{\scrA}^\vee[n-2], \,
\scrI^{\mathit{right}}(T_\mu) \otimes_{\scrA} \Delta_{\scrA})\big) \\
& \qquad \iso H^{2-n}(\scrA^{\mathit{tw}}(\scrS T_\mu, T_\mu)) \iso H^0(\scrA^{\mathit{tw}}(T_\mu, T_\mu)) \iso \bQ.
\end{aligned}
\end{equation}
Here, the first isomorphism is a consequence of the fact that the Yoneda embedding is cohomologically full and faithful, together with the diagram \eqref{eq:serre} defining the Serre functor; the second isomorphism comes from \eqref{eq:serre-2}.

\begin{lemma} \label{th:detecting-element}
In the situation of Lemma \ref{th:classification}, the noncommutative divisor is trivial iff \eqref{eq:detecting-element} vanishes.
\end{lemma}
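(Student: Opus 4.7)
The ``only if'' direction is immediate from the definitions: for the trivial noncommutative divisor, $\scrF_0$ is the trivial extension $\scrA \oplus \scrQ[1]$, and its underlying bimodule section $\sigma: \scrQ \to \Delta_{\scrA}$ from \eqref{eq:dual-section} is the zero morphism; evaluating at $T_\mu$ then produces the zero element in \eqref{eq:detecting-element}.

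For the converse, we must show that when the noncommutative divisor is quasi-isomorphic to the one in \eqref{eq:f-minus} (with $\lambda \in \bQ^\times$), the element \eqref{eq:detecting-element} is nonzero. The plan is to read this element off the short exact sequence of $\scrA$-bimodules from \eqref{eq:fibres},
\[
0 \longrightarrow \Delta_\scrA \longrightarrow \Delta_{\scrF_0} \longrightarrow \scrQ[1] \longrightarrow 0,
\]
whose connecting homomorphism is by definition the section $\sigma$. Tensoring on the left with the right Yoneda module of $T_\mu$ (equivalently, evaluating the bimodules in their left variable at $T_\mu$) yields a distinguished triangle of right $\scrA$-modules
\[
\scrI^{\mathit{right}}_\scrA(T_\mu) \longrightarrow \scrI^{\mathit{right}}_{\scrF_0}(T_\mu)\big|_\scrA \longrightarrow \scrI^{\mathit{right}}_\scrA(T_\mu) \otimes_\scrA \scrQ[1] \longrightarrow \scrI^{\mathit{right}}_\scrA(T_\mu)[1],
\]
whose connecting map is precisely $\sigma$ evaluated at $T_\mu$. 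Under the chain of isomorphisms in \eqref{eq:detecting-element}, this connecting map is the element we wish to detect.

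Now I invoke \eqref{eq:t-mu-dies}: in $\scrF_0^{\mathit{tw}}$, the twisted complex $T_\mu$ is quasi-isomorphic to zero, hence the middle term $\scrI^{\mathit{right}}_{\scrF_0}(T_\mu)\big|_\scrA$ is acyclic. Consequently the connecting homomorphism above is a quasi-isomorphism of right $\scrA$-modules, which is in particular a nonzero class in the one-dimensional space $H^0(\scrA^{\mathit{tw}}(T_\mu,T_\mu)) \iso \bQ$ identified in \eqref{eq:t-mu-hom} and \eqref{eq:detecting-element}. This would complete the proof.

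The main technical point I expect to have to unwind is the identification of the connecting homomorphism of the induced triangle of right modules with the evaluation of the bimodule section $\sigma$ at $T_\mu$, transported through the Serre-duality identifications appearing in \eqref{eq:detecting-element}. This is essentially a compatibility statement built into the definition of the section in \eqref{eq:dual-section} together with the exactness of tensoring a short exact sequence of bimodules with a Yoneda module; the only real bookkeeping is to verify that the sign and shift conventions match so that a quasi-isomorphism of modules corresponds to a nonzero (indeed generating) element of $\bQ$ rather than getting killed by some trivial grading mismatch.
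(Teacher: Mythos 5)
Your proof is correct and follows essentially the same path as the paper's: the ``only if'' direction is by definition, and for the converse you use the short exact sequence of right modules from \eqref{eq:fibres} tensored with $\scrI^{\mathit{right}}(T_\mu)$, identify the connecting map with \eqref{eq:detecting-element}, and apply \eqref{eq:t-mu-dies} to kill the middle term. The paper states the conclusion slightly more weakly (the sequence cannot be homotopically split, so the boundary map is nonzero) while you conclude the stronger fact that the connecting map is a quasi-isomorphism, but these amount to the same point.
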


\begin{proof}
By definition, \eqref{eq:detecting-element} is zero for the trivial noncommutative divisor. Because of Lemma \ref{th:classification}, we only need to show that it is nontrivial for the divisor underlying $\scrF_0$. In view of \eqref{eq:fibres}, the module map in \eqref{eq:detecting-element} is the boundary homomorphism of the short exact sequence of modules
\begin{equation} \label{eq:t-sequence}
0 \rightarrow \scrI^{\mathit{right}}(T_\mu) \otimes_{\scrA} \Delta_\scrA \longrightarrow
\scrI^{\mathit{right}}(T_\mu) \otimes_{\scrA} \Delta_{\scrF_0 }\longrightarrow \scrI^{\mathit{right}}(T_\mu) \otimes_{\scrA}
\Delta_\scrA^\vee[n-1] \rightarrow 0.
\end{equation}
The object in the middle of \eqref{eq:t-sequence} is an $\scrA$-module obtained by restricting an $\scrF_0$-module. That module can be viewed as the image of $T_\mu$ under $\scrA^{\mathit{tw}} \rightarrow \scrF_0^{\mathit{tw}}$ followed by the Yoneda embedding for $\scrF_0$. By \eqref{eq:t-mu-dies}, that object is zero. Hence, \eqref{eq:t-sequence} can't possibly be homotopically split, which means that the boundary homomorphism is nontrivial.
\end{proof}

\subsection{The Lefschetz fibration on an affine quadric\label{subsec:affine-quadric}}
Consider a smooth affine quadric of complex dimension $n > 2$ (in principle, one could discuss $n = 2$ as well, but that case is more complicated in several technical respects), with its linear projection to $\bC$, say
\begin{equation} \label{eq:linear-pi}
\Pi = x_1: M = \{x_1^2 + \cdots + x_{n+1}^2  + 1 = 0\} \longrightarrow \bC.
\end{equation}
With the standard symplectic form, this becomes a symplectic Lefschetz fibration. It does not quite fit into the context from Section \ref{subsec:target} because the fibres are noncompact, and because the fibration is not symplectically locally trivial outside a compact subset. Both deficiencies could be corrected without significantly altering the geometry; however, for the sake of brevity, we prefer to discuss \eqref{eq:linear-pi} in its given form, tacitly assuming that the construction from Section \ref{sec:first-construction} has been adapted accordingly. The total space is symplectically isomorphic to $T^*S^n$, and the smooth fibres to $T^*S^{n-1}$. Let $(X,Y)$ be the Lefschetz thimbles associated to paths emanating from the critical values $\pm i \in \bC$, and going to infinity as shown in Figure \ref{fig:xyt}. We have a clean intersection in a single fibre, $X \cap Y \iso S^{n-1} \subset T^*S^{n-1}$. 
\begin{figure}
\begin{centering}
\begin{picture}(0,0)%
\includegraphics{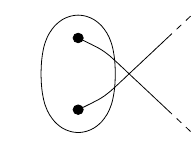}%
\end{picture}%
\setlength{\unitlength}{3355sp}%
\begingroup\makeatletter\ifx\SetFigFont\undefined%
\gdef\SetFigFont#1#2#3#4#5{%
  \reset@font\fontsize{#1}{#2pt}%
  \fontfamily{#3}\fontseries{#4}\fontshape{#5}%
  \selectfont}%
\fi\endgroup%
\begin{picture}(1827,1403)(136,9)
\put(1451, 64){\makebox(0,0)[lb]{\smash{{\SetFigFont{10}{12.0}{\rmdefault}{\mddefault}{\updefault}{\color[rgb]{0,0,0}$\Pi(Y)$}%
}}}}
\put(1451,1289){\makebox(0,0)[lb]{\smash{{\SetFigFont{10}{12.0}{\rmdefault}{\mddefault}{\updefault}{\color[rgb]{0,0,0}$\Pi(X)$}%
}}}}
\put(-301,614){\makebox(0,0)[lb]{\smash{{\SetFigFont{10}{12.0}{\rmdefault}{\mddefault}{\updefault}{\color[rgb]{0,0,0}$\Pi(T) = l$}%
}}}}
\end{picture}%
\caption{\label{fig:xyt}The Lagrangian submanifolds $X,Y,T$ in the quadric, projected to the plane as in \eqref{eq:linear-pi}. The area inside the loop $l$ is fixed by our choice of monotonicity constant.}
\end{centering}
\end{figure}

\begin{prop} \label{th:f-plus}
Let's associate to \eqref{eq:linear-pi} an $A_\infty$-category, and a noncommutative divisor having that category as ambient space; as in Section \ref{sec:first-construction}, but using only $(X,Y)$ as objects. Then, the $A_\infty$-category is (quasi-isomorphic to) \eqref{eq:kronecker}, and the noncommutative divisor has fibre \eqref{eq:f-plus}.
\end{prop}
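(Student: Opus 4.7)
The argument divides into two independent pieces: identifying the ambient $A_\infty$-category $\scrA$, and then identifying the fibre of the associated noncommutative divisor.

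For the first piece, the objects $X$ and $Y$ are Lefschetz thimbles, hence diffeomorphic to the $n$-disc. Applying the standard theory of vanishing cycles and thimbles (as in \cite{seidel04}), one gets $\mathit{HF}^*(X,X) = \mathit{HF}^*(Y,Y) = \bQ$ (generated by units) and $\mathit{HF}^*(Y,X) = 0$, the latter by isotoping $\pi(Y)$ off to infinity in a direction that does not cross $\pi(X)$. The thimbles $X$ and $Y$ meet cleanly along $S^{n-1} \subset \pi^{-1}(0)$, so a Morse-Bott perturbation argument identifies $\mathit{HF}^*(X,Y) \cong H^*(S^{n-1};\bQ) = \bQ \oplus \bQ[1-n]$, with generators $a, b$ of the claimed degrees. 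Higher $A_\infty$-products $\mu^d_{\scrA}$ for $d \geq 2$ on $(X,Y)$ vanish for combinatorial reasons: any composable sequence of morphisms of total length $\geq 2$ must either traverse $\scrA(Y,X) = 0$, pass through a unit, or land in a degree where $\scrA(X,Y)$ is zero. So $\scrA$ is formal and agrees with \eqref{eq:kronecker}.

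For the second piece, I would apply Proposition \ref{th:restriction-functor}. (Strictly speaking, \eqref{eq:linear-pi} does not fit the geometric framework of Section \ref{sec:restriction} on the nose because of noncompactness of fibres at infinity; I would first modify the fibration outside a large compact set so that it becomes symplectically locally trivial, which does not affect the objects or any relevant moduli space, a standard device in this setting.) The smooth fibre is $F \cong T^*S^{n-1}$, and both restrictions $X_F, Y_F$ are Hamiltonian isotopic to the zero section $L_0 = S^{n-1}$. Thus $\scrF$ becomes a full subcategory of the Fukaya category of $T^*S^{n-1}$ generated by two isotopic copies of the zero section. The zero section is a spherical object of dimension $n-1$, with endomorphism algebra $H^*(S^{n-1})$ carrying the Frobenius structure, and the $A_\infty$-structure on the subcategory spanned by two copies is known to be formal for $n > 2$ (and anyway determined up to quasi-isomorphism by its cohomology ring together with the Frobenius pairing). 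Reading off the generators and relations one obtains exactly the algebra in \eqref{eq:f-plus} (for some $\lambda \in \bQ^\times$ depending on normalisations); the relations $aa^* = bb^*$ and $a^*a = b^*b$ encode the Frobenius duality, $ba^* = a^*b = 0$ follow from $H^*(S^{n-1})$ being concentrated in two degrees, and $b^*a = \lambda e$, $ab^* = \lambda f$ encode the nondegeneracy of the Poincar\'e pairing on $S^{n-1}$.

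The main obstacle I expect is not the Floer-theoretic input, which is standard, but rather verifying that the \emph{specific} identification provided by the restriction functor of Section \ref{sec:restriction} lines up with the intrinsic description of the noncommutative divisor of Section \ref{subsec:divisor-1}: one needs to check that the weight $-1$ part of $\scrF$, namely the summands $\mathit{CF}^{*+1}(L_0,L_1;-1)$ that generate the bimodule $\scrQ \iso \Delta_{\scrA}^\vee[-n]$, corresponds under restriction to the ``dual'' morphisms $a^*, b^*$ in the Fukaya category of the fibre, rather than just getting the right cohomology. Concretely this amounts to tracing the Serre functor on $\scrA$ through the quasi-equivalence of Proposition \ref{th:restriction-functor} and matching it with the shift by $n-1$ on $S^{n-1}$ in $T^*S^{n-1}$, combined with the one extra shift coming from the $[1]$ in the definition \eqref{eq:cone-floer-cochains} of the cone-type morphism space, which together give the expected shift $[-n]$ of the dual bundle.
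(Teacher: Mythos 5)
Your argument is correct and follows the same two-step route the paper uses: the identification of $\scrA$ with \eqref{eq:kronecker} is the standard directed-category computation for a pair of clean-intersecting Lefschetz thimbles (the paper deems this ``immediate from the definition''), and the identification of the fibre is delegated to Proposition~\ref{th:restriction-functor}, exactly as you propose. As for the worry in your final paragraph: it is less of an obstacle than you fear, because a noncommutative divisor on $\scrA$ is, up to quasi-isomorphism, simply the datum of a cohomologically injective $A_\infty$-functor $\scrA \to \scrF$ (the viewpoint of \cite[Section 2f]{seidel14b}); since the quasi-equivalence of Lemma~\ref{th:f-q} takes the inclusion $\scrA \subset \scrF$ to the restriction functor $\scrA \to \scrQ$ (projection to exterior intersection points, via \eqref{eq:q-morphisms}), and the latter is readily computed on the generators $e,f,a,b$, the identification of the weight $-1$ part with $\bQ a^* \oplus \bQ b^*$, and hence the $[-n]$ shift on the dual bundle, falls out of the Floer-cohomology computation without any separate matching of Serre functors.
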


\begin{proof}
The first part is immediate from the definition; the second part follows from the relation with the Fukaya category of the fibre, Proposition \ref{th:restriction-functor}.
\end{proof}

The formulation of Proposition \ref{th:f-plus} is a little imprecise: given the approach we have taken, the construction also requires looking at (infinitely many) positive perturbations of $X$ and $Y$. In the resulting category, all these perturbations are quasi-isomorphic to the original object, by definition. Hence, we can forget about them a posteriori, which is how one gets \eqref{eq:kronecker}. Taking a step back, there is no intrinsic need to restrict to $(X,Y)$ as objects. Indeed, geometrically it makes sense to include other Lagrangian submanifolds as well, even though algebraically that does not lead to more information. We spell out this observation in two versions:
\begin{equation} \label{eq:generate-1}
\mybox{
We can add closed Lagrangian submanifolds $L \subset M$ satisfiying \eqref{eq:lagrangian} as objects to our category. That does not affect $\scrA^{\mathit{tw}}$, since any such $L$ can be expressed as a twisted complex in terms of $(X,Y)$ (this is shown in \cite[Proposition 18.17]{seidel04}, in a context that is technically different but equivalent to the one here; see also \cite{biran-cornea17}).
}
\end{equation}
\begin{equation} \label{eq:generate-2}
\mybox{
More generally, fix constants $d \geq 2$ (integral) and $\gamma > 0$ (a real number). Going a little beyond the framework in Section \ref{sec:first-construction}, we can define a larger version of our category which also includes closed Lagrangian submanifolds which are {\em Spin}, have a grading mod $2d$ (see \cite{seidel99} for this notion), and are monotone with monotonicity constant $\gamma$ (Floer cohomology for such Lagrangian submanifolds is well-defined over $\bQ$ by \cite{oh93}; note that the minimal Maslov number is $2d \geq 4$). By an argument analogous to the previous one, such objects can be expressed as $(\bZ/2d)$-graded twisted complexes in $(X,Y)$. As a minor further extension, one can allow the 
Lagrangian submanifolds to come with flat $\bQ$-vector bundles.
}
\end{equation}
For our specific application, we will only need particular instance of the last-mentioned idea:
\begin{equation} \label{eq:generate-3}
\mybox{
Consider the Lagrangian submanifold 
\[
T \iso S^1 \times S^{n-1} \subset M
\]
 fibered over the loop $l$ from Figure \ref{fig:xyt}, with an $S^{n-1}$ in each fibre, as in \cite[Section 6]{khovanov-seidel98}. This is monotone, with the monotonicity constant prescribing the area inside the loop, and it admits a mod $(2n-2)$ grading. Equip it with a flat line bundle with holonomy $\mu \in \bQ^\times$ (going anticlockwise around the loop). This gives a family of objects $T_\mu$ in the category from \eqref{eq:generate-2}, all of which can be written as $\bZ/(2n-2)$-graded twisted complexes in $(X,Y)$. These objects are nonzero, since $\mathit{HF}^*(T_\mu,T_\mu) \neq 0$ (one can see that e.g.\  from the spectral sequence in \cite{oh96}).
}
\end{equation}

\begin{remark}
There is an explicit recipe (compare \cite[Sections 5k and 5l]{seidel04}) for writing a Lagrangian submanifold as in \eqref{eq:generate-1} or \eqref{eq:generate-2} as a twisted complex:
\begin{equation} \label{eq:explicit-cone}
L \htp \mathit{Cone}(\mathit{HF}^*(X,L) \otimes X[-n] \longrightarrow \mathit{HF}^*(Y,L) \otimes Y), 
\end{equation}
where the morphism is obtained as follows. Take the Floer cohomology $\mathit{HF}^*(Y,X)$, which by Floer-theoretic Poincar{\'e} duality is canonically isomorphic to $\mathit{HF}^{n-*}(X,Y)^\vee$. Using that duality, we think of the product $\mathit{HF}^*(X,L) \otimes \mathit{HF}^*(Y,X) \rightarrow \mathit{HF}^*(Y,L)$ as an element of degree $n$ in
\begin{equation}
\begin{aligned}
& \mathit{Hom}^*\big(\mathit{HF}^*(X,L), \mathit{HF}^*(Y,L)\big) \otimes \mathit{HF}^*(X,Y) \\ & \qquad \iso
\mathit{Hom}^*\big(\mathit{HF}^*(X,L), \mathit{HF}^*(Y,L)\big) \otimes (\bQ a \oplus \bQ b),
\end{aligned}
\end{equation}
which enters into \eqref{eq:explicit-cone}. For instance, the zero-section $S^n \subset M$ is quasi-isomorphic to $\mathit{Cone}(a: X \rightarrow Y)$. A bit less obviously, the objects $T_\mu$ from \eqref{eq:generate-3} correspond to the twisted complexes \eqref{eq:t-mu} (up to a sign ambiguity in the $\mu$ parametrization, due to the fact that we have not specified the {\em Spin} structure). We will not really use this explicit expression; but it is instructive to keep it in mind, since it links the algebraic argument from Lemma \ref{th:detecting-element} to a geometric analogue, which will appear below. 
\end{remark}

\subsection{The Lefschetz pencil on a projective quadric\label{subsec:projective-quadric}}
Take a smooth quadric $M \subset \bC P^{n+1}$, and on that a generic pencil of hyperplane sections. Fix a smooth member of that pencil, denoted by $D$, and consider the resulting Lefschetz fibration $\Pi: M \setminus D \rightarrow \bC$. Concretely, we can take $M = \{x_0^2 + \cdots + x_{n+1}^2 = 0\}$, with the pencil spanned by $\{x_0,x_1\}$, in which case $D = \{x_0 = 0\}$ and the Lefschetz fibration is exactly \eqref{eq:linear-pi}. We will again be interested in the two Lefschetz thimbles $(X,Y)$  and the closed Lagrangian submanifold $T$ from \eqref{eq:generate-3}. The latter has the following alternative description from the Lefschetz pencil viewpoint:
\begin{equation} \label{eq:t-again}
\mybox{
Remove the base locus of the pencil from $D$. The outcome is an affine quadric of dimension $(n-1)$; from the preferred Lagrangian sphere in it, we can construct a monotone Lagrangian submanifold in $M \setminus D$, by the method from \cite[Section 5]{biran-cieliebak01b}. This is not unique, but rather depends on a choice of (small) radius. One can show that the resulting submanifolds are (up to Hamiltonian isotopy) those from \eqref{eq:generate-3}, for a certain value of the monotonicity constant (smaller radii correspond to larger monotonicity constants).
}
\end{equation}
Let's strengthen our previous dimension assumption to $n>3$ (again for technical simplicity). We would like to introduce a version of \eqref{eq:wl} for $T_\mu$. There is a $\bZ$ family of homotopy classes of discs with boundary on $T$ and intersection number $1$ with $D$; these are distinguished by the degree of their boundary projected to $l$. Denoting that number by $q$, we find that the Maslov number of such discs is $q(2n-2) + 2n$. Taking into account the intersection constraint \eqref{eq:holo-disc}, the expected dimension of the moduli space of discs is
\begin{equation}
\mathrm{dim}\, \hat\scrM_q = n +(q+1)(2n-2).
\end{equation}
We define $W(T_\mu) \in \bQ$ by counting discs in $\hat\scrM_{-1}$, having prescribed value at one boundary point (with signs, and with weights given by the boundary holonomies of the flat bundle, which in our context means $\mu^{-1}$). For generic choices, disc bubbling does not occur in the compactification of this space: a disc bubble would have to lie in $M \setminus D$, hence have positive boundary degree (with respect to projection to $l$, as before) because of monotonicity; while the remaining component would lie in $\hat\scrM_q$, $q<-1$, which is empty for dimension reasons. The same holds for one-parameter families of the choices involved in setting up \eqref{eq:holo-disc}, which shows that $W(T_\mu)$ is well-defined. 

\begin{lemma} \label{th:trivial-discs}
The disc-counting invariant satisfies $W(T_\mu) = \pm \mu^{-1}$ (the sign ambiguity comes from not having specified the {\em Spin} structure).
\end{lemma}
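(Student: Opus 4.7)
The plan is to reduce the count to a concrete calculation using the Biran circle-bundle model of $T$ recalled in \eqref{eq:t-again}. Inside a standard Weinstein neighbourhood of $D \subset M$, identified with a small disc subbundle of the normal bundle $K_M^{-1}|D$, the submanifold $T$ is the preimage of the Lagrangian sphere $L_{\mathrm{sph}} \subset D \setminus B$ under the radial projection; the circle direction projects to the loop $l$, and the flat line bundle of holonomy $\mu$ weights each disc of boundary degree $q$ by $\mu^q$. So I want to argue that all contributions to $W(T_\mu)$ come from degree $q=-1$ classes, and that the signed count within that class is $\pm 1$.

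First, I would choose an almost complex structure $J$ which, near $D$, respects the product structure in \eqref{eq:normal-bundle} (as in \eqref{eq:global-almost-complex} and Remark \ref{th:local-almost-complex-2}). For such a $J$, any holomorphic disc with boundary on $T$ that is contained in the standard neighbourhood splits into a horizontal component (a disc in $D$ with boundary on $L_{\mathrm{sph}}$) and a vertical component in a normal $\bC$-fibre. Since $L_{\mathrm{sph}}$ is exact in the affine quadric $D \setminus B$, the horizontal component must be constant, so the disc is literally a radial disc in the normal fibre over a point of $L_{\mathrm{sph}}$. Such a disc has Maslov number $2$, intersection number one with $D$, and (with our sign convention for $q$) boundary winding $-1$ around $l$, matching the class $q=-1$.

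Second, I need to rule out solutions escaping the Weinstein neighbourhood or belonging to other classes. Monotonicity is the key tool: in class $q=-1$ the symplectic area is minimal among classes meeting $D$ once, and any $J$-holomorphic representative therefore stays in a fixed small neighbourhood of $D$ for an appropriate choice of $J$, so the splitting argument of the previous paragraph applies. Bubbling is excluded already by the definition of $W(T_\mu)$, and discs in classes with $q < -1$ are obstructed by having the wrong (in fact negative) Maslov number while still being forced to meet $D$ once.

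Finally, the enumeration: the moduli space of fibre discs (mod reparametrisation) is a copy of $L_{\mathrm{sph}} \times S^1 \cong T$ via the boundary evaluation map, so a generic boundary constraint cuts out exactly one disc. Weighting by the holonomy of the flat line bundle around the boundary loop, which is $\mu^{-1}$ because the winding is $-1$, gives $W(T_\mu) = \pm \mu^{-1}$, with the sign reflecting the unspecified \emph{Spin} structure. I expect the main technical point to be the regularity of the fibre discs and the absence of other $J$-holomorphic representatives in the class $q=-1$; the former is a standard automatic regularity computation in the Biran setting (see \cite{biran-cieliebak01b}), and the latter is handled by the area/monotonicity argument above together with a small perturbation of $J$ to maintain transversality.
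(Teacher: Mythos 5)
You take a genuinely different route from the paper. The paper's proof passes to the complementary hypersurface $D_* \subset M$ (the closure of the fibre over a point inside the loop $l$): the relevant discs have intersection number $0$ with $D_*$, so positivity of intersection forces them into $M\setminus D_*$, where the \emph{inverted} Lefschetz fibration $1/\pi$ is trivial over the disc bounded by $1/l$ (both critical values lie outside that disc), reducing everything to the explicit local model $\bC \times T^*S^{n-1}$ with boundary on $S^1 \times S^{n-1}$. Your proposal instead works locally near $D$, using the Biran circle-bundle description of $T$ and a product almost complex structure to split discs into a horizontal component in $D\setminus B$ (constant by exactness) and a vertical fibre disc. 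Both strategies aim at the same picture (only the fibre discs contribute), but they confine the discs by very different means, and that is exactly where your version is incomplete.

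The gap is the confinement step. You write that in class $q=-1$ the area is minimal among classes meeting $D$ once, and ``any $J$-holomorphic representative therefore stays in a fixed small neighbourhood of $D$.'' An area bound does not by itself give a $C^0$ bound keeping the disc inside a Weinstein neighbourhood. To make this precise one needs a quantitative monotonicity or neck-stretching estimate comparing the area of the $q=-1$ class (which equals $2\lambda$ for the Lagrangian monotonicity constant $\lambda$, hence scales with the radius at which $T$ sits) to the area a holomorphic curve must sweep out if it leaves a tube around $D$ — and both quantities depend on the same radius, so the comparison is delicate and has to be argued, not asserted. In the paper this issue never arises: disjointness from $D_*$ is a purely topological consequence of $u\cdot D_*=0$ plus positivity of intersection, and then $1/\pi$ does the rest. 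A secondary point: your splitting argument uses the product trivialization \eqref{eq:elsewhere}, which only holds away from the base locus $B$, so you would also need to show the disc stays out of $V_B$; and the citation of \cite{biran-cieliebak01b} covers the construction of $T$ rather than regularity of the fibre discs, though that is a minor mislabel.
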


\begin{proof}
Let $D_* \subset M$ be a smooth hypersurface in our pencil, which is the closure of the fibre of $\Pi$ over a point lying inside the loop defining $T$. For simplicity, let's adopt the concrete model \eqref{eq:linear-pi} and choose $D_* = \{x_1 = 0\}$. The discs that define $W(T_\mu)$ have intersection number $0$ with $D_*$, hence lie in $M \setminus D_*$. Explicitly,
\begin{equation} \label{eq:1-over-pi}
M \setminus D_* = \{x_0^2 + x_2^2 + \cdots x_{n+1}^2 = 1\} \xrightarrow{1/\Pi = x_0} \bC.
\end{equation}
We are looking for discs in $M \setminus D_*$ whose projection under \eqref{eq:1-over-pi} is the disc $\Delta \subset \bC$ bounded by $1/l$, and which have boundary degree $1$ over $1/l$. One can explicitly trivialize that part of the fibration:
\begin{equation} \label{eq:delta-trivialization}
\begin{aligned}
& \{ x \in M \setminus D_* \;:\; 1/x_0 \in \Delta \} \stackrel{\iso}{\longrightarrow} \Delta \times \{x_2^2+\cdots + x_{n+1}^2 = 1\}, \\
& x \longmapsto \big(1/x_0, (1-x_0^2)^{-1/2} (x_2,\dots,x_{n+1})\big), 
\end{aligned}
\end{equation}
for some choice of root $(1-x_0^2)^{1/2}$ (this is possible because both critical values $x_0 = \pm 1$ lie outside $\Delta$). Under \eqref{eq:delta-trivialization}, the Lagrangian submanifold $T_\mu$ goes to $\partial \Delta \times S^{n-1}$; and hence, the discs under consideration correspond to ones that parametrize $\Delta$, together with a constant map in the $S^{n-1}$ factor. In particular, there is one such disc through every point of $T_\mu$.
\end{proof}

Let's apply Construction \ref{th:2}, again using only $(X,Y)$ as objects. This yields a graded noncommutative pencil. Proposition \ref{th:restriction-functor-2} implies that the fibre at $\infty$ of the noncommutative pencil, in the terminology of \eqref{eq:terminology-fibres}, is \eqref{eq:f-plus}.

\begin{prop}
The fibre at $0$ of the noncommutative pencil is \eqref{eq:f-minus}.
\end{prop}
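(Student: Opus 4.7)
The plan is to apply the classification Lemma~\ref{th:classification} and then exclude the trivial noncommutative divisor via a disc-counting computation. First I would verify the hypotheses: by Proposition~\ref{th:f-plus} applied to the affine quadric $M \setminus D$, the ambient space of $\scrF_0$ is the graded Kronecker algebra $\scrA$ of \eqref{eq:kronecker}, and the dual bundle is $\Delta_\scrA^\vee[n-2]$. The latter identification amounts to a finite check on Floer groups between $X$ and $Y$, parallel to the one underlying the identification $\scrF_\infty \htp$ \eqref{eq:f-plus}: indeed, the two fibres arise from specialising a single pencil at points of the two flavour variables, of degrees $|v_\va|=2n-2$ and $|v_\ch|=0$ respectively, so their dual bundles differ by a shift of $[2n-2]$, which takes $\Delta_\scrA^\vee[-n]$ (the dual bundle of $\scrF_\infty$, see \eqref{eq:f-plus}) to $\Delta_\scrA^\vee[n-2]$. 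Lemma~\ref{th:classification} then asserts that $\scrF_0$ is either the trivial extension, or quasi-isomorphic to \eqref{eq:f-minus} for some $\lambda \in \bQ^\times$.

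To exclude triviality, Lemma~\ref{th:detecting-element} reduces the task to showing that, for some $\mu \in \bQ^\times$, the detecting element \eqref{eq:detecting-element} is nonzero at the twisted complex $T_\mu$ of \eqref{eq:t-mu}. Geometrically, $T_\mu$ is realised by the monotone Lagrangian $T \iso S^1 \times S^{n-1}$ of \eqref{eq:generate-3}, equipped with a flat $\bQ$-line bundle of holonomy $\mu$; the Biran--Cieliebak identification \eqref{eq:t-again} places $T$ in the natural geometric framework of $\scrF_0 = \scrP|_{v_\va=0}$, as the circle bundle over the preferred Lagrangian sphere in the affine quadric $D \setminus B$. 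After mildly extending the constructions of Sections~\ref{sec:anticanonical}--\ref{sec:flavour} so as to accommodate monotone Lagrangians carrying flat line bundles as objects (compare \eqref{eq:generate-2}), we may treat $T_\mu$ as an honest object of $\scrF_0$.

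The key step is then provided by Proposition~\ref{th:disc-counting}: the detecting element is represented by the off-diagonal component of $\mu^1_{\scrF_0}$ on the endomorphisms of $T_\mu$, passing from the weight $-1$ summand to the weight $0$ summand, and on cohomology this coincides with $W(T_\mu)$ times the continuation isomorphism
\[
\mathit{HF}^*(T_\mu^+, T_\mu; -1) \stackrel{\htp}{\longrightarrow} \mathit{HF}^*(T_\mu^+, T_\mu).
\]
Lemma~\ref{th:trivial-discs} computes $W(T_\mu) = \pm \mu^{-1}$, which is nonzero for any $\mu \in \bQ^\times$, so the detecting element is nontrivial. By Lemma~\ref{th:detecting-element} and Lemma~\ref{th:classification}, $\scrF_0$ is therefore quasi-isomorphic to \eqref{eq:f-minus}.

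The main technical obstacle is verifying that Proposition~\ref{th:disc-counting}, stated in the paper for closed exact Lagrangians in $M \setminus D$, adapts to the monotone Lagrangian $T_\mu$ with its flat line bundle. The sliding-the-sprinkle gluing argument goes through essentially verbatim in the monotone setting, with the resulting Maslov-two disc $\hat{u}$ weighted by its boundary holonomy $\mu^{-1}$; the only concern is unwanted disc bubbling in one-parameter families, which is ruled out by the standing assumption $n>3$ in Section~\ref{subsec:projective-quadric} (equivalently, minimal Maslov number $2n-2 \geq 4 > 2$). Once this extension is in place, the argument closes as outlined above.
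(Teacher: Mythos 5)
Your proposal is correct and follows essentially the same route as the paper: invoke Lemma~\ref{th:classification} (with Lemma~\ref{th:detecting-element} as the detection criterion), then use the monotone object $T_\mu$ together with (a mild extension of) Proposition~\ref{th:disc-counting} and the computation in Lemma~\ref{th:trivial-discs} to show the section is nontrivial, concluding by observing that $T_\mu$ is a twisted complex in $(X,Y)$. One small nit: you have the degrees of $v_\va$ and $v_\ch$ swapped in your parenthetical (the paper assigns $|v_\va| = 0$ and $|v_\ch| = 2n-2$); since the dual basis element of $W$ carries the opposite homological degree, your net shift of $[2n-2]$ and the conclusion $\Delta_\scrA^\vee[n-2]$ still come out right, but the intermediate bookkeeping is off.
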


\begin{proof}
Because they are part of a graded noncommutative pencil, the fibres at $0$ and $\infty$ are obtained from the same $A_\infty$-bimodule up to a shift. As a consequence of Proposition \ref{th:f-plus}, the bimodule responsible for the fibre at $\infty$ is $\Delta_{\scrA}^\vee[-n]$; and that responsible for the fibre at $0$ is therefore $\Delta_{\scrA}^\vee[n-2]$ (strictly speaking, these statements are true only up to quasi-isomorphism, but see Remark \ref{th:switch-q}). By Lemma \ref{th:classification}, we only have to show that the section which underlies the fibre at $0$ is nontrivial.

At this point, we enlarge $\scrA$ geometrically as in \eqref{eq:generate-3}, by allowing the $T_\mu$ as objects. By a mild generalization of Proposition \ref{th:disc-counting}, part of the section which gives rise to the fibre at $0$ is a map $\mathit{HF}^*(T_\mu,T_\mu;-1) \rightarrow \mathit{HF}^*(T_\mu,T_\mu;0)$ which is $W(T_\mu)$ times the continuation isomorphism. Since $W(T_\mu)$ is nonzero by Lemma \ref{th:trivial-discs}, and $T_\mu$ is a nonzero object, that section is nontrivial. On the other hand, since $T_\mu$ is a twisted complex in $(X,Y)$, including it does not change the category of $\scrA$-bimodules. Hence, the section was nontrivial already in the original category, having only $(X,Y)$ as objects.
\end{proof}

So far, we have focused on two parts of the noncommutative pencil structure, which correspond to the constructions from Section \ref{sec:first-construction} and \ref{sec:anticanonical}, respectively. We will not determine the entire pencil (this would require additional algebraic arguments, along the lines of Lemma \ref{th:classification}), and instead conclude by looking at the other fibres. The fibre at a point $w \neq 0,\infty$, again considering only $(X,Y)$ as objects, is 
a $\bZ/(2n-2)$-graded $A_\infty$-category. It turns out that these fibres are independent of $w$ (which is along similar lines as the general phenomenon observed in \eqref{eq:graded-fibres}, but stronger):

\begin{prop} \label{th:generic-fibre}
In the fibre at $w \neq 0,\infty$, the objects $(X,Y)$ are quasi-isomorphic, and each of them has (semisimple) endomorphism ring isomorphic to $\bQ[t]/(t^2-1)$, for $|t| = n-1$.
\end{prop}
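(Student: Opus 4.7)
The strategy is to identify the relevant part of the noncommutative pencil explicitly, and then read off the algebra structure in $\scrF_w$ by a cohomology-level computation, relying on the formality of all the pieces involved.

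First, recall from Proposition \ref{th:f-plus} and the preceding result that $\scrF_\infty$ and $\scrF_0$ are the formal algebras \eqref{eq:f-plus} and \eqref{eq:f-minus} respectively, both containing the ambient algebra $\scrA$ as a subcategory. The graded noncommutative pencil $\scrP$ is defined over $\bQ[v_\va,v_\ch]$ with $|v_\va| = 0$ and $|v_\ch| = 2n-2$, so for $w = (w_\va,w_\ch)$ with both coordinates nonzero, the fibre $\scrF_w$ is naturally $\bZ/(2n-2)$-graded, and its underlying graded module is the reduction of $\scrA \oplus \scrQ[1]$ with the same generators that appear in $\scrF_\infty$ and $\scrF_0$.

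Next, I would argue that the pencil structure on $\scrA$ with dual bundle $\scrQ \iso \Delta_\scrA^\vee[n-2]$ is determined (up to the appropriate equivalence) by its two sections $\sigma_\va, \sigma_\ch : \scrQ \to \scrA$. An analogue of Lemma \ref{th:classification}, applied to the two noncommutative divisors obtained by restricting the pencil to each coordinate axis, identifies each $\sigma_\cdot$ with a nonzero scalar multiple of the essentially unique bimodule map (detected by Lemma \ref{th:detecting-element}). Since $\scrA$ is formal and the bimodule maps in question are cocycles in the $\mathit{Hom}$-complex, a formal minimal model for the whole pencil exists, so the total composition in $\scrF_w$ combines linearly as $\sigma_w = w_\va \sigma_\va + w_\ch \sigma_\ch$.

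From this description, the cohomology-level products in $\scrF_w$ include, besides the ambient relations $aa^* = bb^*$ and $a^*a = b^*b$ (inherited from $\scrA$ and hence from any fibre), the section-induced relations
\begin{equation*}
b^*a = w_\ch \lambda_\ch\, e, \quad ab^* = w_\ch \lambda_\ch\, f, \quad a^*b = w_\va \lambda_\va\, e, \quad ba^* = w_\va \lambda_\va\, f,
\end{equation*}
with $\lambda_\va, \lambda_\ch \in \bQ^\times$. For $w_\va, w_\ch \neq 0$ this makes $b^*$ (and $a^*$) into invertible morphisms $Y \to X$ in $\scrF_w$, proving $X \iso Y$. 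To compute $\mathit{End}_{\scrF_w}(X)$, note it has $\bQ$-basis $\{e, t\}$ where $t = a^*a$ sits in degree $n-1$ mod $2n-2$; then
\begin{equation*}
t^2 = a^*(aa^*)a = a^*(bb^*)a = (a^*b)(b^*a) = w_\va w_\ch \lambda_\va \lambda_\ch\, e,
\end{equation*}
and rescaling $t$ by a scalar normalises this to $t^2 = e$, yielding $\mathit{End}_{\scrF_w}(X) \iso \bQ[t]/(t^2-1)$ with $|t| = n-1$.

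The main obstacle is justifying formality of $\scrF_w$, i.e.\ that the above cohomology-level relations determine the entire $A_\infty$-structure up to quasi-isomorphism. One route is to extend Lemma \ref{th:classification} to noncommutative pencils with the given $\scrA$ and $\scrQ$: the relevant obstruction/classification groups, which govern higher $A_\infty$-terms, can be computed by an iterated application of Lemma \ref{th:beilinson} (keeping track of both weight directions), and one checks that everything beyond the linear-in-$w$ piece is forced to vanish for degree reasons in the $\bZ/(2n-2)$-graded setting. Alternatively, one can bypass formality by working directly with the two bimodule sections in the pencil and using that any noncommutative pencil with prescribed ambient $\scrA$, dual bundle $\scrQ$, and sections in $H^*(\mathit{Hom}(\scrQ,\scrA))$ having the same detecting classes as our geometric one is equivalent to it; after a rescaling of $(v_\va, v_\ch)$, we may take $\lambda_\va = \lambda_\ch = 1$, and the claim reduces to the purely algebraic computation above.
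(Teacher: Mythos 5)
The heart of your argument agrees with the paper: the section underlying $\scrF_w$ is a nontrivial linear combination of the sections for $\scrF_0$ and $\scrF_\infty$, so all four products $b^*a,\, ab^*,\, a^*b,\, ba^*$ are nonzero multiples of identities, making $X$ and $Y$ quasi-isomorphic and forcing the endomorphism ring of $X$ to be $\bQ[t]/(t^2-1)$ with $|t| = n-1$. Your explicit computation $t^2 = (a^*b)(b^*a)$ is correct and a reasonable alternative to the paper's argument, which instead observes that $a$ and $b$ simultaneously give $X \htp Y$ and $X \htp Y[n-1]$, hence an invertible degree-$(n-1)$ endomorphism of $X$, which in a two-dimensional endomorphism algebra immediately forces $\bQ[t]/(t^2-1)$.

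However, you have identified the wrong obstacle and then not resolved it. You spend the final paragraph trying to establish formality of the full $A_\infty$-category $\scrF_w$: the first route (extending Lemma \ref{th:classification} to pencils via iterated Beilinson resolutions and degree arguments) is a sketch whose conclusion is not clear, and the second route is essentially a restatement of the desired classification rather than a proof of it. Neither is needed, because the statement of Proposition \ref{th:generic-fibre} only concerns the endomorphism ring of each object, and the graded algebra $\bQ[t]/(t^2-1)$ is \emph{intrinsically formal}: it carries no nontrivial $A_\infty$-deformations, so once the cohomology-level ring is identified, the $A_\infty$-structure on endomorphisms is automatically the trivial one, with no further work. This is exactly the observation the paper makes at the end of its proof. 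Replacing your formality discussion with this one-line remark closes the gap cleanly; as written, your proof is incomplete precisely at the step you flag as ``the main obstacle.''
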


\begin{proof}
We know that the section underlying the fibre at $w$ is a nontrivial linear combination of those for the fibres at $0$ and $\infty$. Hence, with the same notation as in \eqref{eq:f-plus} and \eqref{eq:f-minus}, the product on that fibre satisfies
\begin{equation} \label{eq:both-products}
\left\{
\begin{aligned}
& b^* a = \text{\it (nontrivial multiple of $e$),} \\
& a^* b = \text{\it (nontrivial multiple of $e$),} \\
& a b^* = \text{\it (nontrivial multiple of $f$),} \\
& b a^* = \text{\it (nontrivial multiple of $f$).}
\end{aligned}
\right.
\end{equation}
We know that the objects $X$ and $Y$ have two-dimensional endomorphism algebras, with generators in degrees $0$ and $n-1$. By \eqref{eq:both-products}, $X$ is quasi-isomorphic to both $Y$ and $Y[n-1]$, hence has an automorphism of degree $n-1$. The desired result follows from that. Note that the graded algebra $\bQ[t]/(t^2-1)$ is intrinsically formal (carries no nontrivial $A_\infty$-structure).
\end{proof}



\end{document}